\documentclass[12pt,a4paper]{amsart}
\usepackage{mathtools,amssymb,amsmath,amsopn,amsthm,graphicx,MnSymbol,centernot,stmaryrd,array}
\usepackage{tikz-cd}
\usepackage{enumitem}
\usepackage{amsaddr}
\usepackage{etoolbox}
\patchcmd{\subsection}{-.5em}{.5em}{}{}
\usetikzlibrary{matrix}
\addtolength\textwidth{1 in}
\addtolength\hoffset{-.5 in}
\begin{document}

\newtheorem{definition}{Definition}[section]
\newtheorem{definitions}[definition]{Definitions}
\newtheorem{deflem}[definition]{Definition and Lemma}
\newtheorem{lemma}[definition]{Lemma}
\newtheorem{proposition}[definition]{Proposition}
\newtheorem{theorem}[definition]{Theorem}
\newtheorem{corollary}[definition]{Corollary}
\newtheorem{algo}[definition]{Algorithm}
\theoremstyle{remark}
\newtheorem{rmk}[definition]{Remark}
\theoremstyle{remark}
\newtheorem{remarks}[definition]{Remarks}
\theoremstyle{remark}
\newtheorem{notation}[definition]{Notation}
\theoremstyle{remark}
\newtheorem{example}[definition]{Example}
\theoremstyle{remark}
\newtheorem{examples}[definition]{Examples}
\theoremstyle{remark}
\newtheorem{dgram}[definition]{Diagram}
\theoremstyle{remark}
\newtheorem{fact}[definition]{Fact}
\theoremstyle{remark}
\newtheorem{illust}[definition]{Illustration}
\theoremstyle{remark}
\newtheorem{que}[definition]{Question}
\theoremstyle{definition}
\newtheorem{conj}[definition]{Conjecture}
\newtheorem{scho}[definition]{Scholium}
\newtheorem{por}[definition]{Porism}
\DeclarePairedDelimiter\floor{\lfloor}{\rfloor}

\renewenvironment{proof}{\noindent {\bf{Proof.}}}{\hspace*{3mm}{$\Box$}{\vspace{9pt}}}
\author[Sardar, Kuber]{Shantanu Sardar and Amit Kuber}
\address{Department of Mathematics and Statistics\\Indian Institute of Technology, Kanpur\\ Uttar Pradesh, India}
\email{shantanusardar17@gmail.com, askuber@iitk.ac.in}
\title{{Variations of the bridge quiver for domestic string algebras}}
\keywords{bridge quiver, domestic string algebra, hammock}
\subjclass[2020]{16G30}

\begin{abstract}
In the computation of some representation-theoretic numerical invariants of domestic string algebras, a finite combinatorial gadget introduced by Schr\"{o}er--the \emph{bridge quiver} whose vertices are (representatives of cyclic permutations of) bands and whose arrows are certain band-free strings--has been used extensively. There is a natural but ill-behaved partial binary operation, $\circ$, on the larger set of \emph{weak bridges} such that bridges are precisely the $\circ$-irreducibles. With the goal of computing the order type of any hammock in a later work we equip an even larger set of \emph{weak arch bridges} with another partial binary operation, $\circ_H$, to obtain a finite category. Each weak arch bridge admits a unique $\circ_H$-factorization into \emph{arch bridges}, i.e., the $\circ_H$-irreducibles.
\end{abstract}

\maketitle

\newcommand\B{\mathfrak{B}}
\newcommand\Pp{\mathcal{P}}
\newcommand\hh{\mathfrak{h}}
\newcommand\Red[1]{\mathrm{R}_{#1}}
\newcommand\HRed[1]{\mathrm{HR}_{#1}}
\newcommand\M{\mathcal{M}}
\newcommand\Q{\mathcal{Q}}
\newcommand\T{\mathcal{T}}
\newcommand{\bb}{\mathfrak b}
\newcommand{\ch}{\circ_H}
\newcommand{\bua}[1]{\mathfrak b^{\alpha}(#1)}
\newcommand{\bub}[1]{\mathfrak b^{\beta}(#1)}
\newcommand{\bla}[1]{\mathfrak b_{\alpha}(#1)}
\newcommand{\blb}[1]{\mathfrak b_{\beta}(#1)}
\newcommand{\uu}{\mathfrak u}
\newcommand{\vv}{\mathfrak v}
\newcommand{\ww}{\mathfrak w}
\newcommand{\xx}{\mathfrak x}
\newcommand{\yy}{\mathfrak y}
\newcommand{\zz}{\mathfrak z}
\newcommand{\MM}{\mathfrak M}
\newcommand{\mm}{\mathfrak m}
\newcommand{\sbq}{\mathfrak s}
\newcommand{\tbq}{\mathfrak t}
\newcommand{\sk}[1]{\{#1\}}
\newcommand{\STR}[1]{\mathrm{Str}(#1)}
\newcommand{\Ba}[1]{\mathrm{Ba}(#1)}
\newcommand{\HQ}{\mathcal{HQ}^\mathrm{Ba}}
\newcommand{\bHQ}{\overline{\mathcal{HQ}}^\mathrm{Ba}}

\section{Introduction}
In the representation theory of finite dimensional algebras over an algebraically closed field $\mathcal K$, the class of string algebras form a `test subclass' for various conjectures regarding the class of tame representation type algebras due to the possibility of explicit computations. String algebras are presented as a certain quotient $\mathcal KQ/\langle\rho\rangle$ of the path algebra $\mathcal KQ$ of a quiver $Q$, where $\rho$ is a set of monomial relations. A complete classification of the indecomposable finite length modules over a string algebra into string and band modules is essentially due to Gelfand and Ponomarev \cite{GP}, where strings and bands are certain walks in the quiver.

The complexity of the set of strings is the major source of the complexity in the representation-theoretic study of string algebras. In his doctoral thesis \cite[\S~4]{SchroerThesis}, Schr\"{o}er introduced a finite combinatorial gadget called the \emph{bridge quiver} for the study of strings and bands in the context of domestic string algebras--this subclass is characterized by the existence of only finitely many bands--and showed \cite[\S~4.9]{SchroerThesis} that each string for a domestic string algebra can be generated by a path in the bridge quiver. Laking \cite[\S~2.5]{LakingThesis} used slightly different definition of the bridge quiver, whereas in \cite{GKS}, Gupta and the authors introduced the concept of a prime band for non-domestic string algebras, and then extended the concept of a finite (prime) (weak) bridge quiver to all string algebras; again the definition is slightly different. It was also shown that each string is ``generated by'', in a precise sense, a path in the extended (prime) bridge quiver.

Graph-theoretic properties as well as numerical invariants associated with the bridge quiver are key tools in the study of string algebras. Suppose $n$ denotes the maximal length of a path in the bridge quiver. For a domestic string algebra $\Lambda$ Schr\"{o}er \cite[Theorems~3.10,4.3]{Sch} showed that $\mathrm{rad}_\Lambda^{\omega\cdotp(n+1)}\neq0$ but $\mathrm{rad}_\Lambda^{\omega\cdotp(n+2)}=0$, where $\mathrm{rad}_\Lambda$ denotes the radical of the category $\mathrm{mod}\mbox{-}\Lambda$ of finite dimensional
right $\Lambda$-modules. Laking, Prest and Puninski \cite{LPP} showed that $\mathrm{KG}(\Lambda)=\mathrm{CB}(\mathrm{Zg}_\Lambda)=n+2$, where $\mathrm{KG}(\Lambda)$ stands for the Krull-Gabriel dimension of $\Lambda$ whereas $\mathrm{CB}(\mathrm{Zg}_\Lambda)$ stands for the Cantor-Bendixson rank of the Ziegler spectrum of $\Lambda$. Gupta and the authors \cite{GKS} defined the subclass of meta-torsion-free algebras of the class of non-domestic string algebras using directed cycles in the (prime) bridge quiver and showed that for such algebra $\Lambda$, $\omega\leq\min\{\lambda\mid\mathrm{rad}_\Lambda^\lambda=\mathrm{rad}_\Lambda^{\lambda+1}\}\leq\omega+2$.

For a domestic string algebra, Schr\"{o}er used the bridge quiver to investigate (different variations of) \emph{hammock (po)sets} consisting of string modules and band modules. The simplest version of hammock, denoted $H_l(v)$, consists of strings starting at the vertex $v$ of the quiver $Q$, and can be naturally equipped with a total order, say $<_l$, so that for $\xx,\yy\in H_l(v)$ if $\xx<_l\yy$ then there is a canonical map, known as a \emph{graph map}, $M(\xx)\to M(\yy)$ between the corresponding string modules. Krause \cite{K} and Crawley-Boevey \cite{CB} showed that slightly more general graph maps form a basis of the finite dimensional vector space $\mathrm{Hom}_\Lambda(M(\xx),M(\yy))$. Schr\"{o}er \cite[\S~2.5]{SchroerThesis} showed that $(H_l(v),<_l)$ is a bounded discrete linear order. Further Prest and Schr\"{o}er showed (essentially \cite[Theorem~1.3]{PS}) that it has finite dimension--this notion of dimension is known as the Hausdorff rank in order-theoretic literature.

During personal communication Prest said that a better version of the bridge quiver would lead to a better understanding of the module category. Exact computation of the order type of the hammocks $(H_l(v),<_l)$ is the goal of the sequel \cite{SK}. It would shed more light on the canonical factorization of graph maps, and in turn, on the structure of the category $\mathrm{mod}\mbox{-}\Lambda$, and the purpose of the current paper is to lay down its combinatorial foundation. We realised that we needed a subquiver of the weak bridge quiver that contains the bridge quiver. After various unsuccessful attempts we finally arrived at the notion of the \emph{arch bridge quiver} that explicitly relies on the set $\rho$ of monomial relations. This is the major contribution of this paper. One unsuccessful attempt, namely that of the \emph{semi-bridge quiver}, that has a more natural description turned out to be a subquiver of the arch bridge quiver.

To explain the concept of arch bridges, we first look at the relationship between bridges and weak bridges. Weak bridges are band-free strings, and there are only finitely many band-free strings in any string algebra \cite[Proposition~3.1.7]{GKS}. We can equip the set of weak bridges with a partial composition, $\circ$, so that bridges are precisely the $\circ$-irreducible weak bridges. However iterated $\circ$-compositions of weak bridges need not be defined (Example \ref{nonassociativityweakbridge}), and thus $\circ$ is not associative. Moreover there could be multiple $\circ$-factorizations of the same weak bridge (Example \ref{multiplefactorizationweakbridge}).

The finite set of band-free strings is extended to the finite set (Corollary \ref{hereditaryHstringfinitude}) of \emph{H-reduced strings}. By introducing another composition operation on the set of weak bridges, which we call H-composition and denote by $\ch$, that has the property that all iterated compositions exist, we define a \emph{weak arch bridge} to be such an iterated $\ch$-composition. We show that a weak arch bridge is an H-reduced string (Theorem \ref{weakarchbridgeHreduced}) and that the H-composition of weak arch bridges is associative (Theorem \ref{Hassociativity}) thus making the weak arch bridge quiver a finite category with respect to $\ch$. An \emph{arch bridge} is then defined to be a $\ch$-irreducible weak arch bridge, and we show that every weak arch bridge factors uniquely as an H-composition of arch bridges (Theorem \ref{hisinjective}).

Although Ringel, Schr\"{o}er and other authors have considered only the hammocks $H_l(v)$, where $v$ is a vertex of the quiver, for applications in \cite{SK} we need to study slightly more general hammocks, $H_l(\xx_0)$, consisting of strings having $\xx_0$ as a left substring, where $\xx_0$ is an arbitrary string. On this note, recall that a (weak) (arch) bridge is an arrow between bands. However to study $H_l(\xx_0)$ we need to extend the (weak) (arch) bridge quiver by including arrows from the string $\xx_0$ to other length $0$ strings or bands as well as from a band to other length $0$ strings--the extension is accordingly called the \emph{extended (weak) (arch) bridge quiver}. All the results proved for (weak) (arch) bridges and H-compositions are carefully generalized to the extended setting. In fact, we show that essentially all H-reduced strings in $H_l(\xx_0)$ are arrows in the extended weak arch bridge quiver (Proposition \ref{extHreducedweakarchbr}).

One final contribution of the paper is the set of (counter)examples produced in this paper for various reasons. We believe that these algebras play an important role in the study of domestic string algebras beyond their use in \cite{SK}.

Due to the complex nature of the set of strings there is a lot of new terminology and notations in the paper. The paper is arranged as follows.

Some basic notation about hammocks and skeletal strings is introduced in \S\ref{secfund}. After recalling the definition of the (weak) bridge quiver of a domestic string algebra from \cite{GKS} in \S\ref{extbridge}, we introduce a na\"{i}ve notion of partial composition, $\circ$, of weak bridges and give examples to show its lack of desirable properties like associativity and unique factorization. The set of weak bridges is then partitioned into two classes (Definition \ref{nabndef}), which we call normal and abnormal, and study the latter class in \S\ref{abnarrows}, where we show that this class enjoys the above mentioned desirable properties (Corollary \ref{abnwbcancel} and Remark \ref{associativenaafact}). At the end of this section (Remark \ref{casebycaseanalysis}), we describe the structure of the composition of normal and abnormal arrows with examples.

The H-composition of weak bridges is introduced in \S\ref{secweakarchbrquiv} to describe the weak arch bridge quiver. In that section we also introduce the notion of an H-reduced string, and show that there are only finitely many of them in a domestic string algebra (Corollary \ref{hereditaryHstringfinitude}). The next section, \S\ref{secarchbrquiv}, forms the heart of the paper where we show that the H-composition of weak arch bridges enjoys desirable properties.

The latter part of \S\ref{extbridge} deals with the extended (weak) bridge quiver while \S\ref{secextarchbrquiv} extends the results of \S\ref{secarchbrquiv} to the extended (weak) arch bridge quiver. In the latter section the definitions are carefully laid down and the proofs are omitted.

Finally in \S\ref{newextsemibrquiv} we describe a subquiver of the extended arch bridge quiver, namely the extended semi-bridge quiver, that contains the extended bridge quiver and has a succinct definition. Example \ref{ex:archnotsemi} shows that there are arch bridges that are not semi-bridges.

\subsection*{Acknowledgements}
The first author thanks the \emph{Council of Scientific and Industrial Research (CSIR)} India - Research Grant No. 09/092(0951)/2016-EMR-I for the financial support. Both authors thank Bhargav Kale and Vinit Sinha for corrections in the earlier draft of the manuscript.

\subsection*{Data Availability Statement}
Data sharing not applicable to this article as no datasets were generated or analysed during the current study.

\section{Fundamentals of string algebras}\label{secfund}
Fix an algebraically closed field $\mathcal K$. In this paper we study domestic string algebras presented as $\Lambda:=\mathcal K\Q/\langle\rho\rangle$--here $\Q = (Q_0, Q_1, s, t)$ denotes a quiver, where $Q_0$ is the set of vertices, $Q_1$ is the set of arrows and $s,t:Q_1\to Q_0$ denote the source and target functions, and $\rho$ denotes a set of monomial relations. We assume that no two paths in $\rho$ are comparable, and set $\rho^{-1}:=\{\xx^{-1}\mid\xx\in\rho\}$. The term `string algebra' will always mean a quiver with relations together with a choice of $\sigma$ and $\varepsilon$ maps. The reader should refer to \cite[\S 2.1]{GKS} for the definitions of string algebra, (reduced) word, finite string, left/right $\mathbb N$- string and bands, as well as for the notations and conventions.

Recall that a string algebra is \emph{domestic} if it has only finitely many bands.

For a string $\xx$ with positive length, associate to it its \emph{sign}, denoted $\theta(\xx)$, by $\theta(\xx):=1$ (resp. $\theta(\xx):=-1$) if the first syllable of $\xx$ is an inverse (resp. direct) syllable. For a string $\uu$ with $|\uu|>0$ we set $\delta(\uu):=\begin{cases}\theta(\uu)&\mbox{if }\theta(\alpha_1)=\theta(\alpha_2)\mbox{ for all } \alpha_1,\alpha_2\in\uu;\\0&\mbox{otherwise}\end{cases}.$

Let $H_l(\xx_0)$ denote the left hammock set of the string $\xx_0$, i.e., the set of all strings of the form $\yy\xx_0$ where $\yy$ is any string, possibly of length $0$. The set $H_l(\xx_0)$ can be equipped with a total order $<_l$ described by Schr\"{o}er in \cite[\S~2]{SchroerThesis}. The description the left hammock set as a bounded discrete linear order as given by Lemma in \cite[\S2.5]{SchroerThesis} is also true for the total order $(H_l(\xx_0),<_l)$. An appropriate modification of \cite[Lemma~3.3.4]{GKS} shows that each string in $H_l(\xx_0)$ is ``generated by'', in a precise sense, by a not-necessarily-unique path in the extended bridge quiver $\Q^{\mathrm{Ba}}(\xx_0)$.

For $i\in\{+,-\}$, let $H_l^i(\xx_0):=\{\xx_0\}\cup\{\yy\xx_0\in H_l(\xx_0)\mid\theta(\yy)=i\}$. Then $$H_l(\xx_0)=H_l^+(\xx_0)\cup H_l^-(\xx_0),\quad\{\xx_0\}=H_l^+(\xx_0)\cap H_l^-(\xx_0).$$

For any string $\yy$ with $|\yy|>0$, let $\zz_l(\yy)$ (resp. $\zz_r(\yy)$) denote the longest left (resp. right) substring of $\yy$ such that $\delta(\zz_l(\yy))\neq0$ (resp. $\delta(\zz_r(\yy))\neq0$). If $|\yy|=0$, define $\zz_l(\yy):=1_{(s(\yy),-\sigma(\yy))}$ and $\zz_r(\yy):=1_{(t(\yy),-\epsilon(\yy))}$.

For a string $\xx_0$, $\yy\in H_l(\xx_0)$ and a band $\bb$, let $N(\xx_0;\bb,\yy)$ denote the maximum $n$ such that $\yy=\yy_2{\bb'}^n\yy_1\xx_0$ for some strings $\yy_1,\yy_2$ and a cyclic permutation $\bb'$ of $\bb$. Let $\B(\xx_0;\yy):=\{\bb\mid\bb\mbox{ is a band},\ N(\xx_0;\bb,\yy)>0\}$. Also define $N(\bb,\yy):=N(1_{(s(\yy),-\sigma(\yy))};\bb,\yy)$ and $\B(\yy):=\B(1_{(s(\yy),-\sigma(\yy))};\yy)$.

\begin{definition}
Say that a string $\yy$ is \emph{skeletal} if $N(\bb,\yy)\leq1$ for each band $\bb$. 
\end{definition}

The following follows easily from \cite[Propositions~3.1.7,3.4.2, Lemma~3.3.4]{GKS}.
\begin{proposition}\label{skeletalfinitude}
In a domestic string algebra there are only finitely many skeletal strings.
\end{proposition}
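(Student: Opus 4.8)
The plan is to control any skeletal string by two finite pieces of data: the sequence of bands and of connecting band-free strings through which it passes, and the exponents with which those bands occur; skeletality will pin the exponents down to $1$, and domesticity will make the rest of the data range over a finite set. First I would collect the finiteness inputs. By domesticity there are only finitely many bands, and by \cite[Proposition~3.1.7]{GKS} there are only finitely many band-free strings, hence in particular only finitely many bridges. Then \cite[Lemma~3.3.4]{GKS} attaches to an arbitrary string $\yy$ a path in the (extended) bridge quiver that ``generates'' it; unwinding this, $\yy$ admits a factorization $\yy=\ww_m\bb_m^{n_m}\ww_{m-1}\bb_{m-1}^{n_{m-1}}\cdots\bb_1^{n_1}\ww_0$ as a reduced word, where each $\bb_i$ is a cyclic permutation of a band, each $\ww_i$ is a band-free string (the intermediate $\ww_i$'s being the bridges, $\ww_0$ and $\ww_m$ band-free end pieces), and the $\bb_i$'s and $\ww_i$'s are exactly the vertices and arrows of the associated path. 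By \cite[Proposition~3.4.2]{GKS}, for a domestic string algebra this bridge quiver is finite with a uniform bound $n$ on the length of its paths, so $m\le n+1$ and the tuple $(\bb_1,\dots,\bb_m;\ww_0,\dots,\ww_m)$ ranges over a finite set; the only a priori unbounded datum is the exponent vector $(n_1,\dots,n_m)$.

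Now assume $\yy$ is skeletal. If $n_i\ge 2$ for some $i$, then $\bb_i^{n_i}$, and in particular $\bb_i^{2}$, occurs literally as a substring of $\yy$; since $\bb_i$ is a cyclic permutation of its underlying band $\bb$, this gives $N(\bb,\yy)\ge n_i\ge 2$, contradicting $N(\bb,\yy)\le 1$. Hence every $n_i$ equals $1$, so a skeletal string $\yy$ is completely determined by the finite datum $(\bb_1,\dots,\bb_m;\ww_0,\dots,\ww_m)$. As this datum ranges over a finite set, there are only finitely many skeletal strings.

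The only points that need care, and where the actual content of \cite{GKS} is used, are extracting from \cite[Lemma~3.3.4]{GKS} the alternating band-free/band-power factorization in precisely the form above, and invoking \cite[Proposition~3.4.2]{GKS} to bound the number of blocks; the implication $n_i\ge 2\Rightarrow N(\bb,\yy)\ge 2$ is immediate once one knows $\bb_i^{n_i}$ appears as a substring. I expect quoting the decomposition in exactly the right shape to be the main (though routine) obstacle; once that is in hand, the statement reduces to a finite count.
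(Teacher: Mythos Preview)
Your argument is correct and follows exactly the route the paper indicates: it invokes precisely the same three inputs \cite[Propositions~3.1.7,~3.4.2, Lemma~3.3.4]{GKS} and spells out the implicit step that skeletality forces each band exponent in the bridge-quiver factorization to equal~$1$. The paper merely asserts that the result ``follows easily'' from these citations, so your proposal is a faithful expansion of the intended proof.
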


For a band $\bb$, say that a string $\xx$ is a \emph{$1$-step $\bb$-reduction} of a string $\yy$ if there is a partition $\yy=\yy_2\bb'\yy_1$, where $\bb'$ is a cyclic permutation of $\bb$, such that $\xx=\yy_2\yy_1$. Moreover say that a string $\xx$ is a \emph{reduction} (resp. \emph{$\bb$-reduction}) of $\yy$ if there is a finite sequence $\yy=\yy_0,\yy_1,\hdots,\yy_k=\xx$ of strings such that, for each $0<j\leq k$, $\yy_j$ is a $1$-step reduction (resp. $1$-step $\bb$-reduction) of $\yy_{j-1}$. When $\xx$ is a reduction of $\yy$ we say that $\yy$ is an \emph{extension} of $\xx$.
\begin{rmk}
Any reduction of a cyclic string is again cyclic.
\end{rmk}

If $\yy$ is a string, $\bb\in\B(\yy)$, and a $1$-step $\bb$-reduction of $\yy$ exists then we denote it by $\Red\bb(\yy)$. The \emph{skeleton} of $\yy$ is the string $\sk\yy:=\prod_{\bb\in\B(\yy)}\Red\bb^{N(\bb,\yy)-1}(\yy)$.

In the paper $\sqcup$ will denote disjoint union. For any definition, terminology or notation not explained here the reader should refer to \cite{GKS}.

\section{The extended weak bridge quiver}\label{extbridge}
The theory of (prime) bridge quivers for all string algebras has already been developed in detail in \cite[\S 3]{GKS} for (not necessarily domestic) string algebras, and is motivated by the concepts with similar names from \cite[\S 4.5,4.7,4.9]{SchroerThesis}. Below we recall some notations and definitions in the context of domestic string algebras.

Recall that $\Ba\Lambda$ is the collection of bands up to cyclic permutation and $Q_0^{\mathrm{Ba}}$ is a fixed set of representatives of elements in $\Ba\Lambda$. For $\bb_1,\bb_2\in Q_0^{\mathrm{Ba}}$, say that a finite string $\uu$ is a \emph{weak bridge} from $\bb_1\to\bb_2$ if it is band-free and the word $\bb_2\uu\bb_1$ is a string. Further say that a weak bridge $\bb_1\xrightarrow{\uu}\bb_2$ is a bridge if there is no band $\bb$ and weak bridges $\bb_1\xrightarrow{\uu_1}\bb$ and $\bb\xrightarrow{\uu_2}\bb_2$ such that one of the following holds.
\begin{itemize}
    \item $\uu=\uu_2\uu_1,|\uu_1|>0,|\uu_2|>0$.
    \item $\uu=\uu'_2\uu'_1,|\uu'_1|>0,|\uu'_2|>0,\uu_2=\uu'_2\uu''_2,\uu_1=\uu''_1\uu'_1$ and $\bb=\uu''_1\uu''_2$.
\end{itemize}

There is a trivial bridge $\bb\xrightarrow{1_{(t(\bb),\varepsilon(\bb))}}\bb$ for each band $\bb$ but these are not interesting, so the word (weak) bridge will be used only to refer to non-trivial (weak) bridge unless otherwise specified.
\begin{rmk}\label{bridgehasposlen}
If $\bb_1\xrightarrow{\uu}\bb_2$ is a bridge between bands in a domestic string algebra then $|\uu|\neq0$, for otherwise it is readily verified that both $\bb_1\bb_2$ and $\bb_2\bb_1$ are well-defined strings implying that $\Lambda$ is non-domestic.
\end{rmk}

By $Q_1^{\mathrm{Ba}}$ (resp. $\overline{Q}_1^{\mathrm{Ba}}$) we denote the set of all bridges (resp. weak bridges) between bands in $Q_0^{\mathrm{Ba}}$; this together with $Q_0^{\mathrm{Ba}}$ constitutes a quiver $\Q^{\mathrm{Ba}}=(Q_0^{\mathrm{Ba}},Q_1^{\mathrm{Ba}})$ (resp. $\overline{\Q}^{\mathrm{Ba}}=(Q_0^{\mathrm{Ba}},\overline{Q}_1^{\mathrm{Ba}})$) known as the \emph{bridge quiver} (resp. \emph{weak bridge quiver}). The bridge quiver of a domestic string algebra is finite and acyclic (cf. \cite[Theorem~3.1.6, Remark~3.2.4, Proposition~3.4.2]{GKS}, \cite[\S 4.4]{SchroerThesis}), and using similar arguments we can also show this result for the weak bridge quiver. The (weak) bridge quiver is equipped with the source and target functions $\sbq$ and $\tbq$ respectively.

The set $\overline{Q}_1^{\mathrm{Ba}}$ can be equipped with a partial binary operation $\circ$. Let $\uu,\uu'$ be weak bridges such that $\tbq(\uu)=\sbq(\uu')$. If $\sbq(\uu)\xrightarrow{\uu'\uu}\tbq(\uu')$ is a weak bridge then define $\uu'\circ\uu:=\uu'\uu$. On the other hand, if $N(\tbq(\uu),\uu'\uu)=1$ but $\sbq(\uu)\xrightarrow{\Red{\tbq(\uu)}(\uu'\uu)}\tbq(\uu')$ is a weak bridge then define $\uu'\circ\uu:=\Red{\tbq(\uu)}(\uu'\uu)$. The composition $\uu'\circ\uu$ does not exist if one of the following is true.
\begin{itemize}
    \item $\tbq(\uu)\neq\sbq(\uu')$;
    \item $\tbq(\uu)=\sbq(\uu')$, $N(\tbq(\uu),\uu'\uu)=1$ but $\Red{\tbq(\uu)}(\uu'\uu)$ does not exist;
    \item $\tbq(\uu)=\sbq(\uu')$, $N(\tbq(\uu),\uu'\uu)=1$ and $\Red{\tbq(\uu)}(\uu'\uu)$ exists but is not band-free.
\end{itemize}

\begin{rmk}\label{bridgesareirreducible}
A weak bridge $\uu$ is a bridge if and only if it cannot be written as $\uu=\uu_2\circ\uu_1$ for any non-trivial weak bridges $\uu_1,\uu_2$. In other words, bridges are precisely the $\circ$-irreducibles amongst $\overline{Q}_1^{\mathrm{Ba}}$.
\end{rmk}

The composition $\circ$ is not well-behaved as the following two examples demonstrate.
\begin{example}\label{multiplefactorizationweakbridge}
Consider the algebra $\Lambda$ from Figure \ref{Notuniquefact}. Here the bands are $\bb_1:=aB$, $\bb_2:=eD$, $\bb_3:=hG$ and $\bb_4=jK$ along with their inverses. If $\uu_{ij}$ denotes the weak bridge $\bb_i\xrightarrow{\uu_{ij}}\bb_j$ for $1\leq i<j\leq 4,\ (i,j)\neq(2,3)$, where $\uu_{14}=jiFc$, $\uu_{12}=ec$, $\uu_{13}=Fc$, $\uu_{24}= jiFD$ and $\uu_{34}=ji$ then  $\uu_{14}=\uu_{24}\circ\uu_{12}=\uu_{34}\circ\uu_{13}$ as shown in Figure \ref{exweakbrquivLambda}. This example shows that the factorisation of a weak bridge as a composition of bridges is not necessarily unique.

\begin{figure}[h]
\begin{minipage}[b]{0.55\linewidth}
\centering
\begin{tikzcd}
                                                                &                     & v_4                                                             &                                                                 &                                                                 &     \\
v_1 \arrow[r, "b"', bend right=35] \arrow[r, "a", bend left=35] & v_2 \arrow[r, "c"'] & v_3 \arrow[u, "e"', bend right=40] \arrow[u, "d", bend left=40] & v_5 \arrow[l, "f"] \arrow[r, "i"']                              & v_7 \arrow[r, "k", bend left=35] \arrow[r, "j"', bend right=35] & v_8 \\
                                                                &                     &                                                                 & v_6 \arrow[u, "g"', bend right=40] \arrow[u, "h", bend left=40] &                                                                 &    
\end{tikzcd}
\caption{$\Lambda$ with $\rho=\{cb,dc,ef,fh,ig,ki,dfg\}$}
\label{Notuniquefact}
\end{minipage}
\hspace{0.4cm}
\begin{minipage}[b]{0.4\linewidth}
\centering
\begin{tikzcd}
aB \arrow[d, "ec"'] \arrow[r, "Fc"] \arrow[rd, "jiFc"] & hG \arrow[d, "ji"] \\
eD \arrow[r, "jiFD"']                                  & jK                
\end{tikzcd}
\caption{A part of $\overline{\Q}^{\mathrm{Ba}}$ for $\Lambda$}
\label{exweakbrquivLambda}
\end{minipage}
\end{figure}
\end{example}

\begin{example}\label{nonassociativityweakbridge}
Consider the algebra $\Lambda'$ from Figure \ref{Nonassociative}. Here the bands are $\bb_1:=cbA$, $\bb_2:=biheD$, $\bb_3:=egF$ and $\bb_4:=lK$ along with their inverses. Consider the bridges $\uu_1=bA$, $\uu_2=eD$ and $\uu_3=Jih$. It is readily verified that the compositions $\uu_2\circ\uu_1=eDbA$, $\uu_3\circ\uu_2=JiheD$ and $(\uu_3\circ\uu_2)\circ\uu_1=JA$ exist but $\uu_3\circ(\uu_2\circ\uu_1)$ does not exist as $\uu_3(\uu_2\circ\uu_1)=JiheDbA$ contains a cyclic permutation of a band other than $\sbq(\uu_3)$.

\begin{figure}[h]
\begin{minipage}[b]{0.4\linewidth}
\centering
\begin{tikzcd}
                                   & v_1                  &                                                                 \\
v_3 \arrow[ru, "c"]                &                      & v_2 \arrow[ll, "b"] \arrow[lu, "a"']                            \\
v_4 \arrow[u, "d"] \arrow[rd, "e"] & v_7 \arrow[ru, "i"'] & v_8 \arrow[u, "j"']                                             \\
v_6 \arrow[u, "g"] \arrow[r, "f"]  & v_5 \arrow[u, "h"']  & v_9 \arrow[u, "k", bend left=49] \arrow[u, "l"', bend right=49]
\end{tikzcd}
\caption{$\Lambda'$ with $\rho=\{ai,bj,cd,jl,dg,hf,cbi,biheg\}$}
\label{Nonassociative}
\end{minipage}
\hspace{0.5cm}
\begin{minipage}[b]{0.5\linewidth}
\centering
\begin{tikzcd}
    &                                                                 & v_6                                 &                                      &                                                                 \\
    & v_5 \arrow[ru, "e"]                                             &                                     & v_4 \arrow[lu, "f"'] \arrow[ll, "d"] &                                                                 \\
    & v_7 \arrow[u, "g"] \arrow[rd, "h"']                             &                                     & v_3 \arrow[u, "c"']                  &                                                                 \\
v_0 & v_1 \arrow[l, "m", bend left=49] \arrow[l, "l"', bend right=49] & v_2 \arrow[ru, "b"'] \arrow[l, "a"] & v_8 \arrow[u, "i"]                   & v_9 \arrow[l, "k", bend left=49] \arrow[l, "j"', bend right=49]
\end{tikzcd}
\caption{$\Lambda''$ with $\rho=\{ma,ah,ci,ik,fc,eg,edcbh\}$}
\label{C-opposite}
\end{minipage}
\end{figure}
On the other hand the algebra $\Lambda''$ from Figure \ref{C-opposite} has bands $\bb'_1:=mL$, $\bb'_2:=edF$, $\bb'_3:=dcbhG$ and $\bb'_4:=kJ$ along with their inverses. Consider the bridges $\uu'_1=edcbAL$, $\uu'_2=dF$ and $\uu'_3=IbhG$. It is readily verified that the composition $\uu'_3\circ(\uu'_2\circ\uu'_1)$ exists but $(\uu'_3\circ\uu'_2)\circ\uu'_1$ does not exist.

These examples show that parentheses play a key role in the description of iterated compositions.
\end{example}

Say distinct strings $\xx_1,\xx_2$ with $s(\xx_1)=s(\xx_2)$ and $\sigma(\xx_1)=\sigma(\xx_2)$ \emph{fork} if the maximal common left substring of $\xx_1$ and $\xx_2$ is a proper left substring of both $\xx_1$ and $\xx_2$. We also say that a string $\xx$ \emph{forks} if there are distinct syllables $\alpha,\beta$ such that $\alpha\xx$ and $\beta\xx$ are strings.

Given a string $\xx_0$ of arbitrary finite length, a string $1_{(v,j)}$ of $0$ length, and $\bb\in Q^{\mathrm{Ba}}_0$, the notions of a (weak) half bridge $\xx_0\xrightarrow{\uu}\bb$, a (weak) reverse half bridge $\bb\xrightarrow{\uu}\xx_0$, and a (weak) zero bridge $\xx_0\to1_{(v,j)}$ are defined in a similar way as (weak) bridge; we refer the reader to \cite[\S~3.2]{GKS} where the details are written only for $\xx_0=1_{(v',j')}$. In contrast to Remark \ref{bridgehasposlen}, a (weak) half bridge/reverse half bridge/zero bridge could be of length $0$. For a weak half bridge $\xx_0\xrightarrow{\uu}\bb$ we set $\sbq(\uu):=\xx_0$ and $\tbq(\uu):=\bb$, for a weak zero bridge $\xx_0\xrightarrow{\uu'}1_{(v,j)}$ we set $\sbq(\uu'):=\xx_0$ and $\tbq(\uu'):=1_{(v,j)}(=1_{(t(\uu'),\epsilon(\uu'))})$, and similarly for a weak reverse half bridge.

Say a weak reverse half bridge $\bb\xrightarrow{\uu} 1_{(v,j)}$ is \emph{torsion} if $\uu\bb$ and $\bb^2$ fork and, for any weak bridge $\bb\xrightarrow{\uu'}\bb'$, the strings $\uu\bb$ and $\bb'\uu'\bb$ fork. Say a weak zero bridge $\xx_0\xrightarrow{\uu} 1_{(v,j)}$ is \emph{torsion} if $|\uu|>0$ and for any weak half bridge $\xx_0\xrightarrow{\uu'}\bb'$ the strings $\uu\xx_0$ and $\bb'\uu'\xx_0$ fork. Say that a weak reverse half bridge (resp. weak zero bridge) is \emph{maximal} if it is not a proper left substring of any other weak reverse half bridge (resp. weak zero bridge).

Say a syllable $\vv$ is an \emph{exit syllable} of band $\bb$ if $\vv\notin\bb$ and there is a cyclic permutation $\bb'$ of $\bb$ such that $\vv\bb'$ is defined. Denote the set of all exit syllables of $\bb$ by $\mathcal E(\bb)$.

\begin{rmk}\label{exitorder}
Suppose $\bb$ is a band and $\vv_1,\vv_2\in\mathcal E(\bb)$. If for some cyclic permutation $\bb'$ of $\bb$ the concatenations $\vv_1\bb'$ and $\vv_2\bb'$ exist then $\vv_1=\vv_2$.
\end{rmk}

The \emph{exit} of a (weak) bridge $\bb_1\xrightarrow{\uu}\bb_2$, denoted $\beta(\uu)$, between bands in a domestic string algebra is the syllable $\beta_k$ where $k$ is minimal in $\bb_2\uu\bb_1=\beta_n\hdots\beta_1\bb_1$  such that $\beta_k\hdots\beta_1\bb_1$ is not a substring of a power of $\bb_1$. Dually the \emph{entry} of $\uu$, denoted $\alpha(\uu)$, is $(\beta(\uu^{-1}))^{-1}$, where $\uu^{-1}$ is the bridge $\bb_2^{-1}\to\bb_1^{-1}$. The notions of the exit for a (weak) half bridge and that of the entry for a (weak) reverse half bridge are defined analogously, if such syllables exist. We have ensured that weak torsion reverse half bridges and weak torsion zero bridges have exits.

We give notations to some specific cyclic permutations of bands. For a weak bridge/weak half bridge $\uu$ such that $\alpha(\uu)$ exists denote by $\bua{\uu}$ the cyclic permutation of $\tbq(\uu)$ such that $\bua{\uu}\alpha(\uu)$ is a substring of $\tbq(\uu)^2\uu$. A weak half bridge $\uu$ for which $\alpha(\uu)$ does not exist is a left substring of a cyclic permutation of $\tbq(\uu)$; denote this permutation by $\bua{\uu}$. Dually for a weak bridge/weak reverse half bridge $\uu$ such that $\beta(\uu)$ exists denote by $\bub{\uu}$ the cyclic permutation of $\sbq(\uu)$ such that $\beta(\uu)\bub{\uu}$ is a string. A weak reverse half bridge $\uu$ for which $\beta(\uu)$ does not exist is a right substring of a cyclic permutation of $\sbq(\uu)$; denote this permutation by $\bub{\uu}$.

\begin{rmk}\label{exitisexitforarrow}
For each $\vv\in\mathcal E(\bb)$ either there is a weak bridge or a torsion weak reverse half bridge $\uu$ such that $\beta(\uu)=\vv$.
\end{rmk}

We classify weak bridges into two classes.
\begin{definition}\label{nabndef}
Say a weak bridge $\uu$ is \emph{normal} if there is a positive length substring of $\tbq(\uu)\uu\sbq(\uu)$ whose first syllable is $\beta(\uu)$ and last syllable is $\alpha(\uu)$. Otherwise say that $\uu$ is \emph{abnormal}.

If $\uu$ is normal, call the above mentioned substring its \emph{interior}, denoted $\uu^o$. On the other hand, if $\uu$ is abnormal then define its \emph{complimented interior}, denoted $\uu^c$, to be the substring (possibly of length $0$) of $\tbq(\uu)\uu\sbq(\uu)$ such that $\beta(\uu)\uu^c\alpha(\uu)$ is a string.

The interior of a weak reverse half bridge $\uu$ is defined to be the string satisfying $\uu\sbq(\uu)^\infty=\uu^o\bub{\uu}^\infty$.

For a band $\bb$ say that $\vv\in\mathcal E(\bb)$ is \emph{abnormal} if there is an abnormal weak bridge $\uu$ with $\sbq(\uu)=\bb$ and $\beta(\uu)=\vv$. Otherwise say that $\vv$ is normal.
\end{definition}
Note that for any normal or abnormal $\uu$, the set $\{s(\beta(\uu)),t(\alpha(\uu))\}$ is the set of the source and the target of its interior/complimented interior.

Abnormal arrows are a major source of the complexity of combinatorics of the strings. \S\ref{abnarrows} is devoted entirely to the study of these arrows.

Given a weak bridge $\uu$, set $\bar\lambda(\uu):=\mathcal E(\tbq(\uu))$. We partition this set by setting $\bar\lambda_i(\uu):=\{\vv\in\mathcal E(\tbq(\uu))\mid\theta(\vv)=i\}$ for $i\in\{1,-1\}$.

Given an exit syllable $\vv$ of a band $\bb$ let $\bar\lambda^b(\vv)$ and $\bar\lambda^r(\vv)$ denote the set of all weak bridges with exit $\vv$ and the set of all maximal torsion weak reverse half bridges with exit syllable $\vv$ respectively. Finally set $\bar\lambda(\vv):=\bar\lambda^b(\vv)\sqcup\bar\lambda^r(\vv)$.

For a string $\xx_0$ let $\bar\lambda^h(\xx_0)$ denote the set of all weak half bridges from $\xx_0$. Also let $\bar\lambda^z(\xx_0)$ denote the set of all maximal torsion weak zero bridges from $\xx_0$. We have a natural partition $\bar\lambda^h(\xx_0)=\bar\lambda^h_1(\xx_0)\sqcup\bar\lambda^h_{-1}(\xx_0)$ where $\bar\lambda^h_i(\xx_0):=\{\uu\in\bar\lambda^h(\xx_0)\mid\theta(\uu)=i\}$, where we assume that for a weak half bridge $\uu$ with $|\uu|=0$, we have $\theta(\uu):=\theta(\bua{\uu})=1$. Similarly we have a partition $\bar\lambda^z(\xx_0)=\bar\lambda^z_1(\xx_0)\sqcup\bar\lambda^z_{-1}(\xx_0)$ where $\bar\lambda^z_i(\xx_0):=\{\uu\in\bar\lambda^z(\xx_0)\mid\theta(\uu)=i\}$. Now set $\bar\lambda_i(\xx_0):=\bar\lambda^h_i(\xx_0)\sqcup\bar\lambda^z_i(\xx_0)$ for $i\in\{1,-1\}$.

The \emph{extended weak bridge quiver} $\overline{\Q}^{\mathrm{Ba}}(\xx_0)$ has as its vertices a subset of $Q_0^{\mathrm{Ba}}\sqcup\{\xx_0\}\sqcup\{\tbq(\uu)\mid\uu\mbox{ is a maximal torsion weak zero bridge/reverse half bridge}\}$ consisting of those reachable by a path starting from $\xx_0$, and as arrows the set of all weak bridges between bands, maximal torsion weak zero bridges, weak half bridges from $\xx_0$ and maximal torsion weak reverse bridges. The definition of the composition operation $\circ$ can be naturally extended to all arrows of the extended bridge quiver. Let $\overline{\Q}^{\mathrm{Ba}}_i(\xx_0)$ denote the full subquiver of $\overline{\Q}^{\mathrm{Ba}}(\xx_0)$ consisting of only those vertices which are reachable from $\xx_0$ via a path starting with some element of $\bar\lambda^h_i(\xx_0)$. The extended bridge quiver $\Q^{\mathrm{Ba}}(\xx_0)$ is the subquiver of $\overline\Q^{\mathrm{Ba}}(\xx_0)$ consisting of only bridges, (reverse) half bridges and zero bridges. The extended bridge quiver $\Q^{\mathrm{Ba}}_i(\xx_0)$ is the subquiver of $\overline{\Q}^{\mathrm{Ba}}_i(\xx_0)$ consisting of only those arrows which cannot be written as $\circ$-composition of any two arrows of $\overline{\Q}^{\mathrm{Ba}}_i(\xx_0)$.



Suppose $\Pp=(\uu_1,\uu_2\hdots\uu_n)$ and $\Pp'=(\uu'_1,\uu'_2\hdots\uu'_m)$ are paths in $\overline\Q^{\mathrm{Ba}}$ (or in $\overline\Q^{\mathrm{Ba}}(\xx_0)$) such that $\sbq(\Pp')=\tbq(\Pp)$. Then $\Pp+\Pp'$ is the concatenated path $(\uu_1,\uu_2\hdots\uu_n,\uu'_1,\uu'_2\hdots\uu'_m)$.

\begin{example}\label{parabnedgexample}
The extended weak bridge quiver of the algebra $X_1$ from Figure \ref{X1} is shown in Figure \ref{paraabnbrgeX1}. This example described by G. Puninski in \cite{Pun} consists of only two bands, namely $\bb:= acAB$ and $\bb^{-1}= baCA$, and two parallel abnormal bridges $\bb^{-1}\to\bb$, namely $\uu_1:=acA$ and $\uu_2:=aCA$. Here $\alpha(\uu_1)=b,\alpha(\uu_2)=C,\beta(\uu_1)=c$ and $\beta(\uu_2)=B$. There is one half bridge $1_{(v_2,1)}\xrightarrow{\uu_0}\bb^{-1}$ given by $\uu_0=ba$ and one weak half bridge $1_{(v_2,1)}\xrightarrow{a}\bb$ that factors as $\uu_2\circ\uu_0$.
\begin{figure}[h]
\begin{minipage}[b]{0.45\linewidth}
\centering
\begin{tikzcd}
v_1 \arrow["b"', loop, distance=2em, in=215, out=145] & v_2 \arrow[l, "a"] \arrow["c", loop, distance=2em, in=325, out=35]
\end{tikzcd}
\caption{$X_1$ with $\rho= \{b^2, c^2, bac\}$}
\label{X1}
\end{minipage}
\hspace{0.5cm}
\begin{minipage}[b]{0.5\linewidth}
\centering
\begin{tikzpicture}
\coordinate[label=above:$1_{(v_2,1)}$] (1_{(v_2,1)}) at (1.75,1.78);
\coordinate[label=below:$acAB$] (1_{(v_2,1)}) at (0,.4);
\coordinate[label=below:$baCA$] (1_{(v_2,1)}) at (3.3,.4);
\draw (1.5,1.82)[thick,->] -- (0,.3);
\coordinate[label=left:$a$] (a) at (.9,1.3);
\draw (1.8,1.82)[thick,->] -- (3.3,.3);
\coordinate[label=left:$ba$] (ba) at (3.1,1.3);
\draw (2.7,.2)[thick,->] -- (.7,.2);
\coordinate[label=above:$acA$] (acA) at (1.7,.2);
\draw (2.7,-.1)[thick,->] -- (.7,-.1);
\coordinate[label=below:$aCA$] (aCA) at (1.7,-.1);
\end{tikzpicture}    
\caption{$\overline{\Q}^{\mathrm{Ba}}(1_{(v_2,1)})$ for $X_1$}
    \label{paraabnbrgeX1}
    \end{minipage}
\end{figure}
\end{example}

\section{Abnormal arrows}\label{abnarrows}
If $\uu$ is an abnormal weak bridge then $\uu^c$ is a left substring of $\bua{\uu}$ and of a cyclic permutation, say $\bla{\uu}$, of $\sbq(\uu)$. Dually $\uu^c$ is a right substring of $\bub{\uu}$ and of a cyclic permutation, say $\blb{\uu}$, of $\tbq(\uu)$. If a weak bridge $\uu$ is abnormal then define $\uu^e$ to be the shortest left substring of $\bla{\uu}$ containing $\uu^c$ as a proper left substring such that $\uu^e\bua{\uu}$ is not a string. Note that $\uu^c$ could be of length $0$ but $|\uu^e|>0$.

We wish to understand the properties of maximal common substrings, possibly of length $0$, of two distinct bands, say $\bb_1,\bb_2$. Such strings are necessarily of the form $\uu^c$ for some weak bridge $\bb_1\xrightarrow{\uu}\bb_2$, for which $\bua{\uu}\bla{\uu}$ and $\blb{\uu}\bub{\uu}$ are strings.

\begin{example}
For the abnormal bridge $\uu_1$ in the bridge quiver of algebra $X_1$ from Example \ref{parabnedgexample}, we have $\uu_1^c=A$, $\bua{\uu_1}=BacA$, $\bla{\uu_1}=baCA$, $\blb{\uu_1}=ABac$ and $\bub{\uu_1}=AbaC$.
\end{example}

\begin{proposition}\label{constantsign}
If $\uu$ is an abnormal weak bridge then  $\delta(\uu^e)=-\theta(\beta(\uu))$.
\end{proposition}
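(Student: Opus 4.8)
The statement $\delta(\uu^e) = -\theta(\beta(\uu))$ says two things at once: first, that $\uu^e$ has constant sign (i.e.\ $\delta(\uu^e) \neq 0$), and second, that this common sign is the opposite of the sign of the exit syllable $\beta(\uu)$. I would prove these in this order. The key structural fact to exploit is the definition of $\uu^e$: it is the shortest left substring of $\bla{\uu}$ strictly containing the complimented interior $\uu^c$ as a proper left substring with the property that $\uu^e \bua{\uu}$ fails to be a string. Since $\bla{\uu}$ and $\bua{\uu}$ are cyclic permutations of $\sbq(\uu)$ and $\tbq(\uu)$ respectively, and since $\bua{\uu}\bla{\uu}$ is a string (as noted in the paragraph preceding the proposition), the letter of $\uu^e$ sitting immediately before $\bua{\uu}$ is precisely where the ``branching'' between the two bands occurs.

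\textbf{Step 1: locate the last syllable of $\uu^e$.} Write $\uu^e = \gamma \uu'$ where $\gamma$ is the last syllable of $\uu^e$ (reading right-to-left in the convention of the paper, so $\gamma$ is the syllable adjacent to $\bua{\uu}$ when we form $\uu^e\bua{\uu}$) and $\uu'$ is the part that, by minimality of $\uu^e$, still satisfies $\uu'\bua{\uu}$ is a string. I would argue that $\uu'\bua{\uu}$ is in fact a left substring of a power of $\bla{\uu}$ — this is essentially the content of $\uu^c$ being a common substring of the two bands and $\bua{\uu}\bla{\uu}$ being a string — so that adjoining $\gamma$ is exactly the first moment we leave the band $\bla{\uu} = \sbq(\uu)$. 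Thus $\gamma$ plays the role of an exit-type syllable, and comparing with the definition of the exit $\beta(\uu)$ of the bridge $\uu$ — which is the minimal syllable in $\tbq(\uu)\uu\sbq(\uu) = \beta_n\cdots\beta_1\sbq(\uu)$ making $\beta_k\cdots\beta_1\sbq(\uu)$ not a substring of a power of $\sbq(\uu)$ — I expect to identify $\gamma$ with $\beta(\uu)$, or at least to show $\theta(\gamma) = \theta(\beta(\uu))$.

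\textbf{Step 2: constant sign and the sign flip.} Once $\uu^e = \gamma\uu'$ with $\uu'\bua{\uu}$ a substring of a power of the band $\sbq(\uu) = \bla{\uu}$, I would use the string-algebra axioms (the $\sigma,\varepsilon$ conditions: at each vertex the direct syllables that can follow are determined, and direct syllables cannot be followed by direct syllables beyond the permitted ones, etc.) to force the sign of $\uu'$. The point is that $\uu' \bua{\uu}$ being inside a band power means $\uu'$ starts (on its $\bua{\uu}$-adjacent end) in a way compatible with the band, and the reason $\gamma\uu'\bua{\uu}$ is \emph{not} a string while $\gamma\uu'$ \emph{is} a string (it sits inside $\bla{\uu}$) must be a violation caused at the junction — a failure of a relation in $\rho$ or of the $\sigma/\varepsilon$ compatibility — and this kind of failure happens precisely when the two syllables on either side of the break have the same orientation type forcing a forbidden path, or equivalently pins down $\theta(\uu')$ against $\theta(\gamma)$. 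Carrying this through, $\delta(\uu^e) \neq 0$ and $\delta(\uu^e) = \theta(\gamma) = \theta(\beta(\uu))$; but wait — I need $-\theta(\beta(\uu))$, so the orientation bookkeeping must be done carefully: $\delta$ records the sign of the \emph{first} syllable of $\uu^e$ (which is $\gamma$'s opposite end), and reading $\uu^e$ as a constant-sign string from the $\bla{\uu}$ side, its first syllable points opposite to how $\beta(\uu)$ exits. I would make this precise by tracking whether $\beta(\uu)$ is direct or inverse and drawing the local picture at $s(\beta(\uu)) = $ the source vertex of the interior.

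\textbf{Main obstacle.} The hard part will be Step 2 — rigorously extracting the sign of $\uu^e$ from ``$\uu^e\bua{\uu}$ is not a string but $\uu^e$ is a left substring of $\bla{\uu}$.'' This requires a careful case analysis of \emph{why} a word fails to be a string (the relation set $\rho$ vs.\ the $\sigma/\varepsilon$ maps vs.\ a letter mismatch) combined with the fact that both $\bla{\uu}$ and $\bua{\uu}$ are bands, hence cyclically reduced and relation-avoiding. I anticipate invoking Remark \ref{exitorder} (uniqueness of the exit syllable extending a given cyclic permutation) to pin down that the break at $\gamma$ is forced by orientation and not by accident, and possibly the hypothesis of domesticity via Remark \ref{bridgehasposlen} to exclude degenerate configurations. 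The sign-convention bookkeeping (the minus sign in $-\theta(\beta(\uu))$) is where I expect to have to be most careful, cross-checking against the explicit computation in Example \ref{parabnedgexample}/the following example, where $\uu_1^c = A$, $\bua{\uu_1} = BacA$, $\bla{\uu_1} = baCA$, so $\uu_1^e = baCA$ (the shortest left substring of $\bla{\uu_1} = baCA$ properly containing $A = \uu_1^c$ with $\uu_1^e\bua{\uu_1}$ not a string); then $\delta(baCA)$ — all of $b,a,C,A$ are inverse syllables? one checks $\beta(\uu_1) = c$ is direct, so $-\theta(c) = -(-1) = 1$, matching $\delta(baCA) = 1$ if $b,a,C,A$ are all inverse letters.
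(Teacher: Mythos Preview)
Your proposal has genuine gaps and some geometric confusion.

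\textbf{The confusion in Step~1.} You try to identify the last syllable $\gamma$ of $\uu^e$ with the exit syllable $\beta(\uu)$, or at least to show $\theta(\gamma)=\theta(\beta(\uu))$. But $\uu^e$ is by definition a left substring of $\bla{\uu}$, which is a cyclic permutation of $\sbq(\uu)$; hence every syllable of $\uu^e$, including $\gamma$, lies in the source band $\sbq(\uu)$. By contrast $\beta(\uu)$ is precisely the first syllable \emph{not} in $\sbq(\uu)$. So $\gamma\neq\beta(\uu)$ always, and the whole identification you aim for is a category error. (Your example check confirms the confusion: you compute $\uu_1^e=baCA$, but in fact $\uu_1^e=CA$, since $CAB\in\rho^{-1}$ already kills $CA\cdot\bua{\uu_1}=CA\cdot BacA$; note $\delta(CA)=1=-\theta(c)$ as required.)

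\textbf{The missing idea.} The paper's proof is a one-liner that you do not reach: since $\bua{\uu}\bla{\uu}$ is a string (this was set up in the paragraph before the proposition) and $\Lambda$ is domestic, the word $\bla{\uu}\bua{\uu}$ is \emph{not} a string. Both $\bla{\uu}$ and $\bua{\uu}$ are bands, so the only possible obstruction is a monomial relation straddling the junction: there exist $\xx',\xx''$ of positive length with $\xx'\uu^c\xx''\in\rho\cup\rho^{-1}$, where $\xx'\uu^c$ is a left substring of $\bla{\uu}$ and $\uu^c\xx''$ sits inside $\blb{\uu}$. Because every element of $\rho$ is a path, $\delta(\xx'\uu^c\xx'')\neq 0$; minimality of $\uu^e$ then gives $\uu^e=\xx'\uu^c$, hence $\delta(\uu^e)\neq 0$ for free. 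The sign identification with $-\theta(\beta(\uu))$ then follows because $\beta(\uu)$ and the first syllable of $\xx'$ are the two distinct syllables extending $\uu^c$ (one in $\sbq(\uu)$, one not), forcing opposite $\theta$-values by the string-algebra axioms. Your Step~2 gestures at a relation but leaves open the possibility of a $\sigma/\varepsilon$ failure and never invokes domesticity to force the relation to exist in the first place; without that, neither existence of $\uu^e$ nor constancy of its sign is established.
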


\begin{proof}
Since $\Lambda$ is domestic the word $\bla{\uu}\bua{\uu}$ is not a string which could only happen if for some strings $\xx',\xx''$ with $|\xx'|,|\xx''|>0$ we have $\xx'\uu^c\xx''\in\rho\cup\rho^{-1}$, and $\xx'\uu^c,\uu^c\xx''$ are substrings of $\bla{\uu},\blb{\uu}$ respectively. Therefore $\uu^e$ is either a direct or an inverse string depending on whether $\beta(\uu)$ is inverse or direct respectively.
\end{proof}

Motivated by the above proof, for an abnormal $\uu$, let $\uu^\beta$ be the shortest left substring of $\bub{\uu}$ such that $\uu^e=\uu^\beta\uu^c$ and $\uu_\beta$ be the shortest right substring of $\bua{\uu}$ such that $\uu^\beta\uu^c\uu_\beta\in\rho\cup\rho^{-1}$. Similarly let $\uu^\alpha$ be the shortest right substring of $\bua{\uu}$ such that $\bla{\uu}\uu^\alpha$ is not a string and $\uu_\alpha$ be the shortest left substring of $\bub{\uu}$ such that $\uu_\alpha\uu^c\uu^\alpha\in\rho\cup\rho^{-1}$.
\begin{proposition}\label{component}
Let $\uu$ be an abnormal weak bridge. Suppose $\bub{\uu}=\xx_{2k}\hdots\xx_1$ where $\delta(\xx_j)=(-1)^{j+1}\theta(\beta(\uu))$. Then $\uu^c$ is a proper left substring (possibly of length $0$) of a unique $\xx_i$.
\end{proposition}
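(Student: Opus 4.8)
The plan is to combine the decomposition $\bub{\uu} = \xx_{2k}\cdots\xx_1$ with the structure of $\uu^c$ already extracted in Proposition \ref{constantsign} and its proof. Recall from the preceding discussion that $\uu^c$ is a right substring of $\bub{\uu}$, and from the proof of Proposition \ref{constantsign} that there are strings $\xx', \xx''$ of positive length with $\xx'\uu^c\xx''\in\rho\cup\rho^{-1}$, where $\xx'\uu^c$ is a right substring of $\blb{\uu}$ (a cyclic permutation of $\tbq(\uu)$) realized inside $\bub{\uu}$ — more precisely $\uu^\beta\uu^c = \uu^e$ is a left substring of $\bub{\uu}$ — so that $\uu^c$ sits inside $\bub{\uu}$ with at least one syllable, namely $\beta(\uu)$ (or rather the last syllable of $\uu^\beta$), immediately preceding it. The first step is therefore to locate $\uu^c$ as a substring of $\bub{\uu}$ and observe that, since $\delta(\xx_j) = (-1)^{j+1}\theta(\beta(\uu))$ forces consecutive $\xx_j$'s to have \emph{opposite} nonzero $\delta$-values, each $\xx_j$ is a maximal ``sign-constant'' block of $\bub{\uu}$; the factorization $\bub{\uu}=\xx_{2k}\cdots\xx_1$ is exactly the partition of $\bub{\uu}$ at the sign changes (the points where a direct syllable meets an inverse syllable or vice versa), hence is canonical.

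Next I would use Proposition \ref{constantsign}: $\delta(\uu^e) = -\theta(\beta(\uu)) \neq 0$, so $\uu^e$ is sign-constant, and since $\uu^e = \uu^\beta\uu^c$ has $\uu^c$ as a proper left substring, $\uu^c$ is itself sign-constant (or of length $0$). A sign-constant substring of $\bub{\uu}$ cannot straddle a sign change, so $\uu^c$ must lie entirely within one block $\xx_i$. It remains to pin down \emph{which} $i$ and to see it is forced to be a \emph{proper} left substring. For properness: the syllable of $\bub{\uu}$ immediately following $\uu^c$ (on the $\xx''$/$\uu^\alpha$ side) has $\delta$-value equal to $\theta(\beta(\uu)) = -\delta(\uu^e)$ — this is precisely the content of $\xx'\uu^c\xx''\in\rho\cup\rho^{-1}$ together with the sign analysis of Proposition \ref{constantsign} — so $\uu^c$ ends exactly where its containing block $\xx_i$ ends, i.e. $\uu^c$ is a left substring of $\xx_i$ but the syllable after it already belongs to $\xx_{i-1}$; hence $\uu^c$ is a proper left substring of $\xx_i$ (note $k\geq 1$ and one checks $\uu^c$ is not all of $\bub{\uu}$ since $\uu^\beta$ is nonempty, so $i<2k$ or at least the relevant block has a genuine continuation — the boundary bookkeeping is routine). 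Uniqueness of $i$ is then immediate: the blocks $\xx_j$ are pairwise non-overlapping, so a substring contained in one of them determines that one.

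The main obstacle I anticipate is the bookkeeping at the two ends of $\uu^c$: one must carefully track how $\uu^c$, which is defined as a substring of $\tbq(\uu)\uu\sbq(\uu)$ via the relation $\beta(\uu)\uu^c\alpha(\uu)$ being a string, gets identified as a concrete substring of the cyclic permutation $\bub{\uu}$ of $\tbq(\uu)$, and then to verify that the syllable of $\bub{\uu}$ immediately to the left of this copy of $\uu^c$ is indeed $\beta(\uu)$ (so that the block structure is anchored correctly) while the syllable immediately to the right has the opposite sign (so that the containing block ends exactly at the right end of $\uu^c$). This is where the relation $\xx'\uu^c\xx''\in\rho\cup\rho^{-1}$ from the proof of Proposition \ref{constantsign}, together with Remark \ref{exitorder} and the defining property of $\beta(\uu)$, does all the work; once those two one-sided sign facts are nailed down, the containment in a unique $\xx_i$ and its properness follow formally from the fact that the $\xx_j$ are the maximal sign-constant blocks.
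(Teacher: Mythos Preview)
Your sketch has a genuine gap in the step that establishes \emph{properness}, and it stems from a mis-identification of where $\uu^e$ and $\beta(\uu)$ sit relative to $\bub{\uu}$.

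First, $\uu^e=\uu^\beta\uu^c$ is a left substring of $\bla{\uu}$, \emph{not} of $\bub{\uu}$. In $\bub{\uu}$ the two pieces live at opposite ends: $\uu^c$ is a right substring of $\bub{\uu}$ (the terminal part, ending where $\beta(\uu)$ begins), while $\uu^\beta$ is a left substring of $\bub{\uu}$ (the initial part). So $\uu^e$ is not a contiguous substring of $\bub{\uu}$ at all, and your sentence ``$\uu^\beta\uu^c=\uu^e$ is a left substring of $\bub{\uu}$'' is false. Consequently the argument ``$\uu^c$ is not all of $\bub{\uu}$ since $\uu^\beta$ is nonempty, so the relevant block has a genuine continuation'' does not show that $\uu^c$ is a \emph{proper} substring of its block $\xx_i$: the continuation $\uu^\beta$ lies in the block at the far end of $\bub{\uu}$, not adjacent to $\uu^c$. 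Relatedly, $\beta(\uu)$ is the exit syllable and hence is not a syllable of $\sbq(\uu)$ at all, so it cannot be ``the syllable of $\bub{\uu}$ immediately to the left of this copy of $\uu^c$''.

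What the paper actually does for properness is a domesticity argument that you do not mention: if $\uu^c$ were a right substring of $\xx_i$ then the next block $\xx_{i+1}$ could be composed with $\blb{\uu}$, and one builds the string $\xx_i\cdots\xx_1\xx_{2k}\cdots\xx_{i+1}\blb{\uu}$, i.e.\ a full cyclic permutation of $\sbq(\uu)$ following a cyclic permutation of $\tbq(\uu)$. This yields a path $\tbq(\uu)\to\sbq(\uu)$ in the bridge quiver, contradicting domesticity. Only after ruling out ``right substring'' does the paper turn to a sign analysis (comparing $\theta(\alpha(\uu))$ with the first and last syllables of $\bua{\uu}$ and with the first syllable $\gamma$ of $\bla{\uu}$) to conclude that $\uu^c$ \emph{is} a left substring of $\xx_i$. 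Your local sign observation is in the spirit of this second half, but the first half---the domesticity step that gives properness---is the missing idea.
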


\begin{proof}
Since $\uu$ is abnormal, we have $\delta(\uu^e)=-\theta(\beta(\uu))$ from Proposition \ref{constantsign}. Therefore $\uu^c$ is a substring of some $\xx_i$.

If $\uu^c$ is a right substring of $\xx_i$ then $\xx_{i+1}\blb{\uu}$ is a string, and so is $\xx_i\hdots\xx_1\xx_{2k}\hdots\xx_{i+1}\blb{\uu}$ thus contradicting domesticity of $\Lambda$.

Let $\gamma$ be the first syllable of $\bla{\uu}$. From the above paragraph we know that $\theta(\gamma)=\theta(\xx_i)$. Let $\alpha,\beta$ denote the first and the last syllables of $\bua{\uu}$ respectively. Since $\bla{\uu}\bua{\uu}$ is not a string we have $\theta(\gamma)=\theta(\beta)$. Since $\alpha(\uu)$ is the entry syllable we have $\beta\neq\alpha(\uu)$. Since $\alpha\alpha(\uu)$ and $\alpha\beta$ are strings $\theta(\beta)\neq\theta(\alpha(\uu))$

If $\uu^c$ is not a left substring of $\xx_i$, then $\theta(\alpha(\uu))=\theta(\xx_i)$, and hence $\theta(\alpha(\uu))=\theta(\gamma)=\theta(\beta)$ which is a contradiction. Therefore our assumption is wrong and $\uu^c$ is a left substring of $\xx_i$.
\end{proof}

It follows from the above result and its dual that each string from $\{\uu^\alpha,\uu_\alpha,\uu^\beta,\uu_\beta\}$ has positive length, $|\uu^\alpha|\leq|\uu_\beta|$, and $|\uu^\beta|\leq|\uu_\alpha|$.

\begin{proposition}\label{banduniqueness}
Suppose $\uu_1,\uu_2$ are abnormal weak bridges with $\sbq(\uu_1)=\sbq(\uu_2)$. If $\uu_1^c$ is a left substring of $\uu_2^c$ and $\uu_2^c$ is a proper left substring of $\uu_1^e$ then $\tbq(\uu_1)=\tbq(\uu_2)$.
\end{proposition}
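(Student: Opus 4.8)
The plan is to show that $\tbq(\uu_1)$ and $\tbq(\uu_2)$ are cyclic permutations of each other; since both lie in the fixed transversal $Q_0^{\mathrm{Ba}}$, this gives $\tbq(\uu_1)=\tbq(\uu_2)$. Write $\bb:=\sbq(\uu_1)=\sbq(\uu_2)$ and $\bb_i:=\tbq(\uu_i)$.

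The first point is that $\uu_2^c\bua{\uu_1}$ is a string. If $\uu_1^c=\uu_2^c$ this is automatic, as $\uu_1^c$ is a left substring of $\bua{\uu_1}$, so $\uu_1^c\bua{\uu_1}$ is a left substring of $\bua{\uu_1}^2$. If $\uu_1^c$ is a \emph{proper} left substring of $\uu_2^c$ it is precisely the minimality clause in the definition of $\uu_1^e$: the string $\uu_2^c$ is a left substring of $\bla{\uu_1}$ (being a proper left substring of $\uu_1^e$, itself a left substring of $\bla{\uu_1}$) that properly contains $\uu_1^c$ and is a proper left substring of $\uu_1^e$, so $\uu_2^c\bua{\uu_1}$ is a string. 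As $\bua{\uu_1}$ is a cyclic permutation of the band $\bb_1$, it follows that $\uu_2^c\bua{\uu_1}^{\,m}$ is a string for every $m\geq1$, so the right $\mathbb N$-string $\xx:=\uu_2^c\,\bua{\uu_1}^{\,\infty}$ is well-defined, eventually periodic with period $\bb_1$. Dually $\uu_2^c$ is a left substring of the band permutation $\bua{\uu_2}$, so $\uu_2^c\bua{\uu_2}^{\,m}$ is a left substring of $\bua{\uu_2}^{\,m+1}$ and $\yy:=\uu_2^c\,\bua{\uu_2}^{\,\infty}$ is a right $\mathbb N$-string eventually periodic with period $\bb_2$. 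Both $\xx$ and $\yy$ begin with $\uu_2^c$.

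The crux is to prove $\xx=\yy$: granting this, the eventual period of $\xx=\yy$ is simultaneously a cyclic permutation of $\bb_1$ and of $\bb_2$, and primitivity of bands then forces $\bb_1$ and $\bb_2$ to be cyclic permutations of one another, as required. Suppose $\xx\neq\yy$; then their longest common prefix has the form $\uu_2^c\,\mathfrak q$, with $\mathfrak q$ a common prefix of $\bua{\uu_1}^{\,\infty}$ and $\bua{\uu_2}^{\,\infty}$ — hence simultaneously $|\bb_1|$-periodic and $|\bb_2|$-periodic — and immediately past this prefix $\xx$ and $\yy$ fork, so the two competing next syllables are distinct, hence of opposite sign. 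If $|\mathfrak q|\geq|\bb_1|+|\bb_2|$ the Fine--Wilf theorem makes $\mathfrak q$ periodic of period $\gcd(|\bb_1|,|\bb_2|)$, and primitivity forces $\bb_1,\bb_2$ to be cyclic permutations of each other, whence $\bua{\uu_1}=\bua{\uu_2}$ and $\xx=\yy$ — a contradiction. So only a fork with $|\mathfrak q|<|\bb_1|+|\bb_2|$ remains, and \textbf{ruling this out is the step I expect to be the main obstacle.} Here the hypothesis that $\uu_2^c$ is a \emph{proper} left substring of $\uu_1^e$ is exactly what is used: $\uu_1^e$ is, by definition, the first place along $\bla{\uu_1}$ (reading leftward from $\uu_1^c$) at which a monomial relation obstructs prolonging $\uu_1^c$ by $\bua{\uu_1}$ (the relation $\uu_1^\beta\uu_1^c\,\uu_{1,\beta}$, with $\uu_1^e=\uu_1^\beta\uu_1^c$ and $\uu_{1,\beta}$ a right substring of $\bua{\uu_1}$); strictly below $\uu_1^e$ no such obstruction occurs, so the prolongation of the common occurrence of $\uu_2^c$ into $\bb_1$ is forced to follow the band-periodic pattern already dictated by the complemented-interior data of $\uu_2$ (the permutations $\bla{\uu_2},\bua{\uu_2},\bub{\uu_2},\blb{\uu_2}$ and the run decomposition of Proposition \ref{component} applied to $\uu_2$, together with its dual). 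Domesticity — which, as in the proof of Proposition \ref{constantsign}, forbids a cyclic permutation of one band from being prolonged into a cyclic permutation of a different band so as to close up into arbitrarily large bands — then forces $\xx=\yy$.

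An equivalent reorganisation of the last step: from the first point one checks that $\bua{\uu_2}\bua{\uu_1}$ and $\bua{\uu_1}\bua{\uu_2}$ are both strings — the first junction is controlled by $\uu_2^c\bua{\uu_1}$ being a string, the second by $\uu_1^c\zz$ being a string, a substring of $\bua{\uu_2}^2$ where $\bua{\uu_2}=\zz\,\uu_2^c$ and $\uu_1^c$ is a left substring of $\uu_2^c$ — whence $(\bua{\uu_2}\bua{\uu_1}^{\,n})^2$ is a string for all $n$; were $\bb_1\neq\bb_2$, for large $n$ the closed reduced walks $\bua{\uu_2}\bua{\uu_1}^{\,n}$ would be pairwise non-cyclically-equivalent bands, contradicting domesticity. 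In this framing the obstacle reappears as the verification that no monomial relation straddles either junction, which again uses the strict inclusion $\uu_2^c\neq\uu_1^e$ and Proposition \ref{component}.
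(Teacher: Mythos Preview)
Your ``equivalent reorganisation'' at the end is exactly the paper's proof: show that both $\bua{\uu_2}\bua{\uu_1}$ and $\bua{\uu_1}\bua{\uu_2}$ are strings, then invoke domesticity. The Fine--Wilf detour is unnecessary and, as you acknowledge, incomplete in the short-fork case; discard it.

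The gap you flag twice --- ``verification that no monomial relation straddles either junction'' --- is real, and the paper closes it in one line with a sign observation you did not make. Write $\bua{\uu_2}=\xx_2\uu_2^c$. The point is that $\theta(\xx_2)=-\theta(\uu_2^c)$: the first syllable of $\xx_2$ coincides with the first syllable of $\blb{\uu_2}$ (since $\blb{\uu_2}=\uu_2^c\xx_2$ is the rotation of $\bua{\uu_2}$ moving $\uu_2^c$ to the right), and that syllable, together with $\beta(\uu_2)$, both extend $\bub{\uu_2}$ --- so its sign is $\theta(\beta(\uu_2))=-\delta(\uu_2^c)$ by Proposition~\ref{constantsign}. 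Because every element of $\rho\cup\rho^{-1}$ has constant sign, this sign change at the $\uu_2^c$--$\xx_2$ boundary blocks any relation from crossing from $\bua{\uu_1}$ through $\uu_2^c$ into $\xx_2$; hence $\xx_2\uu_2^c\bua{\uu_1}=\bua{\uu_2}\bua{\uu_1}$ is a string. The dual argument (using that $\uu_1^c$ is a proper left substring of $\uu_2^e$, which follows since $\uu_1^c\subseteq\uu_2^c\subsetneq\uu_2^e$) handles $\bua{\uu_1}\bua{\uu_2}$. That is the whole proof --- five lines, no Fine--Wilf, no Proposition~\ref{component}.

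Your invocation of ``strict inclusion $\uu_2^c\neq\uu_1^e$'' is already fully used in establishing that $\uu_2^c\bua{\uu_1}$ is a string; it does not by itself resolve the straddling issue, which is purely a sign/parity phenomenon.
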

\begin{proof}
Let $\xx_1$ and $\xx_2$ be the complements of $\uu_1^c$ and $\uu_2^c$ in $\bua{\uu_1}$ and $\bua{\uu_2}$ respectively. Since $\uu_2^c$ is a proper left substring of $\uu_1^e$ the concatenation $\uu_2^c\bua{\uu_1}$ exists. Since $\theta(\xx_2)=-\theta(\uu_2^c)$ the further concatenation $\bua{\uu_2}\bua{\uu_1}=\xx_2\uu_2^c\bua{\uu_1}$ also exists. Dually since $\uu_1^c$ is a proper left substring of $\uu_2^c$, and hence of $\uu_2^e$, we can also show that the concatenation $\bua{\uu_1}\bua{\uu_2}$ exists. Since $\Lambda$ is domestic we conclude that $\tbq(\uu_1)=\tbq(\uu_2)$.
\end{proof}

\begin{rmk}
Suppose $\uu=\uu_2\circ\uu_1$ is an abnormal weak bridge. Then the above proposition and its dual together ensure that $|\uu_1^c|\geq|\uu^\beta\uu^c|$ whereas $|\uu_2^c|<|\uu^c\uu^\alpha|$.
\end{rmk}

\begin{corollary}\label{uniqueabnweakbridge}
If $\uu$ is an abnormal weak bridge then any weak bridge $\uu'\neq\uu$ satisfying $\sbq(\uu')=\sbq(\uu)$ and $\beta(\uu')=\beta(\uu)$ is a normal weak bridge.
\end{corollary}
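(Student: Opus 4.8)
The plan is to prove the following statement, from which the corollary is immediate: \emph{if $\uu$ and $\uu'$ are both abnormal weak bridges with $\sbq(\uu)=\sbq(\uu')=:\bb$ and $\beta(\uu)=\beta(\uu')=:\vv$, then $\uu=\uu'$.}

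First I would observe that the cyclic permutation $\bub{\uu}$ is already pinned down by the pair $(\bb,\vv)$: it is the unique cyclic permutation $\ww$ of $\bb$ for which $\vv\ww$ is a string, uniqueness being immediate from the $\sigma,\varepsilon$-conditions in the definition of a string algebra (cf.\ Remark \ref{exitorder}). Thus $\bub{\uu}=\bub{\uu'}=\ww$, and the decomposition $\ww=\xx_{2k}\cdots\xx_1$ into maximal sign-homogeneous pieces with $\delta(\xx_j)=(-1)^{j+1}\theta(\vv)$ supplied by Proposition \ref{component} is the same whether computed from $\uu$ or from $\uu'$. That proposition then tells us that $\uu^c$ and ${\uu'}^c$ are both proper left substrings of the component $\xx_{2k}$ abutting $\vv$; in particular one of $\uu^c,{\uu'}^c$ is a left substring of the other, and we may assume without loss of generality that ${\uu'}^c$ is a left substring of $\uu^c$.

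The heart of the argument is to verify the hypotheses of Proposition \ref{banduniqueness} with $\uu_1:=\uu'$ and $\uu_2:=\uu$. One hypothesis, namely that $\uu_1^c={\uu'}^c$ is a left substring of $\uu_2^c=\uu^c$, is already in hand; the other requires $\uu^c$ to be a \emph{proper} left substring of ${\uu'}^e$. To get this I would use Proposition \ref{constantsign}: $\delta({\uu'}^e)=-\theta(\vv)=\delta(\xx_{2k})$, so ${\uu'}^e$ --- which by its definition is ${\uu'}^c$ extended along the $\bb$-pattern inside $\bla{{\uu'}}$ up to the first syllable at which concatenation with $\bua{{\uu'}}$ breaks down --- lies in the same sign-homogeneous stretch past $\vv$ as $\uu^c$ does, so $\uu^c$ and ${\uu'}^e$ are comparable. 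The delicate point is to exclude the possibility that ${\uu'}^e$ is a left substring of $\uu^c$: since $\uu^c$ is a proper left substring of $\xx_{2k}$ it stays strictly within the $\bb$-pattern, whereas if ${\uu'}^e$ were a left substring of $\uu^c$ then the defining failure of ${\uu'}^e\bua{{\uu'}}$, together with the maximality of ${\uu'}^c$ as a common substring of $\bb$ and $\tbq(\uu')$ and with domesticity of $\Lambda$, would force a relation $\xx'{\uu'}^c\xx''\in\rho\cup\rho^{-1}$ to be visible already inside a power of $\bb$, contradicting that $\bb$ is a string (this is the same mechanism as in the proof of Proposition \ref{constantsign}). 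Hence $\uu^c$ is a proper left substring of ${\uu'}^e$, and Proposition \ref{banduniqueness} gives $\tbq(\uu)=\tbq(\uu')$.

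Finally, once $\sbq(\uu)=\sbq(\uu')$, $\tbq(\uu)=\tbq(\uu')$ and $\beta(\uu)=\beta(\uu')$, the complimented interiors $\uu^c$ and ${\uu'}^c$ are each the maximal common substring of the two bands read off at the location singled out by $\vv$, so $\uu^c={\uu'}^c$; and since an abnormal weak bridge is recovered from its source band, its target band and its complimented interior --- the outstanding syllables being exactly $\uu^\beta$ on one flank and $\uu^\alpha$ on the other, both now determined by $\bb$ and $\tbq(\uu)$ (cf.\ the discussion preceding Proposition \ref{component}) --- we conclude $\uu=\uu'$, which is the contradiction sought. I expect the only real obstacle to be the strict inclusion that $\uu^c$ is a proper left substring of ${\uu'}^e$, i.e.\ the exclusion of ${\uu'}^e$ lying inside $\uu^c$; the remaining steps are routine unwinding of the definitions of $\bub{\ },\bua{\ },\uu^c,\uu^e$ in the light of Propositions \ref{component} and \ref{banduniqueness}, with a small separate check needed in the degenerate case $|\uu^c|=0$, where $\vv$ abuts $\alpha(\uu)$ directly and one argues straight from the definition of $\uu^e$ rather than through $\xx_{2k}$.
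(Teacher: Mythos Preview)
Your approach is the same as the paper's in that both reduce the statement to Proposition \ref{banduniqueness}, but the paper's argument is far shorter and more direct. The paper simply asserts that $\uu^c={\uu'}^c$; since by definition $\uu^c$ is a proper left substring of $\uu^e$ and likewise ${\uu'}^c$ of ${\uu'}^e$, the hypotheses of Proposition \ref{banduniqueness} are then trivially satisfied (with $\uu_1^c=\uu_2^c$), and the conclusion $\uu=\uu'$ follows. Your detour through Proposition \ref{component} to locate both complements in a common sign-homogeneous piece, followed by a separate verification of the strict containment $\uu^c\subsetneq{\uu'}^e$, is unnecessary once $\uu^c={\uu'}^c$ is in hand; that ``delicate point'' is also precisely where your own argument becomes hand-wavy.

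One genuine issue with your route: from $\bub{\uu}=\bub{\uu'}$ what you get immediately is that $\uu^c$ and ${\uu'}^c$ are comparable as \emph{right} substrings of $\bub{\uu}$ (both end at the syllable abutting $\beta(\uu)$), whereas the hypothesis of Proposition \ref{banduniqueness} is stated in terms of \emph{left} substrings (sharing the $\alpha$-end). Your WLOG ``${\uu'}^c$ is a left substring of $\uu^c$'' therefore needs justification, not just symmetry; this is exactly what the paper's equality $\uu^c={\uu'}^c$ finesses. Your invocation of Proposition \ref{component} does not by itself pin down which $\xx_i$ contains the complements, so the claim that both lie in ``$\xx_{2k}$'' is also under-argued.
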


\begin{proof}
Suppose a weak bridge $\uu'$ satisfying the hypotheses is abnormal. Then $\uu^c=\uu'^c$ is a proper left substring of both $\uu^e$ and $\uu'^e$. Hence from the above proposition $\uu'=\uu$. This shows that if $\uu'\neq\uu$ then $\uu'$ is normal.
\end{proof}

If $\bb$ is a band and $\vv\in\mathcal E(\bb)$ is abnormal then, in view of the above result, we use the notation $\bar\lambda^a(\vv)$ to denote the unique abnormal element of $\bar\lambda(\vv)$.

Given a band $\bb$ and $\vv,\vv'\in\mathcal E(\bb)$, say that $\vv$ is \emph{incident on} $\vv'$, written $\vv\perp\vv'$, if $\vv'$ is abnormal and there is a partition $\bar\lambda^a(\vv')^e=\xx_2\xx_1$ with $|\xx_2|>0$ such that $\vv\xx_1$ is a string. Note that $\vv=\vv'$ is possible. Also note that $\perp$ is a transitive relation.

\begin{rmk}\label{perpmaintainsamsign}
If $\vv\perp\vv'$ then $\theta(\vv)=-\theta(\bar\lambda^a(\vv')^e)=\theta(\vv')$.
\end{rmk}

\begin{proposition}\label{factregperp}
Suppose for a band $\bb$, $\vv,\vv'\in\mathcal E(\bb)$ satisfy $\vv\perp\vv'$. Then any $\uu\in\bar\lambda(\vv)$ factors (possibly trivially) through $\bar\lambda^a(\vv')$.
\end{proposition}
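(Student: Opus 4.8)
The plan is to unpack the definition of $\vv\perp\vv'$ and use the structural results on abnormal weak bridges—chiefly Propositions \ref{constantsign}, \ref{component}, and \ref{banduniqueness}—to show that the string $\bb'\uu\bb$ witnessing $\uu\in\bar\lambda(\vv)$ necessarily passes through (a cyclic permutation of) $\tbq(\bar\lambda^a(\vv'))$. Write $\ww:=\bar\lambda^a(\vv')$ for the unique abnormal element of $\bar\lambda(\vv')$ (which exists by Corollary \ref{uniqueabnweakbridge}). By hypothesis there is a partition $\ww^e=\xx_2\xx_1$ with $|\xx_2|>0$ such that $\vv\xx_1$ is a string, and by Remark \ref{perpmaintainsamsign} we have $\theta(\vv)=\theta(\vv')=-\theta(\ww^e)$. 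First I would observe that $\xx_1$ strictly contains $\ww^c$ as a left substring: indeed $\vv\xx_1$ being a string forces $\xx_1$ past the relation that terminates $\ww^e$ (this is the content of Proposition \ref{constantsign}, which pins down $\delta(\ww^e)=-\theta(\beta(\ww))=-\theta(\vv')$, together with the fact that $\ww^c$ is a proper left substring of $\ww^e$). Since $\vv\in\mathcal E(\bb)$ and $\theta(\vv)=\theta(\vv')$, Remark \ref{exitorder} gives that $\vv$ and $\vv'$ are in fact forced to exit $\bb$ at the same place, so $\bub{\uu}$ and $\blb{\ww}$ agree far enough to the right that $\ww^c$ is a right substring of $\bla{\uu}$ read off from the $\bb$-side.

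Next I would build the factorization explicitly. Consider the initial segment $\uu_1$ of $\uu$ (more precisely, of the string $\tbq(\uu)\uu\bb$) that realizes $\ww$ as a half-bridge-type piece: the candidate is essentially $\ww$ itself, since $\ww\bb$ is a string with exit $\vv'=\vv$ (after the identification above), and then the remaining piece $\uu_2$ is obtained by a $1$-step reduction along $\tbq(\ww)$ if a copy of that band has been created, or directly otherwise. The key point to verify is that $\tbq(\ww)=\tbq(\uu_1)$—but this is exactly where Proposition \ref{banduniqueness} applies: both $\ww^c$ and $\uu_1^c$ are complements-of-bands sitting inside $\ww^e$, with $\ww^c$ a left substring of $\uu_1^c$ and $\uu_1^c$ a proper left substring of $\ww^e$, forcing the target bands to coincide by domesticity (the concatenations $\bua{\ww}\bua{\uu_1}$ and $\bua{\uu_1}\bua{\ww}$ both exist, so a non-domestic situation would arise unless the bands are equal). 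One then checks that $\uu=\uu_2\circ\uu_1$ in the sense of the partial operation $\circ$, possibly trivially (i.e. $\uu_2$ trivial) when $\uu$ itself already ``ends at'' $\tbq(\ww)$ after the exit $\vv$, which covers the degenerate case $\vv=\vv'$ and $\uu=\ww$.

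The main obstacle I anticipate is the bookkeeping around whether the intermediate band $\tbq(\ww)$ actually appears as an honest cyclic subword of $\tbq(\uu)\uu\bb$ so that a genuine $\circ$-reduction is available, versus the case where $\uu$ is ``too short'' and the factorization degenerates. Handling this cleanly requires carefully comparing the lengths $|\ww^e|$, $|\uu^c|$, $|\uu^\alpha|$, $|\uu_\beta|$ via the inequalities recorded after Proposition \ref{component} ($|\uu^\alpha|\le|\uu_\beta|$ and $|\uu^\beta|\le|\uu_\alpha|$) and using the remark following Proposition \ref{banduniqueness} on the sizes of $\uu_1^c$ and $\uu_2^c$ in any abnormal factorization. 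A secondary subtlety is that $\uu\in\bar\lambda(\vv)$ may be a torsion weak reverse half bridge rather than a weak bridge (since $\bar\lambda(\vv)=\bar\lambda^b(\vv)\sqcup\bar\lambda^r(\vv)$); in that case ``factors through $\bar\lambda^a(\vv')$'' should be read as: its interior $\uu^o$ contains $\ww$, and the same substring analysis goes through since torsion reverse half bridges were arranged to have exits and the relevant forking conditions are in force. Once the identification of targets is established, transitivity of $\perp$ makes the statement stable under iteration, which is what is needed downstream.
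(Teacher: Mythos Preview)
Your proposal has a genuine gap rooted in two misreadings of the definitions.

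First, your claim that ``$\xx_1$ strictly contains $\ww^c$ as a left substring'' is unjustified and in general false. The definition of $\vv\perp\vv'$ only says there is a partition $\ww^e=\xx_2\xx_1$ with $|\xx_2|>0$; nothing forces $|\xx_1|\geq|\ww^c|$. The relation that defines $\ww^e$ sits at the \emph{top} of $\ww^e$ (it is what prevents $\ww^e\bua{\ww}$ from being a string), so the hypothesis $\vv\xx_1$ is a string says nothing about where $\xx_1$ sits relative to $\ww^c$.

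Second, and more seriously, your invocation of Remark~\ref{exitorder} is backwards. That remark says that if two exit syllables sit on the \emph{same} cyclic permutation of $\bb$ then they coincide; it does \emph{not} say that two exit syllables of the same sign must sit on the same cyclic permutation. In general $\vv$ and $\vv'$ exit $\bb$ at different positions, and the whole point of the $\perp$ relation is to compare such distinct exits. Your subsequent construction (``$\ww\bb$ is a string with exit $\vv'=\vv$ after the identification above'') is therefore built on a false premise, and the appeal to Proposition~\ref{banduniqueness} that follows is both unnecessary and incorrectly set up.

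The paper's argument is far more direct and avoids all of this. Write $\uu':=\bar\lambda^a(\vv')$. Since $\xx_1$ is a \emph{proper} left substring of ${\uu'}^e$, the very definition of ${\uu'}^e$ (as the \emph{shortest} left substring of $\bla{\uu'}$ for which the concatenation with $\bua{\uu'}$ fails) gives that $\xx_1\bua{\uu'}$ \emph{is} a string. Combining this with $\vv\xx_1$ being a string and the sign observation $\theta(\vv)=-\delta({\uu'}^e)$ from Remark~\ref{perpmaintainsamsign}, one gets that $\vv\xx_1\bua{\uu'}\bla{\uu'}$ is a string. This single string exhibits a full copy of $\tbq(\uu')$ (namely $\bua{\uu'}$) sitting between the $\bb$-part and the exit $\vv$, which is exactly what is needed for any $\uu\in\bar\lambda(\vv)$ to factor through $\uu'$. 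No comparison of complemented interiors and no use of Propositions~\ref{component} or~\ref{banduniqueness} is required.
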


\begin{proof}
Suppose $\vv,\vv'$ are as in the hypotheses. Let $\uu'$ denote $\bar\lambda^a(\vv')$ for short. Let $\xx_1$ denote the proper left substring of ${\uu'}^e$ such that $\vv\xx_1$ is a string. Since $\xx_1\bua{\uu'}$ is a string, in view of the Remark \ref{perpmaintainsamsign} we see that $\vv\xx_1\bua{\uu'}\bla{\uu'}$ is also a string. Hence any $\uu\in\bar\lambda(\vv)$ factors through $\uu'$. This factorisation is trivial, i.e., $\uu=1_{(t(\uu),\varepsilon(\uu))}\circ\uu$ if and only if $\vv=\vv'$ and $\uu=\bar\lambda^a(\vv')$.
\end{proof}

\begin{example}
Consider the algebra $\Lambda'$ from Figure \ref{Nonassociative} in Example \ref{nonassociativityweakbridge}. Then $\vv:=J,\vv':=D\in\mathcal E(\bb_1)$ satisfy $\vv\perp\vv'$ and the weak bridge $(\uu_3\circ\uu_2)\circ\uu_1\in\bar\lambda(\vv)$ factors through $\bar\lambda^a(\vv')=\uu_1$.
\end{example}

We also note a couple of useful consequences of the above proposition.
\begin{rmk}\label{normalbridgenormalexit}
If $\uu$ is a normal bridge then $\beta(\uu)$ is a normal exit.
\end{rmk}

\begin{rmk}\label{factgivesperp}
Suppose $\uu$ is a weak bridge factors as $\uu=\uu''\circ\uu'$ where $\uu'$ is an abnormal weak bridge. Then $\beta(\uu)\perp\beta(\uu')$.
\end{rmk}

\begin{proposition}\label{comparisonofcomplement}
If an abnormal weak bridge $\bb_1\xrightarrow{\uu}\bb_2$ factors as $\uu=\uu_2\circ\uu_1$ through $\bb$ then $\uu_1$ is abnormal and $\uu^c$ is a substring of $\uu_1^c$.
\end{proposition}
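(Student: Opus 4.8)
The plan is to work with the string $\bb_2\uu\bb_1$ and track the positions of $\beta(\uu)$, $\alpha(\uu)$, and the factorization point corresponding to $\bb$. Recall that $\uu = \uu_2 \circ \uu_1$ means (up to a possible $\bb$-reduction) that $\uu_2\uu_1$ is obtained from $\uu$ by inserting a cyclic permutation $\bb'$ of $\bb$: concretely $\uu_2\uu_1\sbq(\uu_1) = \uu_2\bb''\uu_1'\sbq(\uu_1)$ for an appropriate writing, or $\uu'\uu = \uu_2 \bb' \uu_1$ directly when no reduction occurs. In either case the relevant fact is that $\bb$ (as a cyclic word) embeds into a string built from $\uu$ together with prefixes/suffixes of $\bb_1,\bb_2$, and that $\alpha(\uu_1) = \alpha(\uu)$ while $\beta(\uu_2) = \beta(\uu)$ — the entry of the composite is the entry of the first factor and the exit of the composite is the exit of the second factor, which follows by unwinding the definitions of $\alpha,\beta$ as the minimal syllables forcing a departure from a power of $\bb_1$ (resp. $\bb_2$).

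First I would show $\uu^c$ is a substring of $\uu_1^c$ (or of $\uu_1^o$, but we will rule that out). By Remark~\ref{factgivesperp} applied in the reverse direction, or more directly: $\uu^c$ is the substring of $\tbq(\uu)\uu\sbq(\uu)$ with $\beta(\uu)\uu^c\alpha(\uu)$ a string. Since $\alpha(\uu) = \alpha(\uu_1)$ sits inside the $\uu_1$-part and $\beta(\uu)=\beta(\uu_2)$ sits inside the $\uu_2$-part, the string $\beta(\uu)\uu^c\alpha(\uu)$ spans across the factorization point. The key observation is that $\uu^c$ cannot actually contain the inserted band $\bb$ (nor cross the full $\uu_2$-region in a way that exits $\bb_2^{\infty}$ improperly), because $\uu^c$ is by definition a common substring of $\bua{\uu}$ (a cyclic permutation of $\bb_2$) and of $\bla{\uu}$ (a cyclic permutation of $\bb_1$); if it contained a cyclic permutation of $\bb$ then, since $\uu^c$ is band-free as a substring of $\uu$ extended only by band-prefixes, we'd contradict either domesticity or band-freeness of the weak bridge $\uu$. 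Hence $\uu^c$ must lie entirely within the portion of $\bb_2\uu_2\bb$ that is a power of $\bb$ together with the $\uu_1$-contribution — after translating through $\bb$, this forces $\uu^c$ to be a substring of $\uu_1^c$, using that $\beta(\uu_1)$ is the exit of $\uu_1$ and $\uu^c$ does not reach past it on the $\uu_1$ side.

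Next, to conclude $\uu_1$ is abnormal: suppose for contradiction $\uu_1$ is normal, so it has an interior $\uu_1^o$, i.e.\ a positive-length substring of $\tbq(\uu_1)\uu_1\sbq(\uu_1) = \bb\uu_1\bb_1$ whose first syllable is $\beta(\uu_1)$ and last syllable is $\alpha(\uu_1) = \alpha(\uu)$. I would argue that this interior, composed with the exit data of $\uu_2$, produces a positive-length substring of $\bb_2\uu\bb_1$ starting at $\beta(\uu)$ and ending at $\alpha(\uu)$, i.e.\ an interior for $\uu$, contradicting abnormality of $\uu$. The point is that $\beta(\uu_1)$ being a genuine exit of $\bb$ means $\beta(\uu_1)$ is not absorbed into a power of $\bb$; combined with $\beta(\uu)$ lying in the $\uu_2$-region and the composite being a string, the segment from $\beta(\uu)$ down through the inserted $\bb$ to $\beta(\uu_1)$ and onward along $\uu_1^o$ to $\alpha(\uu)$ is an honest substring of $\tbq(\uu)\uu\sbq(\uu)$. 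That is exactly $\uu^o$, contradiction. (One can also phrase this via Proposition~\ref{factregperp}/Remark~\ref{factgivesperp}: $\beta(\uu)\perp\beta(\uu_1)$ forces $\beta(\uu_1)$ abnormal, and by Corollary~\ref{uniqueabnweakbridge} the unique abnormal weak bridge out of $\bb$ with that exit is $\uu_1$ itself, so $\uu_1 = \bar\lambda^a(\beta(\uu_1))$ is abnormal.)

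The main obstacle I anticipate is the bookkeeping in the case where the composition $\uu_2\circ\uu_1$ involves a nontrivial $\bb$-reduction, i.e.\ $\uu = \Red{\bb}(\uu_2\uu_1)$ rather than $\uu = \uu_2\uu_1$: there the inserted copy of $\bb$ overlaps $\bb_2$ or $\bb_1$, so identifying precisely where $\uu^c$ sits relative to $\uu_1^c$ requires care about which cyclic permutations $\bua{\cdot}$, $\bla{\cdot}$, $\bub{\cdot}$, $\blb{\cdot}$ are in play and how the reduction shifts them. I would handle this by first treating the clean case $\uu = \uu_2\uu_1$ in full, then observing that a $\bb$-reduction only deletes a copy of $\bb$ that is disjoint from the syllables $\beta(\uu),\alpha(\uu)$ (since those are exits/entries witnessing departure from $\bb_1^\infty$, $\bb_2^\infty$, hence unaffected), so $\uu^c$ is literally unchanged by the reduction and the clean-case argument applies verbatim.
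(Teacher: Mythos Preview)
Your argument rests on the claim that $\alpha(\uu_1)=\alpha(\uu)$ and $\beta(\uu_2)=\beta(\uu)$, which you say ``follows by unwinding the definitions.'' It does not. The entry syllable $\alpha(\uu)$ records the first departure from a power of $\bb_2=\tbq(\uu)$, while $\alpha(\uu_1)$ records the first departure from a power of $\bb=\tbq(\uu_1)$; these are different bands, and there is no a priori reason for the two syllables to coincide. The identities $\alpha(\uu_1)=\alpha(\uu)$ and $\beta(\uu_2)=\beta(\uu)$ are in fact \emph{consequences} of this proposition (they are established in Proposition~\ref{characterizeabnormality}, which uses Proposition~\ref{comparisonofcomplement}), so invoking them here is circular. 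Your alternative route through Remark~\ref{factgivesperp} has the same defect: that remark already assumes the first factor is abnormal, which is exactly what you are trying to prove.

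The paper argues differently. It assumes $\uu_1$ is normal and shows that then $\beta(\uu)=\beta(\uu_1)$ (not $\beta(\uu_2)$): normality of $\uu_1$ means $\beta(\uu_1)\notin\bb$, so the first syllable leaving $\bb_1$ when reading $\bb_2\uu\bb_1$ from the right is still $\beta(\uu_1)$. But then Corollary~\ref{uniqueabnweakbridge} forces $\tbq(\uu_1)=\bb_2$ (since $\uu$ is the unique abnormal weak bridge with that exit), and Proposition~\ref{factregperp} produces a weak bridge $\bb_2\to\bb$, contradicting domesticity together with the existing $\bb\xrightarrow{\uu_2}\bb_2$. Once $\uu_1$ is known to be abnormal, the comparison of $\uu^c$ with $\uu_1^c$ is done by a sign/length analysis using Propositions~\ref{constantsign} and~\ref{banduniqueness}, not by the positional bookkeeping you sketch.
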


\begin{proof}
Let $\uu=\uu'_2\uu'_1$ where the composition $\uu'_2\bb'\uu'_1$ is defined for some cyclic permutation $\bb'$ of $\bb$. If $\uu_1$ is normal then $\beta(\uu_1)\notin\bb'$ and hence $\beta(\uu_1)\in\uu'_1$. Let $\uu'_1=\uu_3\uu_4$ where $\beta(\uu_1)$ is the first syllable of $\uu_3$. Then $\uu_4$ is a right substring $\bub{\uu_1}$. As $\uu$ is abnormal the composition $\blb{\uu}\bub{\uu}$ is defined, and hence the first syllable, from left, of this composition that is not in $\bb_1$ is $\beta(\uu)$. Whereas the above discussion with $\uu=\uu'_2\uu'_1$ gives that such a syllable is $\beta(\uu_1)$. Hence $\beta(\uu)=\beta(\uu_1)$. The unique abnormal weak bridge with exit syllable $\beta(\uu)$, in view of Corollary \ref{uniqueabnweakbridge} is $\uu$ and $\bb\xrightarrow{\uu_2}\bb_2$ is a weak bridge. On the other hand, since $\beta(\uu)\perp\beta(\uu)$, Proposition \ref{factregperp} guarantees the existence of a weak bridge $\bb_2\to\bb$ contradicting domesticity of $\Lambda$. Hence $\uu_1$ is abnormal and $\beta(\uu_1)\neq\beta(\uu)$. We also see that $\uu^c$ and $\uu_1^c$ are comparable strings.

Now either $\uu=\uu_2\circ\uu_1=\uu_2\uu_1$ or $\uu_2\uu_1=\uu'_2\bb'\uu'_1$. In the former case $\ww\uu_1^c$ and $\uu^c$ have a common right substring for some $\ww$ with $|\ww|>0$. In the latter case either a similar statement holds or there is a string $\ww'$ with $|\ww'|>0$ such that $\ww'\uu^c$ and $\uu_1^c$ have a common right substring.

If $\ww\uu_1^c$ and $\uu^c$ have a common right substring for some $\ww$ with $|\ww|>0$ then, in view of Proposition \ref{constantsign}, patching signature types we have $|\uu^c|>0$ and $\uu^c$ is a substring of $\uu_1^e$. Thus by Proposition \ref{banduniqueness} we have $\bb_2=\bb$. This is a contradiction to $|\ww|>0$. Therefore there is a string $\ww'$ with $|\ww'|>0$ such that $\ww'\uu^c$ and $\uu_1^c$ have a common right substring which in turn implies, with similar arguments, that $\uu^c$ is a left substring of $\uu_1^c$.
\end{proof}

Suppose $\vv\perp\vv'$ and $\vv\neq\vv'$ are both abnormal. Let $j:=\theta(\vv)$ then $\theta(\vv')=j$ in view of Remark \ref{perpmaintainsamsign}. Proposition \ref{factregperp} guarantees that $\bar\lambda^a(\vv)$ factors through $\bar\lambda^a(\vv')$. Then Proposition \ref{comparisonofcomplement} guarantees that $\bar\lambda^a(\vv)^c$ is a substring of $\bar\lambda^a(\vv')^c$. Furthermore Proposition \ref{banduniqueness} shows that $\bar\lambda^a(\vv)^e$ is a proper substring of $\bar\lambda^a(\vv')^e$.

Suppose $\bb$ is a band and $(\vv_1,\hdots,\vv_n)$ is a maximal chain of abnormal elements of $\mathcal E(\bb)$ such that $\vv_j\perp\vv_{j+1}$ for each $1\leq j<n$. For each $1\leq j\leq n$, let $\bb_j:=\tbq(\bar\lambda^a(\vv_j))$. Let $\bb\xrightarrow{\uu_j}\bb_j$ denote the canonical weak bridge.

Proposition \ref{factregperp} guarantees that, for each $1\leq j<n$, $\bua{\bar\lambda^a(\vv_j)}\bua{\bar\lambda^a(\vv_{j+1})}$ is a string, and hence there is a canonical weak bridge $\bb_{j+1}\xrightarrow{\uu'_j}\bb_j$. It is straightforward to verify that, for each $1\leq j\leq n$, the weak bridge $\uu_j$ factors as the composition $\bb\xrightarrow{\uu_n}\bb_n\xrightarrow{\uu'_{n-1}}\bb_{n-1}\xrightarrow{\uu'_{n-2}}\hdots\xrightarrow{\uu'_j}\bb_j$ of weak bridges.

\begin{corollary}\label{interweakbridgisbridg}
Suppose $\bb$ is a band and $(\vv_1,\hdots,\vv_n)$ is a maximal chain of abnormal elements of $\mathcal E(\bb)$ such that $\vv_j\perp\vv_{j+1}$ for each $1\leq j<n$. Using the notation above, $\uu_n$ and $\uu'_j$ are bridges for each $1\leq j<n$.
\end{corollary}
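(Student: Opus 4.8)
The plan is to show that each of the weak bridges $\uu_n$ and $\uu'_j$ ($1\le j<n$) is $\circ$-irreducible, i.e.\ a bridge, by exploiting the maximality of the chain $(\vv_1,\dots,\vv_n)$ together with the structural results on abnormal arrows, chiefly Propositions \ref{banduniqueness}, \ref{comparisonofcomplement} and \ref{factregperp}. First I would treat $\uu_n=\bar\lambda^a(\vv_n)$: each $\bar\lambda^a(\vv_j)$ is an abnormal weak bridge, so if $\uu_n$ factored nontrivially as $\uu_n=\ww_2\circ\ww_1$ through some band $\bb'$, then Proposition \ref{comparisonofcomplement} would force $\ww_1$ to be abnormal with $\beta(\ww_1)\neq\beta(\uu_n)$ and $\uu_n^c=\bar\lambda^a(\vv_n)^c$ a proper left substring of $\ww_1^c$. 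By Corollary \ref{uniqueabnweakbridge}, $\ww_1=\bar\lambda^a(\beta(\ww_1))$ with $\beta(\ww_1)$ an abnormal exit of $\bb$ (note $\sbq(\ww_1)=\bb$). Since $\uu_n^c$ is a proper left substring of $\ww_1^c$ and, by the paragraph preceding Proposition \ref{banduniqueness} and its consequences, $\ww_1^c$ is a proper left substring of $\ww_1^e$, while Proposition \ref{comparisonofcomplement} (applied carefully, patching signature types via Proposition \ref{constantsign}) shows $\ww_1^c$ is contained in $\uu_n^e$; then either Proposition \ref{banduniqueness} gives $\ww_1=\uu_n$ (a contradiction), or we obtain $\beta(\ww_1)\perp\vv_n$, so that $(\vv_1,\dots,\vv_n,\beta(\ww_1))$ — after checking $\beta(\ww_1)\neq\vv_n$ — is a strictly longer chain of abnormal exits, contradicting maximality. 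Hence $\uu_n$ is a bridge.

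For the arrows $\uu'_j\colon\bb_{j+1}\to\bb_j$, I would argue similarly but relative to the band $\bb_{j+1}$. The key point is that $\uu'_j$ is itself abnormal: indeed $\bua{\bar\lambda^a(\vv_j)}\bua{\bar\lambda^a(\vv_{j+1})}$ is a string whose behaviour is governed by $\vv_j\perp\vv_{j+1}$, and the complement $(\uu'_j)^c$ is exactly the maximal common substring of $\bb_j$ and $\bb_{j+1}$ recorded by the containment $\bar\lambda^a(\vv_{j+1})^e\subsetneq\bar\lambda^a(\vv_j)^e$ from the displayed discussion before the corollary. If $\uu'_j$ factored nontrivially through a band $\bb''$, Proposition \ref{comparisonofcomplement} again yields an abnormal $\ww_1$ with $\sbq(\ww_1)=\bb_{j+1}$ and $(\uu'_j)^c$ a proper left substring of $\ww_1^c\subseteq$ the relevant $\uu'_j{}^e$; I then translate this back, via $\uu_{j+1}=\uu'_j\circ\uu_{j+1}'\circ\dots$ (the factorization of $\uu_{j+1}$ through the $\uu'$'s), into the statement that there is an abnormal exit $\vv''$ of $\bb$ with $\vv''\perp\vv_{j+1}$ but $\vv''\notin\{\vv_j,\dots,\vv_n\}$, contradicting maximality of the chain. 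The transitivity of $\perp$ and Remark \ref{perpmaintainsamsign} (all the $\vv$'s share the sign $j$) are what make this translation legitimate.

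The main obstacle I anticipate is the bookkeeping in this last translation step: one must verify that a hypothetical nontrivial $\circ$-factorization of $\uu'_j$, which is phrased entirely in terms of bands $\bb_j,\bb_{j+1}$ and their cyclic permutations, does produce an honest new abnormal exit syllable \emph{of $\bb$} — not merely of $\bb_{j+1}$ — that is $\perp$-incident on $\vv_{j+1}$ and distinct from every $\vv_i$ already in the chain. This requires carefully composing the factorization with the canonical weak bridge $\bb\xrightarrow{\uu_{j+1}}\bb_{j+1}$ and checking that abnormality is preserved under this composition (using Proposition \ref{comparisonofcomplement} in the other direction and Corollary \ref{uniqueabnweakbridge} to pin down the exit), and that the resulting exit is genuinely new rather than equal to some $\vv_i$ with $i\le j$ — which would contradict the chain being a \emph{chain} (i.e.\ $\perp$ restricted to it being a strict order). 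Once this is set up, the contradiction with maximality is immediate, and the two cases $\uu_n$, $\uu'_j$ fall out of the same mechanism. A routine check that none of the $\uu_n,\uu'_j$ is trivial (they have positive length by Remark \ref{bridgehasposlen} since they are weak bridges between bands in a domestic algebra) completes the argument.
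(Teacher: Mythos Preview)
Your treatment of $\uu_n$ is essentially the paper's argument (modulo a direction slip: Remark~\ref{factgivesperp} gives $\vv_n\perp\beta(\ww_1)$, not $\beta(\ww_1)\perp\vv_n$; your extended chain $(\vv_1,\dots,\vv_n,\beta(\ww_1))$ is nevertheless correct).

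For the $\uu'_j$ case your route diverges from the paper's, and the paper's is cleaner in exactly the place you flag as the main obstacle. Rather than composing a hypothetical factor $\ww_1$ back with $\uu_{j+1}$ and then arguing that this produces a new abnormal exit of $\bb$, the paper computes $(\uu'_j)^c$ and $(\uu'_j)^e$ directly in terms of $\bb$-data: one has $(\uu'_j)^c=\uu_j^c(\uu_{j+1})_\beta$, and domesticity forces $(\uu'_j)^e$ to be a left substring of $\uu_{j+1}^c(\uu_{j+1})_\beta$, which lies entirely inside a cyclic permutation of $\bb$. Consequently, if $\uu'_j=\uu''\circ\uu'$ with $\uu'$ abnormal (Proposition~\ref{comparisonofcomplement}), the exit $\beta(\uu')$ leaves from a point of $(\uu'_j)^e$ and is therefore already an exit syllable of $\bb$, not merely of $\bb_{j+1}$. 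No translation step is needed. One then reads off both relations $\vv_j\perp\beta(\uu')$ and $\beta(\uu')\perp\vv_{j+1}$ simultaneously, contradicting that $\vv_{j+1}$ is the immediate $\perp$-successor of $\vv_j$ in the maximal chain.

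Note also that your stated contradiction is incomplete as written: you obtain only $\vv''\perp\vv_{j+1}$ together with $\vv''\notin\{\vv_1,\dots,\vv_n\}$, but this does not by itself violate maximality of the chain --- you also need $\vv_j\perp\vv''$ so that $\vv''$ can be inserted between $\vv_j$ and $\vv_{j+1}$. The paper's explicit computation of $(\uu'_j)^e$ is precisely what delivers both incidences at once.
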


\begin{proof}
The contrapositive of Proposition \ref{comparisonofcomplement} applied to $\uu=\uu_n$ together with maximality of $\uu_n$ with respect to $\perp$ shows that $\uu_n$ is a bridge.

Now consider $\uu_{j+1}$ and recall that $|(\uu_{j+1})_{\beta}|>0$. Then ${\uu'}_j^c=\uu_j^c(\uu_{j+1})_{\beta}$ and since $\Lambda$ is domestic the concatenation $\uu_{j+1}^c(\uu_{j+1})_{\beta}\bla{\uu'_j}$ does not exist. Therefore ${\uu'}_j^e$ is a left substring of $\uu_{j+1}^c(\uu_{j+1})_{\beta}$.

If $\uu'_j$ factors as $\uu''\circ\uu'$ then Proposition \ref{comparisonofcomplement} guarantees that $\uu'$ is abnormal and ${\uu'_j}^c$ is a proper left substring of ${\uu'}^c$. It is easy to see that $\theta(\beta(\uu'))=-\theta({\uu'}^c)=-\theta(\uu_{j+1}^c)=\theta(\beta(\uu_{j+1}))$. Hence $\beta(\uu')\in\bar\lambda(\uu)$. From the above paragraph ${\uu'_j}^e$ is a substring of $\uu_{j+1}^c(\uu_{j+1})_{\beta}$ and hence $\vv_j\perp\beta(\uu'),\beta(\uu')\perp\vv_{j+1}$ where $\beta(\uu')$ is distinct from $\vv_j$ and $\vv_{j+1}$, which contradicts the hypothesis that $\vv_{j+1}$ is the immediate successor of $\vv_j$ with respect to $\perp$. This completes the proof that $\uu'_j$ is a bridge.
\end{proof}

The following is clear from the above corollary and Proposition \ref{factregperp}.

\begin{rmk}\label{uefactorization}
Using the notations of the above corollary, suppose $\vv\in\mathcal E(\bb)$ is distinct from any $\vv_j$ and that $\vv^\perp:=\{\vv_j\mid\vv\perp\vv_j\}\neq\emptyset$. Then for any $\vv_j\in\vv^\perp$ any $\uu'\in\bar\lambda(\vv)$ factors as $\uu''_j\circ\uu_j$. Moreover, $\uu''_j$ does not factor through $\uu''_k$ for any $j<k,\vv_k\in\vv^\perp$.
\end{rmk}

Now we are ready to give a characterization of abnormal weak bridges.
\begin{proposition}\label{characterizeabnormality}
A weak bridge $\uu$ is abnormal if and only if for any factorization $\uu=\uu_2\circ\uu_1$ we have that $\uu_2$ is abnormal and $\beta(\uu)=\beta(\uu_2)$.
\end{proposition}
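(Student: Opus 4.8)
A bridge has no nontrivial $\circ$-factorization, so for such a $\uu$ the left-to-right implication is vacuous and the substance lies with $\circ$-reducible $\uu$; accordingly fix a factorization $\uu=\uu_2\circ\uu_1$ through a band $\bb$, with $\uu_1\colon\bb_1\to\bb$, $\uu_2\colon\bb\to\bb_2$ and $\bb_1=\sbq(\uu)$, $\bb_2=\tbq(\uu)$. Assume first that $\uu$ is abnormal. Proposition \ref{comparisonofcomplement} already gives that $\uu_1$ is abnormal and that $\uu^c$ is a left substring of $\uu_1^c$; this inclusion is proper, since otherwise Proposition \ref{banduniqueness} applied to $\uu$ and $\uu_1$ (which share the source $\bb_1$) would give $\bb_2=\bb$, impossible because $\uu_2\colon\bb\to\bb_2$ is a nontrivial arrow in the acyclic weak bridge quiver; the proof of Proposition \ref{comparisonofcomplement} also records $\beta(\uu_1)\neq\beta(\uu)$. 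Since $\uu^{-1}$ is again abnormal, with complemented interior the reverse of $\uu^c$, and $\uu^{-1}=\uu_1^{-1}\circ\uu_2^{-1}$, the same Proposition \ref{comparisonofcomplement} applied to this factorization shows that $\uu_2^{-1}$ is abnormal---hence $\uu_2$ is abnormal---and that $\uu^c$ is a right substring of $\uu_2^c$.

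What remains is the equality $\beta(\uu)=\beta(\uu_2)$, and this is where I expect the real difficulty. First I would show $\uu^c=\uu_2^c$: were $\uu^c$ a proper right substring of $\uu_2^c$, then patching signature types via Proposition \ref{constantsign} exactly as in the proof of Proposition \ref{comparisonofcomplement} would place $\uu_2^c$ inside the dual of $\uu^e$, whence the dual of Proposition \ref{banduniqueness} applied to $\uu$ and $\uu_2$ (which share the target $\bb_2$) would force $\sbq(\uu)=\sbq(\uu_2)$, i.e.\ $\bb_1=\bb$, again contradicting acyclicity through the nontrivial $\uu_1\colon\bb_1\to\bb$. Granting $\uu^c=\uu_2^c$, both $\beta(\uu)$ and $\beta(\uu_2)$ are exit syllables of $\bb$ which prepend to the common string $\uu^c(=\uu_2^c)$, a right substring of the cyclic permutation $\bub{\uu_2}$ of $\bb$; to conclude one must rule out $\beta(\uu)\in\bb$, since that would make $\beta(\uu)\uu^c\alpha(\uu)$ a substring of a power of $\bb$ and, using $\beta(\uu)\perp\beta(\uu_1)$ from Remark \ref{factgivesperp} together with the uniqueness of abnormal weak bridges from a band with a prescribed exit (Corollary \ref{uniqueabnweakbridge}), would exhibit a strictly finer factorization of $\uu$ through $\bb$; Remark \ref{exitorder} then forces $\beta(\uu)=\beta(\uu_2)$. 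The fiddly point throughout is the bookkeeping in the degenerate cases where one of $\uu^c,\uu_1^c,\uu_2^c$ has length $0$, which have to be tracked by hand via the description of the source and target of an interior recorded just after Definition \ref{nabndef}.

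For the converse I would argue the contrapositive for $\circ$-reducible $\uu$: if $\uu$ is normal, then some factorization $\uu=\uu_2\circ\uu_1$ has $\uu_2$ normal or $\beta(\uu_2)\neq\beta(\uu)$. Choose a bracketed $\circ$-factorization of $\uu$ into bridges and let $\uu_2$ be its outermost bridge, so $\uu=\uu_2\circ\uu_1$ with $\uu_2$ a bridge; if $\uu_2$ is normal we are done, so suppose $\uu_2$ is abnormal and, for contradiction, $\beta(\uu_2)=\beta(\uu)$. Then the exit syllable of $\uu$ from $\bb_1$ and the exit syllable of $\uu_2$ from $\bb$ are the same syllable of $\uu$, which forces the subword of $\bb_2\uu\bb_1$ lying between it and $\bb_1$ (and hence $\uu_1\bb_1$) to be a substring of a power of $\bb_1$; a short analysis of the junction of $\uu_1$ and $\uu_2$ then shows that $\uu_2^c$ is also a common substring of $\bb_1$ and $\bb_2$ with $\beta(\uu)\uu_2^c\alpha(\uu_2)$ a string, so $\uu$ would be abnormal, contrary to hypothesis. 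Hence $\beta(\uu_2)\neq\beta(\uu)$, which completes the contrapositive. Once the forward implication and Corollary \ref{uniqueabnweakbridge} are available this direction is comparatively routine, so the main obstacle is the identification $\beta(\uu)=\beta(\uu_2)$ above, i.e.\ pinning down the exact placement of the common substring $\uu^c$ inside the bands $\bb_1,\bb,\bb_2$.
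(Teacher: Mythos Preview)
Your duality idea for obtaining that $\uu_2$ is abnormal is clean and works: applying Proposition~\ref{comparisonofcomplement} to $\uu^{-1}=\uu_1^{-1}\circ\uu_2^{-1}$ does yield that $\uu_2^{-1}$, hence $\uu_2$, is abnormal. The gap is in your step~5. The equality $\uu^c=\uu_2^c$ is simply false in general: the paper's own description in Case~IV(1) of Remark~\ref{casebycaseanalysis} records that when $\uu$ is abnormal one has $\uu_1^c=\ww_1\uu^c$ and $\uu_2^c=\uu^c\ww_2$ with $\ww_1,\ww_2$ possibly of positive length, and the example listed there for~IV(1) (in $\Lambda^{(v)}$, with $\bb_1=cfedB$, $\bb=edjIhG$, $\bb_2=djlK$) gives $\uu_1^c=ed$, $\uu_2^c=dj$, $\uu^c=d$, so $\uu^c\neq\uu_2^c$. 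Your invocation of the dual of Proposition~\ref{banduniqueness} to force $\bb_1=\bb$ does not fire here precisely because $\uu_2^c$ need not lie inside the dual of $\uu^e$; the ``patching signature types'' step, borrowed from the proof of Proposition~\ref{comparisonofcomplement}, does not transfer to this configuration. Since your steps~6--8 all rest on $\uu^c=\uu_2^c$, the identification $\beta(\uu)=\beta(\uu_2)$ is not established.

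The paper's route avoids this entirely. After noting that $\uu^c$ is a common substring of $\sbq(\uu_2)$ and $\tbq(\uu_2)$ (so $\uu_2$ is abnormal and $\uu^c$ is a substring of $\uu_2^c$), it argues directly that $\beta(\uu)$ is an exit syllable of $\bb=\sbq(\uu_2)$: otherwise, since $\beta(\uu)\uu^c$ is a string and $\uu^c$ is a \emph{proper} left substring of $\uu_1^c$, the syllable $\beta(\uu)$ would have to coincide with the next syllable of $\uu_1^c$ and hence lie in $\sbq(\uu)$, contradicting that $\beta(\uu)$ is an exit. Once $\beta(\uu)\in\mathcal E(\bb)$, the exit-uniqueness principle (Remark~\ref{exitorder}) applied at the cyclic permutation $\bub{\uu_2}$ gives $\beta(\uu)=\beta(\uu_2)$. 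No comparison of $|\uu^c|$ with $|\uu_2^c|$ is needed.

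For the converse, your contrapositive is correct but far heavier than necessary. The paper observes in one line that if some factorization has $\uu_2$ abnormal with $\beta(\uu)=\beta(\uu_2)$, then $\beta(\uu)$ is a syllable of $\tbq(\uu_2)=\tbq(\uu)$, which is exactly the abnormality of $\uu$.
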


\begin{proof}
Suppose a weak bridge $\uu$ has the property that for any factorization $\uu=\uu_2\circ\uu_1$ we have that $\uu_2$ is abnormal and $\beta(\uu)=\beta(\uu_2)$. Consider any such factorization. Since $\uu_2$ is abnormal $\beta(\uu_2)=\beta(\uu)$ is a syllable of $\tbq(\uu_2)=\tbq(\uu_1)$. Hence $\uu$ is abnormal.

Conversely suppose $\uu$ is abnormal. Then for any factorization $\uu=\uu_2\circ\uu_1$ Proposition \ref{comparisonofcomplement} guarantees that $\uu_1$ is abnormal and that $\uu^c$ is a left substring of $\uu_1^c$. Since $\uu\neq\uu_1$ we see that $\uu^c$ is a proper left substring of $\uu_1^c$. Moreover $\sbq(\uu_2)$ and $\tbq(\uu_2)$ intersect as $\uu^c$ is a common substring of $\sbq(\uu_2)=\tbq(\uu_1)$ and $\tbq(\uu_2)= \tbq(\uu)$. Therefore $\beta(\uu_2)$ is a syllable of $\tbq(\uu_2)$ and hence $\uu_2$ is abnormal. Furthermore $\uu^c$ is a substring of $\uu_2^c$. Clearly $\beta(\uu)$ is also an exit syllable of $\sbq(\uu_2)$ otherwise, since $\beta(\uu)\uu^c$ is a string and $\uu^c$ is a proper left substring of $\uu_1^c$, we will get that $\beta(\uu)$ is a syllable of $\sbq(\uu_1)=\sbq(\uu)$, a contradiction. Since $\uu^c$ is a substring of $\uu_2^c$ and $\beta(\uu)\uu^c$ is a string, from the definition of the exit syllable of a weak bridge we also have that $\beta(\uu)\uu_2^c$ is a string and hence $\beta(\uu)=\beta(\uu_2)$.
\end{proof}

The following is a very useful observation.
\begin{proposition}\label{normalexitnormalexit}
If $\uu=\uu_2\circ\uu_1$ and $\uu_1$ is normal then $\beta(\uu)=\beta(\uu_1)$. Consequently the conclusion also holds when $\beta(\uu_1)$ is normal.
\end{proposition}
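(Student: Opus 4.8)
The plan is to analyze the string $\tbq(\uu)\uu\sbq(\uu) = \tbq(\uu_2)(\uu_2\uu_1)\sbq(\uu_1)$ together with its possible one-step reduction along $\tbq(\uu_1)=\sbq(\uu_2)$, and to locate the exit syllable $\beta(\uu)$ precisely. Recall that $\beta(\uu)$ is defined as the syllable $\beta_k$ with $k$ minimal in the factorization $\tbq(\uu)\uu\sbq(\uu)=\beta_n\cdots\beta_1\sbq(\uu)$ such that $\beta_k\cdots\beta_1\sbq(\uu)$ is not a substring of a power of $\sbq(\uu)$; the same characterization applies to $\beta(\uu_1)$ relative to $\sbq(\uu_1)=\sbq(\uu)$. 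First I would observe that since $\uu_1$ is normal, by Remark \ref{normalbridgenormalexit} (or directly: normality means $\beta(\uu_1)$ is a normal exit of $\sbq(\uu_1)$), and crucially $\beta(\uu_1)$ is an \emph{exit} syllable of $\sbq(\uu_1)$, i.e. $\beta(\uu_1)\notin\sbq(\uu_1)$. So the syllable $\beta(\uu_1)$ genuinely leaves every power of $\sbq(\uu)$.

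Next I would argue that in passing from $\uu_1$ to $\uu=\uu_2\circ\uu_1$, the portion of the string lying strictly to the left of (and including) $\beta(\uu_1)$ is untouched. Two cases: if $\uu=\uu_2\uu_1$ (no reduction), then $\tbq(\uu)\uu\sbq(\uu)=\tbq(\uu_2)\uu_2\uu_1\sbq(\uu)$ has $\uu_1\sbq(\uu)$ as a right substring, and the minimal-index syllable leaving a power of $\sbq(\uu)$ is exactly the one already identified inside $\uu_1$, namely $\beta(\uu_1)$; hence $\beta(\uu)=\beta(\uu_1)$. If instead a $1$-step $\tbq(\uu_1)$-reduction occurs, $\uu_2\uu_1 = \uu'_2\bb'\uu'_1$ where $\bb'$ is a cyclic permutation of $\tbq(\uu_1)$ and $\uu=\uu'_2\uu'_1$, then the key point is that $\beta(\uu_1)\notin\tbq(\uu_1)$, so $\beta(\uu_1)$ cannot be part of the copy of $\bb'$ that gets deleted; therefore $\beta(\uu_1)$ still appears in $\uu$, in the same relative position with respect to $\sbq(\uu)$ on its right, and again $\beta(\uu)=\beta(\uu_1)$. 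The second assertion is then immediate: if $\beta(\uu_1)$ is normal, then by Corollary \ref{uniqueabnweakbridge} the weak bridge $\uu_1$ with exit $\beta(\uu_1)$ must itself be normal (the abnormal bridge with a given exit being unique, and if none is abnormal all are normal), so the first part applies.

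The main obstacle I anticipate is making rigorous the claim that the reduction along $\bb'$ does not disturb the determination of $\beta(\uu)$ — one must check that after deleting the copy of $\bb'$, the right substring $\uu'_1\sbq(\uu)$ of $\uu$ still sits inside a power of $\sbq(\uu)$ precisely up to the syllable just before $\beta(\uu_1)$, and no \emph{earlier} syllable (further right, i.e. smaller index) becomes an exit. For this I would use that $\beta(\uu_1)$ was already minimal with this property for $\uu_1$, that reduction by $\bb'$ only shortens the string in a region to the left of where $\beta(\uu_1)$ occurs (since $\beta(\uu_1)$, being an exit of $\tbq(\uu_1)=\sbq(\uu_2)$, marks the boundary between the "$\sbq(\uu)$-power part" and the rest), and that $\sbq(\uu)=\sbq(\uu_1)$ is unchanged by the composition. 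A clean way to phrase this: let $\bub{\uu_1}$ be the cyclic permutation of $\sbq(\uu_1)$ with $\beta(\uu_1)\bub{\uu_1}$ a string; then $\uu_1\sbq(\uu_1)^\infty$ and $\beta(\uu_1)\bub{\uu_1}^\infty$ agree, and this right-hand tail is preserved verbatim in $\uu\sbq(\uu)^\infty$ because the reduction acts only further to the left, giving $\beta(\uu)=\beta(\uu_1)$ directly from the defining property.
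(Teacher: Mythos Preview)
Your argument is correct and matches the paper's own proof almost exactly: both split into the cases $\uu=\uu_2\uu_1$ versus $\uu=\Red{\tbq(\uu_1)}(\uu_2\uu_1)$, and use that for normal $\uu_1$ the exit $\beta(\uu_1)$ sits in the interior $\uu_1^o$ (hence strictly to the right of any deleted copy of a cyclic permutation of $\tbq(\uu_1)$), so it survives verbatim and remains the first syllable leaving a power of $\sbq(\uu)$. One inessential slip: your parenthetical that normality of $\uu_1$ forces $\beta(\uu_1)$ to be a normal exit is false in general, and the citation of Remark~\ref{normalbridgenormalexit} is misplaced (that remark concerns bridges), but neither claim is actually used in your argument; the second assertion follows, as you say, directly from the definition of a normal exit syllable.
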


\begin{proof}
Since $\uu_1$ is normal,  $\beta(\uu_1)$ is the first syllable of $\uu_1^o$ but not a syllable of $\tbq(\uu_1)$.

If $\uu=\uu_2\uu_1$ then clearly $\beta(\uu_1)$ is the first syllable of $\uu$ not in $\sbq(\uu)$, and hence $\beta(\uu)=\beta(\uu_1)$.

If $\uu_2\uu_1=\ww_2\bb'\ww_1$ and $\uu=\ww_2\ww_1$, where $\bb'$ is a cyclic permutation of $\tbq(\uu_1)$ and the last syllable of $\ww_1$ is not a syllable of $\tbq(\uu_1)$ then $\beta(\uu_1)$ is a syllable of $\ww_1$, and hence $\beta(\uu)=\beta(\uu_1)$.
\end{proof}

Composition of bridges satisfies a right cancellation property.
\begin{proposition}\label{leftcancel}
Suppose $\uu_2\circ\uu_1=\uu'_2\circ\uu_1$. Then $\uu_2=\uu'_2$.
\end{proposition}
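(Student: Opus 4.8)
The plan is to compare, syllable by syllable from the right, the two strings $\uu_2\uu_1$ (before any possible reduction) realizing the two compositions, and to track where the band $\tbq(\uu_1)$ sits. Fix the notation $\bb:=\tbq(\uu_1)=\sbq(\uu_2)=\sbq(\uu'_2)$ and $\ww:=\uu_2\circ\uu_1=\uu'_2\circ\uu_1$. By definition of $\circ$, there is a cyclic permutation $\bb'$ of $\bb$ and a decomposition $\uu_2\uu_1=\xx_2\bb'\xx_1$ with $\xx_1,\xx_2$ either empty (the ``no reduction'' case, where $\ww=\uu_2\uu_1$ and $\bb'$ is read off as the relevant occurrence of $\bb$ inside $\uu_2\uu_1$) or such that $\ww=\xx_2\xx_1$; and similarly for $\uu'_2$ with some cyclic permutation $\bb''$ and decomposition $\uu'_2\uu_1=\xx'_2\bb''\xx'_1$.

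First I would observe that $\uu_1$ is a proper right substring of $\ww$ in a canonical way: since $N(\tbq(\uu_1),\uu_2\uu_1)\le 1$, the occurrence of $\bb$ witnessing the composition does not overlap a second copy, and $\uu_1$ itself is band-free, so $\uu_1$ is unaffected by the reduction except possibly for the deletion of a $\bb'$ immediately to its left. Thus in both presentations the suffix of $\ww$ of length $|\uu_1|$ is exactly $\uu_1$, and what remains to identify is the ``upper part''. The key point is that $\uu_2$ (resp. $\uu'_2$) is recovered from $\ww$ as follows: reading $\ww=\tbq(\uu_2)\ww\sbq(\uu_2)$ from the left, $\uu_2$ is the shortest left substring $\zz$ of $\ww$ such that $\zz^{-1}$ ... more precisely, $\uu_2$ is determined by its exit syllable $\beta(\uu_2)$ together with the requirement that $\bb=\sbq(\uu_2)$ be a string extending $\uu_2$ on the right. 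So the crux is to show $\beta(\uu_2)=\beta(\uu'_2)$ and that both occupy the ``same'' position in $\ww$.

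Here I expect the main obstacle: disentangling the two reduction scenarios, in particular the possibility that one of $\uu_2\circ\uu_1$, $\uu'_2\circ\uu_1$ is a genuine concatenation while the other involves deleting a cyclic permutation of $\bb$. I would handle this by case analysis on $\delta$ and the exit syllables, using Proposition \ref{normalexitnormalexit} and Proposition \ref{characterizeabnormality}: if $\uu_1$ is normal then $\beta(\ww)=\beta(\uu_1)$ regardless of the second factor, which pins down $\beta(\uu_2)$ and $\beta(\uu'_2)$ via the structure of $\ww$ above $\uu_1$; if $\uu_1$ is abnormal, then by Proposition \ref{comparisonofcomplement} applied in both factorizations, $\ww^c$ is a proper left substring of $\uu_1^c$ in each case, and Corollary \ref{uniqueabnweakbridge} forces $\beta(\ww)$, hence both $\beta(\uu_2)$ and $\beta(\uu'_2)$ (by Proposition \ref{characterizeabnormality}), to coincide. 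In either case $\beta(\uu_2)=\beta(\uu'_2)$, so $\uu_2$ and $\uu'_2$ have the same exit syllable and the same source $\bb$; since a weak bridge is determined by its source, target and the requirement that $\sbq(\uu_2)\uu_2$ be a substring of $\tbq(\uu_2)^2\uu_2\sbq(\uu_2)$ — equivalently, once the exit syllable and the position of $\bb$ inside $\ww$ are fixed, the whole left part of $\ww$ above that position is forced — we conclude $\uu_2=\uu'_2$. Finally I would double-check the degenerate cases where $|\uu_2|$ or $|\uu'_2|$ could a priori differ by the length of a deleted $\bb'$, ruling this out via $N(\tbq(\uu_1),\uu_2\uu_1)\le 1$ together with band-freeness of weak bridges, which prevents a second copy of $\bb$ from hiding in the upper part.
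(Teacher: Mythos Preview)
Your approach has a genuine gap at the very first step, and the subsequent case analysis rests on it. You claim that ``in both presentations the suffix of $\ww$ of length $|\uu_1|$ is exactly $\uu_1$'', i.e., that $\uu_1$ is always a left substring of $\ww=\uu_2\circ\uu_1$. This is false precisely in the reduction case. When $N(\tbq(\uu_1),\uu_2\uu_1)=1$, the unique occurrence of a cyclic permutation $\bb'$ of $\bb$ in $\uu_2\uu_1$ must straddle the boundary between $\uu_1$ and $\uu_2$ (both are band-free), so $\uu_2\uu_1=\xx_2\bb'\xx_1$ with $\xx_1$ a \emph{proper} left substring of $\uu_1$; then $\ww=\xx_2\xx_1$ and $\uu_1$ is not a left substring of $\ww$. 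So you cannot simply ``peel off'' $\uu_1$ from $\ww$ and compare what remains. There is a second gap downstream: in the abnormal case you invoke Proposition~\ref{comparisonofcomplement}, but that proposition assumes the \emph{composite} $\ww$ is abnormal, which you have not established; and even granting $\beta(\uu_2)=\beta(\uu'_2)$, equality of exit syllables does not by itself pin down a weak bridge, so the final identification of $\uu_2$ with $\uu'_2$ is not justified as written.

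The paper's proof avoids all of this with a two-line observation that you nearly stumbled onto but then abandoned: whether the composition $\uu_2\circ\uu_1$ involves a reduction is detectable from $\ww$ and $\uu_1$ alone, namely $\uu_2\circ\uu_1=\uu_2\uu_1$ if and only if $\uu_1$ is a left substring of $\ww$. Hence both compositions are simultaneously reduced or simultaneously unreduced. In the unreduced case $\uu_2\uu_1=\uu'_2\uu_1$ gives $\uu_2=\uu'_2$ by string cancellation. In the reduced case, one locates the band uniformly by taking the \emph{maximal} left substring $\ww_0$ of $\ww$ such that $\bb'\ww_0$ is a string for some cyclic permutation $\bb'$ of $\bb$; then $\uu_2\uu_1=\bar\ww\bb'\ww_0$ and $\uu'_2\uu_1=\bar\ww'\bb'\ww_0$ for the same $\bb'$ and $\ww_0$, whence $\bar\ww=\bar\ww'$ and $\uu_2=\uu'_2$. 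No appeal to normal/abnormal structure, exit syllables, or Propositions~\ref{normalexitnormalexit}--\ref{characterizeabnormality} is needed.
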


\begin{proof}
Recall that $\uu_2\circ\uu_1$ is either $\uu_2\uu_1$ or $\Red{\tbq(\uu_1)}(\uu_2\uu_1)$. Note that $\uu_2\circ\uu_1=\uu_2\uu_1$ if and only if $\uu_1$ is a left substring of $\uu_2\circ\uu_1$ if and only if $\uu_2$ is a right substring of $\uu_2\circ\uu_1$. Since the same statement also holds when $\uu_2$ is replaced by $\uu'_2$ we get that $\uu_2\circ\uu_1=\uu_2\uu_1$ if and only if $\uu'_2\circ\uu_1=\uu'_2\uu_1$.

If $\uu_2\uu_1=\uu_2\circ\uu_1=\uu'_2\circ\uu_1=\uu'_2\uu_1$ then clearly $\uu_2=\uu'_2$.

On the other hand, if $\uu_2\circ\uu_1=\Red{\tbq(\uu_1)}(\uu_2\uu_1)=\Red{\tbq(\uu_1)}(\uu'_2\uu_1)$ then there is a maximal left substring $\ww$ of $\uu_2\circ\uu_1=\uu'_2\circ\uu_1$ for which $\bb'\ww$ is a string for some cyclic permutation $\bb'$ of $\tbq(\uu_1)$. Then for some strings $\bar\ww,\bar\ww'$ we have $\uu_2\uu_1=\bar\ww\bb'\ww$ and $\uu'_2\uu_1=\bar\ww'\bb'\ww$. Since $\bar\ww\ww=\uu_2\circ\uu_1=\uu'_2\circ\uu_1=\bar\ww'\ww$ we get $\bar\ww=\bar\ww'$, and it follows that $\uu_2=\uu'_2$.
\end{proof}

The above cancellation property is paired with the following result to be useful.
\begin{proposition}\label{essentialexit}
Suppose $\uu$ is a weak bridge and $\uu=\uu_2\circ\uu_1$ for some abnormal bridge $\uu_1$ then for any factorization $\uu=\uu'_2\circ\uu'_1$ with $\uu'_1$ a bridge we have $\uu'_1=\uu_1$.
\end{proposition}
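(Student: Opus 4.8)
The plan is to reduce the statement to the two claims that $\uu'_1$ is abnormal and that it has the same exit syllable as $\uu_1$. Write $\bb:=\sbq(\uu)=\sbq(\uu_1)=\sbq(\uu'_1)$. Since $\uu_1$ is an abnormal bridge, $\beta(\uu_1)$ is an abnormal exit of $\bb$ and $\uu_1=\bar\lambda^a(\beta(\uu_1))$ (Corollary \ref{uniqueabnweakbridge}), and Remark \ref{factgivesperp} applied to $\uu=\uu_2\circ\uu_1$ gives $\beta(\uu)\perp\beta(\uu_1)$. Once $\uu'_1$ is known to be abnormal we likewise get $\uu'_1=\bar\lambda^a(\beta(\uu'_1))$ and $\beta(\uu)\perp\beta(\uu'_1)$, so proving $\beta(\uu'_1)=\beta(\uu_1)$ will then force $\uu'_1=\bar\lambda^a(\beta(\uu'_1))=\bar\lambda^a(\beta(\uu_1))=\uu_1$. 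Finally, if $\uu_2$ or $\uu'_2$ is trivial then $\uu$ is itself a bridge, so the $\circ$-irreducibility of bridges (Remark \ref{bridgesareirreducible}) immediately forces $\uu'_1=\uu_1$; hence we may assume that $\uu_2$ and $\uu'_2$ are non-trivial.

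First I would show that $\uu'_1$ is abnormal. If instead it were normal, then Proposition \ref{normalexitnormalexit} applied to $\uu=\uu'_2\circ\uu'_1$ would give $\beta(\uu'_1)=\beta(\uu)$, hence $\beta(\uu'_1)\perp\beta(\uu_1)$. Proposition \ref{factregperp} (with $\vv=\beta(\uu'_1)$, $\vv'=\beta(\uu_1)$) would then show that the weak bridge $\uu'_1\in\bar\lambda(\beta(\uu'_1))$ factors through $\bar\lambda^a(\beta(\uu_1))=\uu_1$, and this factorization is non-trivial since $\uu'_1\neq\uu_1$ (one is normal, the other abnormal). That exhibits $\uu'_1$ as a $\circ$-composition of two non-trivial weak bridges, contradicting Remark \ref{bridgesareirreducible}. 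So $\uu'_1$ is abnormal, and by the reduction above only $\beta(\uu_1)=\beta(\uu'_1)$ remains.

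The crux is to show $\beta(\uu_1)$ and $\beta(\uu'_1)$ are $\perp$-comparable. Granting this, say $\beta(\uu_1)\perp\beta(\uu'_1)$ (the reverse case being symmetric), Proposition \ref{factregperp} exhibits $\uu_1=\bar\lambda^a(\beta(\uu_1))$ as a factorization through $\bar\lambda^a(\beta(\uu'_1))=\uu'_1$; since $\uu_1$ is a bridge this factorization must be trivial, and by Proposition \ref{factregperp} triviality holds precisely when $\beta(\uu_1)=\beta(\uu'_1)$ and $\uu_1=\uu'_1$, which is what we want. To prove the comparability, observe that $\beta(\uu_1)$ and $\beta(\uu'_1)$ are abnormal exits of the one band $\bb$ on which the common syllable $\beta(\uu)$ is incident, so there are proper left substrings $\xx$ of $\bar\lambda^a(\beta(\uu_1))^e$ and $\xx'$ of $\bar\lambda^a(\beta(\uu'_1))^e$ with $\beta(\uu)\xx$ and $\beta(\uu)\xx'$ strings. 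By the unique-continuation property of strings $\xx$ and $\xx'$ are comparable, and then the control on the strings $\uu^c\subseteq\uu^e$ attached to an abnormal bridge $\uu$, supplied by Propositions \ref{constantsign}, \ref{component} and \ref{banduniqueness} (and the nesting recorded in the discussion following Proposition \ref{comparisonofcomplement}), forces $\beta(\uu_1)\perp\beta(\uu'_1)$ or $\beta(\uu'_1)\perp\beta(\uu_1)$.

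I expect this comparability step — equivalently, the statement that the abnormal exits of $\bb$ incident from a fixed syllable form a $\perp$-chain — to be the only real difficulty; the remaining arguments are a routine assembly of Remark \ref{factgivesperp}, Propositions \ref{normalexitnormalexit} and \ref{factregperp}, and the $\circ$-irreducibility of bridges.
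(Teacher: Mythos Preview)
Your proposal is correct and follows essentially the same route as the paper's proof. Both arguments (i) record $\beta(\uu)\perp\beta(\uu_1)$ via Remark~\ref{factgivesperp}, (ii) rule out a normal $\uu'_1$ --- you use Proposition~\ref{normalexitnormalexit} to get $\beta(\uu'_1)=\beta(\uu)\perp\beta(\uu_1)$ and then factor $\uu'_1$ non-trivially through $\uu_1$ via Proposition~\ref{factregperp}; the paper instead notes that a normal bridge has a normal exit (Remark~\ref{normalbridgenormalexit}), whence $\beta(\uu'_1)^\perp=\emptyset$, but this is just a repackaging of the same factorization contradiction --- (iii) deduce $\beta(\uu)\perp\beta(\uu'_1)$ and hence the $\perp$-comparability of $\beta(\uu_1)$ and $\beta(\uu'_1)$, and (iv) use bridge-irreducibility with Proposition~\ref{factregperp} to force $\beta(\uu_1)=\beta(\uu'_1)$ and thus $\uu_1=\uu'_1$ by Corollary~\ref{uniqueabnweakbridge}. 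The paper asserts the comparability in step~(iii) in a single sentence, whereas you sketch why it holds via the common right substrings of $\bub{\uu}$; both treatments leave this the least-detailed step, and your references to Propositions~\ref{constantsign}--\ref{banduniqueness} and the nesting discussion after Proposition~\ref{comparisonofcomplement} are the right ingredients.
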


\begin{proof}
Since $\uu=\uu_2\circ\uu_1$ and $\beta(\uu_1)$ is abnormal we have $\beta(\uu_1)\in\beta(\uu)^\perp$ by Remark \ref{factgivesperp}.

If $\uu'_1$ is normal then $\beta(\uu'_1)$ is normal by Remark \ref{normalbridgenormalexit}. Then by Proposition \ref{normalexitnormalexit} we have $\beta(\uu)=\beta(\uu'_1)$. But $\beta(\uu'_1)^\perp=\emptyset$ by Proposition \ref{factregperp}, a contradiction. Therefore $\uu'_1$ is abnormal.

Since $\beta(\uu)$ is incident on both $\beta(\uu_1)$ and $\beta(\uu'_1)$ there is an incidence relation between the latter two. Since both $\uu_1,\uu'_1$ are bridges we get $\beta(\uu_1)=\beta(\uu'_1)$ and hence $\uu_1=\uu'_1$ by Corollary \ref{uniqueabnweakbridge}.
\end{proof}

Combining the above two results we obtain the following.
\begin{corollary}\label{abnwbcancel}
Suppose $\uu=\uu_n\circ(\hdots\circ(\uu_2\circ\uu_1)\hdots)$, where $\uu_j$ is an abnormal bridge for each $1\leq j<n$ and $\uu_n$ is a bridge. Then the factorization of $\uu$ into bridges is unique.
\end{corollary}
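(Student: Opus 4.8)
The plan is to prove uniqueness by induction on $n$, using Propositions~\ref{leftcancel} and \ref{essentialexit} as the two halves of a cancellation argument. Write $\uu'_1 := \uu_1$ and, for $2\leq j\leq n$, set $\uu'_j := \uu_j\circ(\uu_{j-1}\circ(\hdots\circ\uu_1)\hdots)$, so that $\uu = \uu'_n$ and each $\uu'_j = \uu_j\circ\uu'_{j-1}$; by hypothesis all of these compositions are defined. Suppose $\uu = \vv_m\circ(\hdots\circ(\vv_2\circ\vv_1)\hdots)$ is another factorization into bridges. The goal is to show $m = n$ and $\vv_j = \uu_j$ for all $j$.

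First I would handle the bottom of the factorization. The key point is that $\uu_1$ is an \emph{abnormal} bridge, so $\beta(\uu)\perp\beta(\uu_1)$ by Remark~\ref{factgivesperp} (applied to the factorization $\uu = \uu'_n = (\uu_n\circ(\hdots\circ\uu_2))\circ\uu_1$, noting that $\uu_1$ abnormal forces the relevant intermediate composition to be a genuine factorization through an abnormal first factor). Now apply Proposition~\ref{essentialexit} with the factorization $\uu = \vv_m\circ(\hdots\circ\vv_1)$: since $\vv_1$ is a bridge and $\uu = \uu_2''\circ\uu_1$ with $\uu_1$ an abnormal bridge (where $\uu_2''$ denotes $\uu_n\circ(\hdots\circ\uu_2)$), we get $\vv_1 = \uu_1$. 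The remaining subtlety here is that for $n=1$ there is nothing below to factor, and one must separately observe that a single abnormal bridge cannot be written as a nontrivial $\circ$-product of bridges — this is immediate from Remark~\ref{bridgesareirreducible} (bridges are $\circ$-irreducible) — so in that case $m=1$ and $\vv_1=\uu_1$ trivially.

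Next, with $\vv_1 = \uu_1$ in hand, I would cancel it off. We have $\uu'_n = \vv_m\circ(\hdots\circ(\vv_2\circ\uu_1)\hdots)$ and $\uu'_n = \uu_n\circ(\hdots\circ(\uu_2\circ\uu_1)\hdots)$. Re-associating both as $(\,\cdot\,)\circ\uu_1$ — which is legitimate because every composition in sight is defined, and $\circ$ behaves well enough with a fixed abnormal-bridge right factor — Proposition~\ref{leftcancel} gives $\uu_n\circ(\hdots\circ\uu_2) = \vv_m\circ(\hdots\circ\vv_2)$, call this common value $\uu''$. Now $\uu'' = \uu_n\circ(\hdots\circ(\uu_3\circ\uu_2)\hdots)$ with $\uu_j$ an abnormal bridge for $2\leq j<n$ and $\uu_n$ a bridge, so $\uu''$ again satisfies the hypotheses of the corollary with $n$ replaced by $n-1$. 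By the induction hypothesis its factorization into bridges is unique; since $\vv_m\circ(\hdots\circ\vv_2)$ is such a factorization, $m-1 = n-1$ and $\vv_j = \uu_j$ for $2\leq j\leq n$. Combined with $\vv_1 = \uu_1$ this completes the induction.

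The main obstacle I anticipate is the re-association step: making precise that from $\uu'_n = \uu_n\circ\uu''_{n-1}$ and the alternative factorization one can legitimately write both sides in the form $(\text{something})\circ\uu_1$ so that Proposition~\ref{leftcancel} applies. Because $\circ$ is not associative in general (Example~\ref{nonassociativityweakbridge}), one cannot simply re-bracket; the right justification is that when the innermost right factor $\uu_1$ is an abnormal bridge, Proposition~\ref{comparisonofcomplement} pins down $\uu_1^c$ as a left substring of every intermediate complimented interior, which is exactly what makes the reductions in all these compositions compatible — so the string underlying $\uu_n\circ(\hdots\circ(\uu_2\circ\uu_1))$ genuinely equals that of $(\uu_n\circ(\hdots\circ\uu_2))\circ\uu_1$. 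I would isolate this as the one lemma-style observation to verify carefully; everything else is a clean two-step cancellation plus induction.
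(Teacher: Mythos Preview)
Your approach is essentially the paper's: the paper literally says ``Combining the above two results we obtain the following,'' meaning precisely the induction you spell out using Proposition~\ref{essentialexit} to match the innermost factors and Proposition~\ref{leftcancel} to peel them off. You are also right that the re-association step is the only nontrivial point, and the paper itself defers this to the subsequent Remark~\ref{associativenaafact}.

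One refinement: your proposed justification of the re-association via Proposition~\ref{comparisonofcomplement} (tracking $\uu_1^c$ as a substring of intermediate complimented interiors) is not quite the cleanest route and does not directly yield what you need. A smoother argument is to show inductively that $\beta(\ww_j)\perp\beta(\uu_1)$ for every $j$, where $\ww_j:=\uu_j\circ(\hdots\circ\uu_1)$: if $\ww_{j-1}$ is normal then $\beta(\ww_j)=\beta(\ww_{j-1})$ by Proposition~\ref{normalexitnormalexit}; if $\ww_{j-1}$ is abnormal then $\beta(\ww_j)\perp\beta(\ww_{j-1})$ by Remark~\ref{factgivesperp}, and transitivity of $\perp$ finishes it. Once $\beta(\uu)\perp\beta(\uu_1)$, Proposition~\ref{factregperp} gives $\uu=\xx\circ\uu_1$ directly, which is exactly what Proposition~\ref{essentialexit} needs as input. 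The same argument applied to the competing factorization (after first checking that $\vv_1$ must be abnormal, else $\beta(\uu)=\beta(\vv_1)$ would be normal by iterated Proposition~\ref{normalexitnormalexit}, contradicting $\beta(\uu)\perp\beta(\uu_1)$) then lets you run the cancellation and induction exactly as you outlined.
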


The next observation will be useful later.
\begin{rmk}\label{associativenaafact}
Suppose $\uu=\uu_n\circ(\hdots\circ(\uu_2\circ\uu_1)\hdots)$, where $\uu_j$ is an abnormal bridge for each $1\leq j<n$ and $\uu_n$ is a bridge. Then such factorization, which is unique thanks to Corollary \ref{abnwbcancel}, is associative.
\end{rmk}

In the rest of the section we study abnormal weak half bridges.
\begin{definition}
Say a weak half bridge $\xx_0\xrightarrow{\uu}\bb$ is \emph{abnormal} if $\uu$ is a right substring of $\bb$. Otherwise say that $\uu$ is \emph{normal}.
\end{definition}

The entry syllable, $\alpha(\uu)$, of a normal half bridge is the last syllable from the left in $^\infty\tbq(\uu)\uu\xx_0$ that is not a syllable of $\tbq(\uu)$. Note that $\alpha(\uu)\in\uu$. Let $\bua{\uu}$ denote the unique cyclic permutation of $\tbq(\uu)$ for which  $^\infty\tbq(\uu)\uu\xx_0=\ ^\infty\bua{\uu}\ww\xx_0$, for a string $\ww$ of positive length that has $\alpha(\uu)$ as the last syllable. The string $\ww$ is the \emph{interior} of $\uu$, and we denote it by $\uu^o$.

If $\uu$ is abnormal then $\bua{\uu}$ denotes the unique cyclic permutation of $\tbq(\uu)$ for which $^\infty\tbq(\uu)\uu\xx_0=\ ^\infty\bua{\uu}\xx_0$.

We begin our analysis of half bridges with a useful proof technique.
\begin{proposition}\label{squaresubstring}
Suppose $\bb$ is a band, $\uu$ is a cyclic string and $\delta(\uu)\neq0$. Then $\uu^2$ is not a substring of $\bb$.
\end{proposition}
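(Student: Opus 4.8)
The plan is a proof by contradiction: I would assume $\uu^2$ is a substring of $\bb$ and use the defining axioms of a string algebra to pin down the local combinatorics around a hypothetical occurrence of $\uu^2$, eventually contradicting the finite dimensionality of $\Lambda$.

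First I would normalise: replacing $\bb$ by $\bb^{-1}$ and $\uu$ by $\uu^{-1}$ if necessary, I may assume $\delta(\uu)=-1$, so that $\uu=a_k\cdots a_1$ is a directed cycle in $\Q$ based at the vertex $v:=s(\uu)=t(\uu)$, with each $a_i\in Q_1$. Note also $\delta(\bb)=0$: a band cannot be all direct, since otherwise every power $\bb^n$ would be a nonzero path of $\mathcal K\Q$ surviving in $\Lambda$, contradicting the finite dimensionality of $\Lambda$. Now suppose $\uu^2$ is a substring of $\bb$ and fix a cyclic permutation $\bb'=\xx_2\uu^2\xx_1$ of $\bb$; since $\delta(\bb)=0$, the word $\xx_1\xx_2$ has positive length.

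The heart of the argument is an absorption step. At the middle occurrence of $v$ inside $\uu^2$ the arrow $a_k$ enters $v$ and the arrow $a_1$ leaves $v$, and $a_1a_k\notin\rho$ because $\uu^2$ lies inside the string $\bb$. If $\xx_1$ is nonempty, let $\gamma$ be its syllable abutting $\uu^2$; if $\gamma$ is a direct arrow, then $\gamma$ ends at $v=s(a_1)$, so the string-algebra axiom that at most one arrow $\beta$ satisfies $t(\beta)=s(a_1)$ and $a_1\beta\notin\rho$, together with $a_1a_k\notin\rho$, forces $\gamma=a_k$; iterating with $a_{k-1},a_{k-2},\dots$ in place of $a_k$ shows that a maximal direct left-substring of $\xx_1$ is a power of $\uu$, which we absorb into the central block. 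The dual statement on the $\xx_2$ side uses the axiom that at most one arrow $\beta$ satisfies $s(\beta)=t(a_k)$ and $\beta a_k\notin\rho$. Since $|\bb|$ is finite and $\bb'$ is not all direct, this process terminates at a cyclic permutation $\bb'=\yy_2\uu^N\yy_1$ with $N\geq2$ in which $\uu^N$ is a maximal direct run: the syllables of $\yy_1$ and of $\yy_2$ abutting $\uu^N$ are inverse (allowing the degenerate cases where one of $\yy_1,\yy_2$ is empty and the abutting syllable is read cyclically).

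To finish, observe that, because the syllables of $\yy_1,\yy_2$ next to $\uu^N$ are inverse, for every $m\geq1$ each two-letter factor of $\yy_2\uu^m\yy_1$ and each maximal direct run of it other than $\uu^m$ already occurs in $\bb'$; hence $\yy_2\uu^m\yy_1$ is a string exactly when $\uu^m$ is, and in that case all of its powers are strings, so its primitive root is a band. The contradiction I am aiming for is that $\uu^m$ is then a string for every $m$, producing nonzero paths of unbounded length in $\Lambda$, which is impossible. The step I expect to be the main obstacle is precisely this last implication: one must rule out that $\uu^N$ is genuinely a maximal direct run of the band $\bb$, i.e., that the inverse syllables forced by $\yy_1$ and $\yy_2$ at $v$ can coexist with a relation of $\rho$ that truncates the sequence $\uu,\uu^2,\uu^3,\dots$ of direct strings. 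I would attack this by a careful analysis, via the $\sigma$ and $\varepsilon$ functions, of which syllables may precede or follow a power of $\uu$ at $v$, showing that one further copy of $\uu$ (hence, inductively, all copies) can always be inserted, so that $\uu^m$ remains a string for all $m$ and the desired contradiction is reached.
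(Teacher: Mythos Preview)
Your approach tries to derive a contradiction from finite-dimensionality of $\Lambda$ alone, never invoking domesticity. This cannot succeed: the proposition is false for non-domestic string algebras. For instance, take the quiver with vertices $v,w$, a loop $a$ at $v$, and arrows $b\colon w\to v$, $c\colon v\to w$, with $\rho=\{ab,\,ca,\,a^3,\,cbc\}$. This is a finite-dimensional string algebra, $\uu:=a$ is a direct cyclic string, and $\bb:=b^{-1}a^2c^{-1}$ is a band (every junction of $\bb^n$ already occurs in $\bb^2$, and the only direct and inverse runs are $a^2$ and $(bc)^{-1}$). Here your absorption step correctly identifies the maximal direct run as $\uu^2$, but the final inference ``$\uu^m$ is a string for all $m$'' fails because $a^3\in\rho$. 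The analysis via $\sigma,\varepsilon$ you propose cannot rescue this: those functions constrain which \emph{syllables} may abut at a vertex, not which \emph{long paths} survive modulo $\rho$, so they say nothing about whether a further copy of $\uu$ can be inserted.

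The paper's argument is much shorter and uses domesticity directly. Writing a cyclic permutation of $\bb$ as $\bb'=\uu^2\uu'$, one notes that $\bb'':=\uu\uu'$ has $\delta(\bb'')=0$ and that $\bb''^2$ is a string (all junctions already occur in $\bb'^2$), so $\bb''$ is a power of a cyclic permutation of a band; then both $\bb'\bb''$ and $\bb''\bb'$ are strings, contradicting domesticity. If you want to repair your approach you must feed domesticity into the last step---but once you do, the paper's direct construction of the second band $\bb''$ is more efficient than the maximal-run analysis. (A minor separate issue: your absorption need not produce a clean power $\uu^N$; when $|\uu|>1$ the maximal direct run can be $\uu^N$ flanked by proper fragments of $\uu$.)
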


\begin{proof}
Suppose, for contradiction, that $\delta(\uu)\neq0$ but $\uu^2$ is a substring of $\bb$.

Consider a cyclic permutation $\bb'$ of $\bb$ such that $\bb'=\uu^2\uu'$. Then $\delta(\uu\uu')=0$. Then $\bb'':=\uu\uu'$ is a cyclic string such that ${\bb''}^2$ exists. Hence $\bb''$ is a cyclic permutation of a power of a band by \cite[Remark~3.1.3]{GKS}. Moreover both $\bb'\bb''$ and $\bb''\bb'$ exist, a contradiction to the domesticity of $\Lambda$, thus completing the proof.
\end{proof}

Using arguments similar to the proof of the above proposition we can prove the following.
\begin{proposition}\label{uniquebanduniqueabnormalhalfbridge}
Suppose $i\in\{1,-1\}$, $\uu_1,\uu_2\in\bar\lambda^h_i(\xx_0)$ are abnormal and $\tbq(\uu_1)=\tbq(\uu_2)$. Then $\uu_1=\uu_2$.
\end{proposition}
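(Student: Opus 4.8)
The plan is to imitate the proof of Proposition~\ref{squaresubstring}: assume $\uu_1\neq\uu_2$ and manufacture from this two ``incommensurable'' cyclic words --- one a cyclic permutation of (a power of) $\bb:=\tbq(\uu_1)=\tbq(\uu_2)$ and one a cyclic permutation of a power of a strictly shorter band --- which concatenate in both orders, contradicting domesticity. Write $\ell:=|\bb|$. I would first pin down the shape of an abnormal weak half bridge: comparing the two sides of the identity ${}^\infty\bb\,\uu_j\,\xx_0={}^\infty\bua{\uu_j}\,\xx_0$ defining $\bua{\uu_j}$ segment by segment leftward from $\xx_0$ (first the rightmost $|\uu_j|$ syllables, then the rightmost $\ell$, then the rightmost $\ell+|\uu_j|$) yields $\bb=\uu_j\vv_j$ and $\bua{\uu_j}=\vv_j\uu_j$; in particular each $\uu_j$ is a left substring of the representative $\bb$. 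Two left substrings of $\bb$ of equal length coincide and $\uu_1\neq\uu_2$, so the lengths differ; after renaming, $|\uu_1|<|\uu_2|$ and $\uu_1$ is a proper left substring of $\uu_2$, say $\uu_2=\uu_1\ww$ with $|\ww|\geq1$ and $\bb=\uu_1\ww\yy$, where $|\yy|\geq1$ because $\uu_2$ is band-free. Thus $\bua{\uu_1}=\ww\yy\uu_1$ and $\bua{\uu_2}=\yy\uu_1\ww$.

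Next I would compare the left-$\mathbb N$-strings $\mm_j:={}^\infty\bb\,\uu_j\,\xx_0={}^\infty\bua{\uu_j}\,\xx_0$. The $\ell$ syllables of $\mm_j$ immediately to the left of $\xx_0$ recover $\bua{\uu_j}$, so $\mm_1=\mm_2$ would force $\bua{\uu_1}=\bua{\uu_2}$, i.e.\ $\ww\,(\yy\uu_1)=(\yy\uu_1)\,\ww$; by the standard fact that two commuting words are powers of a common word, both $\ww$ and $\yy\uu_1$, hence $\bua{\uu_1}$, would be powers of a word of length $\leq|\ww|<\ell$, so $\bua{\uu_1}$ would be a proper power --- impossible, since it is a band. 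Hence $\mm_1\neq\mm_2$.

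It remains to derive a contradiction from $\mm_1\neq\mm_2$, and this is the step I expect to be genuinely delicate. Since $\mm_1,\mm_2$ are distinct left-$\mathbb N$-strings sharing the right substring $\xx_0$, they fork: let $\zz=\zz'\xx_0$ be their maximal common right substring (so $|\zz'|<\ell$, for otherwise $\zz'$ contains a full period and pins the common periodic tail, forcing $\mm_1=\mm_2$) and let $\gamma_1\neq\gamma_2$ be the syllables with $\gamma_j\zz$ a right substring of $\mm_j$. Each $\gamma_j\zz'$ is a substring of the periodic word ${}^\infty\bua{\uu_j}$, so $\zz'$ sits at two positions of $\bb$, read cyclically, that differ modulo $\ell$ (if they agreed modulo $\ell$ the preceding syllables would coincide, i.e.\ $\gamma_1=\gamma_2$). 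From the displacement between these two occurrences --- keeping precise track of where the fork lies relative to the blocks $\uu_1$, $\ww$, $\yy$ of $\bb$, treating the degenerate case $|\zz'|=0$ separately, and using the sign hypothesis $\theta(\uu_1)=\theta(\uu_2)$ together with the branching condition of a string algebra --- I would read off a nonempty cyclic string $\ww'$ that is a substring of a cyclic permutation of $\bb$ with $1\leq|\ww'|<\ell$. Then $\ww'$ is a cyclic permutation of a power of a band $\bb'$ with $|\bb'|\leq|\ww'|<\ell=|\bb|$, so $\bb'\neq\bb$; and, exactly as in the final paragraph of the proof of Proposition~\ref{squaresubstring}, $\ww'$ and a suitable cyclic permutation of $\bb$ concatenate in both orders, producing infinitely many bands and contradicting domesticity. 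Therefore $\uu_1=\uu_2$. The hard part is the last extraction: locating the fork precisely and reading off the forbidden short cyclic subword $\ww'$ (including the degenerate sub-case); the rest is bookkeeping with periodic words.
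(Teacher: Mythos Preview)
Your opening moves are sound: abnormality makes each $\uu_j$ a substring of $\bb$ at the ``target'' end (what the paper calls a \emph{right} substring---your ``left'' is just a clash with the paper's directional convention), so one is a prefix of the other; and your Fine--Wilf/Lyndon--Sch\"utzenberger argument correctly rules out $\mm_1=\mm_2$, since a band is primitive. So far so good.

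The gap is the part you yourself flag as ``the hard part'': you never identify the cyclic word $\ww'$, and the mechanism you propose for finding it is not adequate. Knowing that $\zz'$ sits at two cyclic positions of $\bb$ separated by a fixed displacement only says a substring repeats; it does not hand you a \emph{cyclic} substring of $\bb$, let alone one that concatenates with a cyclic permutation of $\bb$ in both orders. The analogy with Proposition~\ref{squaresubstring} is misleading: there the input is specifically a \emph{square} $\uu^2$ inside $\bb$, which is exactly what makes the construction go through. Your fork analysis does not produce a square.

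The paper's argument is different in kind and worth internalising. Write (in its convention) $\uu_1=\uu_2\uu$ with $|\uu|>0$, $\bb=\uu_1\uu_3=\uu_2\uu\uu_3$, and let $\uu'$ be the maximal common \emph{left} substring of $\uu$ and $\uu_2$; the sign hypothesis $i=\theta(\bb\uu_1)=\theta(\bb\uu_2)$ forces $|\uu'|>0$, and since $\uu'$ forks, writing $\uu=\uu''\uu'$, $\uu_2=\uu'''\uu'$ gives $\theta(\uu'')=-\theta(\uu''')$. The two domesticity contradictions are then \emph{rearrangements} of the pieces of $\bb$, not substrings: if $\delta(\uu')=0$ then $\uu_2\uu_3$ is cyclic and commutes with $\bb$; if $\uu^2$ is a string then $\uu_2\uu^2\uu_3$ plays the same role. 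So $\delta(\uu')\neq0$ and $\uu^2$ is not a string. Each of these yields an element of $\rho\cup\rho^{-1}$ of the form $(\text{something})\,\uu'\,(\text{something})$ whose left piece sits inside $\uu'''$, respectively $\uu''$, forcing $\theta(\uu''')=\delta(\uu')$ and $\theta(\uu'')=\delta(\uu')$---contradicting $\theta(\uu'')=-\theta(\uu''')$. The crux is this sign pinning via the relation set $\rho$, not an extraction of a short cyclic subword from periodic combinatorics; your outline never reaches that idea.
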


\begin{proof}
Since $\uu_1,\uu_2$ are abnormal they are right substrings of $\bb:=\tbq(\uu_1)=\tbq(\uu_2)$. Without loss of generality we may assume that $\uu_1=\uu_2\uu$ for some string $\uu$.

Suppose, for contradiction, that $|\uu|>0$. Let $\uu'$ be the maximal common left substring of $\uu$ and $\uu_2$. Since $\theta(\bb\uu_2)=\theta(\bb\uu_1)$ we have that $|\uu'|>0$. Let $\uu=\uu''\uu'$ and $\uu_2=\uu'''\uu'$. Let $\uu_3$ denote the left substring of $\bb$ such that $\uu_1\uu_3=\bb$. Since $\uu'$ forks, we get $\theta(\uu'')=-\theta(\uu''')$.

If $\delta(\uu')=0$ then the concatenation $\uu_2\uu_3\uu_2\uu_3$ exists. By \cite[Remark~3.1.3]{GKS} we obtain that $\bb':=\uu_2\uu_3$ is a power of a cyclic permutation of a band for which both $\bb\bb'$ and $\bb'\bb$ are strings, thus contradicting the domesticity of $\Lambda$. Therefore there are strings $\uu_4,\uu_5$ of positive length such that $\uu_5\uu'\uu_4\in\rho\cup\rho^{-1}$. In particular, $\uu_5$ is a left substring of $\uu'''$ and $\theta(\uu''')=\theta(\uu_5)=\delta(\uu')$.

Further if $\uu^2$ is a string then from \cite[Remark~3.1.3]{GKS} we see that $\bb''=\uu_2\uu^2\uu_3$ is a cyclic permutation of a band for which $\bb''\bb$ and $\bb\bb''$ are strings contradicting the domesticity of $\Lambda$. Thus $\uu^2$ is not a string. Therefore there are strings $\uu_6,\uu_7$ of positive length such that $\uu_7\uu'\uu_6\in\rho\cup\rho^{-1}$. In particular, $\uu_7$ is a left substring of $\uu''$ and $\theta(\uu'')=\theta(\uu_7)=\delta(\uu')$.

The conclusions of the above three paragraphs are contradictory. Hence our assumption $|\uu|>0$ is wrong, and we have concluded $\uu_1=\uu_2$.
\end{proof}

The following example shows that the above result fails when $\theta(\uu_1)\neq\theta(\uu_2)$.
\begin{example}
Consider the algebra $\Lambda^{(iii)}$ from Figure \ref{Disthalfabnbrge}. 
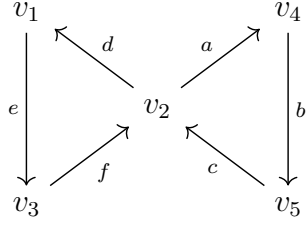
\begin{figure}[h]
    \centering
\begin{tikzcd}
	{v_1} && {v_4} \\
	& {v_2} \\
	{v_3} && {v_5}
	\arrow["e"', from=1-1, to=3-1]
	\arrow["d"', from=2-2, to=1-1]
	\arrow["f"', from=3-1, to=2-2]
	\arrow["a", from=2-2, to=1-3]
	\arrow["b", from=1-3, to=3-3]
	\arrow["c", from=3-3, to=2-2]
\end{tikzcd}    
\caption{$\Lambda^{(iii)}$ with $\rho=\{af,dc,edf,(bac)^2\}$}
    \label{Disthalfabnbrge}
\end{figure}
Here the bands are $\bb:=cbaDEF$ and $\bb^{-1}$. If $\xx_0:=D$ then there are two half bridges $\xx_0\to\bb$, namely $\uu_1=cba$ and $\uu_2=1_{(v_2,\epsilon(D))}$. It is readily verified that both $\uu_1$ and $\uu_2$ are abnormal and that $\theta(\uu_1)=-\theta(\uu_2)$.
\end{example}

Below we note an interesting observation about injectivity of the (complemented) interior map.
\begin{proposition}\label{interiorisinjective}
Suppose two normal weak bridges (resp. abnormal weak bridges, weak reverse half bridges, normal weak half bridges) $\uu_1,\uu_2$ satisfy $\sbq(\uu_1)=\sbq(\uu_2)$, $\tbq(\uu_1)=\tbq(\uu_2)$ and $\uu_1^o=\uu_2^o$ (resp. $\uu_1^c=\uu_2^c$, $\uu_1^o=\uu_2^o$, $\uu_1^o=\uu_2^o$) then $\uu_1=\uu_2$.
\end{proposition}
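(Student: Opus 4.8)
The plan is to treat each of the four cases (normal weak bridges, abnormal weak bridges, weak reverse half bridges, normal weak half bridges) uniformly via the observation, already recorded after Definition \ref{nabndef}, that the source and target of the interior (or complemented interior) of an arrow $\uu$ are determined by $\{s(\beta(\uu)), t(\alpha(\uu))\}$, together with the fact that the relevant cyclic permutations $\bua{\uu}$ and $\bub{\uu}$ are determined by $\alpha(\uu)$ and $\beta(\uu)$ respectively. So the first step is to reduce the claim to showing that, under the stated hypotheses, $\alpha(\uu_1) = \alpha(\uu_2)$ and $\beta(\uu_1) = \beta(\uu_2)$: once entry and exit agree, the word $\tbq(\uu_1)\uu_1\sbq(\uu_1) = \tbq(\uu_2)\uu_2\sbq(\uu_2)$ is recoverable by concatenating the common interior with the cyclic permutations of the (common) source and target bands determined by those syllables, and $\uu_1$, $\uu_2$ are the substrings cut out between the bands, hence equal.

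For the normal weak bridge case, recall that $\beta(\uu)$ is the first syllable of $\uu^o$ and $\alpha(\uu)$ is its last syllable; since $\uu_1^o = \uu_2^o$, these syllables coincide immediately, and $\bua{\uu}$, $\bub{\uu}$ are then forced, giving $\uu_1 = \uu_2$. The normal weak half bridge case is analogous using the description of $\alpha(\uu)$ as the last syllable of $\uu^o$ and $\bua{\uu}$ as the cyclic permutation of $\tbq(\uu)$ with ${}^\infty\tbq(\uu)\uu\xx_0 = {}^\infty\bua{\uu}\uu^o\xx_0$; here $\sbq(\uu_1) = \sbq(\uu_2) = \xx_0$ and $\tbq(\uu_1) = \tbq(\uu_2)$ are given, so reconstructing $\uu$ from $\uu^o$ and $\bua{\uu}$ is routine. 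The weak reverse half bridge case uses the definition $\uu\sbq(\uu)^\infty = \uu^o\bub{\uu}^\infty$: from $\uu_1^o = \uu_2^o$ and $\sbq(\uu_1) = \sbq(\uu_2)$ one reads off that $\uu_1$ and $\uu_2$ have the same length and the same syllables, since $\bub{\uu}$ is a right substring of a cyclic permutation of the common source band and is pinned down by the point where $\uu^o$ ends.

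The abnormal weak bridge case is the one I expect to be the main obstacle, since there $\uu^c$ (rather than an interior containing the full data of the exit syllable as its first letter) is what is assumed equal, and $\uu^c$ may even have length $0$. Here I would argue: from $\uu_1^c = \uu_2^c =: \xx$ and $\beta(\uu_1)\xx\alpha(\uu_1)$, $\beta(\uu_2)\xx\alpha(\uu_2)$ being strings, together with $\sbq(\uu_1)=\sbq(\uu_2)$ and $\tbq(\uu_1)=\tbq(\uu_2)$, one sees that $\xx$ is a right substring of $\bub{\uu_i}$ (a cyclic permutation of the common source) and a left substring of $\bua{\uu_i}$ (a cyclic permutation of the common target). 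The point is that $\beta(\uu_i)$ is the unique syllable such that $\beta(\uu_i)\xx$ is a string and $\beta(\uu_i)\notin\sbq(\uu_i)$ — this uniqueness is essentially the content behind Corollary \ref{uniqueabnweakbridge} (two abnormal weak bridges with the same source and same exit coincide) combined with Proposition \ref{banduniqueness}. So I would invoke Corollary \ref{uniqueabnweakbridge}: it suffices to show $\beta(\uu_1) = \beta(\uu_2)$. If these differed, then since both $\beta(\uu_i)\xx$ are strings with $\beta(\uu_i)$ an exit of the common source band, Remark \ref{exitorder} (two exit syllables that can both precede a common cyclic permutation must be equal) applied to the appropriate cyclic permutation $\bla{\uu_i}$ of the source forces $\beta(\uu_1) = \beta(\uu_2)$, and dually $\alpha(\uu_1) = \alpha(\uu_2)$ via $\beta(\uu^{-1})$ and Remark \ref{exitorder} applied to $\tbq(\uu)$. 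With entry and exit now equal, Corollary \ref{uniqueabnweakbridge} (or directly the reconstruction of $\bla{\uu}$ and the substring between the bands) yields $\uu_1 = \uu_2$, completing the proof.
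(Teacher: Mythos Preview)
Your treatment of the three ``normal'' cases (normal weak bridges, normal weak half bridges, weak reverse half bridges) is correct and matches the paper: you recover the bi-infinite or one-sided infinite word from $\uu^o$ together with the cyclic permutations $\bua{\uu},\bub{\uu}$ that are forced by the first and last syllables of $\uu^o$, exactly as the paper does via the displayed equality ${}^\infty\tbq(\uu_1)\uu_1\sbq(\uu_1)^\infty={}^\infty\bua{\uu_1}\uu_1^o\bub{\uu_1}^\infty$.

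The abnormal case, however, has a genuine gap. Your plan is to deduce $\beta(\uu_1)=\beta(\uu_2)$ from Remark~\ref{exitorder} applied to ``the appropriate cyclic permutation $\bla{\uu_i}$ of the source''. But Remark~\ref{exitorder} needs both exit syllables to precede a \emph{single} cyclic permutation of the source band; what you actually know is that $\beta(\uu_i)\bub{\uu_i}$ is a string for each $i$, and nothing in your hypotheses gives $\bub{\uu_1}=\bub{\uu_2}$ (indeed $\bub{\uu_i}$ is defined via $\beta(\uu_i)$, so this would be circular). The mention of $\bla{\uu_i}$ does not help either: $\beta(\uu_i)$ does not in general precede $\bla{\uu_i}$, and even if it did, you would still need $\bla{\uu_1}=\bla{\uu_2}$, which only says that $\uu_1^c=\uu_2^c$ sit at the same position inside the source band---again part of what must be proved when $\uu^c$ could occur as a left substring of several cyclic permutations.

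The paper avoids this entirely by invoking Proposition~\ref{banduniqueness} directly. With $\uu_1^c=\uu_2^c$ its hypotheses are symmetric in $\uu_1,\uu_2$, and the argument in its proof shows that both $\bua{\uu_2}\bua{\uu_1}$ and $\bua{\uu_1}\bua{\uu_2}$ are strings; since these are cyclic permutations of the common target band this forces $\bua{\uu_1}=\bua{\uu_2}$ (and dually $\bub{\uu_1}=\bub{\uu_2}$), from which $\uu_1=\uu_2$ follows. So the fix is to replace your Remark~\ref{exitorder} step by a direct appeal to Proposition~\ref{banduniqueness}; Corollary~\ref{uniqueabnweakbridge} is then unnecessary.
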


\begin{proof}
If $\uu_1,\uu_2$ are normal weak bridges satisfying the hypotheses, then
$$^\infty\tbq(\uu_1)\uu_1\sbq(\uu_1)^\infty=\ ^\infty\bua{\uu_1}\uu_1^o\bub{\uu_1}^\infty=\ ^\infty\bua{\uu_2}\uu_2^o\bub{\uu_2}^\infty=\ ^\infty\tbq(\uu_2)\uu_2\sbq(\uu_2)^\infty.$$
Hence the conclusion follows.

When $\uu_1,\uu_2$ are abnormal then the conclusion follows from Proposition \ref{banduniqueness}.

The remaining two cases have proofs similar to the case of normal weak bridges.
\end{proof}

\begin{rmk}\label{casebycaseanalysis}
We close this section with a description of (complemented) interiors of $\uu=\uu_2\circ\uu_1$ in terms of $\uu_1^{o/c}$ and $\uu_2^{o/c}$. We do not include proofs of these statements but only indicate certain special cases.
	
\noindent{}\textbf{Case I}: If both $\uu_1$ and $\uu_2$ are normal then $\uu$ is normal using Proposition \ref{characterizeabnormality}. Let $\ww$ be the shortest left substring of $\bua{\uu_1}$ such that the concatenation $\uu_2^o\ww\uu_1^o$ exists. Then $\uu^o=\uu_2^o\ww\uu_1^o$.

\noindent{}\textbf{Case II}: If $\uu_1$ is normal but $\uu_2$ is abnormal then $\uu$ is normal.
	
Since $\uu_2$ is abnormal, $\alpha(\uu_2)$ is a syllable of $\tbq(\uu_1)$ and hence $\alpha(\uu_1)\neq \alpha(\uu_2)$. Let $\ww$ denote the smallest left substring of $\bua{\uu_1}$ such that $\beta(\uu_2)\ww\alpha(\uu_1)$ is a string. Clearly $0\leq |\ww|<|\bua{\uu_1}|$. Moreover $\ww$ and $\uu_2^c$ are comparable as both of them are right substrings of $\bub{\uu_1}$. Let $\beta,\gamma$ denote the first and the last syllable of $\bua{\uu_2}$ respectively. Now consider the following cases:
\begin{enumerate}
    \item $\uu_2^c=\ww\ww'$ with $|\ww'|>0$. In this case $\theta(\uu_2^c)=\theta(\beta)=\theta(\gamma)=-\theta(\beta(\uu_2))$. Thus the only possible choice of $t(\uu_2)$ in $\uu_2^e$ is $s(\beta(\uu_2))$. Therefore $\uu_2\uu_1$ contains a cyclic permutation of $\tbq(\uu_1)$ as a substring. Therefore $\alpha(\uu)= \alpha(\uu_1)$ and hence $\uu^o=\uu_1^o$. 
    
    \item $\ww=\uu_2^c$. In this case $\beta$ is also the first syllable of $\bua{\uu_1}$ and hence $\beta\alpha(\uu_1)$ is a string. Since $\beta\gamma$ and $\beta\alpha(\uu_2)$ are defined we have $\gamma=\alpha(\uu_1)$ by the definition of a string algebra. Let $\ww''$ denote the maximal common right substring of $\bua{\uu_2}$ and $\uu_1^o$. Then $\alpha(\uu_1)$ is a syllable of $\ww''$ and $\uu_1^o=\ww''\uu^o$.
    
    \item $\ww= \uu_2^c\ww'$ with $|\ww'|>0$. In this case, $\uu^o=\ww'\uu_1^o$.
\end{enumerate}

\noindent{}\textbf{Case III}: If $\uu_2$ is normal but $\uu_1$ is abnormal then $\uu$ is normal.
	
Suppose $\uu_1^e=\xx_2\xx_1$ with $|\xx_2|>0$ such that $\beta(\uu)\xx_1$ is a string. There are three different possibilities for $\uu^o$ depending on the length of $\xx_1$.
\begin{enumerate}
    \item If $|\xx_1|<|\uu_1^c|$ then $\uu^o=\uu_2^o$.
    
    \item If $|\xx_1|=|\uu_1^c|$ then $\uu^o=\uu_2^o\ww$ where $\ww$ is the shortest left substring of positive length of $\blb{\uu_1}$ such that the above concatenation is possible.
    
    \item If $|\xx_1|>|\uu_1^c|$ then $\uu^o=\ww$ where $\ww$ is the longest right substring of $\uu_2^o$ such that $\ww\xx_1$ is a string. In this case $|\ww|>0$ is guaranteed as $\Lambda$ is domestic.
\end{enumerate}

\noindent{}\textbf{Case IV}: If both $\uu_1,\uu_2$ are abnormal then $\uu$ could be normal or abnormal as described in Proposition \ref{characterizeabnormality}.
\begin{enumerate}
    \item If $\uu$ is abnormal, then $\uu^c=\ww$ satisfies $\uu_1^c=\ww_1\ww$ and $\uu_2^c=\ww\ww_2$  for some strings $\ww_1,\ww_2$. Note that $\ww$ could have length $0$.

    \item If $\uu$ is normal, then $\uu^o=\ww$ where $\ww$ is the shortest left substring of $\blb{\uu_1}$ such that $\uu_2^c\ww\uu_1^c$ is a string. In this case $|\ww|>0$ is guaranteed as $\Lambda$ is domestic.
\end{enumerate}
\end{rmk}

\begin{example}
\begin{figure}[h]
\begin{minipage}[b]{0.43\linewidth}
\centering
\begin{tikzcd}
	&& {v_3} \\
	{v_1} & {v_2} && {v_4} \\
	&& {v_4}
	\arrow["b", from=2-2, to=1-3]
	\arrow["e", from=3-3, to=2-2]
	\arrow["a"', from=2-2, to=2-1]
	\arrow["d"', from=2-4, to=2-2]
	\arrow["c"', from=2-4, to=1-3]
	\arrow["f"', from=3-3, to=2-4]
\end{tikzcd}
    \caption{$\Lambda^{(iv)}$ with $\rho=\{ad,be,cf,bdf\}$}
    \label{C-equal}
    \end{minipage}
\hspace{0.7cm}
\begin{minipage}[b]{0.5\linewidth}
\centering
\begin{tikzcd}
                                                                   &                                     & v_2 \arrow[r, "a"]                    & v_1                                    \\
v_6 \arrow[rru, "c"]                                               & v_5 \arrow[l, "f"]                  & v_4 \arrow[l, "e"']                   & v_3 \arrow[l, "d"'] \arrow[lu, "b"']   \\
v_{12} \arrow[u, "n"]                                              & v_7 \arrow[u, "g"] \arrow[rd, "h"'] & v_{11} \arrow[u, "k"] \arrow[r, "l"'] & v_{10} \arrow[u, "j"'] \arrow[ld, "i"] \\
v_{13} \arrow[u, "o"', bend right=49] \arrow[u, "p", bend left=49] & v_8 \arrow[u, "m"]                  & v_9                                   &                                       
\end{tikzcd}
    \caption{$\Lambda^{(v)}$ with $\rho=\{ac,bj,ek,fg,cn,il,np,hm,edjl,cfedj\}$}
    \label{case3analysis}
    \end{minipage}
\end{figure}

The examples of various cases described above are described below.
\begin{itemize}
    \item[II(1)] $1_{(v_1,1)}\xrightarrow{bA}bdC\xrightarrow{dC}dfE$ in the algebra $\Lambda^{(iv)}$ from Figure \ref{C-equal}.
    \item[II(2)] $mL\xrightarrow{edcbAL} edF\xrightarrow{dF} dcbhG$ in $\Lambda''$ from Figure \ref{C-opposite}.
    \item[II(3)] $1_{(v_1,1)}\xrightarrow{A}cfedB\xrightarrow{edB}edjIhG$ in the algebra $\Lambda^{(v)}$ from Figure \ref{case3analysis}.
    \item[III(2)] $cfedB\xrightarrow{edB}edjIhG\xrightarrow{MG}1_{(v_8,1)}$ in the algebra $\Lambda^{(v)}$ from Figure \ref{case3analysis}.
    \item[III(3)] $cfedB\xrightarrow{edB}edjIhG\xrightarrow{Nf}pO$ in the algebra $\Lambda^{(v)}$ from Figure \ref{case3analysis}.
    \item[IV(1)] $cfedB\xrightarrow{edB}edjIhG\xrightarrow{djIhG}djlK$ in the algebra $\Lambda^{(v)}$ from Figure \ref{case3analysis}.
    \item[IV(2)] $cbA\xrightarrow{bA}biheD\xrightarrow{eD}egF$ in the algebra $\Lambda'$ from Figure \ref{Nonassociative}.
\end{itemize}
\end{example}

\section{H-equivalence and H-reduction}\label{secweakarchbrquiv}
Say that two strings $\yy_1,\yy_2$ are \emph{H-equivalent}, written $\yy_1\equiv_H\yy_2$, if
\begin{itemize}
    \item $t(\yy_1)=t(\yy_2)$;
    \item for each string $\xx$ such that $s(\xx)=t(\yy_1)$, $\xx\yy_1$ is a string if and only if $\xx\yy_2$ is so.
\end{itemize}

\begin{rmk}
If $\yy_1\equiv_H\yy_2$ then $\xx\yy_1\mapsto\xx\yy_2:H_l(\yy_1)\to H_l(\yy_2)$ is an isomorphism.
\end{rmk}

The following observation will be useful later.
\begin{proposition}\label{newExtHequivcriterion}
For strings $\yy_1,\yy_2$, if $\yy_1\equiv_H\yy_2$ then $\xx\yy_1\equiv_H\xx\yy_2$ for any string $\xx$ such that $\xx\yy_1$ and $\xx\yy_2$ are strings.
\end{proposition}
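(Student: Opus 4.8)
The plan is to unwind the definition of $\equiv_H$ directly. We must show two things: first that $t(\xx\yy_1)=t(\xx\yy_2)$, and second that for every string $\ww$ with $s(\ww)=t(\xx\yy_1)$, the word $\ww\xx\yy_1$ is a string if and only if $\ww\xx\yy_2$ is a string.

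The first point is immediate: since $\xx\yy_1$ and $\xx\yy_2$ are strings, $t(\xx\yy_1)=t(\xx)=t(\xx\yy_2)$ (here I am using the convention that $t$ of a concatenation is the target of the leftmost factor; one should double-check the left/right reading conventions of \cite{GKS}, but in any case $t(\xx\yy_i)$ depends only on $\xx$, so the two agree). So the content is entirely in the second point.

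For the second point, fix a string $\ww$ with $s(\ww)=t(\xx\yy_i)$ and suppose $\ww\xx\yy_1$ is a string. Then $\ww\xx$ is a string (a substring of $\ww\xx\yy_1$) with $s(\ww\xx)=s(\xx)=t(\yy_1)=t(\yy_2)$, so $\ww\xx$ is a legitimate "test string" to apply the hypothesis $\yy_1\equiv_H\yy_2$ to. Being a string is a local, syllable-by-syllable condition (consecutive syllables composable in $\Q$, no subword in $\rho\cup\rho^{-1}$, and the $\sigma/\varepsilon$ conditions at each internal vertex), so $\ww\xx\yy_1$ being a string is equivalent to the conjunction of "$\ww\xx$ is a string", "$\xx\yy_1$ is a string" (both given / already known), and "$(\ww\xx)\yy_1$ satisfies the string conditions at the junction between $\xx$ and $\yy_1$", and this last condition is exactly what the definition of $(\ww\xx)\yy_1$ being a string adds beyond the two halves already being strings. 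By the second bullet in the definition of $\yy_1\equiv_H\yy_2$ applied with the test string $\ww\xx$, we conclude $(\ww\xx)\yy_2=\ww\xx\yy_2$ is a string. The converse direction is symmetric. Hence $\xx\yy_1\equiv_H\xx\yy_2$.

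The only genuinely delicate point — and the one I would be most careful about — is the claim that "$\ww\xx\yy_1$ is a string" decomposes cleanly as "$\ww\xx$ is a string and $\xx\yy_1$ is a string and $(\ww\xx)\yy_1$ is a string", i.e. that no obstruction to $\ww\xx\yy_1$ being a string can straddle the $\ww$--$\xx$--$\yy_1$ boundaries in a way not captured by looking at $\ww\xx$, at $\xx\yy_1$, and at the single concatenation $(\ww\xx)\yy_1$. This is true because the forbidden configurations (relations in $\rho\cup\rho^{-1}$, and the $\sigma,\varepsilon$ constraints) are all local to a bounded window, and $\xx$ is shared between the two halves $\ww\xx$ and $\xx\yy_1$ whose lengths we are free to assume cover these windows — but one should phrase this via the standard fact that a word is a string iff each of its subwords of length $\le 2$ (for the composability and $\sigma/\varepsilon$ conditions) is admissible and no subword lies in $\rho\cup\rho^{-1}$, together with the observation that every length-$\le 2$ subword of $\ww\xx\yy_1$ lies inside $\ww\xx$ or inside $\xx\yy_1$ unless $|\xx|=0$, and every element of $\rho\cup\rho^{-1}$ that is a subword of $\ww\xx\yy_1$ either lies in $\ww\xx$, lies in $\xx\yy_1$, or is detected by the junction condition defining $(\ww\xx)\yy_1$. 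The borderline case $|\xx|=0$ (so $\xx=1_{(v,j)}$) should be noted separately: there $\xx\yy_i$ is a string forces $j$ to match $\yy_i$, and $\ww\xx\yy_i$ is a string iff $\ww\yy_i$ is, so the claim reduces directly to the hypothesis. Modulo this bookkeeping, which is routine given the definitions recalled in \cite[\S2.1]{GKS}, the proposition follows.
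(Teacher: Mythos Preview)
Your proof is correct and follows exactly the same approach as the paper's: extract the substring $\ww\xx$ from $\ww\xx\yy_1$, then apply the defining biconditional of $\yy_1\equiv_H\yy_2$ with test string $\ww\xx$. The long ``delicate point'' paragraph is unnecessary, since concatenation of words is associative and hence $(\ww\xx)\yy_1$ and $\ww\xx\yy_1$ are literally the same word --- there is no junction bookkeeping to do.
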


\begin{proof}
Suppose $\xx\yy_1$ and $\xx\yy_2$ are both strings. If $(\zz\xx)\yy_1$ is a string then so is $(\zz\xx)\yy_2$ using the H-equivalence $\yy_1\equiv_H\yy_2$. Using the above statement and the dual obtained by swapping $\yy_1$ and $\yy_2$, we readily get the conclusion.
\end{proof}

We introduce some notation so that we can provide a ``checkable'' criterion for H-equivalence between two strings. For a string $\yy$ if there is a right substring $\ww$ of positive length such that $\ww$ is a proper left substring of a word in $\rho\cup\rho^{-1}$ then we set $\rho_r(\yy)$ to be the maximal such right substring of $\yy$. If no such right substring exists then we set $\rho_r(\yy):=1_{(t(\yy),\epsilon(\yy))}$. Clearly either $|\rho_r(\yy)|=0$ or $\delta(\rho_r(\yy))\neq0$. Dually we define $\rho_l(\yy):=(\rho_r(\yy^{-1}))^{-1}$.

\begin{proposition}\label{newHequivcriterion}
For strings $\yy_1,\yy_2$ with $t(\yy_1)=t(\yy_2)$, let $\gamma_j$ denote the last syllable of $\yy_j$, if exists, for $j\in\{1,2\}$. Then $\yy_1\equiv_H\yy_2$ if and only if one of the following happens:
\begin{enumerate}
    \item $|\rho_r(\yy_1)|=|\rho_r(\yy_2)|=0$;
    \item $|\rho_r(\yy_1)||\rho_r(\yy_2)|>0$ and 
    \begin{enumerate}
        \item if $\gamma_1=\gamma_2$ then $\rho_r(\yy_1)=\rho_r(\yy_2)$;
        \item if $\gamma_1\neq\gamma_2$ then $\gamma_j^{-1}\rho_r(\yy_k)\in\rho\cup\rho^{-1}$ for $k\neq j$ and whenever the word $\xx\yy_j$ is not a string then $\gamma_k^{-1}$ is the first syllabe of $\xx$.
    \end{enumerate}
\end{enumerate}
\end{proposition}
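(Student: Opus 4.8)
The plan is to characterize $\equiv_H$ entirely via the local "failure data" of strings, namely the datum $\rho_r(\yy)$ together with the last syllable $\gamma$. The key conceptual point is that whether $\xx\yy$ fails to be a string is controlled by two things: whether $\gamma\beta$ is a legal two-letter word (a $\sigma,\varepsilon$-condition), and whether the concatenation creates a subword in $\rho\cup\rho^{-1}$. The first is governed by $\gamma$ alone; the second is governed by exactly how far a left substring of $\xx$ extends a right substring of $\yy$ toward a relation, i.e. by $\rho_r(\yy)$. So I would first prove a "local string test": for a string $\yy$ with last syllable $\gamma$ and a syllable $\beta$ with $s(\beta)=t(\yy)$, the word $\beta\yy$ is a string iff $\beta\gamma$ is a legal word and $\beta\rho_r(\yy)$ is not a left substring of some element of $\rho\cup\rho^{-1}$ (using maximality of $\rho_r$, and the hypothesis that no two paths in $\rho$ are comparable, so the relevant relation is unique). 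Iterating, $\xx\yy$ is a string iff $\xx$ itself is a string, $(\text{first syllable of }\xx)\gamma$ is legal, and the maximal left substring of $\xx$ lying in some relation-prefix does not complete a relation with $\rho_r(\yy)$.

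Next I would use this test to prove both directions. For the "if" direction: in case (1), since neither $\yy_1$ nor $\yy_2$ has a nontrivial right substring that is a proper prefix of a relation, no concatenation $\xx\yy_j$ can ever create a relation, so $\xx\yy_j$ is a string iff $\xx$ is a string with $(\text{first syllable})\gamma_j$ legal; one checks $t(\yy_1)=t(\yy_2)$ forces the $\sigma,\varepsilon$-conditions to agree, giving $\equiv_H$. In case (2a), $\gamma_1=\gamma_2$ and $\rho_r(\yy_1)=\rho_r(\yy_2)$, so the local string test returns literally the same answer for $\xx\yy_1$ and $\xx\yy_2$. In case (2b), the delicate case, the hypothesis $\gamma_j^{-1}\rho_r(\yy_k)\in\rho\cup\rho^{-1}$ says that appending $\gamma_j^{-1}$ (= the last syllable of the "other" string, inverted appropriately) to $\rho_r(\yy_k)$ completes a relation; combined with the stipulation that a word $\xx\yy_j$ fails exactly when $\gamma_k^{-1}$ begins $\xx$, one verifies by direct case analysis that "$\xx\yy_1$ fails" $\iff$ "$\xx\yy_2$ fails," hence $\equiv_H$.

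For the "only if" direction I would argue contrapositively: assuming $\yy_1\equiv_H\yy_2$ but none of (1), (2a), (2b) holds, I would exhibit a witnessing $\xx$ with $\xx\yy_1$ a string and $\xx\yy_2$ not (or vice versa). If exactly one of $|\rho_r(\yy_1)|,|\rho_r(\yy_2)|$ is zero, say $\rho_r(\yy_1)$ is trivial and $\rho_r(\yy_2)$ is not, then take $\xx$ to be (an appropriate string ending in) the complement needed to complete the relation that $\rho_r(\yy_2)$ is a prefix of; this kills $\xx\yy_2$ but, because $\rho_r(\yy_1)$ carries no relation-prefix and one can match up the $\sigma,\varepsilon$-data at $t(\yy_1)=t(\yy_2)$, leaves $\xx\yy_1$ a string, contradiction. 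If both are nontrivial but $\gamma_1=\gamma_2$ and $\rho_r(\yy_1)\neq\rho_r(\yy_2)$, then WLOG $\rho_r(\yy_1)$ is a proper right substring of $\rho_r(\yy_2)$ (they are comparable since both equal the maximal relation-prefix right substring and share the same last syllable; use maximality and non-comparability of elements of $\rho$), and a short extension completing the relation over $\rho_r(\yy_2)$ but not over $\rho_r(\yy_1)$ separates them. If $\gamma_1\neq\gamma_2$ and the precise condition in (2b) fails, one of its two clauses fails, and each failure mode supplies a separating $\xx$ directly from the local string test.

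The main obstacle is the bookkeeping in case (2b) and its converse: correctly tracking the inverse conventions ($\gamma_j^{-1}$ vs. $\gamma_j$, and which of $\rho$ or $\rho^{-1}$ the completed relation lies in), and verifying the somewhat unusual mixed condition that failures happen \emph{only} when $\xx$ starts with a specific syllable — this is where one must carefully use that $\rho_r$ is the \emph{maximal} relation-prefix right substring, that comparable paths in $\rho$ are excluded by hypothesis, and the defining $\sigma,\varepsilon$-axioms of a string algebra (each vertex has at most two arrows in and two out, with the $\sigma,\varepsilon$ sign conditions). Everything else reduces to the local string test, which I expect to be routine given \cite[\S 2.1]{GKS}.
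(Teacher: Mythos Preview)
Your proposal is correct and takes essentially the same approach as the paper's proof: both directions are handled by constructing explicit witnessing strings $\xx$ (for the forward direction, to contradict $\equiv_H$ when the stated conditions fail) and by direct verification (for the reverse direction). Your explicit ``local string test'' lemma is a useful organizing device that the paper uses implicitly throughout but does not isolate as a separate statement; apart from this expository difference, the case analyses and the specific witnesses you describe match those in the paper.
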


\begin{proof}
We first show the forward direction. Suppose $\yy_1\equiv_H\yy_2$. 

\noindent{}\textbf{Claim:} $|\rho_r(\yy_j)|>0$ if and only if $|\rho_r(\yy_k)|>0$.

Without loss assume that $|\rho_r(\yy_1)|>0$ but $|\rho_r(\yy_2)|=0$. In this case, $\rho_r(\yy_1)$ is not a right substring of $\yy_2$. Let $\xx$ be the string such that $\xx\rho_r(\yy_1)\in\rho\cup\rho^{-1}$. Since elements of $\rho$ are incomparable we see that $\xx\yy_2$ is a string, a contradiction to $\yy_1\equiv_H\yy_2$. Hence our claim.

In view of the above claim, we may assume that $|\rho_r(\yy_1)||\rho_r(\yy_2)|>0$.

$(a)$ If $\gamma_1=\gamma_2$ but $\rho_r(\yy_1)\neq\rho_r(\yy_2)$. Since $\delta(\rho_r(\yy_1))=\delta(\rho_r(\yy_2))$, without loss assume that $\rho_r(\yy_2)$ is a proper right substring of $\rho_r(\yy_1)$. Thus if $\xx\rho_r(\yy_1)\in\rho\cup\rho^{-1}$ then $\xx\rho_r(\yy_2)$ is a string, a contradiction to $\yy_1\equiv_H\yy_2$. Thus $\rho_r(\yy_1)=\rho_r(\yy_2)$.

$(b)$ If $\gamma_1\neq\gamma_2$ then $t(\yy_1)=t(\yy_2)$ implies that $\theta(\gamma_1)=-\theta(\gamma_2)$. Since $\gamma_j^{-1}\yy_j$ is not a string and $\yy_1\equiv_H\yy_2$ we see that $\gamma_j^{-1}\yy_k$ is also not a string for $j\neq k$. 

\noindent{\textbf{Claim}:} $\gamma_j^{-1}\rho_r(\yy_k)$ is not a string.

Suppose for contradiction that $\gamma_j^{-1}\rho_r(\yy_k)$ is a string. Then there is a string $\xx_j$ such that $\xx_j\rho_r(\yy_j)\in\rho\cup\rho^{-1}$. Since $\delta(\xx_j)=\delta(\gamma_j)=-\delta(\gamma_k)$ we get that $\xx_j\yy_k$ is a string, a contradiction to $\yy_1\equiv_H\yy_2$. This completes the proof of the claim. In fact the same argument shows that whenever $\xx\yy_j$ is not a string then $\gamma_k^{-1}$ is the first syllable of $\xx$.

For the other direction, if $\rho_r(\yy_1)=\rho_r(\yy_2)$ then $\xx\yy_1$ is a string if and only if $\xx\yy_2$ is a string for any string $\xx$ with $s(\xx)=t(\yy_1)$ and thus $\yy_1\equiv_H\yy_2$. 

It only remains to verify the H-equivalence of $\yy_1$ and $\yy_2$ when $|\rho_r(\yy_1)||\rho_r(\yy_2)|>0$ and $\gamma_1\neq\gamma_2$. If $\xx\yy_j$ is a string then clearly $\gamma_j^{-1}$ is not the first syllable of $\xx$. Moreover the hypothesis says that $\gamma_k^{-1}$ is also not the first syllable of $\xx$ and hence $\xx\yy_k$ is also a string. This completes the proof that $\yy_1\equiv_H\yy_2$.   
\end{proof}

\begin{proposition}\label{bandHequivalence}
If $\xx,\yy$ are strings with $|\xx|>0$ such that $\xx\yy$ is a string and $\xx\yy\equiv_H\yy$ then $\xx$ is finite power of a cyclic permutation of a band.
\end{proposition}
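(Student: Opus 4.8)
The plan is to reduce the assertion to the band-recognition principle \cite[Remark~3.1.3]{GKS}: a cyclic string whose square is again a string must be a cyclic permutation of a power of a band. So it is enough to squeeze two facts out of the hypothesis $\xx\yy\equiv_H\yy$, namely that $s(\xx)=t(\xx)$ (so that powers of $\xx$ are defined) and that $\xx^{2}$ is a string.

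First I would check that $\xx$ is cyclic. By the very definition of H-equivalence, $\xx\yy\equiv_H\yy$ already forces $t(\xx\yy)=t(\yy)$. Since $\xx\yy$ is a string we also have $t(\xx\yy)=t(\xx)$ and $s(\xx)=t(\yy)$, so chaining these equalities gives $s(\xx)=t(\yy)=t(\xx\yy)=t(\xx)$. Hence $\xx$ is a cyclic word and $\xx^{n}$ is defined for every $n\geq1$.

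Next I would observe that $\xx$ is itself a legitimate test string in the H-equivalence $\xx\yy\equiv_H\yy$, since $s(\xx)=t(\xx\yy)$ by the previous step. Applying the equivalence to this test string and using that $\xx\yy$ is a string, I conclude that $\xx\cdot(\xx\yy)=\xx^{2}\yy$ is a string; therefore so is $\xx^{2}$, being a substring of $\xx^{2}\yy$. (If desired one can iterate: Proposition~\ref{newExtHequivcriterion} gives $\xx\yy\equiv_H\xx^{2}\yy$, hence $\xx^{n}\yy\equiv_H\yy$ and each $\xx^{n}$ a string, but $\xx^{2}$ already suffices.) Now $\xx$ is a cyclic string with $\xx^{2}$ a string, so \cite[Remark~3.1.3]{GKS} says $\xx$ is a cyclic permutation of a power of a band, and since every cyclic permutation of $\bb^{n}$ is the $n$-th power of a cyclic permutation of $\bb$, this is precisely the claim.

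I do not anticipate a real obstacle here: the entire content is the realisation that $\xx$ may be plugged into its own H-equivalence relation, after which the rest is a short chase of sources and targets to guarantee that $\xx^{2}\yy$ is well-formed, followed by the cited lemma. The one spot deserving a careful look is the identity $s(\xx)=t(\xx)$, since everything downstream collapses without it.
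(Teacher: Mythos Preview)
Your argument is correct in spirit and is in fact more direct than the paper's, but the throwaway remark ``$\xx^{2}$ already suffices'' is not quite right. The band-recognition principle \cite[Remark~3.1.3]{GKS} as used in this paper is applied only after knowing $\delta(\xx)=0$: a cyclic direct path $\xx$ (say a loop $a$ with $a^{2}\notin\langle\rho\rangle$) can have $\xx^{2}$ a string while no power of it is a band, since finite-dimensionality forces $a^{n}\in\langle\rho\rangle$ for some $n$. So from $\xx^{2}$ alone you cannot yet invoke the remark. The paper handles this by a case split, first ruling out $\delta(\xx)\neq0$ via a contrapositive argument (finding a minimal $n$ with $\xx^{n}$ not a string and exhibiting a test string that separates $\xx\yy$ from $\yy$), and only then establishing that $\xx^{2}$ is a string.

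Your iterated version, however, does close this gap: once you know every $\xx^{n}\yy$ is a string (which your use of Proposition~\ref{newExtHequivcriterion} gives), every $\xx^{n}$ is a string, and the finite-dimensionality condition on a string algebra then forces $\delta(\xx)=0$. At that point \cite[Remark~3.1.3]{GKS} applies legitimately. So the fix is simply to keep the iteration rather than discard it, and note that it yields $\delta(\xx)=0$. Compared with the paper's proof, your route is cleaner: you use the H-equivalence \emph{constructively} (plugging $\xx$ in as the test string) rather than contrapositively, and you avoid the explicit case analysis on $\delta(\xx)$ entirely.
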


\begin{proof}
It follows from the hypotheses that $\xx$ is a cyclic string.

Suppose $\delta(\xx)\neq0$. Then by the definition of a string algebra, let $n>1$ be minimal such that $\xx^n$ is not a string. Then for some $0\leq m\leq n-1$, $\xx^m\xx\yy$ is not a string but $\xx^m\yy$ is implying $\xx\yy\nequiv_H\yy$. Thus $\delta(\xx)=0$.

However if $\delta(\xx)=0$ but $\xx^2$ is not a string then the argument in the above paragraph shows that $\xx\yy\nequiv_H\yy$. Thus $\xx^2$ is a string. Therefore the conclusion follows from \cite[Remark~3.1.3]{GKS}.
\end{proof}

\begin{definition}
Say that a string $\yy$ is an \emph{H-string} if $\yy\nequiv_H\yy'$ for each proper left substring $\yy'$ of $\yy$. Further say that a string $\yy$ is a \emph{hereditary H-string} if each of its left substrings is an H-string.
\end{definition}

\begin{rmk}\label{leftsubhereditary}
A left substring of a hereditary string is also a hereditary string.
\end{rmk}

\begin{proposition}\label{hereditaryskeletal}
A hereditary H-string is skeletal.
\end{proposition}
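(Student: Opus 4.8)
The plan is to prove the contrapositive: if a string $\yy$ is not skeletal, then $\yy$ is not a hereditary H-string, i.e.\ some left substring of $\yy$ fails to be an H-string. So suppose $N(\bb,\yy)\geq 2$ for some band $\bb$. By definition of $N$, we can write $\yy=\yy_2{\bb'}^2\yy_1$ for a cyclic permutation $\bb'$ of $\bb$ and strings $\yy_1,\yy_2$ (possibly of length $0$). Set $\yy':={\bb'}^2\yy_1$ and $\yy'':={\bb'}\yy_1$; both are left substrings of $\yy$, with $\yy''$ a proper left substring of $\yy'$.

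The key step is to show $\yy'\equiv_H\yy''$, which then witnesses that $\yy'$ is not an H-string, hence $\yy$ is not a hereditary H-string. To do this I would apply Proposition~\ref{bandHequivalence} in the ``other direction'': it is straightforward that ${\bb'}\yy''={\bb'}^2\yy_1=\yy'$ is a string, and I claim $\yy'={\bb'}\yy''\equiv_H\yy''$. Concretely, I would verify the H-equivalence directly from the definition: $t(\yy')=t(\yy'')$ is immediate, and for any string $\xx$ with $s(\xx)=t(\yy'')$, I must check that $\xx\yy'$ is a string iff $\xx\yy''$ is a string. Since $\yy'=\bb'\yy''$ and $\bb'$ is a cyclic permutation of a band, the word $\xx\bb'\yy''$ is a string precisely when $\xx\bb'$ and $\bb'\yy''$ are strings; but $\bb'\yy''=\bb'\cdot\bb'\yy_1={\bb'}^2\yy_1$ is already known to be a string, so the only constraint is on $\xx\bb'$. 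On the other side, $\xx\yy''=\xx\bb'\yy_1$ is a string iff $\xx\bb'$ and $\bb'\yy_1$ are strings, and again $\bb'\yy_1$ is a substring of $\yy'$ hence a string. Thus in both cases the condition reduces to ``$\xx\bb'$ is a string together with the already-guaranteed right part'', so the two conditions coincide. (Here one uses that consecutive-syllable admissibility in a string is a purely local condition at each junction, part of the definition of string algebra recalled in \cite{GKS}.)

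The main obstacle I anticipate is being careful about the degenerate cases and about exactly which junction conditions must be checked: when $\yy_1$ has length $0$ one must make sure $t(\yy'')$ and the relevant signs $\sigma,\varepsilon$ match up so that $\xx\yy''$ and $\xx\yy'$ glue at the same vertex with the same sign; and one must make sure that the relation set $\rho$ does not impose a constraint spanning across $\bb'$ into $\xx$ in one case but not the other — this is exactly where domesticity and the fact that $\bb'$ is a (full power of a cyclic permutation of a) band matter, since a band is a reduced cyclic word and no relation in $\rho$ is a substring of a power of a band. Once $\yy'\equiv_H\yy''$ is established, the conclusion is immediate: $\yy'$ is a left substring of $\yy$ that is H-equivalent to its proper left substring $\yy''$, so $\yy'$ is not an H-string, contradicting the assumption that $\yy$ is a hereditary H-string. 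Hence every hereditary H-string satisfies $N(\bb,\yy)\leq 1$ for all bands $\bb$, i.e.\ is skeletal. $\Box$
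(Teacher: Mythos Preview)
Your proof is correct and follows the same contrapositive route as the paper: decompose $\yy=\yy_2(\bb')^2\yy_1$ and show $(\bb')^2\yy_1\equiv_H\bb'\yy_1$, witnessing that the left substring $(\bb')^2\yy_1$ is not an H-string. The paper establishes this H-equivalence via the criterion of Proposition~\ref{newHequivcriterion} (observing that $\delta(\bb')=0$ forces $\rho_r((\bb')^2\yy_1)=\rho_r(\bb'\yy_1)$), whereas you verify it directly from the definition; both arguments rest on the single fact that $\delta(\bb')=0$ prevents any monomial relation from containing $\bb'$ as a subword, and domesticity is not actually needed here.
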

\begin{proof}
Suppose $\yy$ is a string such that $N(\bb,\yy)>1$ for some band $\bb$. Let $\yy=\yy_2(\bb')^2\yy_1$ for some cyclic permutation $\bb'$ of $\bb$. Since $\delta(\bb')=0$ we have $\delta(\rho_r((\bb')^2\yy_1))\delta(\rho_r(\bb'\yy_1))\neq-1$ and $\rho_r((\bb')^2\yy_1)=\rho_r(\bb'\yy_1)$. Then Proposition \ref{newHequivcriterion} gives $(\bb')^2\yy_1\equiv_H\bb'\yy_1$, and hence $\yy$ is not a hereditary H-string. 
\end{proof}

The next result is an immediate consequence of Propositions \ref{skeletalfinitude} and \ref{hereditaryskeletal}.
\begin{corollary}\label{hereditaryHstringfinitude}
There are only finitely many hereditary H-strings.
\end{corollary}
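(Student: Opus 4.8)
The plan is to chain the two preceding propositions, since the statement is explicitly flagged as an immediate consequence of them. First I would observe that Proposition~\ref{hereditaryskeletal} shows that every hereditary H-string is skeletal; in other words, the collection of hereditary H-strings sits inside the collection of skeletal strings as a subset.

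Next I would invoke Proposition~\ref{skeletalfinitude}, which asserts that a domestic string algebra admits only finitely many skeletal strings. Since $\Lambda$ is domestic by our standing assumption, the set of skeletal strings is finite, and hence so is any subset of it.

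Combining these two observations, the set of hereditary H-strings is finite, which is exactly the claim. There is no real obstacle here: the content is entirely carried by Propositions~\ref{skeletalfinitude} and~\ref{hereditaryskeletal}, and the corollary is just the remark that a subset of a finite set is finite. The only thing worth being slightly careful about is that the inclusion goes the right way (hereditary H-strings $\subseteq$ skeletal strings, not the reverse), which is precisely what Proposition~\ref{hereditaryskeletal} provides.
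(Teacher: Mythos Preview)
Your proposal is correct and matches the paper's approach exactly: the paper states that the corollary is an immediate consequence of Propositions~\ref{skeletalfinitude} and~\ref{hereditaryskeletal}, which is precisely the chain you describe.
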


Say that a string $\xx$ is a $1$-step H-reduction of $\yy$ if there is a partition $\yy=\yy_2\bb'\yy_1$, where $\bb'$ is a cyclic permutation of $\bb$, such that $\xx=\yy_2\yy_1$ and $\bb'\yy_1\equiv_H\yy_1$. For a string $\yy$ and $\bb\in\B(\yy)$ if $\Red\bb(\yy)$ exists and is a $1$-step H-reduction of $\yy$ then we denote it by $\HRed\bb(\yy)$. We further define $\hh_\bb(\yy):=\begin{cases}\HRed\bb(\yy)&\mbox{if }\HRed\bb(\yy)\mbox{ exists};\\\yy&\mbox{otherwise}.\end{cases}$

Say that a string $\xx$ is an H-reduction of $\yy$ if there is a finite sequence $\yy=\yy_0,\yy_1,\hdots,\yy_k=\xx$ of strings such that, for each $0<j\leq k$, $\yy_j$ is a $1$-step H-reduction of $\yy_{j-1}$.

\begin{rmk}\label{skeletonisHRed}
For a string $\yy$ and $\bb\in\B(\yy)$, Proposition \ref{newHequivcriterion} guarantees that if $N(\bb,\yy)>1$ then $\HRed\bb^{N(\bb,\yy)-1}(\yy)$ exists. Hence $\sk\yy$ is an H-reduction of $\yy$.
\end{rmk}

Say that a string $\yy$ is \emph{H-reduced} if it has no H-reduction.

\begin{rmk}
Consider the string algebra $\Lambda^{(vi)}$ from Figure \ref{notHreducedreduction} and the string $\yy:=JeHgFcaDB$ such that $\B(\yy)=\{\bb_1,\bb_2\}$, where $\bb_1:=aD$ and $\bb_2:=gFH$. Then $\yy$ is H-reduced but $\Red{\bb_1}(\yy)=JeHgFcB$ is not. Therefore a reduction of an H-reduced string is not necessarily H-reduced.
\begin{figure}[h]
    \centering
\begin{tikzcd}
    & v_3 \arrow[d, "d", bend left=49] \arrow[d, "a"', bend right=49] & v_6                                 & v_5 \arrow[l, "g"'] \arrow[ld, "f"] & v_8                &                    \\
v_1 & v_2 \arrow[l, "b"] \arrow[r, "c"']                              & v_4 \arrow[u, "h"] \arrow[rr, "e"'] &                                     & v_7 \arrow[u, "i"] & v_9 \arrow[l, "j"]
\end{tikzcd}
    \caption{$\Lambda^{(vi)}$ with $\rho=\{ba,cd,hc,ef,ij,ieca\}$}
    \label{notHreducedreduction}
\end{figure}
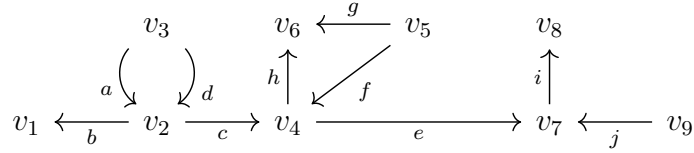
\end{rmk}

\begin{proposition}\label{Hreducedhereditary}
A string $\yy$ is H-reduced if and only if it is a hereditary H-string.
\end{proposition}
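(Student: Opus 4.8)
The plan is to prove both implications by contraposition, after reducing everything to a single step. Recall first that $\yy$ is H-reduced precisely when it admits no $1$-step H-reduction: a $1$-step H-reduction strictly shortens a string (it deletes a cyclic permutation $\bb'$ of a band, and $|\bb'|>0$), so any H-reduction chain of positive length begins with a $1$-step one. Hence it suffices to show that $\yy$ has a left substring which is not an H-string if and only if $\yy$ admits a $1$-step H-reduction.

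One direction is immediate. Suppose $\yy$ admits a $1$-step H-reduction, witnessed by a partition $\yy=\yy_2\bb'\yy_1$ with $\bb'$ a cyclic permutation of a band and $\bb'\yy_1\equiv_H\yy_1$. Since $|\bb'|>0$, the string $\yy_1$ is a \emph{proper} left substring of $\bb'\yy_1$, and $\bb'\yy_1$ is a left substring of $\yy$; as $\bb'\yy_1\equiv_H\yy_1$, the left substring $\bb'\yy_1$ of $\yy$ is not an H-string, so $\yy$ is not a hereditary H-string.

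For the converse, assume $\yy$ is not a hereditary H-string: some left substring $\ww$ of $\yy$ is not an H-string, so $\ww\equiv_H\ww'$ for some proper left substring $\ww'$ of $\ww$; write $\ww=\vv\ww'$ with $|\vv|>0$. Proposition \ref{bandHequivalence}, applied to $\vv\ww'\equiv_H\ww'$, shows $\vv=(\bb')^m$ for some cyclic permutation $\bb'$ of a band $\bb$ and some $m\geq1$, whence $\yy=\zz\ww=\zz(\bb')^m\ww'$ where $\zz$ is the complementary right substring of $\ww$ in $\yy$. If $m=1$, then $\ww=\bb'\ww'$ and the partition $\yy=\zz\,\bb'\,\ww'$ together with $\bb'\ww'=\ww\equiv_H\ww'$ directly exhibits a $1$-step H-reduction of $\yy$. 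If $m\geq2$, then $N(\bb,\yy)\geq m\geq2$, so Remark \ref{skeletonisHRed} (itself based on Proposition \ref{newHequivcriterion}) guarantees that $\HRed{\bb}(\yy)$ exists, and this is by definition a $1$-step H-reduction of $\yy$. In either case $\yy$ is not H-reduced, which completes the contrapositive.

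The only delicate point is the case $m\geq2$: one must ensure that deleting a single one of the $m$ consecutive copies of $\bb'$ inside $\yy$ is a \emph{legitimate} $1$-step H-reduction, i.e.\ that the corresponding $1$-step $\bb$-reduction of $\yy$ exists and the required H-equivalence holds. This is exactly what Remark \ref{skeletonisHRed} provides: the obstruction to prepending a string only sees the target end, and since $\delta(\bb')=0$ no forbidden subword sitting there can swallow a whole period of $\bb'$, so the reduction is H-admissible. If one prefers to avoid the case split altogether, one can first upgrade $\ww\equiv_H\ww'$ to $\bb'\ww'\equiv_H\ww'$ via the $\rho_r$-criterion of Proposition \ref{newHequivcriterion} and then use the uniform partition $\yy=\bigl(\zz(\bb')^{m-1}\bigr)\,\bb'\,\ww'$.
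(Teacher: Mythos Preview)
Your proof is correct and follows essentially the same approach as the paper: both argue by contraposition in each direction, use Proposition \ref{bandHequivalence} to identify the intermediate block as a band-power $(\bb')^m$, and then reduce to a single $\bb'$ via Proposition \ref{newHequivcriterion} (you route this through Remark \ref{skeletonisHRed} but also note the direct alternative, which is exactly what the paper does). The only cosmetic difference is that you split off the $m=1$ case explicitly while the paper handles all $n\geq1$ uniformly after upgrading to $\bb'\xx'\equiv_H\xx'$.
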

\begin{proof}
Suppose $\yy$ is not H-reduced then there is a partition $\yy=\yy_2\bb'\yy_1$, where $\bb'$ is a cyclic permutation of a band $\bb$, such that $\xx=\yy_2\yy_1$ and $\bb'\yy_1\equiv_H\yy_1$. Then $\bb'\yy$ is a left substring of $\yy$ which is not an H-string. Hence $\yy$ is not a hereditary H-string.

For the converse suppose $\yy$ is not a hereditary H-string. Then there is a partition $\yy=\yy'\xx''\xx'$ such that $\xx''\xx'\equiv_H\xx'$. Now Proposition \ref{bandHequivalence} guarantees that $\xx''=(\bb')^n$ for some $n\geq1$ and some cyclic permutation $\bb'$ of a band $\bb$. If $n>1$ then $\delta(\bb')=0$ together with Proposition \ref{newHequivcriterion} gives that $\bb'\xx'\equiv_H\xx'$. Hence for any $n\geq1$, $\yy'(\bb')^{n-1}\xx'$ is an H-reduction of $\yy$, thus implying that $\yy$ is not H-reduced.
\end{proof}

\begin{proposition}\label{notintersectingHequivbands}
Suppose $\zz$ is a string that is not a hereditary H-string. Further suppose that $\xx_1\yy_1$ and $\xx_2\yy_2$ are left substrings of $\zz$ such that $\yy_1\equiv_H\xx_1\yy_1$ and $\yy_2\equiv_H\xx_2\yy_2$. If $\xx_1$ and $\xx_2$ have a common substring of positive length then their union is a substring of a finite power of a band.
\end{proposition}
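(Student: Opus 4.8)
The plan is to identify $\xx_1,\xx_2$ as band powers via Proposition~\ref{bandHequivalence}, reduce by elementary bookkeeping with left substrings of $\zz$ to a single overlap configuration, manufacture from it a cyclic string with $\delta=0$ that contains the union and whose square is a string, and conclude with \cite[Remark~3.1.3]{GKS}.

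First I would observe that, since $\xx_1$ and $\xx_2$ share a substring of positive length, both are of positive length, so Proposition~\ref{bandHequivalence} provides cyclic permutations $\bb'_i$ of bands $\bb_i$ and integers $m_i\geq1$ with $\xx_i=(\bb'_i)^{m_i}$. As $\xx_1\yy_1$ and $\xx_2\yy_2$ are left substrings of $\zz$, one is a left substring of the other; say $\xx_2\yy_2$ is a left substring of $\xx_1\yy_1$. From $\yy_1\equiv_H\xx_1\yy_1$ and $\xx_1\yy_1$ being a string, an easy induction gives that $(\bb'_1)^n\yy_1$ is a string for all $n\geq1$, whence so is $(\bb'_1)^n\yy'$ for every left substring $\yy'$ of $\yy_1$; dually $(\bb'_2)^n\yy_2$ is a string for all $n$. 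This lets one pass freely to the left-infinite strings ${}^\infty(\bb'_1)\yy_1$, ${}^\infty(\bb'_2)\yy_2$, and the like. A short case analysis on how the occurrences of $\xx_1$ and $\xx_2$ sit inside $\zz$ then reduces matters to the \emph{essential case}: otherwise the occurrences are disjoint (excluded by the hypothesis that they share a positive-length substring) or one contains the other (in which case the union is $\xx_1$ or $\xx_2$, already a power of a cyclic permutation of a band). In the essential case $|\yy_1|>|\yy_2|$, and writing $\yy_1=\yy'\yy_2$ with $|\yy'|>0$ one reads off partitions $(\bb'_1)^{m_1}=\vv\ww$ and $(\bb'_2)^{m_2}=\ww\yy'$ with $|\vv|,|\ww|,|\yy'|>0$, where $\ww$ is precisely the overlap; the union occurrence is then the string $U:=(\bb'_1)^{m_1}\yy'=\vv(\bb'_2)^{m_2}$.

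For the heart of the argument, rotate $(\bb'_1)^{m_1}=\vv\ww$ to $\ww\vv=(\hat\bb_1)^{m_1}$, a power of a cyclic permutation $\hat\bb_1$ of $\bb_1$, and set $V:=\ww U=(\hat\bb_1)^{m_1}(\bb'_2)^{m_2}$. A check of sources and targets shows $V$ is cyclic, and $\delta(V)=0$ since $V$ contains the band power $(\hat\bb_1)^{m_1}$. That $V$ is a string follows on exhibiting it as the length-$(|\ww|+|U|)$ suffix of the left-infinite string ${}^\infty(\bb'_1)\vv(\bb'_2)^{m_2}$ (its finite suffixes being the strings $(\bb'_1)^{k+m_1}\yy'$), using that the length-$|\ww|$ suffix of ${}^\infty(\bb'_1)$ is $\ww$. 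The key device is a free gluing principle: since every element of $\rho\cup\rho^{-1}$ is a pure (direct or inverse) path whereas every cyclic permutation of a band has $\delta=0$, no monomial relation contains a cyclic permutation of a band; hence if $ac$ and $cb$ are strings with $c$ containing a cyclic permutation of a band, then $acb$ is a string. Applying this twice reduces ``$V^2$ is a string'' to ``$(\bb'_2)^{m_2}(\hat\bb_1)^{m_1}$ is a string''. Once $V^2$ is a string, \cite[Remark~3.1.3]{GKS} makes $V$ a power of a cyclic permutation of a band $\bb$, so $U$, being a substring of $V$, lies inside a finite power of $\bb$, as required.

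The main obstacle is the residual claim that $(\bb'_2)^{m_2}(\hat\bb_1)^{m_1}$ is a string, equivalently that $\bb_1$ and $\bb_2$ are cyclic permutations of one another. When $|\ww|\ge\min\{|\bb'_1|,|\bb'_2|\}$ this is painless: the left-infinite strings ${}^\infty(\bb'_1)\yy_1$ and ${}^\infty(\bb'_2)\yy_2$ share the suffix $(\bb'_2)^{m_2}\yy_2=\ww\yy_1$, so $\ww$ contains a full cyclic permutation of one band which, being a cyclic string and a factor of a power of the other band, must itself be a cyclic permutation of that band, forcing $\bb_1=\bb_2$ up to rotation. The delicate case is $|\ww|<\min\{|\bb'_1|,|\bb'_2|\}$, where $\ww$ is a proper suffix of $\bb'_1$ and a proper prefix of $\bb'_2$; here I expect to need the full strength of the H-equivalence criterion of Proposition~\ref{newHequivcriterion} together with domesticity and the one-sided extendability noted above ($U$ by copies of $\bb'_1$ on the left and $V$ by copies of $\bb'_2$ on the right) in order to force the two periods to coincide. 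This bookkeeping is where I anticipate most of the remaining work to lie.
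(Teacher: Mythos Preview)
Your proposal has a genuine gap: the ``delicate case'' $|\ww|<\min\{|\bb'_1|,|\bb'_2|\}$ is left unresolved, and you yourself flag it as where ``most of the remaining work'' lies. This is not mere bookkeeping. Your strategy reduces everything to showing that $(\bb'_2)^{m_2}(\hat\bb_1)^{m_1}$ is a string, but in the short-overlap regime you have no mechanism for producing this composition; the one-sided infinite extendability you record gives strings of the form $(\bb'_1)^N\,U$ and the like, but never one with $\bb'_2$ sitting on top of $\hat\bb_1$. Proposition~\ref{newHequivcriterion} will not by itself force the two periods to coincide here.

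The paper's argument is short and avoids your case split entirely. After identifying $\xx_j=(\bb'_j)^{m_j}$ via Proposition~\ref{bandHequivalence} and reducing (as you do) to the overlapping configuration with $\yy_1$ a proper left substring of $\yy_2$ and $\xx_1\yy_1=\zz'\yy_2$ with $|\zz'|>0$, the paper simply \emph{uses the two H-equivalences to insert band copies at will}. From $\bb'_1\xx_1\yy_1=\bb'_1\zz'\yy_2$ being a string and $\yy_2\equiv_H\bb'_2\yy_2$, one gets that $\bb'_1\zz'\bb'_2\yy_2$ is a string, hence a path $\bb_2\to\bb_1$ in the (weak) bridge quiver. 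Symmetrically, starting from $\xx_2\yy_2$ and inserting a copy of $\bb'_1$ below via $\yy_1\equiv_H\bb'_1\yy_1$ yields a string with $\bb'_2$ above $\bb'_1$, hence a path $\bb_1\to\bb_2$. If $\bb_1\neq\bb_2$ this contradicts domesticity (acyclicity of the bridge quiver). Thus $\bb_1=\bb_2$, and the union, being sandwiched between two powers of the same band, lies in a finite power of that band. No cyclic string $V$, no $V^2$, no gluing principle, and no case analysis on $|\ww|$ is needed.
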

\begin{proof}
Using Proposition \ref{bandHequivalence} we know that $\xx_j$ is a finite power of a cyclic permutation $\bb'_j$ of band $\bb_j$, and then Proposition \ref{newHequivcriterion} gives that $\bb'_j\yy_j\equiv_H\yy_j$ for $j=1,2$. Without loss we may assume that neither $\xx_1$ is a substring of $\xx_2$ nor $\xx_2$ is a substring of $\xx_1$. Further without loss we assume that $\yy_1$ is a proper left substring of $\yy_2$. Since $\xx_1$ and $\xx_2$ have a common substring of positive length, $\yy_2$ is a left substring of $\xx_1\yy_1$, and the bands $\bb_1$ and $\bb_2$ also have a common syllable. Our aim is to show that $\bb_1=\bb_2$.

If not, then there exists an abnormal weak bridge $\bb_1\to\bb_2$. Let $\xx_1\yy_1=\zz'\yy_2$ for a string $\zz'$ of positive length. Since $\xx_1$ is a finite power of $\bb'_1$, $\bb'_1\xx_1\yy_1=\bb'_1\zz'\yy_2$ is a string. Since $\yy_2\equiv_H\bb'_2\yy_2$ we have that $\bb'_1\zz'\bb'_2\yy_2$ is a string and thus there is a path $\bb_2\to\bb_1$ in the bridge quiver, a contradiction to the domesticity of the algebra. Thus the proof.
\end{proof}

\begin{corollary}\label{reductioncommutativity}
Suppose $\zz$ is a skeletal string that is not a hereditary string. If $\bb_1,\bb_2$ are distinct bands such that $\zz_i:=\HRed{\bb_i}(\zz)$ exists for $i=1,2$. Then $\zz_{ji}:=\HRed{\bb_i}(\zz_j)$ exist for $i\neq j$, and $\zz_{12}=\zz_{21}$.
\end{corollary}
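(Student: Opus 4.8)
The plan is to read this as a local confluence (``diamond'') statement: deleting the two band occurrences from $\zz$ in either order produces the string with both deleted. The essential point is that those two occurrences are disjoint inside $\zz$; once that is established the remainder is a short chain of H-equivalences.

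First I would pin down the data. Since $\zz$ is skeletal and $\bb_i\in\B(\zz)$, we have $N(\bb_i,\zz)=1$; let $\zz=\ww_2^{(i)}\bb_i'\ww_1^{(i)}$ be a factorization witnessing $\HRed{\bb_i}(\zz)$, so $\bb_i'$ is a cyclic permutation of $\bb_i$ and, by the definition of a $1$-step H-reduction, $\bb_i'\ww_1^{(i)}\equiv_H\ww_1^{(i)}$. Set $\xx_i:=\bb_i'$ and $\yy_i:=\ww_1^{(i)}$; then $\xx_i\yy_i$ is a left substring of $\zz$ with $\yy_i\equiv_H\xx_i\yy_i$, and $\zz$ is not a hereditary H-string by hypothesis. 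Were the occurrences of $\bb_1'$ and $\bb_2'$ in $\zz$ to share a substring of positive length, Proposition~\ref{notintersectingHequivbands}---more precisely, the argument of its proof, via an abnormal weak bridge and domesticity---would force $\bb_1=\bb_2$, contrary to hypothesis. So the two occurrences are disjoint; since the assertion $\zz_{12}=\zz_{21}$ is symmetric in $\bb_1$ and $\bb_2$, we may take the $\bb_1'$-occurrence to precede the $\bb_2'$-occurrence along the walk, whence $\zz=\ww_3\,\bb_2'\,\ww_2\,\bb_1'\,\ww_1$ for suitable strings $\ww_1,\ww_2,\ww_3$.

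Next I would read off the reductions. Using $\bb_1'\ww_1\equiv_H\ww_1$ we get $\zz_1=\HRed{\bb_1}(\zz)=\ww_3\bb_2'\ww_2\ww_1$, and using $\bb_2'\ww_2\bb_1'\ww_1\equiv_H\ww_2\bb_1'\ww_1$ we get $\zz_2=\HRed{\bb_2}(\zz)=\ww_3\ww_2\bb_1'\ww_1$; both are strings. In $\zz_2$ the left substring preceding $\bb_1'$ is again $\ww_1$, so $\bb_1'\ww_1\equiv_H\ww_1$ shows (applying the definition of $\equiv_H$ with left context $\ww_3\ww_2$, and using that $\zz_2$ is a string) that deleting this $\bb_1'$ yields a string and is a $1$-step H-reduction; thus $\zz_{21}=\HRed{\bb_1}(\zz_2)=\ww_3\ww_2\ww_1$. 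Symmetrically, in $\zz_1$ the left substring preceding $\bb_2'$ is $\ww_2\ww_1$, so once we know $\bb_2'\ww_2\ww_1\equiv_H\ww_2\ww_1$ the same reasoning gives $\zz_{12}=\HRed{\bb_2}(\zz_1)=\ww_3\ww_2\ww_1=\zz_{21}$, which is the claim.

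It remains to prove $\bb_2'\ww_2\ww_1\equiv_H\ww_2\ww_1$. Here I would use that $\equiv_H$ is an equivalence relation and chain three links. Proposition~\ref{newExtHequivcriterion} applied to $\ww_1\equiv_H\bb_1'\ww_1$ with prefix $\bb_2'\ww_2$ (valid, since $\bb_2'\ww_2\ww_1$ is a left substring of $\zz_1$ and $\bb_2'\ww_2\bb_1'\ww_1$ of $\zz$) gives $\bb_2'\ww_2\ww_1\equiv_H\bb_2'\ww_2\bb_1'\ww_1$; the hypothesis that $\HRed{\bb_2}(\zz)$ is a $1$-step H-reduction gives $\bb_2'\ww_2\bb_1'\ww_1\equiv_H\ww_2\bb_1'\ww_1$; and Proposition~\ref{newExtHequivcriterion} applied to $\bb_1'\ww_1\equiv_H\ww_1$ with prefix $\ww_2$ gives $\ww_2\bb_1'\ww_1\equiv_H\ww_2\ww_1$. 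Transitivity concludes the proof. The one delicate step is the disjointness claim of the first paragraph---this is exactly where skeletality, the non-hereditariness of $\zz$ and Proposition~\ref{notintersectingHequivbands} all come in, and where one must also dispatch the degenerate configurations (coincident occurrences, or one abstractly a substring of the other) to get $\bb_1=\bb_2$ there too; everything afterwards is routine bookkeeping with $\equiv_H$.
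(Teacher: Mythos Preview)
Your proof is correct and follows essentially the same route as the paper's: use Proposition~\ref{notintersectingHequivbands} (together with skeletality) to make the two band occurrences disjoint, write $\zz=\ww_3\bb_2'\ww_2\bb_1'\ww_1$, and then check the required H-equivalences to see that both iterated reductions land on $\ww_3\ww_2\ww_1$. If anything, your three-link chain establishing $\bb_2'\ww_2\ww_1\equiv_H\ww_2\ww_1$ via Proposition~\ref{newExtHequivcriterion} is slightly more explicit than the paper's corresponding sentence.
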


\begin{proof}
Since $\zz$ is skeletal, in view of Propositions \ref{bandHequivalence} and \ref{notintersectingHequivbands}, we have $\zz=\yy\bb'_2\yy_2=\yy\bb'_2\xx\bb'_1\yy_1$, where $\bb'_1,\bb'_2$ are cyclic permutations of $\bb_1,\bb_2$ respectively. For $i=1,2$, since $\HRed{\bb_i}(\zz)$ exists, we get $\bb'_i\yy_i\equiv_H\yy_i$. Clearly $\zz_1=\yy\bb'_2\xx\yy_1$ and $\zz_2=\yy\xx\bb'_1\yy_1$. 

Since $\bb'_1\yy_1\equiv_H\yy_1$ and $\bb'_2\yy_2\equiv_H\yy_2$, for any string $\xx'$, $\xx'\bb'_2\yy_2$ is a string if and only if $\xx'\yy_2=\xx'\xx\bb'_1\yy_1$ is a string if and only if $\xx'\xx\yy_1$ is a string. Thus $\zz_{12}:=\HRed{\bb_2}(\zz_1)$ exists. On the other hand, since $\bb'_2$ does not interfere with $\bb_1$-reduction of $\zz$ as well as $\zz_2$, clearly $\zz_{21}:=\HRed{\bb_1}(\zz_2)$ exists and $\zz_{12}=\yy\xx\yy_1=\zz_{21}$.
\end{proof}


In fact the above result can be generalized even when certain H-reductions do not exist.
\begin{theorem}\label{Hcompassociativity}
Suppose $\zz$ is a string and $\bb_1,\bb_2\in\B(\zz)$. Then $$\hh_{\bb_1}(\hh_{\bb_2}(\zz))=\hh_{\bb_2}(\hh_{\bb_1}(\zz)).$$
\end{theorem}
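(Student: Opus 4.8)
The plan is to split into cases according to whether the one-step H-reductions $\HRed{\bb_1}(\zz)$ and $\HRed{\bb_2}(\zz)$ exist, the crux being that the distinguished $\bb_1$- and $\bb_2$-blocks cannot interact badly inside $\zz$. If $\bb_1=\bb_2$ there is nothing to prove, so I assume $\bb_1\neq\bb_2$. If neither $\HRed{\bb_1}(\zz)$ nor $\HRed{\bb_2}(\zz)$ exists then both sides equal $\zz$ directly from the definition of $\hh_{\bb_i}$. So I may assume at least one exists; then $\zz$ is not H-reduced, hence not a hereditary H-string by Proposition~\ref{Hreducedhereditary}, which makes Propositions~\ref{bandHequivalence} and~\ref{notintersectingHequivbands} available. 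The engine throughout is Proposition~\ref{newExtHequivcriterion}: it transports an H-equivalence of the form $\bb'\yy_1\equiv_H\yy_1$ --- the data recorded by a one-step H-reduction of $\zz=\yy_2\bb'\yy_1$ --- across concatenations, and, together with Proposition~\ref{bandHequivalence}, detects when a subword is forced to be a power of a cyclic permutation of a band.

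\textbf{Case 1: both $\HRed{\bb_1}(\zz)=\zz_1$ and $\HRed{\bb_2}(\zz)=\zz_2$ exist.} Let $\bb_1',\bb_2'$ be the deleted cyclic permutations of $\bb_1,\bb_2$, occupying subwords $I_1,I_2$ of $\zz$. If $I_1$ and $I_2$ shared a subword of positive length, Proposition~\ref{notintersectingHequivbands} would place $I_1\cup I_2$ inside a power of a single band, forcing $\bb_1=\bb_2$ by primitivity of bands (here domesticity enters) --- contradiction. So $I_1$ and $I_2$ are disjoint; the $\bb_2'$-block then survives verbatim in $\zz_1$ and the $\bb_1'$-block in $\zz_2$, and a few applications of Proposition~\ref{newExtHequivcriterion} propagate the two H-equivalences across the remaining, now disjoint, junctions to show that $\HRed{\bb_2}(\zz_1)$ and $\HRed{\bb_1}(\zz_2)$ both exist and coincide --- with the string obtained from $\zz$ by deleting both blocks. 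When $\zz$ is skeletal this is exactly Corollary~\ref{reductioncommutativity}. Hence $\hh_{\bb_1}(\hh_{\bb_2}(\zz))=\HRed{\bb_1}(\zz_2)=\HRed{\bb_2}(\zz_1)=\hh_{\bb_2}(\hh_{\bb_1}(\zz))$.

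\textbf{Case 2: exactly one exists}, say $\HRed{\bb_1}(\zz)=\zz_1$ while $\HRed{\bb_2}(\zz)$ does not. The two sides are then $\zz_1$ and $\hh_{\bb_2}(\zz_1)$, so it suffices to show $\HRed{\bb_2}(\zz_1)$ does not exist. Assume it does, via an occurrence of a cyclic permutation $\bb_2'$ of $\bb_2$ in $\zz_1=\yy_2\yy_1$, where $\zz=\yy_2\bb_1'\yy_1$ witnesses $\HRed{\bb_1}$. If this occurrence lies entirely within $\yy_2$, or entirely within $\yy_1$, then re-inserting $\bb_1'$ and applying Proposition~\ref{newExtHequivcriterion} shows that the same (possibly shifted) $\bb_2'$-block is a valid one-step H-reduction of $\zz$ itself, contradicting the hypothesis on $\zz$. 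If it straddles the gap, write $\bb_2'=p\,q$ with $p$ a non-empty suffix of $\yy_2$ and $q$ a non-empty prefix of $\yy_1$, say $\yy_1=q\vv$; chaining the two recorded H-equivalences through Proposition~\ref{newExtHequivcriterion} gives $(p\bb_1'q)\vv\equiv_H\vv$ with $p\bb_1'q\vv$ a string, so by Proposition~\ref{bandHequivalence} the word $p\bb_1'q$ is a power of a cyclic permutation of a band, in particular a cyclic string. Its cyclic rotations $\bb_1'(qp)$ and $(qp)\bb_1'$ are therefore strings, and $qp$ is a cyclic permutation of $\bb_2$; thus full-length concatenations of cyclic permutations of $\bb_1$ and $\bb_2$ exist in both orders, which --- exactly as in Remark~\ref{bridgehasposlen} --- makes $\Lambda$ non-domestic, a contradiction. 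Hence $\HRed{\bb_2}(\zz_1)$ does not exist and both sides equal $\zz_1$.

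The genuinely delicate points are the overlap analysis of Case 1 and the straddling sub-case of Case 2: in each, one must rule out the interleaving of cyclic permutations of two distinct bands, and domesticity is what does it (via primitivity and finiteness of the set of bands, or via Remark~\ref{bridgehasposlen}). Everything else is careful but routine junction-level bookkeeping with Proposition~\ref{newExtHequivcriterion}. I expect the straddling sub-case of Case 2 to be the hardest, since there the forbidden configuration is invisible in $\zz$ itself and only surfaces after recasting the question as a statement about the word $p\bb_1'q$.
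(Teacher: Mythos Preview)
Your case decomposition and the tools you invoke (Propositions~\ref{newExtHequivcriterion}, \ref{bandHequivalence}, \ref{notintersectingHequivbands} and Corollary~\ref{reductioncommutativity}) match the paper's proof closely; the overall architecture is the same.

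One genuine omission: the paper's first move is to reduce to skeletal $\zz$ via Remark~\ref{skeletonisHRed}. You skip this, and it matters in your Case~2. When the $\bb_2'$-block lies entirely in $\yy_1$ or $\yy_2$, you conclude ``the same $\bb_2'$-block is a valid one-step H-reduction of $\zz$ itself, contradicting the hypothesis''. But the hypothesis is that $\HRed_{\bb_2}(\zz)$ fails to exist, and $\HRed_{\bb_2}(\zz)$ is defined through \emph{the} reduction $\Red_{\bb_2}(\zz)$; if $\zz$ is not skeletal there may be several occurrences of (cyclic permutations of) $\bb_2$, and exhibiting an H-reduction at one of them does not yet contradict the non-existence of $\HRed_{\bb_2}(\zz)$. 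After the skeletal reduction this ambiguity disappears and your argument goes through.

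On organisation: the paper fixes the left--right order $\zz=\yy_3\bar\bb_2\yy_2\bar\bb_1\yy_1$ and therefore splits your Case~2 into two genuinely asymmetric Cases~III and~IV, handling III by a direct persistence argument (the failure of the $\bb_1$-H-reduction survives the $\bb_2$-deletion) and IV by contradiction via a bridge-quiver path $\bb_2\to\bb_1$ obtained from \cite[Lemma~3.3.4]{GKS}. Your uniform contradiction argument, with the three positional sub-cases for the $\bb_2'$-block in $\zz_1$, subsumes both. Your straddling endgame---using Proposition~\ref{bandHequivalence} to force $p\bb_1'q$ to be a power of a band and then rotating to obtain both $\bb_1'(qp)$ and $(qp)\bb_1'$ as strings---is a nice alternative to the paper's bridge-quiver derivation; the final appeal to non-domesticity is at the same ``readily verified'' level as Remark~\ref{bridgehasposlen} itself.
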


\begin{proof}
In view of Remark \ref{skeletonisHRed} we may assume that $\zz$ is skeletal. Suppose $\zz=\yy_3\bar\bb_2\yy_2\bar\bb_1\yy_1$ for some cyclic permutations $\bar\bb_1,\bar\bb_2$ of bands $\bb_1,\bb_2$ respectively. There are four cases.

\noindent{\textbf{Case I:}} If $\HRed{\bb_j}(\zz)$ exists for $j=1,2$ then the conclusion follows from Corollary \ref{reductioncommutativity}. 

\noindent{\textbf{Case II:}} If $\HRed{\bb_j}(\zz)$ does not exist for $j=1,2$ then both sides of the required identity equal $\zz$, and hence the conclusion.

\noindent{\textbf{Case III:}} If $\zz_2:=\HRed{\bb_2}(\zz)$ exists but $\HRed{\bb_1}(\zz)$ does not exist then it is enough to argue that $\HRed{\bb_1}(\zz_2)$ does not exist.

Let $\zz'$ be the longest left substring of $\zz$ such that $\bb'_1\zz'$ is a left substring of $\zz$ for some cyclic permutation $\bb'_1$ of $\bb_1$. Let $\zz''$ be the maximal common left substring of $\zz$ and $\zz_2$. Then $\bb'_2\zz''\equiv_H\zz''$ for a cyclic permutation $\bb'_2$ of $\bb_2$. Moreover $\zz'$ is a left substring of $\zz''$.

If $\bb'_1\zz'$ is not a left substring of $\zz_2$ then $\bb'_1\zz'=\bar\zz\zz''$, where $|\bar\zz|>0$. Since $\bar\zz$ is a right substring of $\bb'_1$ and a left substring of $\bb'_2$ there is an abnormal weak bridge $\bb_1\xrightarrow{\uu}\bb_2$ such that $\bar\zz$ is a substring of $\uu^c$. Maximality of $\zz'$ gives that $\bar\zz$ is a right substring of $\uu^c$. However $\uu^\beta\bb'_1\zz'=\uu^\beta\bar\zz\zz''$ is a string but $\uu^\beta\bar\zz\bb'_2\zz''$ is not a string, a contradiction to $\bb'_2\zz''\equiv_H\zz''$. Hence $\bb'_1\zz'$ is a left substring of $\zz_2$.

\textbf{Claim:} $\zz'$ is the longest left substring $\ww$ of $\zz_2$ such that $\bb''_1\ww$ is a left substring of $\zz_2$ for some cyclic permutation $\bb''_1$ of $\bb_1$.

Let $\zz=\zz_3\bb'_2\zz'',\zz_2=\zz_3\zz''$. Suppose $\bb''_1\tilde\zz\zz'$ is a left substring of $\zz_2$ for some cyclic permutation $\bb''_1$ of $\bb_1$ and some string $\tilde\zz$ with $|\tilde\zz|>0$. Then maximality of $\zz'$ ensures that $\zz''=\bb'_1\zz'$. However clearly $\bb'_2\bb'_1\zz'\nequiv_H\bb'_1\zz'$. This contradiction completes the proof of the claim, and hence of this case.

\noindent{\textbf{Case IV:}} If $\zz_1:=\HRed{\bb_1}(\zz)$ exists but $\HRed{\bb_2}(\zz)$ does not exist then it is enough to argue that $\HRed{\bb_2}(\zz_1)$ does not exist.

Suppose for contradiction that $\HRed{\bb_2}(\zz_1)$ exists. Then $N(\bb_2,\zz_1)=1$, and there is a shortest left substring $\zz''$ of $\zz_1$ such that $\bb'_2\zz''$ is a left substring of $\zz_1$ for some cyclic permutation $\bb'_2$ of $\bb_2$ and $\bb'_2\zz''\equiv_H\zz''$.

Let $\zz'$ be the maximal common left substring of $\zz$ and $\zz_1$. Then $\bb'_1\zz'$ is a left substring of $\zz$ for some cyclic permutation $\bb'_1$ of $\bb_1$ for which $\bb'_1\zz'\equiv_H\zz'$.

If $\zz'=\tilde\zz\zz''$ then since $\bb'_2\zz''\equiv_H\zz''$, $\tilde\zz\bb'_2\zz''$ is a string.
Then Proposition \ref{newExtHequivcriterion} gives $\tilde\zz\bb'_2\zz''\equiv_H\tilde\zz\zz''(=\zz')$. Since $\zz'\equiv_H\bb'_1\zz'$, $\bb'_1\tilde\zz\bb'_2\zz''$ is a string. Then by \cite[Lemma~3.3.4]{GKS} there is a path in the bridge quiver from $\bb_2$ to $\bb_1$. However since $\bar\bb_2\yy_2\bar\bb_1$ is a string we already have a path from $\bb_1$ to $\bb_2$. This is a contradiction to domesticity. Hence there is a string $\zz'''$ such that $\zz''=\zz'''\zz'$.

Since a $1$-step $\bb_2$-reduction of $\zz$ is not an H-reduction, we have $\bb'_2\zz'''\bb'_1\zz'\nequiv_H\zz'''\bb'_1\zz'$, and hence there is a string $\xx$ such that exactly one of $\xx\bb'_2\zz'''\bb'_1\zz'$ and $\xx\zz'''\bb'_1\zz'$ is a string. If the former is a string then using $\delta(\bb'_2)=0$ together with the H-equivalences $\bb'_2\zz'''\zz'\equiv_H\zz'''\zz'$ and $\bb'_1\zz'\equiv_H\zz'$ it can be easily shown that the latter is also a string, a contradiction. Similarly if the latter is a string then we can argue that the former is also a string, again a contradiction. Hence our assumption is wrong, and the conclusion follows.
\end{proof}

The above result shows that every string $\zz$ has a unique H-reduced iterated H-reduction, which we denote by $\hh(\zz)$.

\section{The weak arch bridge quiver}


\begin{definition}
If $\Pp:=(\uu_1,\uu_2,\hdots,\uu_n)$ is a path in $\overline\Q^{\mathrm{Ba}}$ with $n\geq1$ then define $\hh(\Pp)$ by $$\tbq(\Pp)\hh(\Pp)\sbq(\Pp)=\hh_{\tbq(\uu_1)}\hdots\hh_{\tbq(\uu_{n-1})}(\sk{\tbq(\uu_n)\uu_n\hdots\tbq(\uu_1)\uu_1\sbq(\uu_1)}).$$
\end{definition}




If $\Pp$ is a path in $\overline\Q^{\mathrm{Ba}}$ with $\sbq(\Pp)=\bb_1$ and $\tbq(\Pp)=\bb_2$ then say that $\bb_1\xrightarrow{\hh(\Pp)}\bb_2$ is a \emph{weak arch bridge}.

Note that $\bb_2\uu\bb_1$ is a skeletal string when $\bb_1\xrightarrow{\uu}\bb_2$ is a weak arch bridge.


The weak arch bridge quiver, denoted $\bHQ$, has as its vertex set the set $Q_0^{\mathrm{Ba}}$, and as its arrows the weak arch bridges between bands. We continue to use the notations $\sbq(\uu)$ and $\tbq(\uu)$ to denote the source and the target of an arrow $\uu$ in $\bHQ$.

\begin{rmk}\label{weakbridgesareweakarchbridges}
It follows from Corollary \ref{hereditaryHstringfinitude} and Proposition \ref{Hreducedhereditary} that $\bHQ$ is a finite quiver that contains $\overline{\Q}^{\mathrm{Ba}}$ as a subquiver.
\end{rmk}

\begin{definition}
If $\Pp_1$ and $\Pp_2$ are two paths in $\overline\Q^{\mathrm{Ba}}$ such that $\sbq(\Pp_2)=\tbq(\Pp_2)$ then define $$\hh(\Pp_2)\ch\hh(\Pp_1):=\hh(\Pp_1+\Pp_2).$$
\end{definition}

The next two key results show that an appropriate subset of the (finite) set of H-reduced strings is precisely the set of weak arch bridges.
\begin{theorem}\label{weakarchbridgeHreduced}
A weak arch bridge is an H-reduced string.
\end{theorem}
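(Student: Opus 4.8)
The plan is to unwind the definitions and use the characterization of H-reduced strings as hereditary H-strings (Proposition~\ref{Hreducedhereditary}). Let $\bb_1 \xrightarrow{\uu} \bb_2$ be a weak arch bridge, so $\uu = \hh(\Pp)$ for some path $\Pp = (\uu_1,\dots,\uu_n)$ in $\overline{\Q}^{\mathrm{Ba}}$. By definition, $\bb_2\uu\bb_1$ is obtained from $\sk{\tbq(\uu_n)\uu_n\cdots\tbq(\uu_1)\uu_1\sbq(\uu_1)}$ by applying a sequence of operators $\hh_{\tbq(\uu_j)}$. Since each $\hh_{\tbq(\uu_j)}$ either does nothing or performs a $1$-step H-reduction, and since the skeleton $\sk{\cdot}$ is itself an H-reduction by Remark~\ref{skeletonisHRed}, the string $\bb_2\uu\bb_1$ is an iterated H-reduction of the original concatenation. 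So it suffices to show that $\bb_2\uu\bb_1$ admits no further H-reduction, i.e.\ that it is H-reduced; then by Remark~\ref{leftsubhereditary}/Proposition~\ref{Hreducedhereditary} the substring $\uu$ is H-reduced as well (a left substring of a hereditary H-string is hereditary, but I must be careful: $\uu$ is an \emph{interior} substring of $\bb_2\uu\bb_1$, not a left substring, so I should instead argue that $\uu\bb_1$ is a left substring of $\bb_2\uu\bb_1$, hence hereditary, hence $\uu$ — being a left substring of $\uu\bb_1$ — is hereditary, hence H-reduced).

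The crux is therefore: \emph{the result of applying all the $\hh_{\tbq(\uu_j)}$ operators is H-reduced}. Here I would invoke Theorem~\ref{Hcompassociativity} and the sentence following it: every string has a unique H-reduced iterated H-reduction $\hh(\zz)$, and the operators $\hh_\bb$ commute. The subtle point is that the definition of $\hh(\Pp)$ applies the operators $\hh_{\tbq(\uu_1)},\dots,\hh_{\tbq(\uu_{n-1})}$ only for the bands appearing as targets of the $\uu_j$'s, not for \emph{all} bands in $\B$ of the concatenation. So I need to check that after applying these particular operators, no band of $\bb_2\uu\bb_1$ still admits an H-reduction. The bands occurring in $\bb_2(\uu)\bb_1 = \hh_{\tbq(\uu_1)}\cdots\hh_{\tbq(\uu_{n-1})}(\sk{\cdots})$ are: $\bb_1 = \sbq(\uu_1)$, $\bb_2 = \tbq(\uu_n)$, and possibly $\bb$-occurrences that survived the skeleton and the H-reductions. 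Since each $\uu_j$ is band-free (being a weak bridge), the only bands in the original word $\tbq(\uu_n)\uu_n\cdots\tbq(\uu_1)\uu_1\sbq(\uu_1)$ come from the explicit band factors $\tbq(\uu_n),\dots,\tbq(\uu_1)=\sbq(\uu_1)$ and from new bands created by the concatenation. Taking the skeleton and then H-reducing against each $\tbq(\uu_j)$ for $1 \le j < n$ removes all the ``internal'' band powers; what remains is $\bb_2\uu\bb_1$ where $\uu$ is band-free as a standalone string but $\bb_2\uu\bb_1$ might a priori still contain a band crossing the junctions. I expect this is precisely where Theorem~\ref{Hcompassociativity} is used: the H-reduction against $\tbq(\uu_j)$ actually removes \emph{every} copy of $\tbq(\uu_j)$ reducible within $\bb_2\uu\bb_1$ (using skeletality, $N(\tbq(\uu_j),\cdot)\le 1$, so there is at most one copy), and after doing this for all $j$, and noting $\B(\bb_2\uu\bb_1) \subseteq \{\bb_1,\bb_2\} \cup \{\tbq(\uu_j)\}$ with the extremal copies of $\bb_1,\bb_2$ being non-reducible (a reduction of $\bb_2\uu\bb_1$ that deletes the boundary $\bb_1$ or $\bb_2$ would contradict that $\uu$ is a weak bridge/that $\bb_2\uu\bb_1$ is a string witnessing it), nothing remains.

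The main obstacle I anticipate is the bookkeeping in the last step: verifying that $\B(\bb_2\uu\bb_1)$ contains no band other than $\bb_1$, $\bb_2$, and the $\tbq(\uu_j)$, and that for each such band the relevant H-reduction has genuinely been carried out (not merely attempted). For this I would argue inductively on $n$: for $n=1$, $\bb_2\uu_1\bb_1$ is a skeletal string and any band occurrence other than the boundary $\bb_1,\bb_2$ would, after reduction, produce a shorter witness contradicting that $\uu_1$ is a weak bridge between $\bb_1$ and $\bb_2$ — more precisely, if $\bb'\in\B$ with an H-reducible copy, then $\HRed{\bb'}$ of $\bb_2\uu_1\bb_1$ is still a string of the form $\bb_2 \uu_1' \bb_1$ showing $\uu_1$ factors through $\bb'$, but $\uu_1$ being band-free forces $\bb' \in \{\bb_1,\bb_2\}$, and the boundary copies are not deletable. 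For the inductive step, I would split the path as $\Pp = \Pp' + (\uu_n)$, apply the inductive hypothesis to $\hh(\Pp')$, use Theorem~\ref{Hcompassociativity} to rewrite $\hh_{\tbq(\uu_1)}\cdots\hh_{\tbq(\uu_{n-1})}$ applied to the new skeleton as first forming $\hh(\Pp')$ and then concatenating with $\bb_2\uu_n$ and re-reducing, and track that the only genuinely new band-reductions needed are against $\tbq(\uu_{n-1}) = \sbq(\uu_n)$, which is exactly the one operator supplied by the path. The commutativity in Theorem~\ref{Hcompassociativity} is what makes this rearrangement legitimate. I do not expect to need any new lemma beyond what is already in the excerpt, but the argument will require patient case analysis of where band occurrences can sit relative to the junction syllables $\sbq(\uu_j)$ and $\tbq(\uu_j)$.
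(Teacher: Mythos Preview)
Your proposal has a genuine gap at exactly the point you flag as ``the main obstacle'': the claim that $\B(\uu)\subseteq\{\tbq(\uu_j)\mid 1\le j<n\}$ (so that having applied each $\hh_{\tbq(\uu_j)}$ suffices). This is not true in general and is not what the paper proves. When you form $\uu=\uu_2\ch\uu_1$ with $\uu_1$ H-reduced and $\uu_2$ a band-free weak bridge, a \emph{new} band $\bb_1$, unrelated to any $\tbq(\uu_j)$, can appear straddling the junction between $\uu_1$ and $\uu_2$ (or between $\uu_2$ and the target band). Your base-case reasoning ``$\uu_1$ being band-free forces $\bb'\in\{\bb_1,\bb_2\}$'' already shows the confusion: band-freeness of $\uu_1$ says nothing about bands that live partly in $\uu_1$ and partly in an adjacent band factor. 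The paper's proof does not try to enumerate $\B(\uu)$; instead it assumes an arbitrary band $\bb_1\neq\bb:=\tbq(\Pp_1)$ admits an H-reduction in $\uu$ and derives a contradiction. This requires a four-case analysis on $(N(\bb,\uu_2\uu_1),N(\bb,\uu))\in\{0,1\}^2$, each case using domesticity together with the structure of abnormal weak bridges (Proposition~\ref{constantsign}, \cite[Corollary~3.4.1]{GKS}) to produce either a forbidden cycle in the bridge quiver or two bands composing in both orders. None of this machinery appears in your plan, and Theorem~\ref{Hcompassociativity} alone cannot supply it: commutativity of the $\hh_\bb$ operators tells you the order of reduction doesn't matter, not that the chosen set of bands is exhaustive.

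There is also a secondary issue. You aim to show the stronger statement that $\bb_2\uu\bb_1$ is H-reduced and then descend to $\uu$. But $\bb_2\uu\bb_1$ need not be H-reduced: the boundary copies of $\bb_1,\bb_2$ may well be H-removable (e.g.\ if $\rho_r(\bb_1)$ has length~$0$ then $\bb_1\cdot 1\equiv_H 1$), and the definition of $\hh(\Pp)$ deliberately does \emph{not} apply $\hh_{\sbq(\uu_1)}$ or $\hh_{\tbq(\uu_n)}$. Moreover your descent ``$\uu\bb_1$ is a left substring of $\bb_2\uu\bb_1$, hence hereditary, hence $\uu$\ldots'' fails at the last step: $\uu$ is a \emph{right} substring of $\uu\bb_1$, not a left one, so Remark~\ref{leftsubhereditary} does not apply. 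The paper avoids all of this by working directly with $\uu$ and arguing by contradiction that no $\HRed{\bb_1}(\uu)$ exists.
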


\begin{proof}
Suppose $\uu$ is a weak arch bridge. Then $\uu=\hh(\Pp)$ for some path $\Pp$ in $\overline{\Q}^{\mathrm{Ba}}$. We use induction on the length of $\Pp$ to prove this result.

If $|\Pp|=1$ then $\uu$ is a weak bridge which is band-free and hence an H-reduced string.

Now suppose $|\Pp|>1$. Then $\uu=\uu_2\ch\hh(\Pp_1)$, where $\Pp=\Pp_1+(\uu_2)$. Since $|\Pp_1|<|\Pp|$, the induction hypothesis gives that $\uu_1:=\hh(\Pp_1)$ is an H-reduced string. Then $\uu=\uu_2\ch\uu_1$.

Suppose $\uu$ is not H-reduced. Then there is some band $\bb_1$ different from $\bb:=\tbq(\Pp_1)$ such that $\HRed{\bb_1}(\uu)$ exists. Let $\uu''_1$ be the maximal left substring of $\uu$ such that $\bb'_1\uu''_1$ is a left substring of $\uu$ and $\bb'_1\uu''_1\equiv_H\uu''_1$ for some cyclic permutation $\bb'_1$ of $\bb_1$.





There are four cases.

\noindent{\textbf{Case I:}} $N(\bb,\uu_2\uu_1)=1$ and $N(\bb,\uu)=1$. 

Since $N(\bb,\uu_1)=N(\bb,\uu_2)=0$ but $N(\bb,\uu)=1$, $\delta(\gamma\gamma')=0$, where $\gamma$ is the first syllable of $\uu_2$ and $\gamma'$ is the last syllable of $\uu_1$. Clearly $\gamma\gamma'$ is a substring of $\bb$. Let $\uu'_1$ be the substring of $\uu_1$ such that $\bb'\uu'_1$ is a string for some cyclic permutation $\bb'$ of $\bb$.

Since $\uu_1$ is H-reduced, $\bb'_1\uu''_1$ is not a left substring of $\uu_1$. If $\uu''_1=\zz\uu_1$ for some string $\zz$ with $|\zz|>0$ then it can be easily concluded from $\delta(\gamma\gamma')=0$ that $\bb'_1\zz\equiv_H\zz$, a contradiction to the fact that $\uu_2$ is H-reduced.

If $\uu''_1$ is a proper left substring of $\uu_1$ then some cyclic permutations of $\bb$ and $\bb'$ have $\gamma\gamma'$ as a common substring. Since $\delta(\gamma\gamma')=0$ we conclude using domesticity of the string algebra that $\bb=\bb_1$, a contradiction. Hence $\uu''_1=\uu_1$.

Now $\bb'=\zz_2\zz_1$ and $\bb'_1=\zz_3\zz_2$, where $\uu_1=\zz_1\uu'_1$, $\uu_2=\uu'_2\zz_2$ and $\delta(\zz_2)\neq0$. Then $\zz_3\zz_2\zz_1\uu'_1$ is a string. Since $\delta(\zz_2\zz_1)=0$, $\bb'_1\bb=\zz_3\zz_2\zz_1\zz_2$ is a string.

On the other hand, since $\bb\uu_1$ is a string and $\uu_1=\uu''_1\equiv_H\bb'_1\uu_1$, we get that $\bb\bb'_1\uu_1$, and hence $\bb\bb'_1$ are strings, a contradiction to domesticity.

\noindent{\textbf{Case II:}}
$N(\bb,\uu_2\uu_1)=0$ and $N(\bb,\uu)=1$.

Let $\gamma$ (resp. $\gamma'$) be the first (resp. last) syllable of $\uu_2$ (resp. $\uu_1$). Let $\gamma(\bb)$ and $\gamma(\bb')$ be the first and the last syllables of $\bb$. Then $\theta(\gamma(\bb))=1=-\theta(\gamma'(\bb))$.

Suppose $\uu_1=\yy\uu''_1$ with $|\yy|>0$. Since $\uu_1$ is H-reduced, $\bb'_1\uu''_1$ is not a left substring of $\uu_1$. Hence $\bb'_1=\yy_1\yy$ for a positive length string $\yy_1$. In view of \cite[Corollary~3.4.1]{GKS} we have $\bb=\yy_2\yy_1$ for a positive length string $\yy_2$. Hence there is an abnormal weak bridge $\bb_1\xrightarrow{\uu_0}\bb$ such that $\yy_1$ is a substring of $\uu_0^c$. Since $\gamma(\bb)$ is the first syllable of $\yy_1$ we have $\delta(\yy_1)=1$.

By maximality of $\uu''_1$ we see that $\yy_1$ is a right substring of $\uu_0^c$. Now $\yy_1=\uu_0^c$ if and only if $\gamma'\neq\gamma'(\bb)$. To see that latter condition holds, we indeed have $\delta(\gamma'(\bb))=-1$ and $\delta(\yy_1)=-1=-\delta(\gamma')$. Hence $\yy_1=\uu_0^c$. But then $\alpha(\uu_0)=\gamma'$ which gives $\delta(\alpha(\uu_0))=\delta(\gamma')=\delta(\uu_0^c)$, a contradiction to Proposition \ref{constantsign} since $|\uu_0^c|>0$.

If $\uu''_1=\zz\uu_1$ for some string $\zz$ then \cite[Corollary~3.4.1]{GKS} ensures that $|\zz|>0$ and that $\bb'_1\zz\uu_1$ is not a substring of $\bb\uu_1$.

If $\bb=\zz'\zz$ for some $\zz'$ then $\bb'_1=\zz''\zz'$ for some $\zz''$ with $|\zz''|>0$. In particular there is an abnormal weak bridge $\bb\to\bb_1$.

On the other hand since $\bb'_1\uu''_1\equiv_H\uu''_1$ and $\bb\zz'\uu''_1$ is a string, so is $\bb\zz'\bb'_1\uu''_1$. Hence there is a path from $\bb_1$ to $\bb$ in the bridge quiver.

The conclusions of the above two paragraphs together contradict the domesticity of algebra.

Therefore $\uu''_1=\bar\zz\bb\uu_1$ with $|\bar\zz|>0$. Since $\uu_2$ is H-reduced we have $\bb'_1\bar\zz\nequiv_H\bar\zz$. Thus there is a string $\bar\xx$ with $\delta(\bar\xx)\neq0$ such that exactly one of $\bar\xx\bb'_1\bar\zz$ and $\bar\xx\bar\zz$ is a string.

If $\bar\xx\bb'_1\bar\zz$ is a string then so is $\bar\xx\bb'_1\bar\zz\bb\uu_1$. Further using $\bb'_1\bar\zz\bb\uu_1\equiv_H\bar\zz\bb\uu_1$ we see that $\bar\xx\bar\zz\bb\uu_1$, and hence $\bar\xx\bar\zz$, are strings too, a contradiction to the above paragraph. Therefore $\bar\xx\bar\zz$ is a string but $\bar\xx\bb'_1\bar\zz$ is not. Once again using $\bb'_1\bar\zz\bb\uu_1\equiv_H\bar\zz\bb\uu_1$ we see that $\bar\xx\bar\zz\bb\uu_1$ is not a string. Since $\delta(\bb)=0$, we conclude that $\bar\xx\bar\zz\bb$ is not a string.

Let $\yy_0$ be the shortest right substring of $\bb$ such that $\bar\xx\bar\zz\yy_0$ is not a string. Since $\bar\xx\bar\zz$ is a string we get $|\yy_0|>0$. Thus $\gamma'(\bb)\in\yy_0$, and hence $\delta(\bar\xx\bar\zz\yy_0)=-1$. As a consequence, we see that $\gamma\neq\gamma(\bb)$.

Since $\bar\xx\bar\zz\bb\uu_1$ is not a string, $\bar\xx\bb'_1\bar\zz\bb\uu_1$ is not either. Since $\delta(\bb'_1)=0$ we get that $\bar\xx\bb'_1$ is not a string. Since no two elements of $\rho$ are comparable, we get that $\bar\xx\bar\zz\yy_0$ is right subword of $\bar\xx\bb'_1$. In particular, $\yy_0$ is a common substring of bands $\bb$ and $\bb_1$. Hence there is an abnormal weak bridge $\bb\xrightarrow{\tilde\uu}\bb_1$ such that $|\tilde\uu^c|>0$. As a consequence the first syllable of $\bar\zz$, i.e., $\gamma$, is the exit syllable of $\tilde\uu$. But from the above paragraph we see that $\delta(\beta(\tilde\uu))=\delta(\gamma)=-1=\delta(\yy_0)=\delta(\tilde\uu^c)$, a contradiction to $|\tilde\uu^c|>0$ by Proposition \ref{constantsign}.

Therefore in each subcase we obtained a contradiction. This completes the proof of Case II.

\noindent{\textbf{Case III:}} $N(\bb,\uu_2\uu_1)=1$ and $N(\bb,\uu)=0$. 

As in Case I since $N(\bb,\uu_1)=N(\bb,\uu_2)=0$ but $N(\bb,\uu_2\uu_1)=1$, $\delta(\gamma\gamma')=0$, where $\gamma$ is the first syllable of $\uu_2$ and $\gamma'$ is the last syllable of $\uu_1$. Clearly $\gamma\gamma'$ is a substring of $\bb$. Suppose $\uu'_1$ is the maximal left substring of $\uu$ for which $\bb'\uu'_1\equiv_H\uu'_1$ for some cyclic permutation $\bb'$ of $\bb$. Let $\uu_2\uu_1=\uu'_2\bb'\uu'_1$. Then $\uu_2\ch\uu_1=\HRed\bb(\uu_2\uu_1)=\uu'_2\uu'_1$. Let $\uu'_2\uu'_1=\uu''_2\bb'_1\uu''_1$.

Since $\uu_1$, and hence $\uu'_1$, is H-reduced, $\bb'_1\uu''_1$ is not a left substring of $\uu'_1$. So there are two possibilities.

If $\uu''_1=\zz\uu'_1$ then for any string $\zz'$, $\zz'\zz\bb'\uu'_1$ is a string if and only if $\zz'\zz\uu'_1$ is a string if and only if $\zz'\bb'_1\zz\uu'_1$ is a string if and only if $\zz'\bb'_1\zz\bb'\uu'_1$ is a string. Therefore as $\delta(\gamma\gamma')=0$ we have $\bb'_1\tilde\zz\equiv_H\tilde\zz$, where $\tilde\zz$ is the right substring of $\zz\bb'\uu'_1$ that is also a left substring of $\uu_2$. This is a contradiction to the fact that $\uu_2$ is H-reduced.

On the other hand if $\uu'_1=\zz\uu''_1$ for some string $\zz$ with $|\zz|>0$ then let $\bb'_1=\zz_1\zz$. Since $\zz_1\zz\uu''_1$ is a string, so are $\zz_1\bb'\zz\uu''_1$ and $\zz_1\bb'\zz\bb'_1\uu''_1$. Thus there is an abnormal weak bridge $\bb_1\xrightarrow{\bar\uu}\bb$. By maximality of $\uu'_1$, we see that $|\zz_1|<|\bar\uu^\beta|$. Moreover $\bar\uu^\beta\bb'$ is not a string, and hence $\bar\uu^\beta\bb'\zz\uu''_1$ is not too.

However $\bar\uu^\beta\zz\uu''_1=\bar\uu^\beta\uu'_1$ is a string. Since $\bb'\uu'_1\equiv_H\uu'_1$, $\bar\uu^\beta\bb'\zz\uu''_1$ is a string, a contradiction to the paragraph above. This completes the proof of Case III.

\noindent{\textbf{Case IV:}} 
$N(\bb,\uu_2\uu_1)=0$ and $N(\bb,\uu)=0$. 

The proof of this case is similar to the proof of Case III with appropriate modifications.

Thus the proof of the theorem is complete.
\end{proof}


The following result is the converse to Theorem \ref{weakarchbridgeHreduced}.
\begin{proposition}\label{Hreducedweakarchbridge}
If $\bb,\bb'$ are bands and $\yy$ is an H-reduced string such that $\bb'\yy\bb$ is a skeletal string then $\yy$ is a weak arch bridge.
\end{proposition}

\begin{proof}
Suppose $\B(\yy)=\{\bb_1,\bb_2,\hdots,\bb_n\}$ where $j\leq k$ if and only if there is a possibly trivial path from $\bb_j$ to $\bb_k$ in the weak bridge quiver. Using $\bb_0:=\bb$ and $\bb_{n+1}:=\bb'$ let $\bb_j\xrightarrow{\yy_j}\bb_{j+1}$ be a weak bridge that is a substring of $\bb'\yy\bb$ for $0\leq j\leq n$.

We use induction on $n=|\B(\yy)|$ to prove the result.

If $n=0$ then $\yy$ is a weak bridge and hence a weak arch bridge by Remark \ref{weakbridgesareweakarchbridges}.

If $n>0$ then let $\zz_n$ be the shortest left substring of $\yy$ such that $\bb_n\zz_n\bb$ is a string. Since $\yy$ is H-reduced we get that $\yy=\yy_n\ch\zz_n$. Then $|\B(\zz_n)|<n$. Hence by induction hypothesis there is some path $\Pp=(\uu_1,\hdots,\uu_p)$ in the weak bridge quiver such that $\zz_n=\hh(\Pp)$. Since $\yy_n$ is a weak bridge and $\yy=\yy_n\ch\zz_n$, it is immediate that $\yy=\hh(\Pp+(\yy_n))$, and hence $\yy$ is a weak arch bridge.
\end{proof}

\section{The arch bridge quiver}\label{secarchbrquiv}
As bridges were defined to be the $\circ$-irreducible weak bridges we define ``arch bridges'' to be $\ch$-irreducible weak arch bridges, and study the properties of their H-compositions.
\begin{definition}
Say that a weak arch bridge $\bb_1\xrightarrow{\uu}\bb_2$ is an \emph{arch bridge} if $\uu\neq\hh(\Pp)$ for any path $\Pp$ of length at least $2$ from $\bb_1$ to $\bb_2$ in $\bHQ$.
\end{definition}

The arch bridge quiver, denoted $\HQ$, is the subquiver of the weak arch bridge quiver $\bHQ$ consisting of only arch bridges.


\begin{rmk}\label{archbridgeisweakbridge}

If a weak arch bridge $\bb_1\xrightarrow{\hh(\Pp)}\bb_2$ is an arch bridge for some path $\Pp$ from $\bb_1$ to $\bb_2$ in the weak bridge quiver, then from the inductive construction of $\hh$ we conclude that $|\Pp|=1$. As a consequence $\HQ$ is a subquiver of $\overline\Q^{\mathrm{Ba}}$.

\end{rmk}

\begin{rmk}\label{bridgeisarchbridge}
If $\uu\in\overline{\Q}^{\mathrm{Ba}}$ factors as $\uu=\uu_2\ch\uu_1$ then $\uu_2\ch\uu_1=\uu_2\circ\uu_1$. Hence a bridge is an arch bridge and $\Q^{\mathrm{Ba}}$ is a subquiver of $\HQ$.
\end{rmk}

In \S\ref{newextsemibrquiv} we will study a class of arch bridges containing the class of bridges.
\begin{proposition}\label{pathforweakarchbridge}
Suppose $\uu$ is a weak arch bridge. Then there is some path $\Pp$ in $\HQ$ such that $\uu=\hh(\Pp)$.
\end{proposition}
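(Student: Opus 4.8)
The plan is to argue by induction on the length of any weak bridge path $\Pp_0$ witnessing $\uu=\hh(\Pp_0)$, which exists by the very definition of a weak arch bridge. If $\uu$ is itself an arch bridge, then $(\uu)$ is the required length-$1$ path in $\HQ$ and we are done; this handles the base case and any case where $\uu$ is $\ch$-irreducible. So assume $\uu$ is not an arch bridge. By definition there is a path $\Pp=(\vv_1,\dots,\vv_m)$ in $\bHQ$ with $m\geq 2$ and $\uu=\hh(\Pp)$, i.e. $\uu=\vv_m\ch(\cdots\ch(\vv_2\ch\vv_1)\cdots)$ where each $\vv_i$ is a weak arch bridge. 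The idea is then to replace each $\vv_i$ by a path in $\HQ$ via the induction hypothesis and concatenate.

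First I would make precise the induction. Each $\vv_i$ is a weak arch bridge, hence $\vv_i=\hh(\Pp^{(i)})$ for some weak bridge path $\Pp^{(i)}$; moreover $\Pp^{(1)}+\Pp^{(2)}+\cdots+\Pp^{(m)}$ is a weak bridge path whose $\hh$-image is $\hh(\Pp_0)=\uu$ (this is essentially the definition of $\ch$ via concatenation of paths together with associativity of $\ch$, which is available to us once the weak arch bridge category has been set up — though strictly I only need the identity $\hh(\Pp'+\Pp'')=\hh(\Pp'')\ch\hh(\Pp')$, which is the definition). Since $m\geq 2$, each $\vv_i$ is witnessed by a path strictly shorter than $\Pp_0$ (as the lengths of the $\Pp^{(i)}$ sum to $|\Pp_0|$ and all are $\geq 1$), so by the induction hypothesis each $\vv_i=\hh(\Q^{(i)})$ for some path $\Q^{(i)}$ in $\HQ$. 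Then $\Q:=\Q^{(1)}+\cdots+\Q^{(m)}$ is a path in $\HQ$, and $\hh(\Q)=\hh(\Q^{(m)})\ch\cdots\ch\hh(\Q^{(1)})=\vv_m\ch\cdots\ch\vv_1=\uu$, completing the induction.

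To make this airtight I would do the induction on $N:=\min\{|\Pp|\,:\,\Pp\text{ a weak bridge path with }\hh(\Pp)=\uu\}$, or equivalently on $|\B(\bb_2\uu\bb_1)|$ where $\bb_1=\sbq(\uu)$, $\bb_2=\tbq(\uu)$ (the number of band-occurrences removed in forming $\uu$), since this is the cleaner well-founded quantity and it strictly decreases when passing to the $\vv_i$: indeed $\B$ of each intermediate string involved in building $\vv_i$ is a proper subset. Then: if $\uu$ is an arch bridge we are done; otherwise take the factorization into $m\geq 2$ weak arch bridges $\vv_i$, observe each $\vv_i$ has strictly smaller invariant, apply the induction hypothesis to write each as $\hh$ of a path in $\HQ$, and concatenate.

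The main obstacle I anticipate is purely bookkeeping: verifying that the relevant invariant (minimal witnessing path length, or the band-count) genuinely drops for each $\vv_i$ in the factorization $\uu=\vv_m\ch\cdots\ch\vv_1$ with $m\geq 2$, and that $\hh$ is compatible with concatenation of paths in exactly the way needed so that $\hh(\Q^{(1)}+\cdots+\Q^{(m)})=\uu$. The first point should follow from the inductive construction of $\hh$ (each $\ch$-composition can only merge band occurrences, never create new ones beyond those already present), and the second is the definition $\hh(\Pp_1+\Pp_2)=\hh(\Pp_2)\ch\hh(\Pp_1)$ together with an easy induction on $m$. There is no deep content here beyond what is already assembled in Sections~\ref{secweakarchbrquiv} and~\ref{secarchbrquiv}; the statement is essentially the observation that ``$\ch$-irreducibles generate'', which is automatic for a finite category once one knows every arrow factors into a path of arrows and the ambient set is finite.
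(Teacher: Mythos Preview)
Your overall strategy is sound, and your closing meta-argument (``$\ch$-irreducibles generate'' in a finite category) is essentially correct and could be turned into a clean proof. However, the specific inductive invariants you propose both fail, and the gap is precisely the case the paper has to work hardest on. The problem is when $\uu$ is itself a weak bridge (so $N=1$ and $\B(\uu)=\emptyset$) but is \emph{not} an arch bridge. Such $\uu$ exist: by Remark~\ref{bridgeisarchbridge}, if a weak bridge $\uu$ factors as $\uu=\uu_2\ch\uu_1$ then necessarily $\uu=\uu_2\circ\uu_1$ with $\uu_1,\uu_2$ again weak bridges, so each factor also has $N_i=1$ and $\B(\uu_i)=\emptyset$. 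Neither invariant strictly decreases. Relatedly, your claim that ``the lengths of the $\Pp^{(i)}$ sum to $|\Pp_0|$'' is unjustified: the factorization $\uu=\vv_m\ch\cdots\ch\vv_1$ comes from the definition of non-arch-bridge and bears no a~priori relation to the minimal witnessing path $\Pp_0$; one only gets $\sum|\Pp^{(i)}|\geq|\Pp_0|$.

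The paper handles exactly this by a two-stage argument: first it uses the band decomposition of $\tbq(\uu)\uu\sbq(\uu)$ to reduce to the case where $\uu$ is a weak bridge (this is essentially your band-count idea, and it works fine for that reduction), and then for weak bridges it switches to a different well-founded order, the ``complexity'' order generated by $\uu'\circ\uu''$ being more complex than $\uu',\uu''$, which terminates because the weak bridge quiver is finite. Your meta-argument can be made rigorous without this split: since $\bHQ$ is finite and acyclic (inheriting acyclicity from $\overline\Q^{\mathrm{Ba}}$), the length of any $\ch$-factorization of $\uu$ is bounded, so a maximal one exists and each factor in it is $\ch$-irreducible, hence in $\HQ$. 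That is arguably cleaner than the paper's proof, but you must invoke acyclicity explicitly and drop the path-length/band-count induction.
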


\begin{proof}
Suppose $\sbq(\uu)\xrightarrow{\uu}\tbq(\uu)$ is a weak arch bridge. Then $\uu$ is H-reduced by Theorem \ref{weakarchbridgeHreduced}. Let $\B(\uu)=\{\bb_1,\bb_2,\hdots,\bb_n\}$ where, there is a path from $\bb_i$ to $\bb_{i+1}$ in $\overline{\Q}^{\mathrm{Ba}}$ for $1\leq i<n$. Choosing $\bb_0:=\sbq(\uu)$ and $\bb_{n+1}:=\tbq(\uu)$, there are weak bridges $\bb_i\xrightarrow{\uu_i}\bb_{i+1}$ such that $$\tbq(\uu)\uu\sbq(\uu)=\hh_{\bb_n}\hdots\hh_{\bb_1}(\sk{\tbq(\uu)\uu_n\bb_n\hdots\uu_2\bb_2\uu_1\bb_1\uu_0\sbq(\uu)}).$$

This shows that it is enough to prove the result when $\uu$ is a weak bridge--the longer path can be obtained as a concatenation.

For non-trivial weak bridges $\uu,\uu',\uu''$ with $\uu=\uu'\circ\uu''$, say that $\uu$ is more complex than $\uu'$ and $\uu''$. We use `more complex' as the transitive closure of such a relation. Since the weak bridge quiver is finite, the complexity order is so too. We use induction on the complexity of a weak bridge to prove the result.


Note that $\uu\in\HQ$ if and only if whenever $\uu=\uu_2\circ\uu_1$ then $\uu\neq\uu_2\ch\uu_1$. 

Hence if $\uu\in\HQ$ we have $\uu=\hh((\uu))$. 

On the other hand if $\uu\notin\HQ$ then there is a non-trivial factorization $\uu=\uu_2\ch\uu_1$. Using the induction hypothesis for the weak bridges $\uu_i$ we obtain paths $\Pp_i$ in $\HQ$ such that $\uu_i=\hh(\Pp_i)$ for $i=1,2$ so that $\uu=\hh(\Pp_1+\Pp_2)$.
\end{proof}

\begin{theorem}\label{hisinjective}
Suppose $\Pp,\Pp'$ are paths in $\HQ$ with the same source and target such that $\hh(\Pp)=\hh(\Pp')$ then $\Pp=\Pp'$.
\end{theorem}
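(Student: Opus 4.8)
The plan is to prove this uniqueness statement by induction on the length of $\Pp$, exploiting the H-composition $\ch$ and the structure of H-reduction established in the previous section. First I would set up the induction: if both paths have length $1$, there is nothing to prove. So suppose $\Pp = (\uu_1, \ldots, \uu_n)$ and $\Pp' = (\uu'_1, \ldots, \uu'_m)$ are paths in $\HQ$ from $\bb$ to $\bb'$ with $\hh(\Pp) = \hh(\Pp') =: \uu$. The key is to identify the \emph{last} arch bridge in each path from invariants of the string $\uu$ alone, so that one can then cancel it and invoke the induction hypothesis.

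The heart of the argument is a cancellation lemma: if $\uu = \uu_n \ch \hh(\Pp_{<n})$ where $\Pp_{<n} = (\uu_1, \ldots, \uu_{n-1})$, then $\uu_n$ (together with its source band $\sbq(\uu_n) = \tbq(\uu_{n-1})$) is determined by $\uu$. The natural candidate for the invariant is the band $\bb_n := \sbq(\uu_n)$: it should be the band in $\B(\tbq(\uu)\uu\sbq(\uu))$ that is ``closest to the target'', i.e. the unique $\bb_n \in \B(\uu')$ such that $\bb' \uu' \bb_n$ decomposes with no band of $\B(\uu)$ strictly between the last copy of $\bb_n$ and $\bb'$. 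Then $\uu_n$ is recovered as the weak bridge $\bb_n \to \bb'$ occurring as the relevant right substring, and the ``tail'' $\hh(\Pp_{<n})$ is recovered by H-reducing $\uu$ after deleting this piece — here Theorem~\ref{Hcompassociativity} (well-definedness of $\hh(\zz)$) and Proposition~\ref{Hreducedweakarchbridge} do the bookkeeping, guaranteeing that the recovered tail is again a weak arch bridge with a path decomposition in $\HQ$ via Proposition~\ref{pathforweakarchbridge}. One must also check that the recovered $\uu_n$ is genuinely $\ch$-irreducible, i.e. an arch bridge: this follows because $\Pp$ was a path in $\HQ$ and $\uu_n$ is its last arrow. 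Applying the analogous analysis to $\Pp'$ yields $\uu_n = \uu'_m$ and the same recovered tail $\vv$ with $\hh(\Pp_{<n}) = \vv = \hh(\Pp'_{<m})$; since $\Pp_{<n}$ and $\Pp'_{<m}$ are shorter paths in $\HQ$ with equal $\hh$-image, the induction hypothesis gives $\Pp_{<n} = \Pp'_{<m}$, hence $\Pp = \Pp'$.

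I expect the main obstacle to be making precise the claim that $\bb_n = \sbq(\uu_n)$ is intrinsic to the string $\uu$, independent of the path realizing it. The subtlety is that $\uu$, being H-reduced, contains each band of $\B(\uu)$ exactly once, but the \emph{order} in which bands get ``reduced out'' when building $\uu$ from a path is not obviously forced by $\uu$ — a priori two different paths could reduce the same set of bands in different orders, or could even realize $\uu$ as an H-composition splitting off a different final weak bridge. The resolution should come from combining the skeletality of $\bb'\uu\bb$ (noted right after the definition of weak arch bridge), the acyclicity of the weak bridge quiver, and the structural analysis of how $\ch$ interacts with abnormal arrows (Corollary~\ref{abnwbcancel} and Proposition~\ref{leftcancel}, plus the case analysis of Remark~\ref{casebycaseanalysis}): the last arch bridge's exit syllable $\beta(\uu_n)$ should coincide with $\beta(\uu)$ in the normal case, and in the abnormal case the essential-exit uniqueness of Proposition~\ref{essentialexit} pins it down. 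A secondary technical point is verifying that the ``delete the last weak bridge and re-H-reduce'' operation indeed returns $\hh(\Pp_{<n})$ rather than some shorter string; this is where one needs that the H-reduction of $\uu$ restricted to the bands $\B(\uu_{<n})$ is unaffected by the presence of the final piece, which again rests on Theorem~\ref{Hcompassociativity}.
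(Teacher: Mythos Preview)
Your inductive strategy matches the paper's, but your proposed invariant for recovering $\sbq(\uu_n)$ from $\uu := \hh(\Pp)$ has a genuine gap. You suggest that $\sbq(\uu_n)$ should be ``the band in $\B(\tbq(\uu)\uu\sbq(\uu))$ closest to the target''. This works when $\B(\uu)\neq\emptyset$, and indeed the paper dispatches that case quickly: any $\bb\in\B(\uu)$ forces both $\Pp$ and $\Pp'$ to pass through $\bb$, splitting each path into two shorter pieces with equal $\hh$-images. The substantive case is precisely when $\uu$ is \emph{band-free}, i.e.\ every intermediate band $\sbq(\uu_2),\ldots,\sbq(\uu_n)$ has been erased by H-reduction. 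Here your invariant is vacuous --- there is no band left in $\uu$ to point at --- and the abnormal-arrow machinery you invoke (Proposition~\ref{essentialexit}, Corollary~\ref{abnwbcancel}, the case analysis of Remark~\ref{casebycaseanalysis}) concerns $\circ$-factorizations and exit syllables, not the $\ch$-structure you need.

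The paper's resolution is different in kind. Even after the band $\bb:=\sbq(\uu_n)$ has been reduced out of $\uu$, the \emph{H-equivalence position} survives: there is a shortest left substring $\xx$ of $\uu$ and a cyclic permutation $\tilde\bb$ of $\bb$ with $\tilde\bb\xx\,\sbq(\Pp)\equiv_H\xx\,\sbq(\Pp)$, witnessing where $\hh_\bb$ acted. Likewise one obtains $\xx'$ and $\tilde\bb'$ for $\bb':=\sbq(\uu'_m)$. If $\bb\neq\bb'$, assume without loss $\xx'=\zz\xx$; then chaining the two H-equivalences shows that $\tilde\bb'\zz\tilde\bb\xx\,\sbq(\Pp)$ is a string and, crucially, that $\zz\tilde\bb\equiv_H\tilde\bb'\zz\tilde\bb$. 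This yields a weak bridge $\bb\xrightarrow{\uu''}\bb'$ with $\uu_n=\uu'_m\ch\uu''$, contradicting the $\ch$-irreducibility of the arch bridge $\uu_n$. So the correct invariant is not a visible band in $\uu$ but the H-equivalence locus, and the contradiction is produced directly at the level of $\ch$ rather than by appealing to the $\circ$-theory of \S\ref{abnarrows}.
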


\begin{proof}
Let $\yy:=\hh(\Pp)=\hh(\Pp')$ and $\bb_1:=\sbq(\Pp)=\sbq(\Pp')$. If $\bb\in\B(\yy)$ then there are non-trivial partitions $\Pp=\Pp_1+\Pp_2$ and $\Pp'=\Pp'_1+\Pp'_2$ such that $\bb=\tbq(\Pp_1)=\tbq(\Pp'_1)$ and $\hh(\Pp_j)=\hh(\Pp'_j)$ for $j=1,2$. Hence we may assume that $\yy$ is band-free.

Let $n:=|\Pp|,m:=|\Pp'|$, $\Pp=\tilde\Pp+(\uu_n)$ and $\Pp'=\tilde\Pp'+(\uu'_m)$. We use induction on $\min\{n,m\}$ to prove the result.

For the base case, if $\min\{n,m\}=1$ then the conclusion follows by the definition of an arch bridge.

For the inductive case assume that $\min\{n,m\}>1$. Theorem \ref{Hcompassociativity} gives that $\yy=\uu_n\ch\hh(\tilde\Pp)=\uu'_m\ch\hh(\tilde\Pp')$. Suppose $\bb:=\sbq(\uu_n)$ and $\bb':=\sbq(\uu'_m)$.

\textbf{Claim:} $\bb=\bb'$

Suppose not. Since $\yy=\uu_n\ch\hh(\tilde\Pp)$ there is a shortest left substring $\xx$ of $\yy$ and a cyclic permutation $\tilde\bb$ of $\bb$ such that $\tilde\bb\xx\bb_1\equiv_H\xx\bb_1$. Similarly since $\yy=\uu'_m\ch\hh(\tilde\Pp')$ there is a shortest left substring $\xx'$ of $\yy$ and a cyclic permutation $\tilde\bb'$ such that $\tilde\bb'\xx'\bb_1\equiv_H\xx'\bb_1$. Suppose $\yy=\ww\xx=\ww'\xx'$. Without loss we may assume that $\xx'=\zz\xx$ for some (possibly length $0$) string $\zz$ and $\bar\yy:=\ww'\tilde\bb'\zz\tilde\bb\xx$ is a word.

Since $\tilde\bb\xx\bb_1\equiv_H\xx\bb_1$ and $\tilde\bb'\zz\xx\bb_1$ is a string, we get that $\bar\yy=\tilde\bb'\zz\tilde\bb\xx\bb_1$ is a string. In particular, there is a weak bridge $\bb\xrightarrow{\uu}\bb'$ such that $\uu_n=\uu'_m\circ\uu$.

Using the two H-equivalences above, for a string $\zz'$ such that $s(\zz')=t(\zz)$, we have $\zz'\tilde\bb'\zz\tilde\bb\xx\bb_1$ is a string if and only if $\zz'\tilde\bb'\zz\xx\bb_1$ is a string if and only if $\zz'\zz\xx\bb_1$ is a string if and only if $\zz'\zz\tilde\bb\xx\bb_1$ is a string. Since $\delta(\tilde\bb)=0$, we see that $\zz\tilde\bb\equiv_H\tilde\bb'\zz\tilde\bb$. Hence $\uu_n=\uu'_m\ch\uu$, a contradiction to $\uu_n\in\HQ$. This proves the claim.

It now follows from the claim that $\uu_n=\uu'_m$, and hence $\hh(\tilde\Pp)=\hh(\tilde\Pp')$. Since $\min\{|\tilde\Pp|,|\tilde\Pp'|\}<\min\{n,m\}$ we get $\tilde\Pp=\tilde\Pp'$ by the induction hypothesis. As a consequence we get $\Pp=\Pp'$ as required.
\end{proof}

The next result is in stark contrast with the composition $\circ$ of bridges in view of Example \ref{nonassociativityweakbridge}--its proof easily follows from Propositions \ref{pathforweakarchbridge}, \ref{hisinjective}, and Theorem \ref{Hcompassociativity}.

\begin{theorem}\label{Hassociativity}
Suppose $\bb_0\xrightarrow{\uu_1}\bb_1\xrightarrow{\uu_2}\bb_2\xrightarrow{\uu_3}\bb_3$ is a sequence of non-trivial weak arch bridges. Then $\uu_3\ch(\uu_2\ch\uu_1)=(\uu_3\ch\uu_2)\ch\uu_1$.
\end{theorem}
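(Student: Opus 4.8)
The plan is to lift the three weak arch bridges to paths in the arch bridge quiver $\HQ$ and thereby reduce the assertion to the strict associativity of path concatenation $+$. First I would apply Proposition \ref{pathforweakarchbridge} to fix, for each $i\in\{1,2,3\}$, a path $\Pp_i$ in $\HQ$ with $\hh(\Pp_i)=\uu_i$; by Remark \ref{archbridgeisweakbridge} the quiver $\HQ$ is a subquiver of $\overline{\Q}^{\mathrm{Ba}}$, so each $\Pp_i$ is also a path there, and both the recipe defining $\hh$ on $\overline{\Q}^{\mathrm{Ba}}$-paths and the partial operation $\ch$ apply to it. The hypothesis that $\bb_0\xrightarrow{\uu_1}\bb_1\xrightarrow{\uu_2}\bb_2\xrightarrow{\uu_3}\bb_3$ is composable makes $\Pp_1+\Pp_2$, $\Pp_2+\Pp_3$ and $\Pp_1+\Pp_2+\Pp_3$ composable paths as well.

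With this in hand I would simply unwind the definition of $\ch$ on each side. On the left, $\uu_2\ch\uu_1=\hh(\Pp_2)\ch\hh(\Pp_1)=\hh(\Pp_1+\Pp_2)$ by definition, and then, applying the definition again to the $\overline{\Q}^{\mathrm{Ba}}$-paths $\Pp_1+\Pp_2$ and $\Pp_3$,
\[
\uu_3\ch(\uu_2\ch\uu_1)=\hh(\Pp_3)\ch\hh(\Pp_1+\Pp_2)=\hh\bigl((\Pp_1+\Pp_2)+\Pp_3\bigr).
\]
The symmetric computation on the right gives $(\uu_3\ch\uu_2)\ch\uu_1=\hh\bigl(\Pp_1+(\Pp_2+\Pp_3)\bigr)$. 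Since $(\Pp_1+\Pp_2)+\Pp_3$ and $\Pp_1+(\Pp_2+\Pp_3)$ are literally the same path $\Pp_1+\Pp_2+\Pp_3$, the two sides agree.

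For this short argument to be meaningful one needs to know that $\ch$ is a genuine (partial) binary operation on the \emph{set} of weak arch bridges, that is, that $\hh(\Pp_1+\Pp_2)$ depends only on the weak arch bridges $\hh(\Pp_1)$ and $\hh(\Pp_2)$ and not on the chosen generating paths; this is precisely where Proposition \ref{pathforweakarchbridge}, Theorem \ref{hisinjective} and Theorem \ref{Hcompassociativity} do the work. Proposition \ref{pathforweakarchbridge} together with Theorem \ref{hisinjective} gives each weak arch bridge a \emph{unique} representative as $\hh$ of an $\HQ$-path, so it suffices to compare $\hh(\Pp+\Pp')$ with $\hh(\widehat\Pp+\Pp')$ when $\Pp$ is an arbitrary $\overline{\Q}^{\mathrm{Ba}}$-path and $\widehat\Pp$ is the canonical $\HQ$-path with $\hh(\Pp)=\hh(\widehat\Pp)$; since $\hh$ of a concatenated path is obtained from the skeleton of the fully concatenated string by a sequence of partial band-reductions $\hh_{\bb}$, and these commute by the iterated form of Theorem \ref{Hcompassociativity} (whence iterated H-reductions are order-independent), one may perform the reductions coming from bands internal to $\Pp$, resp.\ $\widehat\Pp$, first, recovering $\hh(\Pp)=\hh(\widehat\Pp)$ as an intermediate block, so that the final result only sees $\hh(\Pp)$ and $\hh(\Pp')$.

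I expect this well-definedness bookkeeping --- keeping track of which $\hh_{\bb}$-reductions belong to which sub-path, verifying that they may be commuted past one another, and controlling the interaction of these reductions with the skeleton operation at the band where the two sub-paths are glued --- to be the only genuine obstacle. Granting it, the associativity of $\ch$ is a purely formal consequence of the associativity of $+$, exactly as displayed above.
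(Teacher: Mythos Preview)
Your approach is correct and is precisely what the paper intends: the proof in the paper consists of the single sentence that the result ``easily follows from Propositions \ref{pathforweakarchbridge}, \ref{hisinjective}, and Theorem \ref{Hcompassociativity}'', and you have unpacked exactly this, including the well-definedness check that makes $\ch$ a genuine operation on weak arch bridges rather than on representing paths. The only caveat is that your final paragraph's bookkeeping sketch (commuting the internal $\hh_{\bb}$-reductions to the front so as to recover $\hh(\Pp)$ as an intermediate block) is really the substance of the argument rather than a side issue, but you have correctly located it and named the tool (Theorem \ref{Hcompassociativity}) that handles it.
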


\section{The extended arch bridge quiver}\label{secextarchbrquiv}
Fix a string $\xx_0$ and $i\in\{1,-1\}$. The goal of this short section is to document versions of the definitions as well as results in the above sections ``relative to the pair $(i,\xx_0)$''. The proofs of the relative versions are omitted as they are similar to the proofs of the absolute versions with appropriate modifications. 

Say that $\yy\in H_l^i(\xx_0)$ is a hereditary H-string relative to $(i,\xx_0)$ if
\begin{itemize}
    \item for any distinct left substrings $\yy_1,\yy_2$ of $\yy$ in $H_l^i(\xx_0)$, different from $\xx_0$, we have $\yy_1\nequiv_H\yy_2$;
    \item for any left substring $\yy_1$ of $\yy$ in $H_l^i(\xx_0)$, different from $\xx_0$, if $\yy_1\equiv_H\xx_0$ and $\yy=\yy'_1\yy_1$ with $|\yy'_1|>0$ then $\theta(\yy'_1)=-i$.
\end{itemize}

If $\yy\in H_l^i(\xx_0)$, $\bb\in\B(\xx_0;\yy)$ then a $1$-step $\bb$-reduction of $\yy$ relative to $(i,\xx_0)$ exists if $\Red\bb(\yy)$ exists and is a string in $H^i_l(\xx_0)$, and we denote it by $\Red\bb^{(i,\xx_0)}(\yy)$. If $\Red\bb^{(i,\xx_0)}(\yy)$ is an H-reduction of $\yy$ then we denote it by $\HRed\bb^{(i,\xx_0)}(\yy)$. We further define $$\hh_\bb^{(i,\xx_0)}(\yy):=\begin{cases}\HRed\bb^{(i,\xx_0)}(\yy)&\mbox{if }\HRed\bb^{(i,\xx_0)}(\yy)\mbox{  exists};\\\yy&\mbox{otherwise}.\end{cases}$$ With appropriate modifications in the definition of an H-reduced string to get its version relative to $(i,\xx_0)$ we get the following relative version of Proposition \ref{Hreducedhereditary}.
\begin{proposition}
A string $\yy\in H_l^i(\xx_0)$ is H-reduced relative to $(i,\xx_0)$ if and only if it is a hereditary H-string relative to $(i,\xx_0)$.
\end{proposition}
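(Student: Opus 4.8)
The plan is to mirror the proof of Proposition~\ref{Hreducedhereditary} verbatim, replacing every notion by its relative-to-$(i,\xx_0)$ counterpart and being careful only at the two points where the relative definitions genuinely differ from the absolute ones: the ambient set is $H_l^i(\xx_0)$ rather than all strings, and the definition of ``hereditary H-string relative to $(i,\xx_0)$'' has the extra clause about left substrings that are $\equiv_H\xx_0$. First I would prove the contrapositive of one direction: if $\yy\in H_l^i(\xx_0)$ is not H-reduced relative to $(i,\xx_0)$ then there is a partition $\yy=\yy_2\bb'\yy_1$ with $\bb'$ a cyclic permutation of a band $\bb\in\B(\xx_0;\yy)$ such that $\yy_2\yy_1\in H_l^i(\xx_0)$ and $\bb'\yy_1\equiv_H\yy_1$. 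Then $\bb'\yy_1$ is a left substring of $\yy$ lying in $H_l^i(\xx_0)$, and it is $\equiv_H$-equivalent to the strictly shorter left substring $\yy_1$; I must only check that $\yy_1$ is still in $H_l^i(\xx_0)$ and different from $\xx_0$ (the latter forced because $\xx_0$ is the shortest element of $H_l^i(\xx_0)$ and $|\yy_1|<|\bb'\yy_1|\le|\yy|$, together with the observation that a $1$-step $\bb$-reduction keeps $\xx_0$ as a right substring), so the first defining clause of ``hereditary relative to $(i,\xx_0)$'' fails and $\yy$ is not such a string.

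For the converse I would again argue contrapositively: assume $\yy\in H_l^i(\xx_0)$ is not a hereditary H-string relative to $(i,\xx_0)$, and split into the two ways the definition can fail. If the first clause fails there are distinct left substrings $\yy_1,\yy_2$ of $\yy$ in $H_l^i(\xx_0)$, neither equal to $\xx_0$, with $\yy_1\equiv_H\yy_2$; writing $\yy=\yy'\yy_2$ and (WLOG) $\yy_2$ the longer one, $\yy_2=\xx''\yy_1$ for a nonempty $\xx''$, and Proposition~\ref{bandHequivalence} gives $\xx''=(\bb')^n$ for a cyclic permutation $\bb'$ of a band $\bb$, with $\bb'\yy_1\equiv_H\yy_1$ by Proposition~\ref{newHequivcriterion} (the $\delta(\bb')=0$ argument exactly as in the absolute proof). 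Then $\yy'(\bb')^{n-1}\yy_1$ is the desired H-reduction relative to $(i,\xx_0)$, provided I verify it still lies in $H_l^i(\xx_0)$ — which holds because deleting a copy of $\bb'$ from the interior of a string in $H_l^i(\xx_0)$ leaves $\xx_0$ as a right substring and does not change the sign $\theta$ of the leading part when $n\ge 2$; the case $n=1$ (so $\yy_1\equiv_H\yy_2=\bb'\yy_1$) needs the extra care that $\yy'$ has $\theta(\yy')=i$, which is exactly what the relative H-reduction definition requires and what the failure of the \emph{second} clause would otherwise supply.

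This points at the one genuinely new ingredient: the second clause of the relative definition, concerning a left substring $\yy_1\equiv_H\xx_0$ with $\yy=\yy'_1\yy_1$ and $\theta(\yy'_1)=-i$ (the ``bad'' case being $\theta(\yy'_1)=i$). When this clause fails, Proposition~\ref{bandHequivalence} applied to $\yy'_1\yy_1\equiv_H\yy_1$ again forces $\yy'_1$ to be a power $(\bb')^n$ of a cyclic band permutation with $\bb'\yy_1\equiv_H\yy_1$, and now $\xx_0\equiv_H\yy_1$ means $\B(\xx_0;\yy)\ni\bb$; the resulting reduction $\yy'_1{}^{\text{(shortened)}}\yy_1$ fails to be in $H_l^i(\xx_0)$ precisely when $\theta$ of its front would be $i$ instead of the required $-i$ — but that is exactly the hypothesis $\theta(\yy'_1)=i$ under which we are operating, so the reduction \emph{is} relative-legal and witnesses that $\yy$ is not H-reduced relative to $(i,\xx_0)$. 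I expect this bookkeeping with $\theta$ and the membership conditions to be the main obstacle: the combinatorial skeleton is identical to Proposition~\ref{Hreducedhereditary}, but one must thread the sign condition through both clauses so that ``fails to be hereditary'' and ``admits a relative H-reduction'' really are each other's contrapositives, rather than one being strictly stronger. No genuinely new lemma should be needed — Propositions~\ref{bandHequivalence}, \ref{newHequivcriterion}, \ref{newExtHequivcriterion} and the elementary fact that $1$-step $\bb$-reductions preserve $\xx_0$ as a right substring together suffice.
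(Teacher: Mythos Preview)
Your plan—adapt Proposition~\ref{Hreducedhereditary} verbatim, threading the sign condition and the second clause through the argument—is precisely what the paper does; indeed the paper omits the proof entirely, calling it similar to the absolute version with appropriate modifications. Two concrete slips in your sketch would need fixing, however.

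In the forward direction you claim $\yy_1\neq\xx_0$ from $|\yy_1|<|\yy|$, but that inequality does not exclude $\yy_1=\xx_0$. When the removed band copy sits directly atop $\xx_0$, clause~(1) is unavailable (it excludes $\xx_0$ as one of the two substrings) and you must instead violate clause~(2) using the left substring $\bb'\xx_0\equiv_H\xx_0$: the hypothesis that the reduction $\yy_2\xx_0$ lies in $H_l^i(\xx_0)$ gives $\theta(\yy_2)=i$, which is exactly the failure of clause~(2). In the converse direction your handling of a clause-(2) failure applies Proposition~\ref{bandHequivalence} to the wrong equivalence: what is given is $\yy_1\equiv_H\xx_0$, not $\yy'_1\yy_1\equiv_H\yy_1$. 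Writing $\yy_1=\zz\xx_0$, that proposition yields $\zz=(\bb')^n$ (not $\yy'_1$) with $\theta(\bb')=i$ since $\yy_1\in H_l^i(\xx_0)$; the relative H-reduction is $\yy'_1(\bb')^{n-1}\xx_0$, whose leading part has sign $i$ either because it begins with $\bb'$ (if $n\geq2$) or with $\yy'_1$ (if $n=1$, using the clause-(2)-failure hypothesis $\theta(\yy'_1)=i$). Your last sentence also has the membership test inverted: lying in $H_l^i(\xx_0)$ requires the front to have sign $i$, not $-i$.
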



The following extension of Theorem \ref{Hcompassociativity} gives that every string $\zz\in H_l^i(\xx_0)$ has a unique iterated H-reduction that is H-reduced with respect to $(i,\xx_0)$.
\begin{theorem}
Suppose $\zz\in H_l^i(\xx_0)$ and $\bb_1,\bb_2\in\B(\zz)$. Then $$\hh_{\bb_1}^{(i,\xx_0)}(\hh_{\bb_2}^{(i,\xx_0)}(\zz))=\hh_{\bb_2}^{(i,\xx_0)}(\hh_{\bb_1}^{(i,\xx_0)}(\zz)).$$
\end{theorem}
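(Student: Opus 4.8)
The plan is to mimic the proof of Theorem \ref{Hcompassociativity} verbatim, checking that every step survives the passage from the absolute setting to the relative setting. First I would recall that, by the relative version of Remark \ref{skeletonisHRed} (which produces $\sk\zz$ as an iterated H-reduction relative to $(i,\xx_0)$, valid because powers of bands are H-equivalent by Proposition \ref{newHequivcriterion} and $\delta$ of a band is $0$), we may assume $\zz$ is skeletal. Then write $\zz=\yy_3\bar\bb_2\yy_2\bar\bb_1\yy_1\xx_0$ for cyclic permutations $\bar\bb_1,\bar\bb_2$ of $\bb_1,\bb_2$, and split into the same four cases according to whether $\HRed{\bb_1}^{(i,\xx_0)}(\zz)$ and $\HRed{\bb_2}^{(i,\xx_0)}(\zz)$ exist.

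The key observation making the case analysis go through is that, for $\yy\in H_l^i(\xx_0)$, the condition ``$\Red\bb(\yy)\in H_l^i(\xx_0)$'' depends only on the sign $\theta$ of the left part being reduced, and reduction by a band $\bb$ does not change $\theta$ of the relevant initial segment (since $\delta(\bb')=0$, deleting a cyclic permutation $\bb'$ of $\bb$ leaves the sign of the string to its left, hence the $H_l^i$-membership, unchanged). Thus the extra clause distinguishing $\Red\bb^{(i,\xx_0)}$ from $\Red\bb$ is automatically inherited under the manipulations in the original proof: whenever the original argument shows some string is an H-reduction of another, the same string lies in $H_l^i(\xx_0)$. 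In Case I one invokes the relative version of Corollary \ref{reductioncommutativity}, whose proof is the obvious relative adaptation (the two bands $\bb_1,\bb_2$ occur in disjoint or nested positions by Propositions \ref{bandHequivalence} and \ref{notintersectingHequivbands}, and their reductions commute; membership in $H_l^i(\xx_0)$ is preserved at each step by the sign remark). Case II is trivial as both sides equal $\zz$.

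For Cases III and IV — say $\HRed{\bb_2}^{(i,\xx_0)}(\zz)$ exists but $\HRed{\bb_1}^{(i,\xx_0)}(\zz)$ does not (Case III); the other is symmetric with the roles and $\yy_1$-versus-$\yy_3$ ends swapped — I would argue exactly as in Theorem \ref{Hcompassociativity}: it suffices to show $\HRed{\bb_1}^{(i,\xx_0)}(\zz_2)$ does not exist, where $\zz_2:=\HRed{\bb_2}^{(i,\xx_0)}(\zz)$. Take $\zz'$ the longest left substring of $\zz$ with $\bb'_1\zz'$ a left substring of $\zz$, and $\zz''$ the maximal common left substring of $\zz$ and $\zz_2$; the abnormal-weak-bridge argument (using domesticity and $\uu^\beta\bb'_1\zz'$ being a string while $\uu^\beta\bar\zz\bb'_2\zz''$ is not) shows $\bb'_1\zz'$ is a left substring of $\zz_2$, and the Claim that $\zz'$ is the longest such left substring of $\zz_2$ follows as before from maximality of $\zz'$ and $\bb'_2\bb'_1\zz'\nequiv_H\bb'_1\zz'$. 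The only new point to record is that if $\HRed{\bb_1}^{(i,\xx_0)}(\zz_2)$ existed, its underlying string would be $\Red{\bb_1}(\zz_2)$, contradicting the Claim regardless of the $H_l^i$-membership clause; so nothing extra is needed. Case IV likewise copies the Case IV argument of Theorem \ref{Hcompassociativity}, with the two $\delta=0$ manipulations at the end showing that a witness string $\xx$ distinguishing $\bb'_2\zz'''\bb'_1\zz'\xx_0$ from $\zz'''\bb'_1\zz'\xx_0$ cannot exist.

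The main obstacle, and the only place requiring genuine care rather than transcription, is verifying that the relative H-reduction notion is not strictly finer in a way that breaks the commutation: concretely, that in Case I, when both reductions exist relative to $(i,\xx_0)$, performing them in the other order still lands inside $H_l^i(\xx_0)$ rather than merely producing a string equal to $\zz_{12}$ outside $H_l^i(\xx_0)$. This is handled by the sign remark above (band reduction preserves $\theta$ of every initial segment, hence preserves $H_l^i$-membership of every left substring, in particular of the whole string), together with the second bullet of the relative hereditary-H-string definition which ensures that if an intermediate left substring becomes $H$-equivalent to $\xx_0$ the residual left factor still has sign $-i$; since a band power has $\delta=0$, deleting it does not alter that residual sign either. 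With this remark in place, every implication ``$X$ is a $1$-step H-reduction of $Y$'' in the absolute proof upgrades to ``$X$ is a $1$-step H-reduction of $Y$ relative to $(i,\xx_0)$'', and the absolute proof transfers wholesale.
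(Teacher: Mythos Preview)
Your proposal is correct and is precisely the approach the paper takes: the paper omits the proof entirely, stating that ``the proofs of the relative versions are omitted as they are similar to the proofs of the absolute versions with appropriate modifications,'' and your plan to transcribe the four-case argument of Theorem~\ref{Hcompassociativity} while tracking the extra $H_l^i(\xx_0)$-membership clause via the $\delta(\bb')=0$ observation is exactly the intended adaptation. Your identification of the one genuine new point---that band reduction preserves the sign of the relevant initial segment, so the relative clause is inherited automatically---is the ``appropriate modification'' the paper alludes to.
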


Now we define the operator $\hh(\xx_0;\mbox{-})$ to all paths in $\overline\Q^{\mathrm{Ba}}_i(\xx_0)$ starting at $\xx_0$; we omit the reference to $i$ as it will always be clear from the context.

\begin{definition}
We associate to each path $\Pp:=(\uu_1,\hdots,\uu_m)$ in $\overline{\Q}^{\mathrm{Ba}}_i(\xx_0)$ with $m\geq1$ and $\sbq(\Pp)=\xx_0$ a string $\hh(\xx_0;\Pp)$ inductively as follows.

$$\tbq(\Pp)\hh(\xx_0;\Pp)\xx_0=\hh_{\tbq(\uu_1)}^{(i,\xx_0)}\hdots\hh_{\tbq(\uu_{n-1})}^{(i,\xx_0)}(\sk{\tbq(\uu_n)\uu_n\hdots\tbq(\uu_1)\uu_1\xx_0}).$$

We further define, by dropping the reference to $\xx_0$ when it is clear from the context, the string $\hh(\Pp)$ by $\hh(\Pp)\xx_0=\hh(\xx_0;\Pp)$.

Moreover, if $\Pp_1$ and $\Pp_2$ are two paths such that $\Pp_1+\Pp_2$ exists in $\overline{\Q}^{\mathrm{Ba}}_i(\xx_0)$ and $\sbq(\Pp_1)=\xx_0$ then define $$\hh(\Pp_2)\ch^{(i,\xx_0)}\hh(\Pp_1):=\hh(\Pp_1+\Pp_2).$$
\end{definition}

For each path $\Pp:=(\uu_1,\hdots,\uu_m)$ in $\overline{\Q}^{\mathrm{Ba}}_i(\xx_0)$ with $m\geq1$, say that $\sbq(\Pp)\xrightarrow{\hh(\Pp)}\tbq(\Pp)$ is
\begin{itemize}
    \item a \emph{weak arch bridge} if $\sbq(\Pp)$ and $\tbq(\Pp)$ are bands;
    \item a \emph{weak half $i$-arch bridge} if $\sbq(\Pp)=\xx_0$ and $\tbq(\Pp)$ is a band;
    \item a \emph{weak zero $i$-arch bridge} if $\sbq(\Pp)=\xx_0$ and $\tbq(\Pp)$ is a $0$-length string;
    \item a \emph{weak reverse half arch bridge} if $\sbq(\Pp)$ is a band and $\tbq(\Pp)$ is a $0$-length string.
\end{itemize}
The extended weak arch bridge quiver, denoted $\bHQ_i(\xx_0)$, is the quiver whose vertices are the same as the vertices of $\overline{\Q}^{\mathrm{Ba}}_i(\xx_0)$ and whose arrows are weak arch bridges, weak half $i$-arch bridges, weak zero $i$-arch bridges, and weak reverse half arch bridges. We continue to use the notations $\sbq(\uu)$ and $\tbq(\uu)$ to denote the source and the target functions of this quiver.

Say that an arrow $\uu$ in $\bHQ_i(\xx_0)$ is an \emph{arch bridge} (resp. \emph{half $i$-arch bridge}, \emph{zero $i$-arch bridge}, \emph{reverse half arch bridge}) if $\uu$ is a weak arch bridge (resp. weak half $i$-arch bridge, weak zero $i$-arch bridge, weak reverse half arch bridge) and $\uu\neq\hh(\Pp)$ for any path $\Pp$ of length at least $2$ in $\overline{\Q}^{\mathrm{Ba}}_i(\xx_0)$ with $\sbq(\uu)=\sbq(\Pp)$ and $\tbq(\uu)=\tbq(\Pp)$.

The extended arch bridge quiver, denoted $\HQ_i(\xx_0)$, is the subquiver of $\bHQ_i(\xx_0)$ consisting of only arch bridges, zero $i$-arch bridges, half arch $i$-bridges and reverse half arch bridges.

\begin{rmk}
The quiver $\Q^{\mathrm{Ba}}_i(\xx_0)$ is a subquiver of $\HQ_i(\xx_0)$, and the latter is a subquiver of $\overline{\Q}^{\mathrm{Ba}}_i(\xx_0)$. \end{rmk}

\begin{example}
Figure \ref{archbrquivLambda'} shows the arch bridge quiver of the algebra $\Lambda'$ from Figure \ref{Nonassociative}. Note that the arrows $JA$ and $JiheD$ are not bridges.

Figure \ref{extarchbrquivex} shows the extended arch bridge quiver of the algebra $\Lambda''$ from Figure \ref{C-opposite}, where the reference point is chosen to be a length $0$-string at vertex $v_1$. Note that the arrow $IbA$ is an half $1$-arch bridge that is not a half bridge.
\begin{figure}[h]
\begin{minipage}[b]{0.4\linewidth}
\centering
\begin{tikzcd}
cbA \arrow[d, "bA"'] \arrow[r, "JA"]        & lK                    \\
biheD \arrow[r, "eD"'] \arrow[ru, "JiheD"'] & egF \arrow[u, "Jih"']
\end{tikzcd}
    \caption{A part of $\HQ$ for $\Lambda'$}
    \label{archbrquivLambda'}
\end{minipage}
\hspace{0.7cm}
\begin{minipage}[b]{0.53\linewidth}
\centering
\begin{tikzcd}
{1_{(v_1,i)}} \arrow[d, "edcbA"'] \arrow[r, "IbA"] & kJ                       \\
edF \arrow[r, "dF"]                                & dcbhG \arrow[u, "IbhG"']
\end{tikzcd}
    \caption{$\HQ_1(1_{(v_1,i)})$ for $\Lambda''$}
    \label{extarchbrquivex}
\end{minipage}
\end{figure}
\end{example}

The following two results extend Theorem \ref{weakarchbridgeHreduced} and its converse, Proposition \ref{Hreducedweakarchbridge}, respectively.
\begin{theorem}
Suppose $\uu$ is an arrow in $\bHQ_i(\xx_0)$. Then
\begin{itemize}
    \item if $\sbq(\uu)\neq\xx_0$ then $\uu$ is an H-reduced string;
    \item if $\sbq(\uu)=\xx_0$ then $\uu\xx_0$ is an H-reduced string relative to $(i,\xx_0)$.
\end{itemize}
\end{theorem}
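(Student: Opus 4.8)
The plan is to mimic the proof of Theorem~\ref{weakarchbridgeHreduced} but now carry the reference string $\xx_0$ and the sign $i$ along throughout, noting that the only real novelty is in the four sub-cases that involve $\xx_0$ as an endpoint. First I would establish the result for arrows $\uu$ with $\sbq(\uu)\neq\xx_0$, i.e.\ for weak arch bridges in $\bHQ_i(\xx_0)$ and weak reverse half arch bridges. For a weak arch bridge this is literally Theorem~\ref{weakarchbridgeHreduced}; for a weak reverse half arch bridge $\bb\xrightarrow{\uu}1_{(v,j)}$ one uses the same induction on the length of the defining path $\Pp$, with the base case being a torsion weak reverse half bridge (which is band-free, hence H-reduced) and the inductive step repeating the Case~I--IV analysis with $\tbq$ now a length-$0$ string; the $N(\bb,\cdot)$ bookkeeping is unchanged since the band $\bb=\tbq(\Pp_1)$ appears on the \emph{left} of the relevant strings.

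Next I would treat arrows with $\sbq(\uu)=\xx_0$, i.e.\ weak half $i$-arch bridges and weak zero $i$-arch bridges. Here the induction is on the length of the path $\Pp$ with $\sbq(\Pp)=\xx_0$, and the base case is a weak half bridge or a maximal torsion weak zero bridge from $\xx_0$, both of which are band-free, so $\uu\xx_0$ is H-reduced relative to $(i,\xx_0)$ provided $\xx_0$ itself is. For the inductive step one writes $\uu=\uu_2\ch^{(i,\xx_0)}\uu_1$ where $\uu_1\xx_0=\hh(\Pp_1)\xx_0$ is H-reduced relative to $(i,\xx_0)$, and $\uu_2$ is a weak bridge; supposing $\uu\xx_0$ is not H-reduced relative to $(i,\xx_0)$ produces a band $\bb_1\neq\bb:=\tbq(\Pp_1)$ together with a cyclic permutation $\bb'_1$ and a maximal left substring $\uu''_1$ of $\uu\xx_0$ with $\bb'_1\uu''_1$ a left substring of $\uu\xx_0$ and $\bb'_1\uu''_1\equiv_H\uu''_1$. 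One then runs exactly the four cases on the relative position of $\bb$ inside $\uu_2\uu_1\xx_0$ versus $\uu\xx_0$, each time deriving — as in the absolute proof — either an abnormal weak bridge going the ``wrong way'' between $\bb_1$ and $\bb$ together with a weak-bridge-quiver path going the ``right way'', contradicting domesticity, or a violation of $\delta$-sign constancy via Proposition~\ref{constantsign}. The only genuinely new feature is the possibility that $\bb'_1\uu''_1$ reaches all the way down to $\xx_0$ — i.e.\ $\uu''_1$ is short and $\bb'_1\uu''_1=\uu\xx_0$ or $\uu''_1=\xx_0$; in that situation one invokes the second clause in the definition of ``hereditary H-string relative to $(i,\xx_0)$'' (the condition $\theta(\yy'_1)=-i$ when a left substring is $\equiv_H\xx_0$) to rule out the bad H-equivalence, since $\bb'_1$ has $\delta(\bb'_1)=0$ and hence cannot have the forced sign $-i$ unless it interferes with $\xx_0$ in a way that contradicts maximality of $\uu''_1$.

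The main obstacle, I expect, is precisely this interaction with $\xx_0$: in the absolute setting one always has genuine bands on both ends of every substring one manipulates, so Proposition~\ref{constantsign}, Proposition~\ref{banduniqueness}, and \cite[Lemma~3.3.4]{GKS} apply uniformly, whereas now the ``band'' on one side may have degenerated to $\xx_0$ or to a length-$0$ string, and the torsion conditions on zero/reverse-half bridges (designed in \S\ref{extbridge} precisely so that exits still exist) must be invoked to make the sign and forking arguments go through. I would therefore organize the write-up so that the band-to-band parts are explicitly cited as ``identical to the corresponding case of Theorem~\ref{weakarchbridgeHreduced}'' and only the $\xx_0$-adjacent sub-cases are spelled out in detail, checking in each that the H-equivalence-to-$\xx_0$ clause and the torsion-exit hypotheses supply exactly the ingredient (a syllable of the correct sign, or a fork) that the absolute proof got for free from domesticity. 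Since the section explicitly says proofs of relative versions are omitted, in the final text this theorem's proof can legitimately be reduced to this remark plus a pointer to Theorem~\ref{weakarchbridgeHreduced}, but the above is the argument one would fill in.
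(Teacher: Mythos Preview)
Your proposal is correct and matches the paper's approach exactly. The paper omits the proof entirely, stating at the start of \S\ref{secextarchbrquiv} that proofs of the relative versions are similar to those of the absolute versions with appropriate modifications, and your outline correctly identifies both the absolute result to mimic (Theorem~\ref{weakarchbridgeHreduced}) and the nature of the modifications needed at the $\xx_0$ endpoint.
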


\begin{proposition}\label{extHreducedweakarchbr}
Suppose $\vv,\vv'$ are distinct vertices of $\bHQ_i(\xx_0)$. If a string $\yy$ satisfies any of the following conditions:
\begin{itemize}
    \item[$(\vv\neq\xx_0)$] $\yy$ is an H-reduced string and $\vv'\yy\vv$ is a skeletal string;
    \item[$(\vv=\xx_0)$] $\yy\xx_0$ is an H-reduced string relative to $(i,\xx_0)$ and $\vv'\yy$ is a skeletal string,
\end{itemize}
then $\vv\xrightarrow{\yy}\vv'$ is an arrow in $\bHQ_i(\xx_0)$.
\end{proposition}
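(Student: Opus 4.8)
The approach is to reprise, nearly verbatim, the proof of Proposition~\ref{Hreducedweakarchbridge}: induct on the number of bands of $\B(\yy)$ (counted relative to $\xx_0$ when $\vv=\xx_0$), stripping off at each step the topmost band of $\yy$ together with the band-free piece of $\yy$ lying above it, which becomes the final arrow of the sought path. The only genuinely new ingredient is a case analysis over the two possible types -- a band, or a $0$-length string -- of each of the endpoints $\vv,\vv'$. The configuration ``$\vv$ a band, $\vv'$ a band'' with $\vv\neq\xx_0$ is exactly Proposition~\ref{Hreducedweakarchbridge}, and the remaining possibilities for $(\vv,\vv')$ differ from it only in that the terminal (resp.\ initial) arrow of the path must be taken to be a maximal torsion weak reverse half bridge, a weak half bridge from $\bar\lambda^h_i(\xx_0)$, or a maximal torsion weak zero bridge, as dictated by the definition of $\overline{\Q}^{\mathrm{Ba}}_i(\xx_0)$.

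In the base case, where $\yy$ (resp.\ $\yy\xx_0$) is band-free, I would argue that the skeletality and (relative) H-reducedness hypotheses, together with the fact that $\vv$ and $\vv'$ are prescribed \emph{vertices} of $\overline{\Q}^{\mathrm{Ba}}_i(\xx_0)$, force $\yy$ to be precisely the relevant arrow: a weak bridge, a weak half $i$-bridge, a maximal torsion weak reverse half bridge, or a maximal torsion weak zero bridge. Concretely, when $\vv'$ is a $0$-length string one uses that $\vv'$ occurs in $\overline{\Q}^{\mathrm{Ba}}_i(\xx_0)$ only as the target of a \emph{maximal} torsion arrow, invokes the defining fork conditions of torsion (reverse half / zero) bridges, and -- in the case $\vv=\xx_0$ -- the second clause of ``hereditary H-string relative to $(i,\xx_0)$'' (the condition $\theta(\yy'_1)=-i$), to see that $\yy$ has an exit and is maximal torsion of the appropriate flavour. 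This identification of the base-case arrow is the step I expect to be most delicate.

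For the inductive step, let $\B(\yy)=\{\bb_1,\dots,\bb_n\}$, enumerated compatibly with the weak bridge quiver order, put $\bb_0:=\vv$, $\bb_{n+1}:=\vv'$, and take $\zz_n$ to be the shortest left substring of $\yy$ with $\bb_n\zz_n\vv$ a string (a string of $H^i_l(\xx_0)$ when $\vv=\xx_0$). Exactly as in Proposition~\ref{Hreducedweakarchbridge}, (relative) H-reducedness of $\yy$ yields $\yy=\yy_n\ch\zz_n$ -- or $\yy=\yy_n\ch^{(i,\xx_0)}\zz_n$ when $\vv=\xx_0$ -- where $\bb_n\xrightarrow{\yy_n}\vv'$ is the topmost band-free piece of $\yy$, namely a weak bridge if $\vv'$ is a band and the maximal torsion weak reverse half bridge with target $\vv'$ otherwise (this last being the base-case argument applied to $\yy_n$). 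Now $\zz_n$ is again (relatively) H-reduced with $|\B(\zz_n)|=n-1$, $\bb_n$ a band, and $\bb_n\zz_n\vv$ skeletal, so the induction hypothesis applied to the pair $(\vv,\bb_n)$ -- whose target is a band, hence falling under one of the two already-treated endpoint configurations -- produces a path $\Pp$ in $\overline{\Q}^{\mathrm{Ba}}_i(\xx_0)$ with $\zz_n=\hh(\Pp)$. Then $\yy=\hh(\Pp+(\yy_n))$, so $\vv\xrightarrow{\yy}\vv'$ is an arrow of $\overline{\Q}^{\mathrm{Ba}}_i(\xx_0)$; one may moreover replace $\Pp$ by a path in $\HQ_i(\xx_0)$ via the extended analogue of Proposition~\ref{pathforweakarchbridge}.

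The main obstacle is the bookkeeping of the ``relative to $(i,\xx_0)$'' conditions: one must check that stripping off $\yy_n$ keeps the remainder in $H^i_l(\xx_0)$ and H-reduced relative to $(i,\xx_0)$ (so that no band absorption violates the $\theta(\yy'_1)=-i$ clause), that the decomposition $\yy=\yy_n\ch^{(i,\xx_0)}\zz_n$ is legitimate (i.e.\ that the relevant $1$-step $\bb_n$-reduction is an H-reduction relative to $(i,\xx_0)$), and that the torsion and maximality of the endpoint arrows are preserved. Each of these is the relative analogue of a statement already proved in \S\S\ref{secweakarchbrquiv}--\ref{secarchbrquiv}, or an immediate consequence of the definitions of torsion and maximal torsion arrows, so no new idea is needed -- which is why, consistently with the policy announced in \S\ref{secextarchbrquiv}, the verification is left to the reader.
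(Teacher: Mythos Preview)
Your proposal is correct and matches the paper's approach: the paper explicitly omits this proof (consistent with the policy announced at the start of \S\ref{secextarchbrquiv}), indicating only that it is the relative-to-$(i,\xx_0)$ analogue of Proposition~\ref{Hreducedweakarchbridge}, and your proposal is precisely that adaptation together with the requisite case analysis on the types of the endpoints $\vv,\vv'$. Your identification of the base-case verification (that $\yy$ is the appropriate flavour of maximal torsion arrow when $\vv'$ is a $0$-length string) as the most delicate step, and of the relative-$(i,\xx_0)$ bookkeeping as the main obstacle, is accurate and in keeping with the paper's own remark that such checks are ``appropriate modifications'' left to the reader.
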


The following result extends Proposition \ref{pathforweakarchbridge}.
\begin{proposition}
Suppose $\uu$ is an arrow in $\bHQ_i(\xx_0)$. Then there is some path $\Pp$ in $\HQ_i(\xx_0)$ with $\sbq(\uu)=\sbq(\Pp)$ and $\tbq(\uu)=\tbq(\Pp)$ such that $\uu=\hh(\Pp)$.
\end{proposition}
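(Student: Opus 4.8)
The plan is to follow the proof of Proposition~\ref{pathforweakarchbridge}, installing the ``relative to $(i,\xx_0)$'' bookkeeping at each step. By the extended version of Theorem~\ref{weakarchbridgeHreduced}, the string $\uu$ (when $\sbq(\uu)\neq\xx_0$) or $\uu\xx_0$ (when $\sbq(\uu)=\xx_0$) is H-reduced in the appropriate sense, hence skeletal by the relative analogues of Propositions~\ref{hereditaryskeletal} and~\ref{Hreducedhereditary}. First I would peel off the bands occurring in $\uu$: enumerate $\B(\uu)$ (respectively $\B(\xx_0;\uu\xx_0)$) as $\bb_1,\dots,\bb_n$, sorted so that there is a path $\bb_j\to\bb_{j+1}$ in $\overline\Q^{\mathrm{Ba}}_i(\xx_0)$ for $1\le j<n$, set $\bb_0:=\sbq(\uu)$ and $\bb_{n+1}:=\tbq(\uu)$, and read off band-free arrows $\bb_j\xrightarrow{\uu_j}\bb_{j+1}$ of $\overline\Q^{\mathrm{Ba}}_i(\xx_0)$: a weak half or zero $i$-bridge out of $\xx_0$ when $j=0$ and $\sbq(\uu)=\xx_0$, a maximal torsion weak reverse half bridge when $j=n$ and $|\tbq(\uu)|=0$, and a weak bridge between bands otherwise, arranged so that $\tbq(\uu)\,\uu\,\sbq(\uu)$ is the iterated relative H-reduction of $\sk{\tbq(\uu)\uu_n\bb_n\cdots\uu_1\bb_1\uu_0\sbq(\uu)}$ prescribed by the defining recursion for $\hh(\xx_0;-)$. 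Thus $\uu$ is an iterated $\ch^{(i,\xx_0)}$-composition of $\uu_0,\dots,\uu_n$, and by the relative analogue of Theorem~\ref{Hassociativity} it suffices to produce, for each single band-free arrow $\vv$ of $\overline\Q^{\mathrm{Ba}}_i(\xx_0)$, a path $\Pp$ in $\HQ_i(\xx_0)$ with $\sbq(\Pp)=\sbq(\vv)$, $\tbq(\Pp)=\tbq(\vv)$ and $\vv=\hh(\Pp)$; concatenating these produces the path for $\uu$.

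For a single band-free arrow I would argue exactly as in Proposition~\ref{pathforweakarchbridge}. Using the already-noted inclusions $\Q^{\mathrm{Ba}}_i(\xx_0)\subseteq\HQ_i(\xx_0)\subseteq\overline\Q^{\mathrm{Ba}}_i(\xx_0)$ and the relative analogue of Remark~\ref{bridgeisarchbridge}, a band-free arrow $\vv$ of $\overline\Q^{\mathrm{Ba}}_i(\xx_0)$ lies in $\HQ_i(\xx_0)$ if and only if it cannot be written as $\vv_2\circ\vv_1$ with $\vv_2\circ\vv_1=\vv_2\ch^{(i,\xx_0)}\vv_1$ and both factors nontrivial. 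One then inducts on the $\circ$-complexity of $\vv$ --- the transitive closure of the relation declaring $\vv=\vv_2\circ\vv_1$ to be more complex than $\vv_1$ and $\vv_2$ --- which, like $\overline\Q^{\mathrm{Ba}}_i(\xx_0)$, is finite. If $\vv\in\HQ_i(\xx_0)$ then $\Pp:=(\vv)$ works since $\vv=\hh((\vv))$. Otherwise $\vv=\vv_2\ch^{(i,\xx_0)}\vv_1$ nontrivially with $\vv_1,\vv_2$ band-free arrows of strictly smaller $\circ$-complexity; the inductive hypothesis supplies paths $\Pp_1,\Pp_2$ in $\HQ_i(\xx_0)$ with $\vv_j=\hh(\Pp_j)$, and $\Pp:=\Pp_1+\Pp_2$ satisfies $\vv=\hh(\Pp)$.

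The main obstacle is not a new idea but the stratified bookkeeping: one must verify that both the band-peeling decomposition and $\ch^{(i,\xx_0)}$-factorization respect the four kinds of vertices and arrows of $\bHQ_i(\xx_0)$ --- for instance, a half $i$-arch bridge out of $\xx_0$ can only factor as an arch bridge (or reverse half arch bridge) composed after a half (or zero) $i$-arch bridge out of $\xx_0$, so the leftmost factor stays anchored at $\xx_0$ and, when $|\tbq(\uu)|=0$, the rightmost factor stays a reverse half arch bridge --- and that the maximality conditions built into torsion weak zero bridges and torsion weak reverse half bridges are preserved throughout. These are precisely the ``appropriate modifications'' promised in the preamble of \S\ref{secextarchbrquiv}; each is dispatched by the domesticity-plus-forking arguments of \S\ref{abnarrows} and \S\ref{secweakarchbrquiv} applied verbatim to the relative definitions, together with the already-quoted relative forms of Theorems~\ref{weakarchbridgeHreduced}, \ref{Hcompassociativity} and~\ref{Hassociativity}.
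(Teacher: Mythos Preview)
Your proposal is correct and follows precisely the approach the paper intends: the paper omits this proof entirely, stating in the preamble to \S\ref{secextarchbrquiv} that the relative versions have proofs ``similar to the proofs of the absolute versions with appropriate modifications,'' and your write-up faithfully transports the proof of Proposition~\ref{pathforweakarchbridge}---band-peeling to reduce to band-free arrows, then induction on $\circ$-complexity---into the extended setting, correctly flagging the type-stratification bookkeeping as the only new wrinkle.
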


The following result extends Theorem \ref{hisinjective}.
\begin{theorem}
Suppose $\Pp,\Pp'$ are paths in $\HQ_i(\xx_0)$ such that $\sbq(\Pp)=\sbq(\Pp')$, $\tbq(\Pp)=\tbq(\Pp')$ and $\hh(\Pp)=\hh(\Pp')$ then $\Pp=\Pp'$.
\end{theorem}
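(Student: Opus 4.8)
The plan is to replay the proof of Theorem~\ref{hisinjective} with every absolute gadget replaced by its relative counterpart: $\hh_\bb$ by $\hh_\bb^{(i,\xx_0)}$, $\ch$ by $\ch^{(i,\xx_0)}$, $\circ$ read inside $\overline{\Q}^{\mathrm{Ba}}_i(\xx_0)$, and ``H-reduced'' by ``H-reduced relative to $(i,\xx_0)$''. Write $\yy:=\hh(\Pp)=\hh(\Pp')$, $\vv_0:=\sbq(\Pp)=\sbq(\Pp')$ (a band or $\xx_0$) and $\vv_1:=\tbq(\Pp)=\tbq(\Pp')$ (a band or a $0$-length string). First I would eliminate the bands occurring inside $\yy$: if a cyclic permutation of a band $\bb$ is a substring of $\yy$, then by the relative form of Theorem~\ref{Hcompassociativity} (commutativity of iterated relative H-reductions, hence uniqueness of the relatively H-reduced form) there is a well-defined stage of each path at which $\bb$ is reached, giving compatible splittings $\Pp=\Pp_1+\Pp_2$, $\Pp'=\Pp'_1+\Pp'_2$ with $\tbq(\Pp_1)=\tbq(\Pp'_1)=\bb$, $\hh(\Pp_1)=\hh(\Pp'_1)$ and $\hh(\Pp_2)=\hh(\Pp'_2)$; here $\Pp_1$ is a path whose $\hh$-value is a weak arch bridge or a weak half $i$-arch bridge (as $\vv_0$ is a band or $\xx_0$) and $\Pp_2$ a path whose $\hh$-value is a weak arch bridge or a weak reverse half arch bridge (as $\vv_1$ is a band or $0$-length). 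Applying the statement to these strictly shorter pieces reduces us to the case where $\yy$ (equivalently $\yy\vv_0$) is band-free.

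Then I would induct on $\min\{|\Pp|,|\Pp'|\}$. In the base case $\Pp=(\uu)$, we have $\uu=\hh(\Pp)=\hh(\Pp')$; since $\uu$ is an arch bridge / half $i$-arch bridge / zero $i$-arch bridge / reverse half arch bridge (according to the types of $\vv_0,\vv_1$), its defining property forbids $\uu=\hh(\Pp')$ with $|\Pp'|\ge2$, so $|\Pp'|=1$; and because $\hh$ fixes length-one paths (the relative skeleton of $\vv_1\uu\vv_0$ is itself and there is no inner H-reduction to perform), $\Pp=(\uu)=(\hh(\Pp'))=\Pp'$. For the inductive step, write $\Pp=\tilde\Pp+(\uu_n)$, $\Pp'=\tilde\Pp'+(\uu'_m)$; the relative associativity theorem gives $\yy=\uu_n\ch^{(i,\xx_0)}\hh(\tilde\Pp)=\uu'_m\ch^{(i,\xx_0)}\hh(\tilde\Pp')$. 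The crux is the claim $\sbq(\uu_n)=\sbq(\uu'_m)$, after which one recovers $\hh(\tilde\Pp)$ (resp.\ $\hh(\tilde\Pp')$) as the shortest left substring $\xx$ of $\yy$ admitting a cyclic permutation $\tilde\bb$ of that common band with $\tilde\bb\,\xx\vv_0\equiv_H\xx\vv_0$; since $\uu_n$ is then determined as $\yy$ with $\xx$ removed from the right, we get $\uu_n=\uu'_m$ and $\hh(\tilde\Pp)=\hh(\tilde\Pp')$, and the induction hypothesis gives $\tilde\Pp=\tilde\Pp'$, whence $\Pp=\Pp'$.

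For the claim $\sbq(\uu_n)=\sbq(\uu'_m)$ I would argue by contradiction as in Theorem~\ref{hisinjective}: with $\xx,\xx'$ the shortest left substrings of $\yy$ witnessing the two final H-reductions and $\tilde\bb,\tilde\bb'$ the corresponding cyclic permutations of $\sbq(\uu_n),\sbq(\uu'_m)$, assuming without loss $\xx'=\zz\xx$, the relative H-equivalences $\tilde\bb\,\xx\vv_0\equiv_H\xx\vv_0$ and $\tilde\bb'\,\xx'\vv_0\equiv_H\xx'\vv_0$ force $\tilde\bb'\zz\tilde\bb\,\xx\vv_0$ to be a string, so there is a weak bridge $\sbq(\uu_n)\xrightarrow{\uu}\sbq(\uu'_m)$ with $\uu_n=\uu'_m\circ\uu$; chaining these same equivalences together with $\delta(\tilde\bb)=0$ (via the relative form of Proposition~\ref{newHequivcriterion}) yields $\zz\tilde\bb\equiv_H\tilde\bb'\zz\tilde\bb$, which upgrades the factorization to $\uu_n=\uu'_m\ch^{(i,\xx_0)}\uu$ inside $\overline{\Q}^{\mathrm{Ba}}_i(\xx_0)$, contradicting that $\uu_n$ is an arch bridge.

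The main obstacle is checking that all these $\equiv_H$-manipulations stay sound when $\vv_0=\xx_0$ is a general string rather than a band: one must verify that the extra clause in ``hereditary H-string relative to $(i,\xx_0)$'' (the sign condition $\theta(\yy'_1)=-i$ when a proper left substring is H-equivalent to $\xx_0$) neither manufactures a spurious relative H-reduction in the band-removal step nor obstructs the construction of the auxiliary weak bridge, and that the case distinctions (source a band versus $\xx_0$, target a band versus $0$-length) are all absorbed because the argument only ever inspects the source end of the last arrow. A secondary, routine point is that splitting a weak half $i$-arch bridge at an internal band again produces arrows of the correct relative types, which needs the relative versions of Remark~\ref{archbridgeisweakbridge} and Theorem~\ref{weakarchbridgeHreduced}.
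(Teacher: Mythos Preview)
Your proposal is correct and is exactly the approach the paper intends: the paper explicitly omits this proof, stating that the relative versions are proved ``similar to the proofs of the absolute versions with appropriate modifications,'' and your outline faithfully replays the proof of Theorem~\ref{hisinjective} with the relative gadgets in place. The only minor slip is notational: in the contradiction step the factorization $\uu_n=\uu'_m\ch\uu$ involves arrows whose sources are bands (since $|\Pp|,|\Pp'|>1$), so the absolute $\ch$ rather than $\ch^{(i,\xx_0)}$ is the correct operation there, but this does not affect the argument since the two coincide on such arrows.
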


Finally the following result extends Theorem \ref{Hassociativity}.
\begin{theorem}
Suppose $\xx_0\xrightarrow{\uu_1}\bb_1\xrightarrow{\uu_2}\bb_2\xrightarrow{\uu_3}\vv$ is a sequence of arrows in $\bHQ_i(\xx_0)$, where $\uu_2,\uu_3$ are non-trivial. Then $\uu_3\ch^{(i,\xx_0)}(\uu_2\ch^{(i,\xx_0)}\uu_1)=(\uu_3\ch^{(i,\xx_0)}\uu_2)\ch^{(i,\xx_0)}\uu_1$.
\end{theorem}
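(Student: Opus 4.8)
The plan is to run the proof of Theorem~\ref{Hassociativity} essentially line for line, replacing each ingredient by its counterpart relative to $(i,\xx_0)$ established earlier in this section. The first step is to invoke the extended version of Proposition~\ref{pathforweakarchbridge} to fix paths $\Pp_1,\Pp_2,\Pp_3$ in $\HQ_i(\xx_0)$ (hence in $\overline{\Q}^{\mathrm{Ba}}_i(\xx_0)$) with $\uu_j=\hh(\Pp_j)$, where $\sbq(\Pp_1)=\xx_0$, $\tbq(\Pp_1)=\sbq(\Pp_2)=\bb_1$, $\tbq(\Pp_2)=\sbq(\Pp_3)=\bb_2$ and $\tbq(\Pp_3)=\vv$. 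Because $\xx_0$ is arbitrary and $\vv$ may have length $0$, the intermediate composites $\uu_2\ch^{(i,\xx_0)}\uu_1$ and $\uu_3\ch^{(i,\xx_0)}\uu_2$ are arrows of $\bHQ_i(\xx_0)$ of possibly different ``type'' (weak half $i$-arch bridge, weak arch bridge, or weak reverse half arch bridge); this is immaterial, since the relative machinery treats all three types uniformly and each such composite is by construction an arrow of $\bHQ_i(\xx_0)$, being $\hh$ of a path, so $\ch^{(i,\xx_0)}$ may legitimately be applied to it again.

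The one point that genuinely needs the earlier results is the well-definedness of $\ch^{(i,\xx_0)}$, i.e.\ that $\hh(\Pp+\Q)$ depends only on the arrows $\hh(\Pp)$ and $\hh(\Q)$ and not on the chosen representing paths. This follows exactly as for the absolute operation $\ch$: the extended versions of Proposition~\ref{pathforweakarchbridge} and Theorem~\ref{hisinjective} together say that each arrow of $\bHQ_i(\xx_0)$ is $\hh$ of a \emph{unique} path of $\HQ_i(\xx_0)$ with the given endpoints, while the extended version of Theorem~\ref{Hcompassociativity} --- that the relative H-reduction operators $\hh^{(i,\xx_0)}_\bb$ pairwise commute --- shows that $\hh(\Pp)$, being the result of applying these operators to the skeleton of the concatenated word of $\Pp$, is insensitive both to the order of application and to the way a path is cut into subpaths. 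Checking that all the operators $\hh^{(i,\xx_0)}_\bb$ involved are defined and commute in the edge cases, source $\xx_0$ or target of length $0$, is precisely the content of the extended Theorem~\ref{Hcompassociativity}, so nothing new is required.

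Granting well-definedness, the remainder is purely formal. By the definition of $\ch^{(i,\xx_0)}$ applied with the paths chosen above,
$$\uu_2\ch^{(i,\xx_0)}\uu_1=\hh(\Pp_1+\Pp_2),\qquad\uu_3\ch^{(i,\xx_0)}\uu_2=\hh(\Pp_2+\Pp_3),$$
and applying the definition once more,
$$\uu_3\ch^{(i,\xx_0)}\bigl(\uu_2\ch^{(i,\xx_0)}\uu_1\bigr)=\hh(\Pp_3)\ch^{(i,\xx_0)}\hh(\Pp_1+\Pp_2)=\hh\bigl((\Pp_1+\Pp_2)+\Pp_3\bigr),$$
$$\bigl(\uu_3\ch^{(i,\xx_0)}\uu_2\bigr)\ch^{(i,\xx_0)}\uu_1=\hh(\Pp_2+\Pp_3)\ch^{(i,\xx_0)}\hh(\Pp_1)=\hh\bigl(\Pp_1+(\Pp_2+\Pp_3)\bigr).$$
Concatenation of paths in $\overline{\Q}^{\mathrm{Ba}}_i(\xx_0)$ is literally associative, so $(\Pp_1+\Pp_2)+\Pp_3=\Pp_1+(\Pp_2+\Pp_3)$ and the two right-hand sides coincide, which is the asserted equality.

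I do not expect a serious obstacle in this statement itself: essentially all the difficulty has been absorbed into the extended forms of Proposition~\ref{pathforweakarchbridge}, Theorem~\ref{hisinjective} and Theorem~\ref{Hcompassociativity}. The only thing demanding attention while writing is a clean statement and use of the well-definedness of $\ch^{(i,\xx_0)}$, which, as explained above, rests on the uniqueness clause of the extended Theorem~\ref{hisinjective} together with the commutativity clause of the extended Theorem~\ref{Hcompassociativity}.
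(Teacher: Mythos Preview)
Your proposal is correct and follows essentially the same approach as the paper. The paper omits the proof entirely, stating at the start of \S\ref{secextarchbrquiv} that the relative versions have proofs ``similar to the proofs of the absolute versions with appropriate modifications,'' and for the absolute Theorem~\ref{Hassociativity} it says only that the proof ``easily follows from Propositions~\ref{pathforweakarchbridge},~\ref{hisinjective}, and Theorem~\ref{Hcompassociativity}''; your argument is a faithful unpacking of exactly that, using the extended counterparts of those three results.
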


\section{The extended semi-bridge quiver}\label{newextsemibrquiv}
In this section we identify a subquiver of the extended arch bridge quiver that properly contains the extended bridge quiver.

Motivated by the property in Remark \ref{exitisexitforarrow} we identify a subset of the set of weak arrows such that this desirable property is also true of the subset.

\begin{definition}
Say that a weak bridge (respectively, a torsion weak reverse half bridge) $\uu$ is a \emph{semi-bridge} (resp. \emph{torsion reverse semi-bridge}) if for any non-trivial factorization $\uu=\uu_2\circ\uu_1$ with $\uu_1$ a weak bridge we have $\beta(\uu)\neq\beta(\uu_1)$.
\end{definition}

A bridge (resp. torsion reverse half bridge) is trivially a semi-bridge (resp. torsion reverse semi-bridge) but the converse need not be true as the following example illustrates.
\begin{example}\label{semibrgexmple}
Consider the weak bridge $\uu:=\bb_1\xrightarrow{JA}\bb_4$ in the string algebra $\Lambda'$ from Figure \ref{Nonassociative}. It is readily verified that $\uu=(\uu_3\circ\uu_2)\circ\uu_1$ is the only factorization and thus $\uu$ is not a bridge. But since $\beta(\uu)\neq\beta(\uu_1)$, $\uu$ is a semi-bridge.
\end{example}

We get a description of abnormal semi-bridges from Corollary \ref{abnwbcancel}, Proposition \ref{characterizeabnormality} and Remark \ref{associativenaafact}.
\begin{corollary}\label{newabnormaluniquefactor}
An abnormal weak bridge is a semi-bridge and can be uniquely factorised as an associative composition of abnormal bridges.
\end{corollary}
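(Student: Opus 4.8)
The plan is to deduce both assertions from the three cited results, treating the ``semi-bridge'' claim first and the ``unique factorization into abnormal bridges'' claim second. First I would recall from Proposition \ref{characterizeabnormality} that if $\uu$ is abnormal then in every factorization $\uu = \uu_2\circ\uu_1$ the \emph{outer} factor $\uu_2$ is abnormal with $\beta(\uu)=\beta(\uu_2)$, while a companion consequence of Proposition \ref{comparisonofcomplement} (used inside the proof of \ref{characterizeabnormality}) tells us the \emph{inner} factor $\uu_1$ is also abnormal, with $\uu^c$ a proper left substring of $\uu_1^c$, so in particular $\beta(\uu)\neq\beta(\uu_1)$ since $\beta(\uu)\uu^c$ and $\beta(\uu_1)\uu_1^c$ are strings with $\uu^c$ properly shorter. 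This last inequality is exactly the defining condition for $\uu$ to be a semi-bridge, so the first assertion follows immediately.

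For the second assertion I would iterate the factorization. Given an abnormal weak bridge $\uu$, if it is already a bridge we are done; otherwise write $\uu=\uu_2\circ\uu_1$ non-trivially. By the previous paragraph both $\uu_1$ and $\uu_2$ are abnormal, and $|\uu_1^c| > |\uu^c|$; since the complemented interiors are substrings of a fixed pair of bands, this strictly-increasing-length argument (together with finiteness of the weak bridge quiver) guarantees that continuing to factor the inner abnormal factors must terminate, producing a factorization $\uu = \uu_n\circ(\cdots\circ(\uu_2\circ\uu_1)\cdots)$ in which each $\uu_j$ for $j<n$ is an abnormal bridge and $\uu_n$ is a bridge — and by Proposition \ref{characterizeabnormality} $\uu_n$ is in fact abnormal. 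This is precisely the situation of Corollary \ref{abnwbcancel}, which yields uniqueness of the factorization into bridges, and Remark \ref{associativenaafact}, which yields associativity. Hence $\uu$ factors uniquely as an associative composition of abnormal bridges.

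The main obstacle I anticipate is making the termination argument clean: one must verify that the inner factors in an iterated $\circ$-factorization of an abnormal weak bridge are forced to be abnormal at \emph{every} stage (not just the first), so that the length-of-complemented-interior invariant genuinely increases at each step and the process cannot stall or branch; this is where Proposition \ref{comparisonofcomplement} does the real work, and one should also check that the resulting nested factorization can be rebracketed to the left-associated form demanded by Corollary \ref{abnwbcancel} — which is exactly what Remark \ref{associativenaafact} licenses. Everything else is bookkeeping on top of the quoted statements.
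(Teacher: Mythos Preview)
Your proposal is correct and follows essentially the same route as the paper, which simply cites Corollary~\ref{abnwbcancel}, Proposition~\ref{characterizeabnormality}, and Remark~\ref{associativenaafact} without further elaboration; you have correctly unpacked how those three ingredients combine, including the observation (via Proposition~\ref{comparisonofcomplement}) that inner factors stay abnormal with strictly longer complemented interior. One small slip: the nested form in Corollary~\ref{abnwbcancel} is \emph{right}-associated, $\uu_n\circ(\cdots\circ(\uu_2\circ\uu_1)\cdots)$, not left-associated as you wrote; but since Remark~\ref{associativenaafact} asserts full associativity of such factorizations this does not affect the argument.
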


\begin{example}\label{exnormalsemibr}
In contrast to the above corollary the semi-bridge from Examples \ref{semibrgexmple} and \ref{nonassociativityweakbridge} shows that the factorization of a normal semi-bridge need not be associative.
\end{example}

Now we introduce some notation to give a useful characterization of semi-bridges.

Given an exit syllable $\vv$ of a band $\bb$ let $\lambda^b(\vv)$ and $\lambda^r(\vv)$ denote the set of all semi-bridges with exit $\vv$ and the set of all maximal torsion reverse semi-bridges with exit syllable $\vv$ respectively. Then set $\lambda(\vv):=\lambda^b(\vv)\sqcup\lambda^r(\vv)$. Define a binary relation $\triangleleft$ on the set $\bar\lambda(\vv)$ by $\uu\triangleleft\uu'$ if $\uu'=\uu''\circ\uu$ for some weak bridge $\uu''$. The relation $\triangleleft$ is clearly transitive. Moreover since $\Lambda$ is domestic it is anti-symmetric.

The proof of the next result is obvious, and thus is omitted. 
\begin{proposition}\label{newsemibridgeminimal}
For a band $\bb$ and $\vv\in\mathcal E(\bb)$ the set $\lambda(\vv)$ of semi-bridges and maximal torsion reverse half bridges with exit $\vv$ is precisely the set of minimal elements of $(\bar\lambda(\vv),\triangleleft)$.
\end{proposition}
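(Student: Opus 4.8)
The statement asserts that $\lambda(\vv)$, the set of semi-bridges together with maximal torsion reverse semi-bridges having exit $\vv$, coincides with the set of $\triangleleft$-minimal elements of $(\bar\lambda(\vv),\triangleleft)$. So I would prove the two inclusions separately. First, I would unwind the definitions: an element $\uu\in\bar\lambda(\vv)$ fails to be $\triangleleft$-minimal precisely when there is some $\uu'\in\bar\lambda(\vv)$ with $\uu'\triangleleft\uu$ and $\uu'\neq\uu$, i.e. $\uu=\uu''\circ\uu'$ for some weak bridge $\uu''$ and $\uu'$ a (non-trivial) weak bridge in $\bar\lambda(\vv)$. The key point connecting this to the semi-bridge condition is that $\uu'\in\bar\lambda(\vv)$ means $\beta(\uu')=\vv=\beta(\uu)$; so non-minimality of $\uu$ is exactly the existence of a factorization $\uu=\uu''\circ\uu'$ through a weak bridge $\uu'$ with $\beta(\uu')=\beta(\uu)$, which is exactly the negation of the semi-bridge condition.

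\textbf{The minimal $\Rightarrow$ semi-bridge direction.} Suppose $\uu\in\bar\lambda(\vv)$ is $\triangleleft$-minimal. If $\uu$ were not a semi-bridge (or not a torsion reverse semi-bridge, in the reverse case), there would be a non-trivial factorization $\uu=\uu_2\circ\uu_1$ with $\uu_1$ a weak bridge and $\beta(\uu)=\beta(\uu_1)$. Then $\uu_1$ has exit $\vv$, so $\uu_1\in\bar\lambda(\vv)$, and $\uu_1\triangleleft\uu$ with $\uu_1\neq\uu$ (as the factorization is non-trivial), contradicting minimality. Hence $\uu$ is a semi-bridge; in the reverse-half-bridge case one additionally needs $\uu$ to be maximal, but that is automatic since we only put maximal torsion reverse half bridges into $\bar\lambda(\vv)=\bar\lambda^b(\vv)\sqcup\bar\lambda^r(\vv)$ in the first place, so every element of $\bar\lambda(\vv)$ that is a reverse half bridge is already maximal.

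\textbf{The semi-bridge $\Rightarrow$ minimal direction.} Conversely, suppose $\uu\in\lambda(\vv)$, so $\uu$ is a semi-bridge (or maximal torsion reverse semi-bridge) with exit $\vv$. If $\uu$ were not $\triangleleft$-minimal, there would be $\uu'\in\bar\lambda(\vv)$ with $\uu'\triangleleft\uu$, $\uu'\neq\uu$; write $\uu=\uu''\circ\uu'$ with $\uu''$ a weak bridge. Since $\uu'\in\bar\lambda(\vv)$ it is a weak bridge with $\beta(\uu')=\vv$. But $\beta(\uu)=\vv$ as well, so $\beta(\uu)=\beta(\uu')$, contradicting the semi-bridge condition on $\uu$. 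Hence $\uu$ is $\triangleleft$-minimal. The one subtlety worth checking is trivial-versus-non-trivial factorizations: the semi-bridge condition quantifies over \emph{non-trivial} factorizations $\uu=\uu_2\circ\uu_1$, and $\uu'\neq\uu$ together with anti-symmetry of $\triangleleft$ (which holds by domesticity, as noted in the text) guarantees the factorization in question is non-trivial, so the conditions match up.

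\textbf{Main obstacle.} Honestly there is no serious obstacle here — the proof is a direct translation between the definition of $\triangleleft$ and the definition of semi-bridge, which is presumably why the paper says ``The proof of the next result is obvious, and thus is omitted.'' The only things requiring a moment's care are: (i) confirming that the exit syllable is well-defined and stable under the relevant factorizations, so that $\uu'\triangleleft\uu$ forces $\beta(\uu')=\beta(\uu)=\vv$ — this uses Remark \ref{exitorder} and the setup around the definition of $\beta$; and (ii) the bookkeeping that $\bar\lambda(\vv)$ already contains only \emph{maximal} torsion reverse half bridges, so the word ``maximal'' in the statement of $\lambda(\vv)$ is not an extra constraint relative to $\bar\lambda(\vv)$. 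With those two points noted, the equivalence is immediate.
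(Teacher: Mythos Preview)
Your proposal is correct and is exactly the direct definitional unwinding the paper has in mind when it says the proof is obvious and omits it. The only residual wrinkle is that the paper's literal definition of $\triangleleft$ requires the cofactor $\uu''$ to be a weak bridge, whereas when $\uu\in\bar\lambda^r(\vv)$ the cofactor is a weak reverse half bridge; this is a minor looseness in the paper's wording rather than a gap in your argument, and your reasoning goes through once one reads ``weak bridge'' there as ``arrow of the extended weak bridge quiver''.
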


The next result follows readily from Proposition \ref{normalexitnormalexit}.
\begin{corollary}\label{semibridinitialabnfacts}
If $\uu$ is a semi-bridge or a torsion reverse weak half bridge and $\uu=\uu_2\circ\uu_1$ is a non-trivial factorization into weak bridges then $\uu_1$ is abnormal.
\end{corollary}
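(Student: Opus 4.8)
The plan is to argue by contradiction, feeding the given factorization into Proposition \ref{normalexitnormalexit}.

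First I would unwind the definitions. If $\uu$ is a semi-bridge, then by definition every non-trivial factorization $\uu=\uu_2\circ\uu_1$ into weak bridges satisfies $\beta(\uu)\neq\beta(\uu_1)$; the same defining property holds verbatim if $\uu$ is a torsion reverse semi-bridge (and in particular if $\uu$ is a torsion reverse half bridge, which is automatically a torsion reverse semi-bridge). So in all cases under consideration $\uu$ enjoys the property: for any non-trivial factorization $\uu=\uu_2\circ\uu_1$ with $\uu_1$ a weak bridge, $\beta(\uu)\neq\beta(\uu_1)$.

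Next, take the factorization $\uu=\uu_2\circ\uu_1$ supplied in the statement and suppose for contradiction that $\uu_1$ is normal. Proposition \ref{normalexitnormalexit} then yields $\beta(\uu)=\beta(\uu_1)$, which directly contradicts the property recalled in the previous paragraph. Hence $\uu_1$ cannot be normal, i.e.\ $\uu_1$ is abnormal, as claimed.

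There is essentially no obstacle here; the statement really is an immediate consequence of Proposition \ref{normalexitnormalexit}. The only point deserving a line of care is that Proposition \ref{normalexitnormalexit} is phrased for a weak bridge $\uu$, whereas here $\uu$ may be a torsion reverse half bridge; in that case one invokes the evident analogue of Proposition \ref{normalexitnormalexit}, whose proof is identical — the syllable $\beta(\uu)$ exists because torsion reverse half bridges were arranged to have exits, and the syllable-tracking argument tracing the first syllable of $\uu_1^o$ through $\uu_2\uu_1$ never uses that $\uu$ terminates at a band rather than at a length-$0$ string.
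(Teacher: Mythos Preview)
Your proposal is correct and matches the paper's approach exactly: the paper states only that the result ``follows readily from Proposition \ref{normalexitnormalexit}'', and your contradiction argument via that proposition is precisely the intended one-line deduction. The only wrinkle is terminological---the phrase ``torsion reverse weak half bridge'' in the statement is almost certainly a slip for ``torsion reverse semi-bridge'' (compare its use in Proposition \ref{newsemibrisarchbr}), so your reading is the right one.
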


\begin{proposition}\label{newsemibrisarchbr}
If $\uu$ is a semi-bridge (resp. a torsion reverse semi-bridge) then $\uu$ is an arch bridge (resp. a torsion reverse arch bridge).
\end{proposition}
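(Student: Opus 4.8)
The plan is to argue by contradiction, assuming that a semi-bridge $\uu$ is \emph{not} an arch bridge, and extracting a $\circ$-factorization of $\uu$ whose first factor shares the same exit syllable as $\uu$, contradicting the definition of a semi-bridge. First I would observe that by Proposition \ref{pathforweakarchbridge} (and its extended version for torsion reverse semi-bridges) a weak arch bridge $\uu$ that is not an arch bridge admits a path $\Pp=(\uu_1,\dots,\uu_n)$ in $\HQ$ with $n\geq2$ and $\uu=\hh(\Pp)$; writing $\Pp=(\uu_1)+\Pp'$ and setting $\vv_1:=\uu_1$, $\vv_2:=\hh(\Pp')$, we get $\uu=\vv_2\ch\vv_1$ with both factors non-trivial weak arch bridges. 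The first reduction is to bring this back to the level of $\circ$: since $\uu$ is a weak arch bridge coming from a path in the ordinary weak bridge quiver (Remark \ref{archbridgeisweakbridge}), and $\vv_1$ is itself an arch bridge, hence by Remark \ref{archbridgeisweakbridge} a weak bridge, I would use the inductive construction of $\hh$ to produce a genuine $\circ$-factorization $\uu=\ww_2\circ\ww_1$ with $\ww_1$ a \emph{weak bridge} such that $\beta(\ww_1)=\beta(\vv_1)$ — the point being that the H-reduction steps in passing from $\sk{\cdots}$ to $\hh(\Pp)$ do not alter the leftmost syllable outside $\sbq(\uu)$, so the exit syllable is preserved under $\ch$-factorization exactly as it would be under $\circ$-factorization when the first factor is a bridge.

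The core of the argument is then to show $\beta(\uu)=\beta(\ww_1)$, which directly contradicts the semi-bridge hypothesis. Here I would split into the two cases of Proposition \ref{normalexitnormalexit} and Corollary \ref{semibridinitialabnfacts}. If $\ww_1$ were normal then Proposition \ref{normalexitnormalexit} gives $\beta(\uu)=\beta(\ww_1)$ immediately, done. So $\ww_1$ must be abnormal; but Corollary \ref{semibridinitialabnfacts} already tells us that in any non-trivial factorization of a semi-bridge into weak bridges the first factor is abnormal, so this is consistent and we are in the genuinely hard case. Now I invoke Proposition \ref{comparisonofcomplement}: since $\ww_1$ is abnormal, either $\uu$ is abnormal (and then $\uu^c$ is a left substring of $\ww_1^c$, and by Proposition \ref{characterizeabnormality} applied to the factorization $\uu=\ww_2\circ\ww_1$ we would actually be forced into $\beta(\uu)=\beta(\ww_2)$ with $\ww_2$ abnormal — this is the abnormal subcase which is handled by Corollary \ref{abnwbcancel} / Corollary \ref{newabnormaluniquefactor}, giving the unique associative abnormal-bridge factorization and contradicting $\ch$-irreducibility differently), or $\uu$ is normal. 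In the normal subcase, $\beta(\ww_1)$ is abnormal, so $\beta(\uu)\perp\beta(\ww_1)$ by Remark \ref{factgivesperp}; I then need to rule out $\beta(\uu)\neq\beta(\ww_1)$, which is exactly where Proposition \ref{factregperp} bites — if $\beta(\uu)\neq\beta(\ww_1)$ with $\beta(\uu)\perp\beta(\ww_1)$, then $\bar\lambda^a(\beta(\ww_1))$ exists and every element of $\bar\lambda(\beta(\uu))$ factors through it, forcing a further non-trivial $\circ$-factorization $\uu = \uu_2'\circ\uu_1'$ with $\uu_1'$ a \emph{bridge} and $\beta(\uu_1') = \beta(\uu)$, contradicting the semi-bridge property directly. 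Conversely if $\beta(\uu)=\beta(\ww_1)$ we contradict the semi-bridge property with the factorization already in hand.

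For the torsion reverse semi-bridge statement, the argument is structurally identical: a torsion weak reverse half bridge that is not a reverse arch bridge factors as $\uu=\vv_2\ch\vv_1$ by the extended version of Proposition \ref{pathforweakarchbridge}, the first factor $\vv_1$ is a weak bridge (since the only "reverse" piece sits at the right end, i.e.\ at the target side), and the same exit-syllable tracking plus Propositions \ref{normalexitnormalexit}, \ref{factregperp} and \ref{comparisonofcomplement} apply verbatim, noting that torsion weak reverse half bridges have well-defined exits by construction (as remarked just before Remark \ref{exitisexitforarrow}). The main obstacle I anticipate is the bookkeeping in the normal-$\uu$, abnormal-$\ww_1$ subcase: one has to be careful that the ``further factorization through $\bar\lambda^a(\beta(\ww_1))$'' produced by Proposition \ref{factregperp} is genuinely a $\circ$-factorization of $\uu$ (not merely of some related string) and that its first factor is a \emph{bridge} with exit $\beta(\uu)$, so that the semi-bridge hypothesis applies to it — this requires combining Proposition \ref{comparisonofcomplement} with the maximality/incidence analysis in Corollary \ref{interweakbridgisbridg} and Remark \ref{uefactorization} to pick the right abnormal bridge in the $\perp$-chain below $\beta(\uu)$. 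Everything else is a routine application of the cited results.
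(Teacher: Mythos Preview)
Your overall strategy---assume $\uu$ is not an arch bridge, extract a $\circ$-factorization, and then contradict the semi-bridge condition by exhibiting a factorization with $\beta(\uu)=\beta(\ww_1)$---does not close, and it misses the mechanism the paper actually uses. The breakdown is in your ``normal $\uu$, abnormal $\ww_1$, $\beta(\uu)\neq\beta(\ww_1)$'' subcase: Proposition~\ref{factregperp} says that any element of $\bar\lambda(\beta(\uu))$ factors through $\bar\lambda^a(\beta(\ww_1))$, so the factorization you obtain is $\uu=\uu_2'\circ\uu_1'$ with $\uu_1'=\bar\lambda^a(\beta(\ww_1))$ and hence $\beta(\uu_1')=\beta(\ww_1)\neq\beta(\uu)$. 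This is \emph{consistent} with the semi-bridge condition, not a contradiction of it; your asserted conclusion ``$\beta(\uu_1')=\beta(\uu)$'' is simply false. The same issue makes your abnormal-$\uu$ subcase vague: knowing the unique associative factorization into abnormal bridges (Corollary~\ref{newabnormaluniquefactor}) does not by itself produce a first factor sharing the exit $\beta(\uu)$, and Proposition~\ref{characterizeabnormality} tells you it is the \emph{second} factor $\uu_2$ that satisfies $\beta(\uu_2)=\beta(\uu)$, which is useless for contradicting the semi-bridge property.

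The paper's proof does not try to contradict the semi-bridge property at all; instead it contradicts the assumption $\uu=\uu_2\ch\uu_1$ directly by showing the required H-reduction cannot occur. After reducing (via Remark~\ref{bridgeisarchbridge} and band-freeness) to $\uu=\uu_2\circ\uu_1$ with $\uu_1$ abnormal (Corollary~\ref{semibridinitialabnfacts}) and noting $\beta(\uu)\perp\beta(\uu_1)$ (Remark~\ref{factgivesperp}), the key step is to use the $\rho$-relation encoded in $\uu_1^e$: writing $\uu_1^e=\xx_2\xx_1$ with $\beta(\uu)\xx_1$ a string and taking $\ww$ the left substring of $\sbq(\uu_1)$ with $\uu_1^c\ww$ a string, abnormality of $\uu_1$ forces $\xx_2\xx_1\ww$ to be a string while $\xx_2\bub{\uu_1}\xx_1\ww$ is not. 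This exhibits an explicit failure of the H-equivalence needed for $\HRed{\tbq(\uu_1)}$, so $\hh_{\tbq(\uu_1)}$ acts as the identity and $\uu_2\ch\uu_1$ retains a copy of $\tbq(\uu_1)$---hence cannot equal the band-free $\uu$. This is the idea you are missing: the obstruction lives in the monomial relations attached to $\uu_1^e$, not in further $\circ$-factorizations of $\uu$.
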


\begin{proof}
We only prove the result when $\uu$ is a semi-bridge; the other case has similar proof. In view of Remark \ref{bridgeisarchbridge}, we may assume that $\uu\notin\Q^{\mathrm{Ba}}$.

If $\uu\notin\HQ$ then $\uu=\uu_2\ch\uu_1$ for some weak bridges $\uu_1,\uu_2$. Since $\uu$ is band-free we also have $\uu=\uu_2\circ\uu_1$. Then Corollary \ref{semibridinitialabnfacts} gives that $\uu_1$ is abnormal. Since $\uu$ factors through $\uu_1$, Remark \ref{factgivesperp} gives $\beta(\uu)\perp\beta(\uu_1)$. 

Let $\uu_1^e=\xx_2\xx_1$ be a partition such that $\beta(\uu)\xx_1$ is a string. Let $\ww$ denote the left substring of $\sbq(\uu_1)$ such that $\uu_1^c\ww$ is a string. Since $\uu_1$ is abnormal, $\xx_2\xx_1\ww$ is a string but $\xx_2\bub{\uu_1}\xx_1\ww$ is not. Hence $\HRed{\tbq(\uu_1)}(\sk{\uu_2\tbq(\uu_1)\uu_1})$ does not exist, which is a contradiction to $\uu=\uu_2\ch\uu_1$.
\end{proof}

\begin{example}\label{ex:archnotsemi}
The converse of the above result is not true as the weak bridge $\uu:=IbAL:\bb'_1\to\bb'_4$ in $\overline\Q^{\mathrm{Ba}}(\Lambda'')$ from Example \ref{nonassociativityweakbridge} is an arch bridge but not a semi-bridge since it factors as $\uu'_3\circ(\uu'_2\circ\uu'_1)$, where $\beta(\uu)=\beta(\uu'_2\circ\uu'_1)$.
\end{example}

Motivated by Proposition \ref{newsemibridgeminimal} we introduce a similar notation for weak half bridges and weak zero bridges. 

We can define an anti-symmetric, transitive binary relation $\triangleleft$ on $\bar\lambda_i(\xx_0)$ in analogous manner. Let $\lambda_i(\xx_0)$ denote the set of minimal elements in $(\bar\lambda_i(\xx_0),\triangleleft)$. We have natural partitions $\lambda_i(\xx_0)=\lambda^h_i(\xx_0)\sqcup\lambda^z_i(\xx_0)$. Finally set $\lambda(\xx_0):=\lambda_1(\xx_0)\sqcup\lambda_{-1}(\xx_0)$ with natural partition $\lambda(\xx_0)=\lambda^h(\xx_0)\sqcup\lambda^z(\xx_0)$. The elements of $\lambda^h(\xx_0)$ and $\lambda^z(\xx_0)$ will be referred to as \emph{half semi-bridges} and maximal \emph{torsion zero semi-bridges} respectively.

\begin{proposition}\label{trianglefactabnhalfbridge}
If $\uu_1,\uu_2\in\bar\lambda_i^h(\xx_0)$ are abnormal and distinct then either $\uu_1\triangleleft\uu_2$ or $\uu_2\triangleleft\uu_1$.
\end{proposition}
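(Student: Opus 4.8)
The plan is to exploit the fact that two abnormal half bridges with the same parity $i$ are both right substrings of their targets (if they had the same target, Proposition~\ref{uniquebanduniqueabnormalhalfbridge} would force them equal), so the comparison reduces to the behaviour of their complemented interiors, exactly as in the band-to-band case treated in Proposition~\ref{banduniqueness} and Corollary~\ref{uniqueabnweakbridge}. First I would record that since $\uu_1,\uu_2\in\bar\lambda_i^h(\xx_0)$ are abnormal, each $\uu_j$ is a right substring of $\tbq(\uu_j)$, and both $\uu_j\xx_0$ have first syllable of the same sign $i$ (when $|\uu_j|>0$; the length-$0$ case is handled by the convention $\theta(\uu)=\theta(\bua{\uu})=1$). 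Write $\bb_j:=\tbq(\uu_j)$ and $\bua{\uu_j}$ for the cyclic permutation with ${}^\infty\bb_j\uu_j\xx_0={}^\infty\bua{\uu_j}\xx_0$; let $\xx_j$ be the left substring of $\bua{\uu_j}$ with $\bua{\uu_j}=\xx_j\uu_j$, so that $\uu_j$ plays the role of a ``complemented interior'' relative to $\xx_0$.

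Next I would compare $\uu_1$ and $\uu_2$ as right substrings of a common string. Both are right substrings of the string $\yy_0\xx_0$ for any sufficiently long $\yy_0\in H_l^i(\xx_0)$ through which they factor; more directly, both $\uu_1\xx_0$ and $\uu_2\xx_0$ are left substrings of strings in $H_l^i(\xx_0)$ with the same sign on the left, and having $\xx_0$ as a common right substring they must be comparable as left substrings of one another — this is where the hypothesis $\theta(\uu_1)=\theta(\uu_2)$ is used, to rule out forking at $\xx_0$. Without loss of generality $\uu_1=\uu_2\uu$ for some string $\uu$ (i.e. $\uu_1\xx_0=\uu_2\uu\xx_0$, with $\uu_2$ as the shorter). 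Now I would argue that $\bua{\uu_2}\bua{\uu_1}$ is a well-defined string: since $\uu_2$ is a right substring of $\bb_2$ and $\uu_1=\uu_2\uu$ extends it on the left within the "half-bridge region", the concatenation $\xx_2\uu_2\cdot\xx_1\uu_1$ can be formed exactly as in the proof of Proposition~\ref{banduniqueness}, using that the sign of $\xx_j$ is opposite to that of $\uu_j$. Having produced $\bua{\uu_2}\bua{\uu_1}$ as a string, and symmetrically $\bua{\uu_1}\bua{\uu_2}$ (if $\uu_1$ and $\uu_2$ were incomparable as bands one could form both concatenations), domesticity of $\Lambda$ forces $\bb_1=\bb_2$, and then Proposition~\ref{uniquebanduniqueabnormalhalfbridge} gives $\uu_1=\uu_2$, contradicting distinctness unless the comparison $\uu_1=\uu_2\uu$ was strict, i.e. the shorter of the two is a \emph{proper} right substring of the longer. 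This proper containment $\uu_2\triangleleft\uu_1$ (read through the $\triangleleft$ relation: $\uu_1=\uu''\circ\uu_2$ for the evident weak bridge $\uu''$) is exactly the desired conclusion.

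The main obstacle I anticipate is the careful bookkeeping needed to show $\bua{\uu_2}\bua{\uu_1}$ is a string \emph{without} already assuming $\bb_1=\bb_2$: one must verify that the place where $\uu_1$ overtakes $\uu_2$ on the left is governed by a relation in $\rho\cup\rho^{-1}$ straddling the boundary, analogously to Proposition~\ref{constantsign} and Proposition~\ref{component}, so that the sign alternation forces the two cyclic permutations to concatenate. The subtlety is that here one endpoint is the fixed string $\xx_0$ rather than a band, so the ``$\uu^\alpha,\uu_\alpha,\uu^\beta,\uu_\beta$'' machinery of \S\ref{abnarrows} is not available verbatim; instead I would invoke the relative version of Proposition~\ref{squaresubstring}/Proposition~\ref{uniquebanduniqueabnormalhalfbridge} directly, noting that its proof (which shows a forking square cannot sit inside a band) applies once we have reduced to $\bb_1=\bb_2$, and that the reduction to $\bb_1=\bb_2$ itself only needs the two-sided concatenation argument of Proposition~\ref{banduniqueness}. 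Everything else — the comparability of $\uu_1\xx_0$ and $\uu_2\xx_0$, the identification of $\triangleleft$ with proper right-substring containment — is routine.
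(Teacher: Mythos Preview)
There is a genuine gap. Your argument rests on two claims that are both false in general: that $\uu_1\xx_0$ and $\uu_2\xx_0$ must be comparable (one a left substring of the other), and that the relation $\triangleleft$ coincides with proper substring containment. Neither holds. Recall that $\uu_1\triangleleft\uu_2$ means $\uu_2=\uu''\circ\uu_1$ for some weak bridge $\uu''$, and the operation $\circ$ may involve the band reduction $\Red{\tbq(\uu_1)}$; in that case $\uu_1$ is \emph{not} a left substring of $\uu_2$ at all. Concretely, after the common first syllable $\gamma$ of sign $i$, the strings $\uu_1$ and $\uu_2$ (being right substrings of distinct bands $\bb_1,\bb_2$) can and do fork at the end of the maximal common substring of $\bb_1$ and $\bb_2$ containing $\gamma$. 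The paper's proof handles exactly this: the forking point is located inside $\uu^c$ for the abnormal weak bridge $\bb_1\xrightarrow{\uu}\bb_2$, and one then verifies $\uu_2\bua{\uu_1}=\uu\uu_1$, so that $\uu_2=\uu\circ\uu_1$ via reduction by $\bua{\uu_1}$.

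Your contradiction strategy is also misdirected. You aim to produce both concatenations $\bua{\uu_1}\bua{\uu_2}$ and $\bua{\uu_2}\bua{\uu_1}$ and invoke domesticity to force $\bb_1=\bb_2$; but $\bb_1\neq\bb_2$ is the \emph{starting point} (via Proposition~\ref{uniquebanduniqueabnormalhalfbridge}), not a contradiction to be derived. The correct use of the shared syllable $\gamma\in\bb_1\cap\bb_2$ is to produce the abnormal weak bridge $\uu$ between the two bands, then use Proposition~\ref{constantsign} to control signs and explicitly exhibit the $\circ$-factorization $\uu_2=\uu\circ\uu_1$. What is missing from your outline is precisely this construction of $\uu$ and the verification that the concatenation $\uu\uu_1$ exists and reduces to $\uu_2$.
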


\begin{proof}
Since $\uu_1\neq\uu_2$, in view of Proposition \ref{uniquebanduniqueabnormalhalfbridge}, we have $\tbq(\uu_1)\neq\tbq(\uu_2)$. Let $\gamma$ be the common first syllable of $\tbq(\uu_1)\uu_1$ and $\tbq(\uu_2)\uu_2$ with $\theta(\gamma)=i$. Since both $\uu_1,\uu_2$ are abnormal, $\gamma$ is a common syllable of $\tbq(\uu_1)$ and $\tbq(\uu_2)$. Without loss of generality, there is an abnormal weak bridge $\tbq(\uu_1)\xrightarrow{\uu}\tbq(\uu_2)$ with $\gamma\in\uu^c$. Since $|\uu^c|>0$, Proposition \ref{constantsign} guarantees $-\theta(\beta(\uu))=\delta(\uu^c)=\theta(\gamma)$. 

Moreover, there is a partition $\uu^c=\xx_2\xx_1$ such that $|\xx_2|>0$ and $\xx_2\xx_0$ is a string. Since $\delta(\xx_2)\neq0$, we see that $\xx_2$ is the maximal common left substring of $\tbq(\uu_1)\uu_1$ and $\tbq(\uu_2)\uu_2$.

If $\xx_2$ is the maximal common left substring of $\uu_1$ and $\uu_2$ then the concatenation $\uu\uu_1$ exists since the first syllable of $\uu$ and the last syllable of $\uu_1$ have opposite signs. If $\xx_2$ is not a common left substring of $\uu_1$ and $\uu_2$ then $\xx_2=\uu^c$ and $|\uu_1|=0$. The concatenation $\uu\uu_1$ exists in this case as well since $\uu$ is a left substring of $\bla{\uu}$ and $\tbq(\uu_2)\uu_2\xx_0$ is a string.

The first syllable of $\tbq(\uu_2)\uu_2$ from left that is not a syllable of $\bua{\uu_1}$ is $\beta(\uu)$ since $\xx_2$ forks. Moreover, since $\delta(\xx_2)=-\beta(\uu)$, the concatenation $\uu_2\bua{\uu_1}$ exists. Thus $\tbq(\uu_2)\uu_2\bua{\uu_1}=\tbq(\uu)\uu\uu_1$, which implies that $\uu_2\bua{\uu_1}=\uu\uu_1$. Thus we have showed that $\uu_2=\uu\circ\uu_1$, i.e., $\uu_1\triangleleft\uu_2$.
\end{proof}

We note an immediate consequence of the above.
\begin{corollary}\label{uniqueabnhalfbridge}
If $\uu\in\lambda^h_i(\xx_0)$ is abnormal then each $\uu'\in\lambda^h_i(\xx_0)$ distinct from $\uu$ is normal.
\end{corollary}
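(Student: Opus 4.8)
The plan is to derive Corollary \ref{uniqueabnhalfbridge} as a direct consequence of Proposition \ref{trianglefactabnhalfbridge} together with the anti-symmetry of $\triangleleft$. First I would recall that, by definition, $\lambda^h_i(\xx_0)$ is the set of $\triangleleft$-minimal elements of $(\bar\lambda^h_i(\xx_0)\subseteq\bar\lambda_i(\xx_0),\triangleleft)$. Now suppose $\uu\in\lambda^h_i(\xx_0)$ is abnormal and, for contradiction, that some $\uu'\in\lambda^h_i(\xx_0)$ distinct from $\uu$ is also abnormal. Since both $\uu$ and $\uu'$ lie in $\bar\lambda^h_i(\xx_0)$, are abnormal, and are distinct, Proposition \ref{trianglefactabnhalfbridge} applies and gives that either $\uu\triangleleft\uu'$ or $\uu'\triangleleft\uu$.

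In either case I would invoke minimality: if $\uu\triangleleft\uu'$ then $\uu'$ is not $\triangleleft$-minimal (as $\uu\neq\uu'$), contradicting $\uu'\in\lambda^h_i(\xx_0)$; symmetrically, if $\uu'\triangleleft\uu$ then $\uu$ is not minimal, contradicting $\uu\in\lambda^h_i(\xx_0)$. Hence no such $\uu'$ exists, which is exactly the assertion that every $\uu'\in\lambda^h_i(\xx_0)$ distinct from $\uu$ must be normal.

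There is essentially no obstacle here: the real content is already packaged in Proposition \ref{trianglefactabnhalfbridge}, whose proof establishes the dichotomy between any two distinct abnormal half bridges with the same sign. The only point requiring a moment's care is to make sure the hypothesis of Proposition \ref{trianglefactabnhalfbridge} is genuinely met --- namely that $\uu,\uu'\in\bar\lambda^h_i(\xx_0)$ (not merely in $\lambda^h_i(\xx_0)$) --- which holds since $\lambda^h_i(\xx_0)\subseteq\bar\lambda^h_i(\xx_0)$ by construction as a subset of minimal elements. Thus the corollary is immediate. This is the analogue, in the half-bridge setting, of how Corollary \ref{uniqueabnweakbridge} followed from Proposition \ref{banduniqueness}, and the argument structure mirrors it exactly.
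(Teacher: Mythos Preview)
Your proposal is correct and matches the paper's approach exactly: the paper states this corollary as an ``immediate consequence'' of Proposition~\ref{trianglefactabnhalfbridge}, and your argument simply spells out why---two distinct abnormal elements would be $\triangleleft$-comparable, contradicting $\triangleleft$-minimality of both. The only cosmetic point is that $\lambda^h_i(\xx_0)$ is defined as the half-bridge part of the set of $\triangleleft$-minimal elements in $\bar\lambda_i(\xx_0)$ (not just in $\bar\lambda^h_i(\xx_0)$), but this distinction does not affect your argument.
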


In view of the above corollary, let $\lambda^a_i(\xx_0)$ denote the unique abnormal half semi-bridge, if exists.

\begin{rmk}\label{zerolengthweakbridge}
If $\uu\in\bar\lambda_i(\xx_0)$ and $|\uu|=0$ then $\uu$ is an abnormal half bridge.
\end{rmk}

Existence of a zero length element in $\lambda_i(\xx_0)$ has strong consequences.
\begin{proposition}\label{lengthzerohalfbruniqueness}
If $\lambda^a_i(\xx_0)$ exists and $|\lambda^a_i(\xx_0)|=0$ then $\lambda_i(\xx_0)=\{\lambda^a_i(\xx_0)\}$.
\end{proposition}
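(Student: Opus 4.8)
The plan is to pin down $\uu:=\lambda^a_i(\xx_0)$ and prove it is the unique $\triangleleft$-minimal element of $\bar\lambda_i(\xx_0)$. Since $|\uu|=0$, Remark \ref{zerolengthweakbridge} tells us $\uu$ is an abnormal half bridge, say $\xx_0\xrightarrow{\uu}\bb$; put $\bb':=\bua{\uu}$, so that ${}^{\infty}\bb'\xx_0$ is a string and hence $\bb'^{n}\xx_0$ is a string for every $n\geq 0$. As $\uu\in\lambda^h_i(\xx_0)\subseteq\lambda_i(\xx_0)$ it is already $\triangleleft$-minimal, so it suffices to show that every $\uu'\in\bar\lambda_i(\xx_0)$ with $\uu'\neq\uu$ factors non-trivially as $\uu'=\uu_2\circ\uu$, which puts $\uu'\notin\lambda_i(\xx_0)$ and leaves $\uu$ as the only minimal element. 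The structural point driving the main case is that, since $|\uu|=0$, a composite $\uu_2\circ\uu$ is, as a word, just $\uu_2$ (possibly after reducing one copy of $\bb$); so $\uu'=\uu_2\circ\uu$ holds essentially when the word $\uu'$ is itself a weak bridge or weak reverse half bridge with source $\bb$, compatibly on the $\bb$-side with the cyclic permutation $\bb'=\bua{\uu}$.

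First I would dispose of $|\uu'|=0$. By Remark \ref{zerolengthweakbridge}, $\uu'$ is then an abnormal half bridge with $\theta(\uu')=i$, so Proposition \ref{trianglefactabnhalfbridge} gives $\uu\triangleleft\uu'$ or $\uu'\triangleleft\uu$; since $\uu'\neq\uu$, the latter would contradict $\triangleleft$-minimality of $\uu$, so $\uu\triangleleft\uu'$, as wanted.

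For $|\uu'|>0$ the idea is to compare the finite left extension $\uu'$ of $\xx_0$ against the infinite one ${}^{\infty}\bb'$. Let $\ww\xx_0$ be the maximal common substring of $\uu'\xx_0$ and ${}^{\infty}\bb'\xx_0$ containing $\xx_0$; since $\uu'$ is band-free, $|\ww|<|\bb'|$, so $\ww$ is a right substring of $\bb'$ and $\bb'=\bb'_0\ww$. If $\ww=\uu'$, then $\uu'$ is a right substring of the band $\bb$: when $\tbq(\uu')=\bb$ this makes $\uu'$ an abnormal half bridge $\xx_0\to\bb$ with $\theta(\uu')=i$, so $\uu'=\uu$ by Proposition \ref{uniquebanduniqueabnormalhalfbridge}, contradicting $|\uu'|>0$; when $\tbq(\uu')\neq\bb$, the word $\uu'$ still defines an abnormal half bridge $\xx_0\to\bb$, so Proposition \ref{trianglefactabnhalfbridge} and minimality of $\uu$ again give $\uu\triangleleft\uu'$; and if $\uu'$ is a torsion zero bridge, then $\uu'\xx_0$ is a substring of $\bb'\xx_0=\bb\uu\xx_0$ and so cannot fork with it, contradicting torsionness. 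Hence $\uu'=\uu'_2\ww$ with $|\uu'_2|>0$; let $\gamma_u$ and $\gamma_b$ be the last syllables of $\uu'_2$ and of $\bb'_0$. These are the two distinct syllables that can precede $\ww\xx_0$, so the string-algebra axioms give $\theta(\gamma_u)=-\theta(\gamma_b)$. The plan now is to prove $\gamma_u\in\mathcal E(\bb)$ --- the alternative, that an element of $\rho\cup\rho^{-1}$ straddles $\gamma_u$, the band-internal $\ww$, and a left substring of $\bb'_0$, is to be excluded using Propositions \ref{constantsign} and \ref{squaresubstring} and domesticity --- then use Remark \ref{exitisexitforarrow} to obtain a weak bridge or torsion weak reverse half bridge $\uu_0$ out of $\bb$ with $\beta(\uu_0)=\gamma_u$ through which $\uu'$ factors, whence the composite $\uu_0\circ\uu$ exists (with $\bb'$ on its $\bb$-side, by the choice of $\ww$) and $\uu'=\uu_2\circ\uu$ non-trivially.

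I expect the crux to be this last step for $|\uu'|>0$: ruling out the straddling-relation configuration, and then, once $\gamma_u$ is an exit syllable, checking that the cyclic permutation of $\bb$ attached to $\uu_0$ is compatible with $\bb'=\bua{\uu}$ (so $\uu_0\circ\uu$ is genuinely defined) and that $\uu'$ factors through $\uu_0$ and not through a different arrow leaving $\bb$. These are cyclic-permutation bookkeeping arguments in the spirit of Proposition \ref{comparisonofcomplement} and Corollary \ref{uniqueabnweakbridge}, relying on domesticity, but they carry the weight of the proof.
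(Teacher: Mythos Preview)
Your structural insight is right: since $|\lambda^a_i(\xx_0)|=0$, showing $\lambda^a_i(\xx_0)\triangleleft\uu'$ for any $\uu'\in\bar\lambda_i(\xx_0)\setminus\{\lambda^a_i(\xx_0)\}$ amounts to showing that $\tbq(\uu')\uu'\bb\xx_0$ is a string, where $\bb:=\tbq(\lambda^a_i(\xx_0))$. But you then launch into a comparison with ${}^\infty\bb'\xx_0$, a case split on the common prefix $\ww$, and an exit-syllable argument whose decisive step (ruling out a straddling relation and matching cyclic permutations) you yourself flag as the unfinished crux. This machinery is not wrong in spirit, but it is far heavier than the situation demands, and as written it is incomplete.

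The paper's proof dispatches everything in one stroke with an observation you never use: a band has first and last syllables of opposite sign. We already know $\bb\xx_0$ is a string (this is what $|\lambda^a_i(\xx_0)|=0$ means) and $\tbq(\uu')\uu'\xx_0$ is a string (definition of $\uu'\in\bar\lambda_i(\xx_0)$), with $\theta(\tbq(\uu')\uu')=i=\theta(\bb)$. Hence the last syllable of $\bb$ has sign $-i$ while the first syllable of $\tbq(\uu')\uu'$ has sign $i$; since both $\bb^2\xx_0$ and $\tbq(\uu')\uu'\xx_0$ are strings and share their first syllable after $\xx_0$, the concatenation $\tbq(\uu')\uu'\bb\xx_0$ is a string (no element of $\rho\cup\rho^{-1}$, having $\delta\neq0$, can straddle the sign change at the junction). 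That is the whole proof, uniformly for $\uu'\in\bar\lambda^h_i(\xx_0)$ and $\uu'\in\bar\lambda^z_i(\xx_0)$, with no recourse to Propositions~\ref{trianglefactabnhalfbridge}, \ref{uniquebanduniqueabnormalhalfbridge}, \ref{constantsign}, or \ref{squaresubstring} and no case analysis on $|\uu'|$ or on $\ww$.
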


\begin{proof}
Suppose $\uu\in\bar\lambda^h_i(\xx_0)$ and $\uu\neq\lambda^a_i(\xx_0)$. By definition the concatenation $\tbq(\uu)\uu\xx_0$ exists. Since the first and the last syllables of $\tbq(\lambda^a_i(\xx_0))$ are of opposite sign and $\theta(\tbq(\uu)\uu)=i=\theta(\tbq(\lambda^a_i(\xx_0))\lambda^a_i(\xx_0))$, the concatenation $\tbq(\uu)\uu\tbq(\lambda^a_i(\xx_0))\lambda^a_i(\xx_0)\xx_0$ also exists, and thus $\lambda^a_i(\xx_0)\triangleleft\uu$.

A similar argument holds if $\uu\in\bar\lambda^z_i(\xx_0)$.
\end{proof}

\begin{rmk}
If $\lambda^a_i(\xx_0)$ exists and $|\lambda^a_i(\xx_0)|=0$ then $i=1$.
\end{rmk}

Note that a half bridge (resp. maximal torsion zero bridge) is clearly a half semi-bridge (resp. maximal torsion zero semi-bridge). The converse however is not true.
\begin{rmk}\label{semihalfnotstrong}
Suppose $\uu=\uu'\circ\lambda^a_{-i}(\xx_0)$, $\beta(\uu')\xx_0$ is a string and $\uu'\in\lambda(\beta(\uu'))$ then $\uu\in\lambda_i(\xx_0)$.
\end{rmk}

The following results states that the cases described in the above remark are the only elements of $\lambda_i(\xx_0)$ that are neither half bridges nor torsion zero bridges.
\begin{theorem}\label{halfbridgeminimal}
If $\uu\in\lambda^h_i(\xx_0)$ (resp. $\uu\in\lambda^z_i(\xx_0)$) is not a half bridge (resp. a maximal torsion zero bridge) then there is a factorization $\uu=\uu'\circ\lambda_{-i}^a(\xx_0)$ such that $\uu'\in\lambda^b(\beta(\uu'))$ (resp. $\uu'\in\lambda^r(\beta(\uu'))$) and $\beta(\uu')\xx_0$ is a string.
\end{theorem}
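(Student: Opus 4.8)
The plan is to mimic the proof of Theorem \ref{essentialexit}/Proposition \ref{newsemibrisarchbr} combined with the half-bridge analogue of the $\perp$-machinery from \S\ref{abnarrows}. First I would handle the case $\uu\in\lambda^h_i(\xx_0)$; the case $\uu\in\lambda^z_i(\xx_0)$ is parallel. Since $\uu$ is a half semi-bridge that is not a half bridge, by definition $\uu$ is not $\circ$-irreducible among half bridges, so there is a non-trivial factorization $\uu=\uu_2\circ\uu_1$ where $\uu_1\in\bar\lambda^h_{-i}(\xx_0)$ (note the sign flip: the initial arrow from $\xx_0$ has $\theta=-i$ since the composite exit sign is $i$ — this is the half-bridge analogue of Remark \ref{perpmaintainsamsign}) and $\uu_2$ is a weak bridge. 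I would first show $\uu_1$ is abnormal: if $\uu_1$ were normal (equivalently $\beta(\uu_1)$ normal, cf. Remark \ref{normalbridgenormalexit}), the half-bridge version of Proposition \ref{normalexitnormalexit} gives $\beta(\uu)=\beta(\uu_1)$, contradicting that $\uu$ is a semi-bridge. This is the exact analogue of Corollary \ref{semibridinitialabnfacts}, so I would cite that corollary's proof technique directly.

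Next, among all such factorizations $\uu=\uu_2\circ\uu_1$ with $\uu_1\in\bar\lambda^h_{-i}(\xx_0)$ abnormal, Proposition \ref{trianglefactabnhalfbridge} tells us the abnormal elements of $\bar\lambda^h_{-i}(\xx_0)$ are totally ordered by $\triangleleft$, so by Corollary \ref{uniqueabnhalfbridge} there is a \emph{unique} minimal abnormal one, namely $\lambda^a_{-i}(\xx_0)$. I would argue that $\uu_1$ must in fact be $\lambda^a_{-i}(\xx_0)$: since $\beta(\uu)$ is incident on $\beta(\uu_1)$ and (as in the proof of Proposition \ref{essentialexit}) an incidence relation forces all abnormal half bridges with exit incident on $\beta(\uu)$ to sit in one $\triangleleft$-chain, minimality of $\uu$ in $(\bar\lambda_i(\xx_0),\triangleleft)$ forces the initial factor to be $\triangleleft$-minimal, i.e. $\uu_1=\lambda^a_{-i}(\xx_0)$. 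Setting $\uu':=\uu_2$ we get $\uu=\uu'\circ\lambda^a_{-i}(\xx_0)$. Finally I must check $\uu'\in\lambda^b(\beta(\uu'))$ and $\beta(\uu')\xx_0$ is a string. For the latter: $\uu=\uu'\circ\lambda^a_{-i}(\xx_0)$ and $\uu\xx_0$ is a string, and since $\lambda^a_{-i}(\xx_0)$ is a right substring of its target band $\bb:=\tbq(\lambda^a_{-i}(\xx_0))$, the exit $\beta(\uu')$ of $\uu'$ reads off the first syllable of $\uu'\bb$ not in $\bb$; tracking substrings of $\uu\xx_0$ shows $\beta(\uu')\xx_0$ is a string. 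For $\uu'\in\lambda^b(\beta(\uu'))$: if $\uu'$ were not a semi-bridge, it would factor as $\uu'=\uu''\circ\uu'''$ with $\beta(\uu')=\beta(\uu''')$, $\uu'''$ abnormal; then $\uu=\uu''\circ\uu'''\circ\lambda^a_{-i}(\xx_0)$ would give a factorization of $\uu$ with initial half-bridge factor strictly $\triangleright$ $\lambda^a_{-i}(\xx_0)$ — wait, rather it would produce an element of $\bar\lambda_i(\xx_0)$ strictly below $\uu$ in $\triangleleft$ (namely $\uu'''\circ\lambda^a_{-i}(\xx_0)$, which is $\neq\uu$ since $\uu''$ is non-trivial), contradicting minimality of $\uu$.

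The main obstacle I anticipate is the sign bookkeeping and the precise verification that the initial factor is forced to be the \emph{unique} abnormal half semi-bridge $\lambda^a_{-i}(\xx_0)$ rather than merely \emph{some} abnormal half bridge: this requires carefully transporting the $\perp$-incidence/$\triangleleft$-chain argument from the band-to-band setting (Corollary \ref{interweakbridgisbridg}, Remark \ref{uefactorization}, Proposition \ref{essentialexit}) to the half-bridge setting, where the role of ``exit incident on $\beta(\uu)$'' is played by the total order of Proposition \ref{trianglefactabnhalfbridge}. A secondary subtlety is the degenerate case $|\lambda^a_{-i}(\xx_0)|=0$, covered by Proposition \ref{lengthzerohalfbruniqueness}: when the abnormal half semi-bridge has length zero it is the unique element of $\lambda_{-i}(\xx_0)$, so any non-half-bridge $\uu\in\lambda^h_i(\xx_0)$ visibly factors through it, and one must make sure the claimed $\uu'\in\lambda^b(\beta(\uu'))$ still holds — but this follows from the same minimality argument. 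The $\lambda^z_i$ case is verbatim with ``weak reverse half bridge/$\lambda^r$'' replacing ``weak bridge/$\lambda^b$'' and Proposition \ref{trianglefactabnhalfbridge} applied to zero bridges instead.
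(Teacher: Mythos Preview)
Your overall skeleton is right, but two of the steps rest on misapplied machinery from the band-to-band setting.

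First, the sign flip $\uu_1\in\bar\lambda^h_{-i}(\xx_0)$ is not a consequence of any ``composite exit sign'' or an analogue of Remark~\ref{perpmaintainsamsign}. A priori nothing prevents a factorization $\uu=\uu_2\circ\uu_1$ with $\theta(\tbq(\uu_1)\uu_1)=i$; what rules this out is precisely the $\triangleleft$-minimality of $\uu$ in $\bar\lambda_i(\xx_0)$ (if $\uu_1\in\bar\lambda^h_i(\xx_0)$ then $\uu_1\triangleleft\uu$). The paper makes this explicit and, more importantly, takes $\uu_1$ to be a half \emph{bridge} from the outset, so that $\uu_1$ is automatically $\triangleleft$-minimal and hence already lies in $\lambda^h_{-i}(\xx_0)$; then Corollary~\ref{uniqueabnhalfbridge} gives $\uu_1=\lambda^a_{-i}(\xx_0)$ in one line, without any of the $\perp$/incidence transport you anticipate needing.

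Second, and more seriously, your argument for abnormality of $\uu_1$ invokes ``$\beta(\uu)=\beta(\uu_1)$'' and Corollary~\ref{semibridinitialabnfacts}, but a weak half bridge $\xx_0\to\bb$ has no exit syllable $\beta(\uu_1)$: the exit is only defined when the source is a band. Likewise a half semi-bridge is defined purely by $\triangleleft$-minimality, not by any exit-syllable condition, so the contradiction ``$\beta(\uu)=\beta(\uu_1)$ violates semi-bridge'' is meaningless here. The paper's argument is different and direct: if $\uu_1$ were normal then its interior $\uu_1^o$ has $\theta(\uu_1^o)=-i$ and survives as a substring of $\uu=\uu_2\circ\uu_1$, forcing $\theta(\uu)=-i$, a contradiction. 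Your verification that $\beta(\uu')\xx_0$ is a string is also too vague; the paper handles this by a short case split on whether $\uu_2\uu_1$ is band-free (forcing $|\uu_1|=0$) or not (forcing $\bua{\uu_1}$ to be a left substring of $\uu_2\uu_1$), and in both cases reads off the claim. Your final minimality argument for $\uu'\in\lambda^b(\beta(\uu'))$ is essentially the paper's.
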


\begin{proof}
Suppose $\uu\in\lambda^h_i(\xx_0)$ is not a half bridge. Then there is a factorization $\uu=\uu_2\circ\uu_1$ for some half bridge $\uu_1$. Since $\uu$ is $\triangleleft$-minimal in $\bar\lambda_i(\xx_0)$ we immediately conclude that $\uu_1\in\lambda^h_{-i}(\xx_0)$. In other words, $\theta(\tbq(\uu_1)\uu_1)=-i$.

If $\uu_1$ is normal then $\theta(\uu_1^o)=-i$ and $\uu_1^o$ is a substring of both $\uu_2\uu_1$ and $\uu=\uu_2\circ\uu_1$. Since $\theta(\uu)=i$, we arrive at a contradiction. Hence $\uu_1$ is abnormal. By Corollary \ref{uniqueabnhalfbridge} we conclude that $\uu_1=\lambda^a_{-i}(\xx_0)$.

Note that $|\uu|>0$ in view of Remark \ref{zerolengthweakbridge}. In order to ensure that $\theta(\uu_2\circ\uu_1)=i$ we have one of the following two cases. 
\begin{itemize}
    \item If $\uu_2\uu_1$ is band-free then $|\uu_1|=0$.
    \item If $\uu_2\uu_1$ is not band-free then $\bua{\uu_1}$ is a left substring of $\uu_2\uu_1$.
\end{itemize}

In both cases $\beta(\uu_2)\xx_0$ is a string. Finally, $\triangleleft$-minimality of $\uu_2$ in $\bar\lambda(\beta(\uu_2))$ is guaranteed by $\triangleleft$-minimality of $\uu$ in $\lambda_i(\xx_0)$.

If $\uu\in\lambda^z_i(\xx_0)$ is not a torsion zero bridge then the proof is similar.
\end{proof}

The following is an analogue of Proposition \ref{newsemibrisarchbr} which provides a subclass of half/zero $i$-arch bridges. 
\begin{corollary}
If $\uu\in\lambda^h_i(\xx_0)$ (resp. $\uu\in\lambda^z_i(\xx_0)$) then $\uu$ is a half $i$-arch bridge (resp. a zero $i$-arch bridge).
\end{corollary}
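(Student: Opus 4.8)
The plan is to mirror the argument structure of Proposition \ref{newsemibrisarchbr}, which handled the case of semi-bridges and torsion reverse semi-bridges between bands, and carry it over to the ``relative to $(i,\xx_0)$'' setting using the extended analogues of the earlier results. First I would reduce to the genuinely new case: if $\uu$ happens to be an ordinary half bridge (resp.\ maximal torsion zero bridge), then it lies in $\Q^{\mathrm{Ba}}_i(\xx_0)$, which is a subquiver of $\HQ_i(\xx_0)$, so there is nothing to prove. So assume $\uu\in\lambda^h_i(\xx_0)$ (resp.\ $\uu\in\lambda^z_i(\xx_0)$) is not a half bridge (resp.\ torsion zero bridge).

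Next I would invoke Theorem \ref{halfbridgeminimal}: it gives a factorization $\uu=\uu'\circ\lambda^a_{-i}(\xx_0)$ with $\uu'\in\lambda^b(\beta(\uu'))$ (resp.\ $\uu'\in\lambda^r(\beta(\uu'))$) and $\beta(\uu')\xx_0$ a string. By Proposition \ref{newsemibrisarchbr}, $\uu'$ is an arch bridge (resp.\ a torsion reverse arch bridge). Now suppose, for contradiction, that $\uu$ is \emph{not} a half $i$-arch bridge; then there is a path $\Pp$ of length at least $2$ in $\overline\Q^{\mathrm{Ba}}_i(\xx_0)$ with the same source $\xx_0$ and target as $\uu$ and with $\uu=\hh(\Pp)$. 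Equivalently, $\uu$ admits a nontrivial $\ch^{(i,\xx_0)}$-factorization $\uu=\uu_2\ch^{(i,\xx_0)}\uu_1$ for arrows $\uu_1,\uu_2$ of $\overline\Q^{\mathrm{Ba}}_i(\xx_0)$. Since $\uu$ is band-free (as $\uu\xx_0$ is H-reduced relative to $(i,\xx_0)$, hence skeletal, and $\uu$ itself is band-free being a weak half/zero bridge in the extended weak bridge quiver), the $\ch^{(i,\xx_0)}$-composition coincides with the ordinary $\circ$-composition on these arrows, so $\uu=\uu_2\circ\uu_1$.

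The heart of the argument is then the same contradiction as in Proposition \ref{newsemibrisarchbr}. The relative analogue of Corollary \ref{semibridinitialabnfacts} — which follows from the relative version of Proposition \ref{normalexitnormalexit}, noting that a $\triangleleft$-minimal element of $\bar\lambda_i(\xx_0)$ cannot have a normal initial weak bridge factor without violating minimality — forces $\uu_1$ to be abnormal. By Corollary \ref{uniqueabnhalfbridge}, since $\uu$ is $\triangleleft$-minimal the only abnormal element available below $\uu$ is $\lambda^a_{-i}(\xx_0)$, so $\uu_1=\lambda^a_{-i}(\xx_0)$; writing $\uu_1^e=\xx_2\xx_1$ with $\beta(\uu_2)\xx_1$ a string and letting $\ww$ be the left substring of $\sbq(\uu_1)$ (here $\sbq(\uu_1)=\xx_0$) with $\uu_1^c\ww$ defined, abnormality of $\uu_1$ gives that $\xx_2\xx_1\ww$ is a string while $\xx_2\bub{\uu_1}\xx_1\ww$ is not. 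Hence the H-reduction $\HRed{\tbq(\uu_1)}^{(i,\xx_0)}(\sk{\uu_2\,\tbq(\uu_1)\,\uu_1\,\xx_0})$ does not exist, contradicting $\uu=\uu_2\ch^{(i,\xx_0)}\uu_1$. Therefore no such factorization exists and $\uu\in\HQ_i(\xx_0)$.

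The main obstacle I anticipate is purely bookkeeping: confirming that all the absolute results used (Proposition \ref{normalexitnormalexit}, Corollary \ref{semibridinitialabnfacts}, Proposition \ref{constantsign}, the characterization of $\uu_1^e$ and $\bub{\uu_1}$, and the equality of $\ch^{(i,\xx_0)}$ with $\circ$ on band-free arrows) have the stated relative analogues with $\xx_0$ playing the role formerly played by a band $\sbq(\uu_1)$, in particular when $|\uu_1|=0$ so that $\uu_1$ is a zero-length abnormal half bridge and the relevant cyclic permutation of $\tbq(\uu_1)$ must be extracted correctly. Since the paper explicitly states these relative versions are obtained ``with appropriate modifications'' and the zero-length case is governed by Remark \ref{zerolengthweakbridge} and Proposition \ref{lengthzerohalfbruniqueness}, I expect no genuine new difficulty, only care in transcription.
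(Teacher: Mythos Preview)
Your argument has a genuine gap in the final step, and you miss the much simpler contradiction that the paper uses.

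You correctly arrive at the point where any nontrivial factorization $\uu=\uu_2\ch^{(i,\xx_0)}\uu_1$ with $\uu_1,\uu_2$ arrows of $\overline\Q^{\mathrm{Ba}}_i(\xx_0)$ yields $\uu=\uu_2\circ\uu_1$ (since $\uu$ is band-free), and that $\uu_1=\lambda^a_{-i}(\xx_0)$. But here you already have the contradiction and do not notice it: by the very definition of $\overline\Q^{\mathrm{Ba}}_i(\xx_0)$, the first arrow of any path from $\xx_0$ in this quiver lies in $\bar\lambda^h_i(\xx_0)$, whereas $\lambda^a_{-i}(\xx_0)\in\bar\lambda^h_{-i}(\xx_0)$. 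That is precisely the paper's argument: Theorem~\ref{halfbridgeminimal} (more exactly, its proof) forces $\uu_1=\lambda^a_{-i}(\xx_0)\notin\bar\lambda^h_i(\xx_0)$, done.

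Your attempted continuation, mirroring the H-reduction obstruction from Proposition~\ref{newsemibrisarchbr}, does not go through as written and is not merely bookkeeping. The objects $\uu_1^c$, $\uu_1^e$, $\bub{\uu_1}$ you invoke are defined in \S\ref{abnarrows} only for abnormal weak bridges \emph{between bands}; the definition of an abnormal weak half bridge $\xx_0\xrightarrow{\uu_1}\bb$ is simply that $\uu_1$ is a right substring of $\bb$, and the paper supplies no complemented interior, no $\uu_1^e$, and no $\bub{\uu_1}$ for it, because $\sbq(\uu_1)=\xx_0$ is not a band. There is no $\rho$-relation obstructing the concatenation $\bua{\uu_1}\xx_0$ in the way that $\bla{\uu_1}\bua{\uu_1}$ is obstructed for an abnormal weak bridge, so the mechanism of Proposition~\ref{newsemibrisarchbr} (``$\xx_2\xx_1\ww$ is a string but $\xx_2\bub{\uu_1}\xx_1\ww$ is not'') has no analogue here. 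Drop this step entirely and stop at the sign contradiction $\uu_1\in\bar\lambda^h_i(\xx_0)$ versus $\uu_1\in\bar\lambda^h_{-i}(\xx_0)$.
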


\begin{proof}
We only prove the result when $\uu\in\lambda^h_i(\xx_0)$; the other proof is similar.

Suppose $\uu$ is not a half $i$-arch bridge. Then $\uu=\uu_2\ch\uu_1$ for some $\uu_1\in\bar\lambda^h_i(\xx_0)$ and $\uu_2\in\overline\Q^{\mathrm{Ba}}$. Since $\uu$ is band-free we see that $\uu=\uu_2\circ\uu_1$. But then Theorem \ref{halfbridgeminimal} gives that $\uu_1=\lambda^a_{-i}(\xx_0)\notin\lambda^h_i(\xx_0)$, a contradiction.
\end{proof}

\vspace{0.2in}
\noindent{}Shantanu Sardar\\
Indian Institute of Technology Kanpur\\
Uttar Pradesh, India\\
Email: \texttt{shantanusardar17@gmail.com}

\vspace{0.2in}
\noindent{}Corresponding Author: Amit Kuber\\
Indian Institute of Technology Kanpur\\
Uttar Pradesh, India\\
Email: \texttt{askuber@iitk.ac.in}\\
Phone: (+91) 512 259 6721\\
Fax: (+91) 512 259 7500
\end{document}